\documentclass[11pt,a4paper]{article}
\usepackage[T1]{fontenc}
\usepackage{amsfonts}
\usepackage{amsmath}
\usepackage{empheq}
\usepackage{amssymb}
\usepackage{wrapfig}
\usepackage{tikz}
\usepackage{pgfplots}
\pgfplotsset{compat=1.18}
\usepackage{enumitem}
\setlist{leftmargin=0mm}
\usepackage{setspace}
\usepackage{skull}
\usepackage{nicematrix}
\usepackage[english]{babel}
\usepackage{hyperref}
\usepackage{comment}
\usepackage{amsthm}
\usepackage{changepage}   
\usepackage{dsfont}
\usepackage[
backend=biber,
style=numeric,
sorting=nyt,
maxbibnames=99]{biblatex}
\usepackage{graphicx}
\usepackage{caption}
\usepackage[pagewise]{lineno} 
\newtheorem{theorem}{Theorem}

\newtheorem{prop}{Proposition}
\theoremstyle{definition}
\newtheorem{definition}{Definition}
\newtheorem{lemma}{Lemma}
\newtheorem{remark}{Remark}

\newtheorem{notation}{Notation}

\usepackage{lipsum}

\newcommand\phantomarrow[2]{
  \setbox0=\hbox{$\displaystyle #1\to$}%
  \hbox to \wd0{%
    $#2\mapstochar
     \cleaders\hbox{$\mkern-1mu\relbar\mkern-3mu$}\hfill
     \mkern-7mu\rightarrow$}%
  \,}
\makeatletter
\newcommand*{\rom}[1]{\expandafter\@slowromancap\romannumeral #1@}
\makeatother

\newcommand{\R}{\mathbb{R}}

\newcommand{\N}{\mathbb{N}}

\newcommand{\T}{\mathbb{T}}

\newcommand{\tend}[2]{\underset{#1 \to #2}{\longrightarrow}}
\newcommand{\tendf}[2]{\underset{#1 \to #2}{\rightharpoonup}}

\DeclareMathOperator{\cE}{{\cal E}}
\DeclareMathOperator{\cM}{{\cal M}}

\DeclareMathOperator{\cv}{\mathfrak{c}_v}
\usepackage{csquotes}
\usepackage{xcolor} 
\title{{Vanishing conductivity limit for the 1D compressible Navier-Stokes system}\thanks{This work benefited of the support of the ANR under France 2030 bearing the reference ANR-23-EXMA-004 (Complexflows project)}}
\date{\today}
\author{Pierre Gonin-{}-Joubert
\thanks{Universit\'e Claude Bernard Lyon 1, ICJ UMR5208, CNRS, \'Ecole Centrale de Lyon, INSA Lyon, Université Jean Monnet, 69622 Villeurbanne, France \texttt{goninjoubert@math.univ-lyon1.fr}}
\thanks{LAMA UMR5127 CNRS, Université Savoie Mont Blanc, Le Bourget du lac, France}}

\pgfplotstableread{
0.001 5.908755278982137
0.013011011011011011 3.3549702901874077
0.025022022022022024 2.713020983002951
0.037033033033033035 2.3329780129895727
0.04904404404404405 2.0640805706048155
0.06105505505505506 1.8570343350237615
0.07306606606606607 1.6894572985749667
0.08507707707707708 1.5492747213117166
0.0970880880880881 1.4292246760430691
0.10909909909909911 1.3245977428464322
0.12111011011011012 1.2321652563853136
0.13312112112112112 1.1496170011824018
0.14513213213213214 1.0752428275517902
0.15714314314314315 1.0077412923939677
0.16915415415415416 0.9460989792152545
0.18116516516516518 0.8895113142452631
0.1931761761761762 0.8373288527009252
0.2051871871871872 0.7890197954304553
0.21719819819819822 0.7441431847433694
0.22920920920920923 0.7023293243620965
0.24122022022022024 0.6632652061042221
0.25323123123123126 0.6266834814549411
0.26524224224224224 0.5923539911240867
0.2772532532532533 0.5600771720674329
0.28926426426426427 0.5296788637324867
0.3012752752752753 0.5010061716433526
0.3132862862862863 0.4739241400962958
0.32529729729729734 0.448313051173723
0.3373083083083083 0.4240662137197704
0.34931931931931937 0.4010881393455952
0.36133033033033035 0.379293026909441
0.3733413413413414 0.35860349491104615
0.3853523523523524 0.3389495146725151
0.39736336336336336 0.3202675073096194
0.4093743743743744 0.30249957521396587
0.4213853853853854 0.2855928446966409
0.43339639639639643 0.2694989010399692
0.4454074074074074 0.2541733007943596
0.45741841841841846 0.23957514898285137
0.46942942942942945 0.2256667311154006
0.4814404404404405 0.21241319170152906
0.4934514514514515 0.19978225238418756
0.5054624624624625 0.18774396397584814
0.5174734734734735 0.1762704876183303
0.5294844844844845 0.16533590105564067
0.5414954954954956 0.154916026639062
0.5535065065065066 0.14498827820307003
0.5655175175175176 0.13553152438078675
0.5775285285285285 0.1265259662854592
0.5895395395395395 0.11795302778329952
0.6015505505505506 0.10979525683364011
0.6135615615615616 0.1020362365833184
0.6255725725725726 0.09466050508044788
0.6375835835835836 0.08765348262384087
0.6495945945945947 0.08100140589289495
0.6616056056056057 0.07469126811245369
0.6736166166166166 0.06871076460105902
0.6856276276276276 0.06304824313163915
0.6976386386386387 0.05769265860309791
0.7096496496496497 0.05263353158120998
0.7216606606606607 0.0478609103191186
0.7336716716716717 0.0433653359127672
0.7456826826826828 0.039137810285782115
0.7576936936936938 0.03516976673249805
0.7697047047047048 0.03145304277769084
0.7817157157157157 0.027979855137758053
0.7937267267267267 0.02474277659106322
0.8057377377377378 0.021734714585381815
0.8177487487487488 0.018948891428208492
0.8297597597597598 0.016378825921431195
0.8417707707707708 0.014018316315812657
0.8537817817817819 0.011861424473073506
0.8657927927927929 0.009902461134345758
0.8778038038038039 0.008135972203529435
0.8898148148148148 0.006556725962787696
0.9018258258258259 0.005159701145184228
0.9138368368368369 0.003940075796413686
0.9258478478478479 0.0028932168637962052
0.9378588588588589 0.0020146704562865603
0.94986986986987 0.0013001527242594915
0.961880880880881 0.0007455413123400936
0.973891891891892 0.0003468673426079219
0.985902902902903 0.000100307889165277
0.9979139139139139 2.178908366934438e-06
1.009924924924925 4.892859199891013e-05
1.021935935935936 0.00023713111340144602
1.0339469469469469 0.0005634807389857041
1.0459579579579579 0.0010247862798213256
1.0579689689689689 0.0016179658599959207
1.0699799799799798 0.0023400419802891287
1.081990990990991 0.003188136857380397
1.094002002002002 0.004159468020338508
1.106013013013013 0.005251344147536197
1.118024024024024 0.006461161128407417
1.130035035035035 0.007786398335630815
1.142046046046046 0.009224615094390015
1.154057057057057 0.010773447336338476
1.166068068068068 0.012430604426794528
1.178079079079079 0.01419386615451379
1.1900900900900901 0.016061079874144596
1.2021011011011011 0.018030157792165435
1.2141121121121121 0.02009907438774594
1.226123123123123 0.02226586396056096
1.238134134134134 0.024528618298132215
1.250145145145145 0.026885484455773656
1.262156156156156 0.029334662642679837
1.274167167167167 0.031874404208125456
1.2861781781781783 0.03450300972213971
1.2981891891891892 0.03721882714538649
1.3102002002002002 0.04002025008332111
1.3222112112112112 0.042905716120008486
1.3342222222222222 0.04587370522728079
1.3462332332332332 0.04892273824518262
1.3582442442442442 0.05205137542990523
1.3702552552552552 0.05525821506564366
1.3822662662662661 0.05854189213703004
1.3942772772772773 0.061901077058998
1.4062882882882883 0.06533447446112206
1.4182992992992993 0.06884082202365371
1.4303103103103103 0.07241888936263569
1.4423213213213213 0.07606747696163452
1.4543323323323323 0.07978541514776993
1.4663433433433433 0.08357156310985564
1.4783543543543542 0.08742480795659024
1.4903653653653655 0.09134406381285393
1.5023763763763764 0.09532827095227431
1.5143873873873874 0.09937639496433126
1.5263983983983984 0.1034874259543605
1.5384094094094094 0.10766037777491078
1.5504204204204204 0.11189428728699075
1.5624314314314314 0.11618821364982157
1.5744424424424424 0.12054123763778574
1.5864534534534533 0.12495246098333151
1.5984644644644646 0.12942100574465892
1.6104754754754755 0.13394601369707299
1.6224864864864865 0.13852664574695145
1.6344974974974975 0.1431620813673229
1.6465085085085085 0.14785151805410868
1.6585195195195195 0.15259417080212612
1.6705305305305305 0.15738927159999783
1.6825415415415415 0.162236068943154
1.6945525525525527 0.16713382736415638
1.7065635635635636 0.17208182697960817
1.7185745745745746 0.17707936305295413
1.7305855855855856 0.18212574557250505
1.7425965965965966 0.18722029884405544
1.7546076076076076 0.19236236109749294
1.7666186186186186 0.19755128410682654
1.7786296296296296 0.20278643282308917
1.7906406406406405 0.2080671850195922
1.8026516516516518 0.21339293094904
1.8146626626626627 0.21876307301202746
1.8266736736736737 0.22417702543647544
1.8386846846846847 0.22963421396756856
1.8506956956956957 0.23513407556778854
1.8627067067067067 0.24067605812665027
1.8747177177177177 0.24625962017976577
1.8867287287287287 0.2518842306368797
1.8987397397397399 0.2575493685185338
1.9107507507507508 0.2632545227010349
1.9227617617617618 0.2689991916694128
1.9347727727727728 0.274782883278071
1.9467837837837838 0.28060511451884396
1.9587947947947948 0.28646541129618697
1.9708058058058058 0.29236330820923784
1.9828168168168168 0.2982983483405002
1.9948278278278277 0.30427008305090697
2.006838838838839 0.3102780717810356
2.01884984984985 0.3163218818582546
2.030860860860861 0.32240108830958913
2.042871871871872 0.3285152736801056
2.054882882882883 0.33466402785661786
2.066893893893894 0.3408469478965327
2.078904904904905 0.3470636378616522
2.090915915915916 0.35331370865676515
2.102926926926927 0.3595967778728596
2.114937937937938 0.3659124696348016
2.126948948948949 0.37226041445332647
2.13895995995996 0.37864024908119687
2.150970970970971 0.38505161637338947
2.162981981981982 0.3914941651511734
2.174992992992993 0.3979675500699521
2.187004004004004 0.4044714314907456
2.199015015015015 0.41100547535519083
2.211026026026026 0.4175693530639466
2.223037037037037 0.4241627413583935
2.235048048048048 0.43078532220551957
2.247059059059059 0.4374367826858925
2.25907007007007 0.44411681488461574
2.271081081081081 0.4508251157851787
2.283092092092092 0.4575613871661034
2.295103103103103 0.4643253355003053
2.307114114114114 0.47111667185707895
2.319125125125125 0.4779351118066293
2.331136136136136 0.4847803753270684
2.343147147147147 0.4916521867138025
2.355158158158158 0.4985502744912347
2.3671691691691694 0.5054743713267148
2.3791801801801804 0.5124242139466648
2.3911911911911914 0.5193995430548188
2.4032022022022024 0.5264001032525093
2.4152132132132134 0.5334256429609403
2.4272242242242243 0.5404759143453902
2.4392352352352353 0.547550673241284
2.4512462462462463 0.5546496790820814
2.4632572572572573 0.5617726948289279
2.4752682682682683 0.5689194869020161
2.4872792792792793 0.5760898251136091
2.4992902902902903 0.5832834826026766
2.5113013013013012 0.5905002357710978
2.5233123123123122 0.5977398642213873
2.535323323323323 0.6050021506958978
2.547334334334334 0.6122868810174619
2.559345345345345 0.6195938440314275
2.5713563563563566 0.6269228315490527
2.5833673673673676 0.6342736382922167
2.5953783783783786 0.6416460618394167
2.6073893893893896 0.6490399025730086
2.6194004004004006 0.6564549636276604
2.6314114114114115 0.6638910508399866
2.6434224224224225 0.6713479726993276
2.6554334334334335 0.6788255402996454
2.6674444444444445 0.686323567292505
2.6794554554554555 0.6938418698411116
2.6914664664664665 0.7013802665753766
2.7034774774774775 0.7089385785479819
2.7154884884884884 0.7165166291914211
2.7274994994994994 0.7241142442759858
2.7395105105105104 0.7317312518686769
2.7515215215215214 0.739367482293013
2.7635325325325324 0.7470227680897166
2.775543543543544 0.75469694397825
2.787554554554555 0.7623898468191825
2.799565565565566 0.7701013155773688
2.811576576576577 0.7778311912859128
2.8235875875875878 0.7855793170109004
2.8355985985985988 0.7933455378168812
2.8476096096096097 0.8011297007330795
2.8596206206206207 0.8089316547203165
2.8716316316316317 0.8167512506386254
2.8836426426426427 0.8245883412155428
2.8956536536536537 0.8324427810150585
2.9076646646646647 0.8403144264072087
2.9196756756756757 0.8482031355382953
2.9316866866866866 0.8561087683017172
2.9436976976976976 0.864031186309397
2.9557087087087086 0.8719702528637896
2.9677197197197196 0.87992583293046
2.979730730730731 0.8878977931112126
2.991741741741742 0.895886001617761
3.003752752752753 0.9038903282459287
3.015763763763764 0.9119106443503606
3.027774774774775 0.9199468228197387
3.039785785785786 0.9279987380524888
3.051796796796797 0.9360662659329653
3.063807807807808 0.9441492838081051
3.075818818818819 0.9522476704645362
3.08782982982983 0.9603613061061342
3.099840840840841 0.9684900723320136
3.111851851851852 0.9766338521149449
3.123862862862863 0.9847925297801876
3.135873873873874 0.9929659909847284
3.147884884884885 1.001154122696918
3.159895895895896 1.0093568131764934
3.171906906906907 1.0175739519549818
3.1839179179179182 1.0258054298164725
3.195928928928929 1.0340511387787508
3.20793993993994 1.0423109720747894
3.219950950950951 1.05058482413458
3.231961961961962 1.0588725905673082
3.243972972972973 1.0671741681438554
3.255983983983984 1.075489454779626
3.267994994994995 1.0838183495176907
3.280006006006006 1.092160752512238
3.292017017017017 1.100516565012329
3.304028028028028 1.108885689345947
3.316039039039039 1.1172680289043382
3.32805005005005 1.1256634881266345
3.340061061061061 1.1340719724847526
3.352072072072072 1.1424933884685646
3.364083083083083 1.1509276435713354
3.376094094094094 1.1593746462754166
3.3881051051051054 1.1678343060381977
3.4001161161161164 1.1763065332783023
3.4121271271271274 1.1847912393620317
3.4241381381381384 1.193288336590043
3.4361491491491494 1.2017977381842633
3.4481601601601604 1.2103193582750296
3.4601711711711713 1.218853111888455
3.4721821821821823 1.2273989149340112
3.4841931931931933 1.2359566841923284
3.4962042042042043 1.244526337303201
3.5082152152152153 1.253107792753803
3.5202262262262263 1.2617009698671036
3.5322372372372373 1.2703057887904774
3.5442482482482482 1.2789221704845128
3.5562592592592592 1.287550036712005
3.56827027027027 1.296189310027138
3.580281281281281 1.304839913764846
3.5922922922922926 1.313501772030355
3.6043033033033036 1.3221748096888966
3.6163143143143146 1.3308589523555951
3.6283253253253256 1.3395541263855222
3.6403363363363366 1.3482602588639128
3.6523473473473476 1.3569772775965465
3.6643583583583585 1.3657051111002847
3.6763693693693695 1.3744436885937608
3.6883803803803805 1.3831929399882268
3.7003913913913915 1.3919527958785438
3.7124024024024025 1.4007231875343218
3.7244134134134135 1.4095040468912017
3.7364244244244245 1.4182953065422768
3.7484354354354354 1.4270968997296523
3.7604464464464464 1.4359087603361418
3.7724574574574574 1.4447308228770923
3.7844684684684684 1.4535630224923435
3.79647947947948 1.4624052949383108
3.808490490490491 1.471257576580195
3.820501501501502 1.4801198043843156
3.832512512512513 1.4889919159105627
3.844523523523524 1.4978738493049673
3.8565345345345348 1.5067655432923888
3.8685455455455457 1.5156669371693139
3.8805565565565567 1.52457797079677
3.8925675675675677 1.5334985845933453
3.9045785785785787 1.5424287195283184
3.9165895895895897 1.5513683171148922
3.9286006006006007 1.5603173194035325
3.9406116116116117 1.5692756689754066
3.9526226226226227 1.5782433089359242
3.9646336336336336 1.5872201829083739
3.9766446446446446 1.596206235027657
3.9886556556556556 1.6052014099341159
4.0006666666666675 1.6142056527674558
4.0126776776776785 1.6232189091607545
4.0246886886886895 1.6322411252345674
4.0366996996997 1.6412722475911141
4.048710710710711 1.650312223308556
4.060721721721722 1.659360999935356
4.072732732732733 1.6684185254847224
4.084743743743744 1.6774847484291346
4.096754754754755 1.68655961769495
4.108765765765766 1.6956430826570876
4.120776776776777 1.7047350931337915
4.132787787787788 1.7138355993814696
4.144798798798799 1.7229445520896067
4.15680980980981 1.7320619023757529
4.168820820820821 1.7411876017805816
4.180831831831832 1.7503216022630221
4.192842842842843 1.759463856195458
4.204853853853854 1.7686143163589982
4.216864864864865 1.7777729359388124
4.228875875875876 1.7869396685195338
4.240886886886887 1.7961144680807288
4.252897897897898 1.8052972889924275
4.264908908908909 1.81448808601072
4.27691991991992 1.8236868142734128
4.288930930930931 1.8328934292957473
4.300941941941942 1.842107886966178
4.312952952952954 1.851330143542209
4.324963963963965 1.8605601556462872
4.336974974974976 1.8697978802617568
4.348985985985987 1.879043274728865
4.360996996996998 1.8882962967408252
4.373008008008009 1.8975569043399336
4.38501901901902 1.9068250559137392
4.397030030030031 1.916100710191266
4.409041041041042 1.925383826239288
4.421052052052053 1.9346743634586525
4.433063063063064 1.9439722815806557
4.445074074074075 1.9532775406634657
4.457085085085086 1.962590101088594
4.469096096096097 1.9719099235574153
4.481107107107108 1.9812369690877338
4.493118118118119 1.990571199010394
4.50512912912913 1.999912574965939
4.517140140140141 2.0092610589013113
4.529151151151152 2.0186166130665995
4.541162162162163 2.027979200011825
4.553173173173174 2.037348782583775
4.565184184184185 2.0467253239228733
4.577195195195196 2.0561087874600963
4.589206206206207 2.0654991369139246
4.601217217217218 2.074896336287339
4.613228228228229 2.0843003498648542
4.62523923923924 2.093711142209588
4.637250250250251 2.103128678160374
4.649261261261262 2.112552922828906
4.661272272272273 2.1219838415969248
4.673283283283284 2.131421400113436
4.685294294294295 2.1408655642919676
4.6973053053053055 2.15031630030786
4.7093163163163165 2.1597735745955915
4.721327327327328 2.169237353846139
4.733338338338339 2.1787076050043694
4.74534934934935 2.1881842952664674
4.757360360360361 2.1976673920773946
4.769371371371372 2.207156863128379
4.781382382382383 2.216652676354437
4.793393393393394 2.2261547999319298
4.805404404404405 2.2356632022761422
4.817415415415416 2.2451778520389025
4.829426426426427 2.254698718106223
4.841437437437438 2.264225769595974
4.853448448448449 2.2737589758555856
4.86545945945946 2.283298306459779
4.877470470470471 2.2928437312083236
4.889481481481482 2.302395220123824
4.901492492492493 2.311952743449531
4.913503503503504 2.3215162716471864
4.925514514514515 2.3310857753948824
4.937525525525526 2.3406612255849604
4.949536536536537 2.350242593321923
4.961547547547548 2.3598298499203816
4.973558558558559 2.369422966903023
4.98556956956957 2.3790219159986004
4.997580580580581 2.388626669139951
5.009591591591592 2.398237198462037
5.021602602602603 2.407853476300008
5.033613613613614 2.4174754751872904
5.045624624624625 2.4271031678536943
5.057635635635636 2.436736527223548
5.069646646646647 2.44637552641385
5.081657657657658 2.456020138732449
5.093668668668669 2.4656703376762374
5.10567967967968 2.475326096929374
5.117690690690691 2.484987390361521
5.129701701701703 2.4946541920261067
5.141712712712714 2.504326476158604
5.153723723723725 2.514004217174834
5.165734734734736 2.523687389669286
5.177745745745747 2.5333759684134565
5.189756756756758 2.5430699283542095
5.201767767767769 2.552769244612155
5.21377877877878 2.562473892480046
5.225789789789791 2.5721838474211944
5.237800800800802 2.581899085067903
5.249811811811813 2.5916195812199194
5.261822822822824 2.601345311842905
5.273833833833835 2.611076253066921
5.285844844844846 2.620812381184932
5.297855855855857 2.6305536726513283
5.309866866866868 2.640300104080464
5.321877877877879 2.6500516522452093
5.33388888888889 2.6598082940755217
5.3458998998999006 2.6695700066570347
5.3579109109109115 2.6793367672296577
5.3699219219219225 2.6891085531861956
5.3819329329329335 2.698885342070981
5.3939439439439445 2.708667111578526
5.4059549549549555 2.718453839552182
5.4179659659659665 2.728245503982821
5.4299769769769775 2.73804208300753
5.441987987987988 2.7478435549083136
5.453998998998999 2.757649898110823
5.46601001001001 2.767461091183087
5.478021021021021 2.7772771128342626
5.490032032032032 2.7870979419134008
5.502043043043043 2.796923557408222
5.514054054054054 2.806753938443906
5.526065065065065 2.816589064281897
5.538076076076077 2.826428914318721
5.550087087087088 2.8362734680848103
5.562098098098099 2.846122705243352
5.57410910910911 2.855976605589139
5.586120120120121 2.8658351490474367
5.598131131131132 2.8756983156728637
5.610142142142143 2.885566085648282
5.622153153153154 2.8954384392836996
5.634164164164165 2.905315357015186
5.646175175175176 2.9151968194037963
5.658186186186187 2.9250828071345127
5.670197197197198 2.9349733010151895
5.682208208208209 2.9448682819755145
5.69421921921922 2.9547677310659815
5.706230230230231 2.964671629456869
5.718241241241242 2.9745799584372348
5.730252252252253 2.98449269941392
5.742263263263264 2.994409833910559
5.754274274274275 3.004331343566607
5.766285285285286 3.014257210136372
5.778296296296297 3.0241874154880604
5.790307307307308 3.0341219416028276
5.802318318318319 3.044060770573846
5.81432932932933 3.0540038846053763
5.826340340340341 3.0639512660118498
5.838351351351352 3.073902897216963
5.850362362362363 3.0838587607527774
5.862373373373374 3.0938188392588315
5.874384384384385 3.103783115481261
5.886395395395396 3.1137515722719273
5.898406406406407 3.123724192587556
5.910417417417418 3.1337009594888823
5.922428428428429 3.143681856139809
5.93443943943944 3.1536668658065685
5.946450450450452 3.163655971856895
5.958461461461463 3.173649157759205
5.970472472472474 3.1836464070817887
5.982483483483485 3.193647703492005
5.994494494494496 3.203653030755487
6.0065055055055065 3.2136623727353566
6.0185165165165175 3.223675713391442
6.0305275275275285 3.2336930367795107
6.0425385385385395 3.2437143270505038
6.0545495495495505 3.2537395684497783
6.0665605605605615 3.2637687453163635
6.0785715715715725 3.273801842082215
6.0905825825825834 3.283838843271483
6.102593593593594 3.2938797334997867
6.114604604604605 3.303924497473493
6.126615615615616 3.3139731199890052
6.138626626626627 3.324025585932058
6.150637637637638 3.334081880277018
6.162648648648649 3.344141988086191
6.17465965965966 3.3542058945091426
6.186670670670671 3.364273584782013
6.198681681681682 3.374345044226848
6.210692692692693 3.3844202582509357
6.222703703703704 3.394499212346145
6.234714714714715 3.4045818920882747
6.246725725725726 3.4146682831364084
6.258736736736737 3.4247583712322696
6.270747747747748 3.4348521421995954
6.282758758758759 3.4449495819435025
6.29476976976977 3.455050676449869
6.306780780780781 3.465155411784716
6.318791791791792 3.4752637740936008
6.330802802802803 3.485375749601009
6.342813813813814 3.4954913246097608
6.354824824824825 3.5056104855004127
6.366835835835837 3.5157332187306753
6.378846846846848 3.5258595108348283
6.390857857857859 3.535989348423145
6.40286886886887 3.5461227181813237
6.414879879879881 3.5562596068699195
6.426890890890892 3.566400001323786
6.438901901901903 3.57654388845152
6.450912912912914 3.5866912552349097
6.462923923923925 3.596842088728394
6.474934934934936 3.60699637605852
6.486945945945947 3.617154104423409
6.498956956956958 3.6273152610922272
6.510967967967969 3.6374798334046625
6.52297897897898 3.6476478087704005
6.534989989989991 3.6578191746686146
6.547001001001002 3.6679939186474506
6.559012012012013 3.6781720283235257
6.571023023023024 3.688353491381423
6.583034034034035 3.698538295573199
6.595045045045046 3.7087264287178883
6.607056056056057 3.718917878701019
6.619067067067068 3.7291126334741267
6.631078078078079 3.7393106810542784
6.64308908908909 3.7495120095235968
6.655100100100101 3.75971660702879
6.667111111111112 3.769924461780687
6.679122122122123 3.7801355620537747
6.691133133133134 3.790349896185741
6.703144144144145 3.800567452577022
6.715155155155156 3.81078821969035
6.727166166166167 3.8210121860503126
6.739177177177178 3.8312393402429077
6.7511881881881886 3.8414696709151075
6.7631991991991995 3.851703166774425
6.775210210210211 3.8619398165884844
6.787221221221222 3.8721796091845926
6.799232232232233 3.882422533449322
6.811243243243244 3.892668578328089
6.823254254254255 3.90291773282474
6.835265265265266 3.9131699860011384
6.847276276276277 3.923425326976762
6.859287287287288 3.9336837449282926
6.871298298298299 3.943945229089222
6.88330930930931 3.9542097687494504
6.895320320320321 3.9644773532548956
6.907331331331332 3.974747972007102
6.919342342342343 3.9850216144628545
6.931353353353354 3.9952982701337953
6.943364364364365 4.005577928586043
6.955375375375376 4.015860579439818
6.967386386386387 4.026146212369066
6.979397397397398 4.036434817101092
6.991408408408409 4.046726383416191
7.00341941941942 4.057020901147279
7.015430430430431 4.067318360179546
7.027441441441442 4.077618750450084
7.039452452452453 4.0879220619475385
7.051463463463464 4.098228284711759
7.063474474474475 4.108537408833448
7.075485485485486 4.118849424453813
7.087496496496497 4.129164321764231
7.099507507507508 4.139482091005899
7.111518518518519 4.149802722469505
7.12352952952953 4.160126206494891
7.135540540540541 4.170452533470723
7.147551551551552 4.180781693834159
7.159562562562563 4.191113678070527
7.171573573573574 4.201448476713003
7.183584584584586 4.211786080342287
7.195595595595597 4.222126479586283
7.207606606606608 4.232469665119795
7.219617617617619 4.242815627664203
7.23162862862863 4.25316435798716
7.243639639639641 4.2635158469022825
7.255650650650652 4.273870085268846
7.267661661661663 4.284227063991484
7.279672672672674 4.294586774019887
7.291683683683685 4.304949206348505
7.303694694694696 4.315314352016257
7.315705705705707 4.325682202106231
7.327716716716718 4.336052747745401
7.339727727727729 4.346425980104339
7.35173873873874 4.356801890396925
7.363749749749751 4.3671804698800685
7.375760760760762 4.377561709853427
7.387771771771773 4.387945601659127
7.399782782782784 4.398332136681489
7.4117937937937945 4.408721306346754
7.4238048048048055 4.419113102122807
7.4358158158158165 4.429507515518921
7.4478268268268275 4.43990453808547
7.4598378378378385 4.4503041614136825
7.4718488488488495 4.460706377135368
7.4838598598598605 4.471111176922662
7.4958708708708714 4.48151855248776
7.507881881881882 4.491928495582668
7.519892892892893 4.502340997998948
7.531903903903904 4.512756051567457
7.543914914914915 4.52317364815811
7.555925925925926 4.5335937796796175
7.567936936936937 4.5440164380792485
7.579947947947948 4.554441615342583
7.59195895895896 4.564869303493269
7.603969969969971 4.575299494592779
7.615980980980982 4.58573218074018
7.627991991991993 4.596167354071887
7.640003003003004 4.606605006761432
7.652014014014015 4.617045131019231
7.664025025025026 4.627487719092352
7.676036036036037 4.6379327632642875
7.688047047047048 4.648380255854721
7.700058058058059 4.658830189219307
7.71206906906907 4.669282555749444
7.724080080080081 4.679737347872052
7.736091091091092 4.690194558049354
7.748102102102103 4.700654178778654
7.760113113113114 4.71111620259212
7.772124124124125 4.721580622056569
7.784135135135136 4.732047429773255
7.796146146146147 4.742516618377652
7.808157157157158 4.7529881805392495
7.820168168168169 4.7634621089613365
7.83217917917918 4.773938396380798
7.844190190190191 4.784417035567909
7.856201201201202 4.794898019326132
7.868212212212213 4.8053813404919055
7.880223223223224 4.815866991934455
7.892234234234235 4.826354966555585
7.904245245245246 4.8368452572894824
7.916256256256257 4.847337857102523
7.928267267267268 4.85783275899307
7.940278278278279 4.868329955991288
7.95228928928929 4.878829441158945
7.964300300300301 4.889331207589223
7.976311311311312 4.8998352484065295
7.988322322322323 4.910341556766311
8.000333333333334 4.920850125854862
8.012344344344344 4.931360948889145
8.024355355355356 4.941874019116602
8.036366366366366 4.952389329814974
8.048377377377378 4.962906874292125
8.060388388388388 4.973426645885853
8.0723993993994 4.983948637963719
8.08441041041041 4.994472843922862
8.096421421421422 5.004999257189841
8.108432432432432 5.015527871220433
8.120443443443444 5.02605867949949
8.132454454454454 5.036591675540739
8.144465465465466 5.047126852886636
8.156476476476476 5.057664205108178
8.168487487487488 5.06820372580475
8.180498498498498 5.078745408603941
8.19250950950951 5.0892892471614
8.204520520520521 5.09983523516065
8.216531531531531 5.110383366312938
8.228542542542543 5.120933634357073
8.240553553553553 5.1314860330592555
8.252564564564565 5.142040556212931
8.264575575575575 5.152597197638622
8.276586586586587 5.163155951183774
8.288597597597597 5.173716810722596
8.30060860860861 5.184279770155915
8.31261961961962 5.194844823411007
8.324630630630631 5.205411964441461
8.336641641641641 5.215981187227011
8.348652652652653 5.226552485773402
8.360663663663663 5.237125854112222
8.372674674674675 5.247701286300774
8.384685685685685 5.258278776421908
8.396696696696697 5.268858318583893
8.408707707707707 5.279439906920256
8.42071871871872 5.290023535589652
8.43272972972973 5.300609198775708
8.444740740740741 5.31119689068689
8.456751751751751 5.321786605556358
8.468762762762763 5.332378337641827
8.480773773773773 5.342972081225422
8.492784784784785 5.353567830613555
8.504795795795795 5.364165580136765
8.516806806806807 5.374765324149607
8.528817817817817 5.385367057030493
8.540828828828829 5.395970773181576
8.552839839839839 5.406576467028605
8.564850850850851 5.417184133020802
8.576861861861861 5.427793765630719
8.588872872872873 5.438405359354119
8.600883883883883 5.449018908709835
8.612894894894895 5.459634408239653
8.624905905905907 5.470251852508177
8.636916916916917 5.480871236102697
8.648927927927929 5.491492553633079
8.660938938938939 5.50211579973162
8.67294994994995 5.512740969052944
8.68496096096096 5.5233680562738545
8.696971971971973 5.533997056093238
8.708982982982983 5.544627963231921
8.720993993993995 5.555260772432563
8.733005005005005 5.565895478459522
8.745016016016017 5.576532076098756
8.757027027027027 5.587170560157681
8.769038038038039 5.597810925465072
8.781049049049049 5.608453166870933
8.79306006006006 5.619097279246393
8.80507107107107 5.629743257483578
8.817082082082083 5.640391096495508
8.829093093093093 5.651040791215973
8.841104104104105 5.66169233659943
8.853115115115115 5.672345727620879
8.865126126126126 5.68300095927577
8.877137137137137 5.693658026579865
8.889148148148148 5.7043169245691585
8.901159159159159 5.714977648299743
8.91317017017017 5.7256401928477185
8.92518118118118 5.736304553309071
8.937192192192192 5.746970724799578
8.949203203203203 5.75763870245469
8.961214214214214 5.768308481429436
8.973225225225224 5.778980056898309
8.985236236236236 5.789653424055172
8.997247247247246 5.8003285781131435
9.009258258258258 5.8110055143045045
9.02126926926927 5.8216842278805885
9.03328028028028 5.832364714111685
9.045291291291292 5.843046968286939
9.057302302302302 5.853730985714247
9.069313313313314 5.864416761720162
9.081324324324324 5.875104291649789
9.093335335335336 5.8857935708666975
9.105346346346346 5.89648459475281
9.117357357357358 5.907177358708315
9.129368368368368 5.917871858151569
9.14137937937938 5.9285680885189995
9.15339039039039 5.939266045265009
9.165401401401402 5.949965723861886
9.177412412412412 5.960667119799702
9.189423423423424 5.971370228586233
9.201434434434434 5.982075045746846
9.213445445445446 5.9927815668244335
9.225456456456456 6.003489787379295
9.237467467467468 6.014199702989069
9.249478478478478 6.024911309248625
9.26148948948949 6.035624601769992
9.2735005005005 6.04633957618225
9.285511511511512 6.0570562281314615
9.297522522522522 6.067774553280563
9.309533533533534 6.078494547309298
9.321544544544544 6.089216205914116
9.333555555555556 6.099939524808095
9.345566566566566 6.110664499720846
9.357577577577578 6.121391126398443
9.369588588588588 6.132119400603322
9.3815995995996 6.142849318114212
9.39361061061061 6.153580874726041
9.405621621621622 6.164314066249856
9.417632632632632 6.175048888512745
9.429643643643644 6.185785337357749
9.441654654654656 6.196523408643786
9.453665665665666 6.207263098245566
9.465676676676678 6.218004402053514
9.477687687687688 6.228747315973687
9.4896986986987 6.239491835927701
9.50170970970971 6.250237957852645
9.513720720720722 6.260985677701006
9.525731731731732 6.27173499144059
9.537742742742743 6.282485895054451
9.549753753753754 6.293238384540803
9.561764764764765 6.303992455912956
9.573775775775776 6.314748105199227
9.585786786786787 6.3255053284428815
9.597797797797798 6.336264121702038
9.60980880880881 6.347024481049615
9.62181981981982 6.357786402573239
9.633830830830831 6.368549882375185
9.645841841841841 6.379314916572293
9.657852852852853 6.390081501295903
9.669863863863863 6.400849632691775
9.681874874874875 6.411619306920031
9.693885885885885 6.422390520155065
9.705896896896897 6.43316326858549
9.717907907907907 6.4439375484140555
9.72991891891892 6.454713355857584
9.74192992992993 6.4654906871468985
9.753940940940941 6.476269538526759
9.765951951951951 6.4870499062557805
9.777962962962963 6.497831786606385
9.789973973973973 6.508615175864713
9.801984984984985 6.519400070330573
9.813995995995995 6.530186466317362
9.826007007007007 6.540974360152012
9.838018018018017 6.5517637481749045
9.850029029029029 6.5625546267398285
9.862040040040041 6.573346992213897
9.874051051051051 6.584140840977487
9.886062062062063 6.594936169424184
9.898073073073073 6.605732973960702
9.910084084084085 6.616531251006833
9.922095095095095 6.627330996995376
9.934106106106107 6.638132208372081
9.946117117117117 6.648934881595576
9.958128128128129 6.659739013137317
9.970139139139139 6.670544599481515
9.98215015015015 6.681351637125086
9.994161161161161 6.692160122577576
10.006172172172173 6.702970052361115
10.018183183183183 6.713781423010344
10.030194194194195 6.724594231072365
10.042205205205205 6.735408473106674
10.054216216216217 6.746224145685108
10.066227227227227 6.757041245391777
10.078238238238239 6.767859768823018
10.090249249249249 6.778679712587325
10.10226026026026 6.789501073305298
10.11427127127127 6.800323847609583
10.126282282282283 6.8111480321448195
10.138293293293293 6.821973623567571
10.150304304304305 6.832800618546285
10.162315315315315 6.843629013761224
10.174326326326327 6.854458805904419
10.186337337337337 6.865289991679603
10.198348348348349 6.87612256780217
10.210359359359359 6.886956530999106
10.22237037037037 6.897791878008945
10.23438138138138 6.908628605581708
10.246392392392393 6.919466710478853
10.258403403403404 6.930306189473221
10.270414414414414 6.941147039348979
10.282425425425426 6.951989256901575
10.294436436436436 6.962832838937672
10.306447447447448 6.973677782275115
10.318458458458458 6.9845240837428575
10.33046946946947 6.9953717401809286
10.34248048048048 7.006220748440364
10.354491491491492 7.017071105383174
10.366502502502502 7.027922807882273
10.378513513513514 7.038775852821448
10.390524524524524 7.0496302370952915
10.402535535535536 7.060485957609162
10.414546546546546 7.07134301127913
10.426557557557558 7.082201395031933
10.438568568568568 7.093061105804917
10.45057957957958 7.103922140546004
10.46259059059059 7.11478449621362
10.474601601601602 7.125648169776674
10.486612612612612 7.136513158214484
10.498623623623624 7.147379458516751
10.510634634634634 7.1582470676834955
10.522645645645646 7.169115982725021
10.534656656656656 7.179986200661858
10.546667667667668 7.19085771852473
10.558678678678678 7.201730533354489
10.57068968968969 7.212604642202091
10.5827007007007 7.223480042128529
10.594711711711712 7.234356730204805
10.606722722722722 7.245234703511871
10.618733733733734 7.256113959140594
10.630744744744744 7.266994494191705
10.642755755755756 7.277876305775759
10.654766766766766 7.288759391013084
10.666777777777778 7.299643747033747
10.67878878878879 7.3105293709775
10.6907997997998 7.32141625999374
10.702810810810812 7.33230441124147
10.714821821821822 7.343193821889249
10.726832832832834 7.3540844891151576
10.738843843843844 7.364976410106744
10.750854854854856 7.375869582060995
10.762865865865866 7.386764002184277
10.774876876876878 7.3976596676923165
10.786887887887888 7.408556575810135
10.7988988988989 7.419454723772025
10.81090990990991 7.430354108821499
10.822920920920922 7.4412547282112556
10.834931931931932 7.452156579203128
10.846942942942944 7.463059659068058
10.858953953953954 7.473963965086044
10.870964964964966 7.484869494546109
10.882975975975976 7.49577624474625
10.894986986986988 7.506684212993413
10.906997997997998 7.51759339660344
10.91900900900901 7.52850379290104
10.93102002002002 7.53941539921974
10.943031031031031 7.550328212901859
10.955042042042042 7.561242231298458
10.967053053053053 7.572157451769307
10.979064064064064 7.583073871682846
10.991075075075075 7.593991488416147
11.003086086086086 7.604910299354874
11.015097097097097 7.615830301893254
11.027108108108107 7.626751493434027
11.03911911911912 7.637673871388419
11.05113013013013 7.648597433176098
11.063141141141141 7.659522176225146
11.075152152152153 7.670448097972015
11.087163163163163 7.681375195861488
11.099174174174175 7.692303467346656
11.111185185185185 7.703232909888868
11.123196196196197 7.714163520957708
11.135207207207207 7.725095298030942
11.14721821821822 7.736028238594505
11.15922922922923 7.746962340142447
11.171240240240241 7.757897600176907
11.183251251251251 7.768834016208077
11.195262262262263 7.779771585754167
11.207273273273273 7.790710306341371
11.219284284284285 7.801650175503832
11.231295295295295 7.812591190783608
11.243306306306307 7.823533349730639
11.255317317317317 7.834476649902712
11.267328328328329 7.8454210888654305
11.279339339339339 7.8563666641921746
11.291350350350351 7.867313373464079
11.303361361361361 7.878261214269989
11.315372372372373 7.889210184206432
11.327383383383383 7.900160280877584
11.339394394394395 7.911111501895245
11.351405405405405 7.9220638448787914
11.363416416416417 7.9330173074551595
11.375427427427427 7.943971887258798
11.387438438438439 7.954927581931656
11.399449449449449 7.965884389123131
11.41146046046046 7.976842306490051
11.423471471471471 7.987801331696636
11.435482482482483 7.998761462414476
11.447493493493493 8.009722696322484
11.459504504504505 8.020685031106886
11.471515515515515 8.031648464461172
11.483526526526527 8.042612994086078
11.495537537537539 8.053578617689547
11.507548548548549 8.064545332986704
11.51955955955956 8.075513137699824
11.53157057057057 8.086482029558306
11.543581581581583 8.097452006298639
11.555592592592593 8.10842306566437
11.567603603603605 8.119395205406088
11.579614614614615 8.130368423281372
11.591625625625626 8.141342717054785
11.603636636636637 8.152318084497834
11.615647647647648 8.163294523388942
11.627658658658659 8.174272031513414
11.63966966966967 8.185250606663425
11.65168068068068 8.196230246637974
11.663691691691692 8.207210949242867
11.675702702702702 8.21819271229068
11.687713713713714 8.229175533600745
11.699724724724724 8.240159410999102
11.711735735735736 8.251144342318495
11.723746746746746 8.26213032539832
11.735757757757758 8.273117358084624
11.747768768768768 8.284105438230052
11.75977977977978 8.295094563693839
11.77179079079079 8.306084732341773
11.783801801801802 8.317075942046174
11.795812812812812 8.32806819068586
11.807823823823824 8.339061476146133
11.819834834834834 8.350055796318738
11.831845845845846 8.361051149101847
11.843856856856856 8.37204753240003
11.855867867867868 8.383044944124228
11.867878878878878 8.394043382191725
11.87988988988989 8.40504284452613
11.891900900900902 8.416043329057349
11.903911911911912 8.427044833721546
11.915922922922924 8.438047356461142
11.927933933933934 8.449050895224769
11.939944944944946 8.460055447967257
11.951955955955956 8.471061012649598
11.963966966966968 8.482067587238937
11.975977977977978 8.493075169708536
11.98798898898899 8.504083758037748
12.0 8.515093350212
}\mesdonnees
\begin{document}
\maketitle

\begin{abstract}
The present article studies solutions to the compressible Navier-Stokes equations for ideal gases in one dimension when thermal conductivity is present but very weak, {while viscosity is positive and constant}. The main novelty is the establishment of bounds that do not explode when the conductivity coefficient approaches zero. The conductivity coefficient is assumed to be constant and the framework is that of "à la Hoff" solutions. More precisely, the velocity is initially assumed to be regular, while the density and temperature are only in $L^\infty$ and far from zero. A new proof of a stability result for cases without conductivity is given. {Then, the proof of the zero-conductivity limit to the Navier-Stokes system without conduction is established in the "à la Hoff" framework.}
\end{abstract}

\section*{Introduction}
We consider in this article the unidimensional one periodic non-isentropic Navier-Stokes equations for perfect gases, which is written in its Eulerian form as
\begin{empheq}[left=(NS_\kappa)\empheqlbrace]{alignat=1}
\partial_t \rho + \partial_x (\rho u) &= 0 \label{eq:1}\\
\partial_t (\rho u) + \partial_x (\rho u^2) & = \partial_x(\mu\partial_x u - R\rho\theta) \label{eq:2}\\
\partial_t \left( \frac{\rho u^2}{2} + \cv\rho\theta \right)+ \partial_x \left(\frac{\rho u^3}{2} + \cv\rho\theta u \right)  & = \partial_x(\mu(\partial_x u)u - R\rho\theta u) + \partial_x(\kappa \partial_x \theta) \label{eq:3}
\end{empheq}
where $\rho>0$ is the density, $u$ the velocity and $\theta>0$ the temperature. The Navier-Stokes system models the motion of a fluid from a macroscopic point of view. The equation \eqref{eq:1} (continuity equation) describes the conservation of mass. The equation \eqref{eq:2} (momentum equation) comes from the second Newton's law of motion. The left hand side of \eqref{eq:2} is the time derivative of the momentum. The right hand side is the sum of the external forces, from one part some viscous force $\mu\partial_x u$, from an other part a pressure force $p = R\rho\theta$. The quantity $\mu>0$ is called the viscosity, and $R>0$ the constant of perfect gases. Finally, the equation \eqref{eq:3} (energy equation) describes the global conservation of the energy. We define the specific total energy 
\begin{equation*}
E = \frac{u^2}{2} + e
\end{equation*} 
where 
\begin{equation*}
e = \cv\theta
\end{equation*}
is the specific internal energy, and $\cv>0$ the specific heat. We then define the adiabatic constant
\begin{equation*}
\gamma = \frac{R}{\cv} + 1 > 1,
\end{equation*}
so that
\begin{equation}\label{eq:4}
p = (\gamma -1)\rho e = R\rho\theta.
\end{equation}
The right hand side term of \eqref{eq:3} can be seen as an energy flux. We consider some conduction term $\partial_x(\kappa\partial_x\theta)$, where $\kappa>0$ is the heat conductivity. This term models the molecular shaking. In the whole paper, we will assume that $\mu, R, \cv, \gamma, \kappa$ are constants. From \eqref{eq:2} and \eqref{eq:3} we derive the equation on the temperature
\begin{equation}\label{eq:5}
\partial_t (\rho \cv \theta) + \partial_x(\rho u\cv\theta) = (\mu\partial_x u - R\rho\theta)\partial_x u + \partial_x(\kappa\partial_x\theta).
\end{equation}

\ 

The study of the Navier-Stokes non-barotropic system is much harder than the barotropic one, thus there are fewer results. We will not go into detail here about the literature on the study of barotropic cases, but we can cite here as references the famous book by Lions \cite{Lions1996}, and some articles by Hoff \cite{Hoff1986, Hoff1998} and Desjardins \cite{Des}.

For the non-barotropic multidimensional case with heat conduction $(\kappa>0)$, a first uniqueness result is given by Serrin \cite{Se59} in 1959, for strong solutions. Some existence results have been given by Nash \cite{Na62} and Itaya \cite{It71,It75} for short times, and by Matsumura and Nishida \cite{MaNi78} for long times but near an equilibrium. They also give some results on the asymptotic behaviour when $t\rightarrow +\infty$, proving that the solution converges to the equilibrium. Some multidimensional results for initial data close to constants are then obtained by Hoff \cite{Ho96} for solutions with less regularity. Danchin proved some theorems in Besov spaces \cite{Da99, Da01}. A real breakthrough was made in 2018 by Huang and Li \cite{HuLi18}, with an existence result in dimension $3$, for classical and weak solutions, on the whole space, requiring the smallness of the energy but allowing large oscillations of solutions.

The one-dimensional case has been very well studied, because in this simpler case some bounds for large times and large data are available. The founding article is certainly that by Kazikhov and Shelukhin in 1976 \cite{KaSh76}. They indeed show the global existence in dimension $1$, for bounded domains and classical solutions. This result was improved by Kanel \cite{Ka79} and Kazikhov \cite{Ka82}. The authors simplify the proof by introducing the physical entropy
\begin{equation}\label{eq:6}
s = s(\rho,\theta)= \cv \ln \theta - R\ln \rho
\end{equation}
satisfying the useful equation
\begin{equation}\label{eq:7}
\partial_t (\rho s) + \partial_x(\rho s u) = \mu\frac{(\partial_x u)^2}{\theta} + \kappa\frac{(\partial_x \theta)^2}{\theta^2} + \partial_x(\kappa \partial_x(\ln\theta)).
\end{equation}
Moreover, they adapted the proof in order to obtain asymptotic in time behaviour. Global existence was also proved when the domain is $\R$ \cite{Ji98}.
In fact (as remarked by Serre \cite{Se86}), \cite{KaSh76} is not very far from some existence results for weak solutions. Amosov and Zlotnik even studied the case with oscillating coefficients \cite{AmZl92, AmZl88, AmZl97a, AmZl97b}, and Hoff obtained some existence and asymptotic in time behaviour results for small time, unbounded domains, using intermediate regularity between weak and strong \cite{Ho92}. In 2000, Chen, Hoff and Trivisa \cite{ChHoTr00} used a semi-discrete approach and succeeded in proving the global existence and asymptotic in time behaviour of 1D solutions, for bounded domains and without restriction on the data size. Then Jiang and Zhang \cite{JiZh00} and Jiang and Zlotnik \cite{JiZl04} obtained a similar result on $\R$. For strong solutions, Li and Liang managed to obtain the upper bound on $\theta$ on $[0,T]$, for each $T>0$ \cite{LiLi14}. All the articles quoted here strongly use the assumption that the initial density $\rho$ is far from vacuum.
In 2019, Li \cite{Li19} got rid of this hypothesis, in part by exploiting some bounds on the Cauchy stress
\begin{equation}\label{eq:8}
\sigma = \mu\partial_x u - R\rho\theta.
\end{equation}
Even in the barotropic case, this magical quantity is known to give some extra regularity (see \cite{Se91}). It can be explained in physical terms by the principle of opposing forces, and mathematically for example by remarking that from \eqref{eq:1}--\eqref{eq:3}, $\sigma$ satisfies the equation
\begin{equation}\label{eq:9}
\partial_t \sigma + u\partial_x\sigma - \mu \partial_x\left(\frac{\partial_x\sigma}{\rho}\right) = -\gamma\sigma\partial_x u - (\gamma - 1)\partial_x(\kappa\partial_x\theta)
\end{equation}
which is partially parabolic. Hoff strongly used the Cauchy stress in his analysis, particularly in the articles \cite{Ho92,Ho96} cited above. That is one of the main ideas of \cite{HuLi18}, which also allows one to get closer to vacuum.

The case $\kappa = 0$ is less studied and considered more difficult, because of the lack of ellipticity. Some nasty behaviours are known on this subject: for example, Xin and Yan \cite{XiYa12} proved the blow-up of classical solutions if $\rho$ has compact support. However, in the multidimensional case, some existence theorems for smooth solutions had been proved for this type of partially parabolic system from 1982 by Kawashima \cite{Ka83, KaOk82}. In 1997, Liu and Zeng studied the existence of classical solutions near equilibrium, looking at the linearised system \cite{LiZe97}. Duan and Ma then obtained the asymptotic behaviour in dimension 3, for small data in $H^3$ \cite{DuMa08}. The main idea is to consider the change of variables
\begin{equation*}
(\rho,u,\theta)\rightarrow (p, u, s).
\end{equation*}
The authors then consider the system of PDEs on $(p,u)$, then $s$, using some decay information on $(p,u)$. Note that in the case $\kappa = 0$, the equation \eqref{eq:7} becomes
\begin{equation}\label{eq:10}
\partial_t(\rho s) + \partial_x(\rho u s) = \mu\frac{(\partial_x u)^2}{\theta}.
\end{equation}
Using this technique, improvements were obtained by Tang and Wang \cite{TaWa12,TaWa16}. Finally, in 2019, Chen, Tan, Wu and Zou obtained global existence without using the decay of $(p,u)$, by exploiting Besov spaces. Remark that Wu \cite{Wu16} exhibited an interesting equation, derived from \eqref{eq:10}:
\begin{equation}\label{eq:11}
\partial_t (p^{1/\gamma}) + \partial_x(p^{1/\gamma} u) = \frac{\gamma-1}{\gamma}\mu(\partial_x u)^2 p^{1/\gamma - 1},
\end{equation}
a form of writing that is particularly well-suited to variable changes.

In dimension 1, \eqref{eq:9} is easy to use in order to obtain bounds on $\sigma$. Li \cite{Li20} then succeeded in showing the global existence of strong solutions for $(NS_0)$, without any condition of smallness on the data. Adapting this idea for weaker solutions is immediate, and thanks to some tricks we can even adapt the estimates in the case of oscillating coefficients (see \cite{BrBuGJLa24}).

In this article, we will consider solutions on the torus $\T$ with a regularity intermediate between strong (where $\rho,\theta,u\in H^1$) and weak (where $\rho,\theta,u\in L^2$), which we call "à la Hoff" solutions. This framework is specified by
\begin{definition}\label{def:Hoff} Let us define $w:[0,+\infty[\rightarrow \R$ by 
\begin{equation*}
\text{for all }t\in [0,+\infty[,\quad w(t) = \min(1, t).
\end{equation*}
Let $\kappa\geq 0$. We call global "à la Hoff" solution of  $(NS_\kappa)$ any weak solution $(\rho,u,\theta)\in L^2_{loc}(\R_+\times\T)^3$ of $(NS_\kappa)$ that also satisfies, for all $T>0$, 
\begin{equation}\label{eq:12}
\text{for almost all }(t,x)\in[0,T]\times\T,\quad \underline{\rho}\leq \rho(t,x) \leq \overline{\rho},
\end{equation}
\begin{equation}\label{eq:13}
\text{for almost all }(t,x)\in [0,T]\times\T,\quad \underline{\theta}\leq \theta(t,x) \leq \overline{\theta},
\end{equation}
\begin{equation}\label{eq:14}
\begin{split}
\sup_{0\leq t\leq T}\int_0^1\sigma^2 + \sup_{0\leq t\leq T}\int_0^1(\partial_x u)^2 &+ \kappa\int_0^T\int_0^1 (\partial_x\theta)^2 + \int_0^T\int_0^1 (\partial_x\sigma)^2 + \int_0^T\int_0^1(\partial_t u)^2\\&+ \int_0^T \Vert\sigma\Vert_\infty^2 + \int_0^T \Vert \partial_x u\Vert_\infty^2 + \sup_{[0,T]\times\T} u^2 \leq C_1,
\end{split}
\end{equation}
\begin{equation}\label{eq:15}
\sup_{0\leq t\leq T} w\int_0^1 (\partial_x\sigma)^2 + \sup_{0\leq t \leq T} w \int_0^1 \kappa(\partial_x\theta)^2 + \int_0^T w\int_0^1 (\partial_t \sigma)^2 + \int_0^T w \int_0^1[\partial_x(\kappa\partial_x\theta)]^2\leq C_2.
\end{equation}
for some $\underline{\rho},\overline{\rho}, \underline{\theta},\overline{\theta}, C_1, C_2>0$ that may depend on $T$.
\end{definition}

Let $(\rho_0,u_0,\theta_0)\in L^2(\T)^3$ such that
\begin{equation}\label{eq:16}
\text{for almost all }x\in\T,\quad \underline{\rho_0}\leq \rho_0(x) \leq \overline{\rho_0},
\end{equation}
\begin{equation}\label{eq:17}
\text{for almost all }x\in\T,\quad \underline{\theta_0}\leq \theta_0(x) \leq \overline{\theta_0},
\end{equation}
\begin{equation}\label{eq:18}
\int_\T \rho_0 u_0^2 + \int_\T (\partial_x u_0)^2 \leq C_0,
\end{equation}
with some $\underline{\rho_0}, \overline{\rho_0}, \underline{\theta_0}, \overline{\theta_0}, C_0>0$.

\ 

In what follows, we will refer to a global solution as a solution defined on the interval $[0, + \infty[$. The first result of this paper establishes the existence of "à la Hoff" solutions in the case where the conductivity coefficient is positive. To the best of the author's knowledge, this result has never been stated as such, although the various steps involved are known. This is the
\begin{theorem}\label{thm:1} Let $\overline{\kappa}\geq \underline{\kappa}>0.$ Assume that $(\rho_0, u_0,\theta_0)$ satisfies \eqref{eq:16}--\eqref{eq:18}. Then, for all $\overline{\kappa}\geq\kappa\geq \underline{\kappa}$, $(NS_\kappa)$  with initial condition $(\rho_0,u_0,\theta_0)$ admits a unique "à la Hoff" solution. Moreover, for all $T>0$, the constants $\underline{\rho},\overline{\rho}, \underline{\theta},\overline{\theta}, C_1, C_2$ in Definition \ref{def:Hoff} can be chosen to depend only on $C_0,\cv,\gamma,\underline{\kappa},\overline{\kappa},\mu,\underline{\rho_0}, \overline{\rho_0}, T, \underline{\theta_0}, \overline{\theta_0}$. 
\end{theorem}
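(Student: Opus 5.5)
Since every step is classical, the plan is to regularize the data, prove a priori bounds that depend only on the listed quantities, pass to the limit by compactness, and finally prove uniqueness by a Lagrangian stability estimate. We mollify $(\rho_0,u_0,\theta_0)$ into smooth data $(\rho_0^\varepsilon,u_0^\varepsilon,\theta_0^\varepsilon)$ satisfying \eqref{eq:16}--\eqref{eq:18} with the same constants. For these data the classical theory (Kazhikhov--Shelukhin \cite{KaSh76}, or Li--Liang \cite{LiLi14}) gives a global smooth solution of $(NS_\kappa)$ on $\T$. The whole difficulty is to establish, for these smooth solutions, the bounds \eqref{eq:12}--\eqref{eq:15} with constants depending only on $C_0,\cv,\gamma,\underline\kappa,\overline\kappa,\mu,\underline{\rho_0},\overline{\rho_0},\underline{\theta_0},\overline{\theta_0},T$, and in particular not on $\varepsilon$.

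\textbf{A priori estimates.} We work in Lagrangian mass coordinates, where $\tau=1/\rho$ satisfies $\partial_t\tau=\partial_y u$.
\begin{enumerate}
\item Conservation of total energy together with the entropy equation \eqref{eq:7} gives the basic relative entropy bound. It controls $\int\rho u^2$, $\int(\theta-\ln\theta-1)$, $\int(\tau-\ln\tau-1)$, and also $\int_0^T\!\!\int \mu(\partial_xu)^2/\theta+\kappa(\partial_x\theta)^2/\theta^2$.
\item Integrating the momentum equation in time yields Kazhikhov's representation formula for $\tau$, expressed through $\exp\big(\int_0^t R\theta/\tau\big)$. Combined with step 1, it gives pointwise bounds $\underline\rho\le\rho\le\overline\rho$.
\item The lower bound on $\theta$ comes from a minimum principle applied to \eqref{eq:5}. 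Since $\sigma\partial_xu\ge -\frac{R^2\rho^2\theta^2}{4\mu}$, one obtains $\theta\ge \underline\theta_0/(1+Ct)$.
\item Next we multiply \eqref{eq:2} by $\partial_t u+u\partial_xu$ to get \eqref{eq:14}. This step uses $\|\sigma\|_\infty\lesssim \|\sigma\|_{L^2}+\|\partial_x\sigma\|_{L^2}$, with $\partial_x\sigma=\rho\dot u$, and absorbs the coupling with the temperature by means of the $\kappa$-weighted terms.
\item Weighted by $w(t)$, we then multiply the $\sigma$ equation \eqref{eq:9} by $\partial_t\sigma$, or equivalently differentiate the momentum equation in time, and also multiply \eqref{eq:5} by $\partial_t\theta$. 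This gives \eqref{eq:15}, and here $\kappa\ge\underline\kappa$ provides the parabolic control on $\theta$.
\item Finally, the upper bound $\theta\le\overline\theta$ is derived by a maximum principle, or a De Giorgi/$L^p$ iteration, on \eqref{eq:5}. The source term $\mu(\partial_xu)^2$ is controlled through $\int_0^T\|\partial_xu\|_\infty^2\le C$, which gives $\theta\le \overline{\theta_0}+C\int_0^T\|\partial_x u\|_\infty^2$.
\end{enumerate}

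Steps 4--6 are coupled, because the bound on $\|\theta\|_\infty$ enters step 4 through $\sigma$. I expect this coupling to be \textbf{the main obstacle}. I would close it with a continuity/bootstrap argument on $\sup\theta$. An alternative is to first obtain $\int_0^T\|\theta\|_\infty$ from the $\kappa$-dissipation via $\|\theta\|_\infty\le \int\theta+\int|\partial_x\theta|$, and then run a Gronwall argument. The constants will depend on $\underline\kappa,\overline\kappa$ through the dissipation in step 1 and the parabolic estimate on $\theta$.

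\textbf{Passage to the limit.} The bounds \eqref{eq:14}--\eqref{eq:15} give strong compactness of $u$ and $\sigma$ in $L^2_{loc}$ by the Aubin--Lions lemma, and of $\theta$ as well since $\kappa>0$. The density only converges weakly, but strong convergence of $\rho$ follows from the effective viscous flux $\sigma$ in the standard Hoff way: weak-strong product limits give $\overline{\rho\log\rho}=\rho\log\rho$. Alternatively, strong convergence can be read off from the Lagrangian formula for $\tau$. The bounds then pass to the limit by lower semicontinuity.

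\textbf{Uniqueness.} Following Hoff \cite{Ho92}, we take the difference of two solutions in Lagrangian coordinates. We estimate $\|\delta\tau\|_{L^2}$ together with time-weighted energy norms of $\delta u$ and $\delta\theta$, using $\int_0^T\|\partial_xu\|_\infty<\infty$ and the weighted bounds \eqref{eq:15} to control the nonlinear terms, and conclude with Gronwall's lemma. The singular weight $w$ near $t=0$ is handled because $\int_0^1 w^{-1/2}$ is finite.
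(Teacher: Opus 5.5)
Your architecture matches the paper's. The substance of Theorem \ref{thm:1} is the a priori bounds \eqref{eq:12}--\eqref{eq:15} for strong solutions, with the stated dependence. Existence then follows by approximation plus stability, and uniqueness for $\kappa>0$ is taken from the literature. Your Steps 1--3 are Propositions \ref{prop:2}--\ref{prop:6} in Lagrangian form; the Kazhikhov representation is \eqref{eq:47}--\eqref{eq:50}.

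\textbf{The closure of Steps 4--6.} This is where the gap is, and you correctly flag it. A continuity/bootstrap argument on $\sup\theta$ cannot close here. Assume $\theta\le M$ on $[0,t]$. Step 1 controls the coupling term $(\gamma-1)\kappa\int_0^1\partial_x\sigma\,\partial_x\theta$ only through $\kappa^2\int(\partial_x\theta)^2\le \kappa M^2\cdot\kappa\int(\partial_x\theta)^2/\theta^2$. So Step 4 gives $\sup_t\int\sigma^2+\int_0^t\int(\partial_x\sigma)^2\lesssim 1+\overline{\kappa}M^2$. The maximum principle then returns $\sup\theta\lesssim (1+M^2)e^{C(1+M)}$, which is not $\le M$ for large data or large $T$. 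There is no small parameter when $\kappa\ge\underline{\kappa}$; this superlinear loop is exactly what the paper can only close for $\kappa<\kappa_0$, via Lemma \ref{lem:17}.

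Only your ``alternative'' works, and it is the paper's route. You in fact already have it inside Step 2. The Kazhikhov--Shelukhin lower bound on $\rho$ runs through $1/\rho\lesssim 1+\int_0^t\Vert\theta\Vert_\infty$ and $\Vert\theta\Vert_\infty\lesssim 1+V(t)\Vert 1/\rho\Vert_\infty$, with $V=\int(\partial_x\theta)^2/\theta^2$. Closing this by Gr\"onwall gives $\int_0^T\Vert\theta\Vert_\infty\le H_7$ (Propositions \ref{prop:6} and \ref{prop:11}).

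\textbf{The missing companion estimate.} Even with $\int_0^T\Vert\theta\Vert_\infty$ in hand, running Gr\"onwall on the $\sigma$ (equivalently $\dot u$) estimate alone fails for the same reason: the coupling term still needs $\kappa^2\int_0^T\int(\partial_x\theta)^2$. The missing ingredient is the companion estimate obtained by multiplying \eqref{eq:5} by $\theta$:
\[
\frac{d}{dt}\int\frac{\rho\cv\theta^2}{2}+\kappa\int(\partial_x\theta)^2=\int\sigma\partial_x u\,\theta\lesssim\Vert\theta\Vert_\infty\Big(\int\sigma^2+\int\rho\theta^2\Big).
\]
Adding a multiple proportional to $\kappa$ of it to the $\sigma$-estimate gives a linear Gr\"onwall inequality with coefficient $1+\Vert\theta\Vert_\infty\in L^1(0,T)$ (Proposition \ref{prop:12}). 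This, together with $H_7$, is where $\underline{\kappa}$ enters.

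\textbf{Two further points.}

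First, inside the $\sigma$-estimate the cubic term $(\gamma-\frac12)\int\sigma^2\partial_x u$ must be rewritten as $-2(\gamma-\frac12)\int u\sigma\partial_x\sigma$. It is then treated with the energy bound on $\sqrt\rho u$ and the sharp interpolation $\Vert\sigma\Vert_\infty^2\le\Vert\sigma\Vert_2^2+2\Vert\sigma\Vert_2\Vert\partial_x\sigma\Vert_2$. If you use the inequality $\Vert\sigma\Vert_\infty\lesssim\Vert\sigma\Vert_2+\Vert\partial_x\sigma\Vert_2$ there, you are left with a term of size $\sqrt{\cE\overline{\rho}}\int(\partial_x\sigma)^2/\rho$, which cannot be absorbed for large energy.

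Second, in Step 6 write the source as $\sigma\partial_x u=\sigma^2/\mu+\sigma p/\mu$. This is linear in $\theta$ and needs only $\int_0^T\Vert\sigma\Vert_\infty^2$ (Proposition \ref{prop:8}). Your version via $\int_0^T\Vert\partial_x u\Vert_\infty^2$ presupposes $\int_0^T\Vert\theta\Vert_\infty^2$, which is available only after the $\theta^2$-estimate (using $\kappa\ge\underline{\kappa}$). It also drops the factor $\exp(C\int_0^T\Vert\partial_x u\Vert_\infty)$ produced by $-p\,\partial_x u$.
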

The bounds \eqref{eq:12} and \eqref{eq:14} were obtained by Li \cite{Li19} for strong solutions, and can be easily adapted in the “à la Hoff” framework. We then use ideas from Hoff \cite{Ho92,Ho96} to establish the upper bound on the temperature in \eqref{eq:13}. Let us now state one of the main result of this article, establishing the existence of "à la Hoff" solutions with bounds that do not explode when the conductivity tends towards zero. This is a stronger result than Theorem \ref{thm:1}.
\begin{theorem}\label{thm:2}
Let $\overline{\kappa}>0$. Assume that $(\rho_0, u_0,\theta_0)$ satisfies \eqref{eq:16}--\eqref{eq:18}. Then, for all $\overline{\kappa}\geq\kappa\geq 0$, $(NS_\kappa)$ with initial condition $(\rho_0,u_0,\theta_0)$ admits a unique global "à la Hoff" solution. Moreover, for all $T>0$, the constants $\underline{\rho},\overline{\rho}, \underline{\theta},\overline{\theta}, C_1, C_2$ in Definition \ref{def:Hoff} can be chosen to depend only on $C_0,\cv,\gamma,\overline{\kappa},\mu,\underline{\rho_0}, \overline{\rho_0}, T, \underline{\theta_0}, \overline{\theta_0}$. 
\end{theorem}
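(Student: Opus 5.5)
The plan is to prove Theorem~\ref{thm:2} by deriving a priori estimates on smooth (or Theorem~\ref{thm:1}) solutions of $(NS_\kappa)$ for $\kappa\in(0,\overline{\kappa}]$ that are uniform in $\kappa$. The case $\kappa=0$ is then obtained by compactness, and uniqueness by a separate stability estimate. We work in Lagrangian mass coordinates, where $v=1/\rho$ and the equations take the simple form $v_t=u_y$, $u_t=\sigma_y$, $\cv\theta_t=\sigma u_y+(\kappa\theta_y/v)_y$.

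\textbf{Basic estimates.} First we write the energy and entropy identities (\eqref{eq:7} integrated on $\T$). These give $\int \rho u^2$, $\int\rho(\theta-\ln\theta)$ and the dissipation $\int_0^T\!\int \mu(\partial_xu)^2/\theta+\kappa(\partial_x\theta)^2/\theta^2$, bounded independently of $\kappa$.

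\textbf{Density bounds.} Next comes Kazhikhov's representation of $\rho$ via the effective flux. We integrate the momentum equation in space to get $\mu\partial_x\ln\rho$ plus a material derivative. This yields an upper bound on $\rho$ that needs only $\int\rho\theta$, not $\kappa$, following Li \cite{Li19,Li20}.

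\textbf{Lower bound on $\theta$.} This comes from the $\kappa$-uniform maximum principle on \eqref{eq:5}: the conduction term has the right sign at a minimum, and $\sigma\partial_xu\ge -R^2\rho^2\theta^2/(4\mu)\cdot$ (or via the entropy with $\theta^{-1}$) gives $\theta\ge c\,\underline{\theta_0}e^{-CT}$.

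\textbf{Bounds on $\sigma$.} Multiply \eqref{eq:9} by $\sigma$ and integrate; by \eqref{eq:9}, $\int\sigma^2$ is controlled. The dangerous term $(\gamma-1)\int\partial_x(\kappa\partial_x\theta)\sigma=-(\gamma-1)\kappa\int\partial_x\theta\,\partial_x\sigma$ is absorbed by $\frac{\mu}{2}\int(\partial_x\sigma)^2/\rho$ plus $C\kappa^2\int(\partial_x\theta)^2$. The last quantity must be bounded uniformly, and this is where care is needed, since only $\kappa\int(\partial_x\theta)^2/\theta^2$ is known.

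\textbf{Main obstacle.} The key difficulty is the $\kappa$-independent $L^\infty$ upper bound on $\theta$ and the control of $\kappa\int\!\!\int(\partial_x\theta)^2$ in \eqref{eq:14}. Hoff's argument, used for Theorem~\ref{thm:1}, relies on parabolic smoothing of \eqref{eq:5} and degenerates as $\kappa\to0$. Instead I would exploit the structure visible at $\kappa=0$. Using $p=R\rho\theta$ and $\sigma$, the temperature equation becomes
\[
\cv\,\frac{D\theta}{Dt}\rho=\sigma\,\partial_xu+\partial_x(\kappa\partial_x\theta),\qquad \partial_xu=\frac{\sigma+R\rho\theta}{\mu},
\]
so that $\cv\rho\frac{D\theta}{Dt}=\frac{\sigma^2}{\mu}+\frac{R\rho\theta\sigma}{\mu}+\partial_x(\kappa\partial_x\theta)$. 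The maximum principle, which is valid uniformly in $\kappa\ge0$ because the conduction term is nonpositive at a maximum, then gives $\frac{d}{dt}\max\theta\le C(\|\sigma\|_\infty^2+\|\sigma\|_\infty\max\theta)$. Grönwall closes this once $\int_0^T\|\sigma\|_\infty^2$ is known. To break the circularity, I would first get $\int\sigma^2$ and $\int_0^T\!\int(\partial_x\sigma)^2$ from the $\sigma$-estimate, bounding the cross term $\kappa\int\partial_x\theta\partial_x\sigma$ by $\kappa\|\theta\|_\infty\big(\kappa\int(\partial_x\theta)^2/\theta^2\big)^{1/2}\|\partial_x\sigma\|_2$, which only involves $\max\theta$ linearly. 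Gagliardo–Nirenberg then gives $\|\sigma\|_\infty^2\le C\|\sigma\|_2\|\partial_x\sigma\|_2+C\|\sigma\|_2^2$, and a joint Grönwall on $(\max\theta,\int\sigma^2)$ closes. After that, $\kappa\int\!\!\int(\partial_x\theta)^2\le\overline\theta^2\kappa\int\!\!\int(\partial_x\theta)^2/\theta^2$, and the remaining items of \eqref{eq:14} follow: $\partial_xu$ from $\sigma$ and $\rho\theta$, $\partial_tu$ from $u_t=\sigma_y$, and $\sup u^2$ by Sobolev embedding.

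\textbf{Time-weighted bounds.} The bounds \eqref{eq:15} come from differentiating \eqref{eq:9}, i.e.\ multiplying by $w\,\partial_t\sigma$ in Lagrangian form, and multiplying \eqref{eq:5} by $w\,\partial_t\theta$ together with $w\,\partial_x(\kappa\partial_x\theta)$. All $\kappa$-terms appear with a favorable sign or multiplied by $\kappa\le\overline\kappa$.

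\textbf{Passage to $\kappa=0$ and uniqueness.} For $\kappa=0$, take $\kappa_n\to0$ and pass to the limit. Strong compactness of $u$ and $\sigma$ follows from \eqref{eq:14}–\eqref{eq:15} and Aubin–Lions. For $\rho$ and $\theta$, which are only $L^\infty$, I would use the Lagrangian formulation: $v$ is strongly compact since $v_t=u_y$ with $u_y=(\sigma+R\theta/v)/\mu$. To identify $\theta$, I would use that $p=(\mu u_y-\sigma)$ converges strongly together with $v$, or equivalently the pointwise ODE \eqref{eq:11} along particle paths. Uniqueness, for all $\kappa\in[0,\overline\kappa]$, follows from a Lagrangian stability estimate. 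One compares two solutions in $L^2$ for $(v,u)$ and $L^1$ or $L^2$ for $\theta$, using the Lipschitz bound $\int_0^T\|\partial_xu\|_\infty$ given by \eqref{eq:14}.
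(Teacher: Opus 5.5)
Your plan breaks down at its central step: the claim that ``a joint Gr\"onwall on $(\max\theta,\int\sigma^2)$ closes'' is false, and this step is exactly what Theorem~\ref{thm:2} is about. The correct bound on the cross term is $\kappa\int_0^1|\partial_x\theta\,\partial_x\sigma|\le\sqrt{\kappa}\,\Vert\theta\Vert_\infty\,\delta^{1/2}\Vert\partial_x\sigma\Vert_2$, with $\delta:=\kappa\int_0^1(\partial_x\theta)^2/\theta^2$ and $\int_0^T\delta\le H_1$ (your prefactor $\kappa$ should be $\sqrt{\kappa}$). Once you absorb $\Vert\partial_x\sigma\Vert_2$ into the dissipation you obtain
\[
\frac{d}{dt}\int_0^1\sigma^2+\mu\int_0^1\frac{(\partial_x\sigma)^2}{\rho}\le C\int_0^1\sigma^2+C\kappa\,\delta(t)\Big(\sup_{[0,t]\times\T}\theta\Big)^2 ,
\]
so $\max\theta$ enters quadratically, not linearly. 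On the other side, the maximum principle only gives $\sup_{[0,t]\times\T}\theta\lesssim\big(1+\int_0^t\Vert\sigma\Vert_\infty^2\big)\exp\big(C\int_0^t\Vert\sigma\Vert_\infty\big)$. This is superlinear in $A(t)=\int_0^1\sigma^2(t)+\int_0^t\int_0^1(\partial_x\sigma)^2$. You therefore end up with $A'\le DA+\kappa\,\delta(t)\,\Phi(A)$, where $\Phi(y)\gtrsim y^2$ and $\delta$ is known only in $L^1$.

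That inequality does not rule out blow-up once $\kappa H_1$ is large. A Bihari-type argument bounds $A$ only for $\kappa<\kappa_0$, with $\kappa_0$ set by $\int^{\infty}dy/\Phi(y)$; this is exactly what the paper does in Proposition~\ref{prop:14} combined with Lemma~\ref{lem:17}. On $[\kappa_0,\overline\kappa]$ you need a different mechanism, and your plan has none. For $\kappa\ge\underline\kappa$, the entropy dissipation together with $1/\rho\le H_3(1+\int_0^t\Vert\theta\Vert_\infty)$ gives $\int_0^T\Vert\theta\Vert_\infty\le H_7(\underline\kappa)$ (Proposition~\ref{prop:11}). That makes the coupled estimate on $\int\sigma^2$ and $\int\rho\theta^2$ linear (Proposition~\ref{prop:12}), i.e.\ Theorem~\ref{thm:1}'s argument. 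Uniformity on $[0,\overline\kappa]$ comes from applying this with $\underline\kappa=\kappa_0$, which depends only on the data, and patching the two regimes.

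There are also two secondary problems.

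\textbf{Lower bound on $\rho$.} Your maximum-principle bound $\frac{d}{dt}\max\theta\le C(\Vert\sigma\Vert_\infty^2+\Vert\sigma\Vert_\infty\max\theta)$ requires dividing $\cv\rho D_t\theta$ by $\rho$, hence a lower bound on $\rho$. You never derive one. It is only available in the form $1/\rho\le H_3(1+\int_0^t\Vert\theta\Vert_\infty)$ (Proposition~\ref{prop:6}), which must be fed back into the $\theta$ estimate.

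\textbf{Passage to $\kappa=0$.} This is not sound as sketched. The relation $v_t=u_y$ gives compactness in time only, not in space. Strong convergence of $p=\mu u_y/v-\sigma$ is circular, since $u_y$ converges only weakly. The ODE route via \eqref{eq:11} works at $\kappa=0$, but along $\kappa_n\to0$ it needs $\partial_x(\kappa_n\partial_x\theta_n)\to0$ strongly, which holds only for well-prepared data (Proposition~\ref{prop:18}). The paper avoids all of this: it takes existence of strong solutions at $\kappa=0$ and uniqueness as known. Theorem~\ref{thm:2} then reduces to proving \eqref{eq:12}--\eqref{eq:15} uniformly for strong solutions with regularized data, followed by a stability argument in the data.
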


The third theorem in this paper is a stability result for the Navier-Stokes system without conductivity. We give here a new proof of this theorem already stated in \cite{BrBuGJLa24}, based on the identity \eqref{eq:11}.
\begin{theorem}\label{thm:3}
Let $(\rho_0^n, u_0^n,\theta_0^n)_n\subset L^2(\T)^3$ be a sequence of triplets satisfying \eqref{eq:16}--\eqref{eq:18} with  $\underline{\rho_0},\overline{\rho_0}, \underline{\theta_0},\overline{\theta_0}, C_0$ not depending on $n$. Assume that there exists some $(\rho_0,u_0,\theta_0)\in L^2(\T)\times H^1(\T)\times L^2(\T)$ such that
\begin{equation*}
\rho_0^n,\theta_0^n \tend{n}{+\infty} \rho_0,\theta_0\quad \text{ in }L^2(\T),\quad u_0^n \tendf{n}{+\infty} u_0\quad \text{ in } H^1(\T).
\end{equation*}  
Then, for all $T>0$, there exists some $(\rho,u,\theta)\in L^2(0,T,L^2(\T))\times L^2(0,T,H^1(\T))\times L^2(0,T,L^2(\T))$ such that the "à la Hoff" solution $(\rho^n, u^n,\theta^n)$ of $(NS_0)$ with initial conditions $\rho_0^n,u_0^n,\theta_0^n$ verifies
\begin{equation*}
\rho^n,\theta^n \tend{n}{+\infty} \rho,\theta\quad \text{ in }L^2(0,T,L^2(\T)),\quad u^n \tendf{n}{+\infty}u\quad \text{ in } L^2(0,T,H^1(\T)). 
\end{equation*}
Moreover, $(\rho,u,\theta)$ is the solution of $(NS_0)$ with initial conditions $(\rho_0,u_0,\theta_0)$. 
\end{theorem}
Finally, the second main theorem of this article establishes the zero-conductivity limit of the Navier-Stokes system for non-barotropic ideal gases.
\begin{theorem}\label{thm:main}
Let $(\rho_0,\theta_0,u_0)$ satisfy \eqref{eq:16}--\eqref{eq:18}, for some constants $\underline{\rho_0},\overline{\rho_0},\underline{\theta_0},\overline{\theta_0},C_0>0$. Let $\overline{\kappa}>0$.
Then, for all $T>0$, there exists some $(\rho,u,\theta)\in L^2(0,T,L^2(\T))\times L^2(0,T,H^1(\T))\times L^2(0,T,L^2(\T))$ such that the "à la Hoff" solution $(\rho^{{\kappa}}, u^{{\kappa}},\theta^{{\kappa}})_{0\leq\kappa\leq\overline{\kappa}}$ of $(NS_{{\kappa}})$ with initial conditions $\rho_0,\theta_0, u_0$ verifies
\begin{equation*}
\rho^{{\kappa}},\theta^{{\kappa}} \tend{\kappa}{0} \rho,\theta\quad \text{ in }L^2(0,T,L^2(\T)),\quad u^{{\kappa}} \tendf{\kappa}{0}u\quad \text{ in } L^2(0,T,H^1(\T)). 
\end{equation*}
Moreover, $(\rho,u,\theta)$ is the "à la Hoff" solution of $(NS_0)$ with initial conditions $(\rho_0,u_0,\theta_0)$.
\end{theorem}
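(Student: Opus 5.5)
The plan is a compactness argument built on the uniform bounds of Theorem \ref{thm:2}, followed by identification of the limit and uniqueness. By Theorem \ref{thm:2}, for every $T>0$ the constants $\underline{\rho},\overline{\rho},\underline{\theta},\overline{\theta},C_1,C_2$ for $(\rho^\kappa,u^\kappa,\theta^\kappa)$ can be chosen independent of $\kappa\in[0,\overline{\kappa}]$. Hence $u^\kappa$ is bounded in $L^\infty(0,T,H^1)$ with $\partial_t u^\kappa$ bounded in $L^2L^2$. By Aubin--Lions, $u^\kappa\to u$ strongly in $C([0,T],L^2)$ along a subsequence, and $u^\kappa\rightharpoonup u$ weakly in $L^2(0,T,H^1)$. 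Likewise $\sigma^\kappa$ is bounded in $L^2(0,T,H^1)$, and $w^{1/2}\partial_t\sigma^\kappa$ is bounded in $L^2L^2$. Aubin--Lions on each $[\tau,T]$, combined with the uniform $L^\infty_tL^2$ bound near $t=0$, gives $\sigma^\kappa\to\sigma$ strongly in $L^2L^2$. The densities and temperatures are bounded in $L^\infty$, so they converge weakly-$*$ to some $\rho,\theta$, but only weakly.

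\textbf{Main obstacle: strong convergence of $\rho^\kappa$ and $\theta^\kappa$.} We cannot pass to the limit in $\rho\theta$ or $\rho u^2$ without it, and there is no spatial regularity on $\rho,\theta$. I would follow the strategy announced for Theorem \ref{thm:3}, namely a stability argument using the pressure identity \eqref{eq:11} adapted to $\kappa>0$.

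The density is handled through $\log\rho$. The mass equation and $\mu\partial_x u=\sigma+R\rho\theta=\sigma+p$ give
\[
\partial_t\log\rho^\kappa+u^\kappa\partial_x\log\rho^\kappa=-\tfrac1\mu(\sigma^\kappa+p^\kappa).
\]
The pressure is handled through $p^{1/\gamma}$, which for $\kappa>0$ satisfies \eqref{eq:11} plus an extra term of the form $\frac{\gamma-1}{\gamma}p^{1/\gamma-1}\partial_x(\kappa\partial_x\theta)$. By \eqref{eq:15}, $\kappa\partial_x\theta$ is bounded in $L^2$ uniformly, with $\sqrt{\kappa}\,\Vert\partial_x\theta\Vert_{L^2L^2}\le C$. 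So after one integration by parts, this term goes to $0$ in $L^1(\tau,T;W^{-1,1})$ like $\sqrt\kappa$. Weak limits only give the inequality between $\overline{f(z)}$ and $f(\overline z)$ for convex $f$.

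To close, I would compute $\frac{d}{dt}\int(\overline{b(z)}-b(\bar z))$ for renormalized quantities $z=\log\rho$ and $z=p^{1/\gamma}$, using the transport structure along the strongly converging $u$. The right-hand sides involve $\overline{\rho\theta\,\cdot}$, which are the defect terms, and $(\partial_x u)^2$, which converges strongly thanks to $\partial_x u=(\sigma+p)/\mu$ and the strong convergence of $\sigma$. The aim is a Gr\"onwall inequality for the defect measures $\int \overline{z^2}-\bar z^2$, which vanish initially because the initial data are identical. The key monotonicity is that $p$ is increasing in $\rho$ at fixed entropy; this is exactly why one works with $(p,s)$ or $(\rho,p^{1/\gamma})$ rather than $(\rho,\theta)$.

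Once $\rho^\kappa\to\rho$ and $p^\kappa\to p$ strongly, we also get $\theta^\kappa=p^\kappa/(R\rho^\kappa)\to\theta$ strongly in $L^2L^2$, using the uniform lower bound on $\rho$.

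\textbf{Identification of the limit.} Pass to the limit in the weak formulations of \eqref{eq:1}--\eqref{eq:3}. The conduction term $\int\kappa\partial_x\theta^\kappa\,\partial_x\varphi$ is $O(\sqrt\kappa)$ by \eqref{eq:14}, all nonlinear terms converge by strong/weak pairing, and $(\partial_x u)^2$ converges strongly as noted above. The bounds \eqref{eq:12}--\eqref{eq:15} pass to the limit by lower semicontinuity, so $(\rho,u,\theta)$ is an "à la Hoff" solution of $(NS_0)$. By the uniqueness in Theorem \ref{thm:2}, this limit is independent of the subsequence, so the whole family converges as $\kappa\to0$.
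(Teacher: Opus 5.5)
\textbf{There is a genuine gap at the central step: the strong convergence of $p^\kappa$, and hence of $\rho^\kappa,\theta^\kappa$.}

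Your claim that the conduction term $\frac{\gamma-1}{\gamma}(p^\kappa)^{1/\gamma-1}\partial_x(\kappa\partial_x\theta^\kappa)$ in \eqref{eq:145} vanishes in $L^1(\tau,T;W^{-1,1})$ after one integration by parts is false. Besides the harmless divergence $\partial_x\big((p^\kappa)^{1/\gamma-1}\kappa\partial_x\theta^\kappa\big)$, the integration by parts leaves the remainder
\[
\kappa\,\partial_x\theta^\kappa\,\partial_x\big((p^\kappa)^{1/\gamma-1}\big)=\Big(\tfrac1\gamma-1\Big)R\,(p^\kappa)^{1/\gamma-2}\,\kappa\,\partial_x\theta^\kappa\big(\theta^\kappa\partial_x\rho^\kappa+\rho^\kappa\partial_x\theta^\kappa\big).
\]
This remainder has two problems:
\begin{itemize}
\item it contains $\partial_x\rho^\kappa$, for which nothing is available with $L^\infty$ data;
\item it contains $\kappa(\partial_x\theta^\kappa)^2$, which \eqref{eq:14} bounds in $L^1$ but does not make small.
\end{itemize}
If you do not integrate by parts, \eqref{eq:15} only gives that $\partial_x(\kappa\partial_x\theta^\kappa)$ is bounded in $L^2(\tau,T;L^2)$ and tends to $0$ in $\mathcal{D}'$. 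It is multiplied by $(p^\kappa)^{1/\gamma-1}$, which converges only weakly-$\star$ (that is precisely what you want to improve). So the weak limit of the product is not identified. This is exactly the obstruction flagged in Remark~\ref{rmk:1}: passing to the limit in \eqref{eq:145} needs $\partial_x(\kappa\partial_x\theta^\kappa)\to0$ strongly in $L^2L^2$.

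Your defect argument is also circular. $(\partial_x u^\kappa)^2=\big((\sigma^\kappa+p^\kappa)/\mu\big)^2$ converges strongly only once $p^\kappa$ does. For $\kappa=0$ the paper avoids this as follows:
\begin{itemize}
\item the joint convexity of $(y,z)\mapsto y^2z^{1/\gamma-1}$ gives $\langle(\partial_xu)^2p^{1/\gamma-1}\rangle\ge(\partial_xu)^2\langle p\rangle^{1/\gamma-1}$;
\item the strict concavity of $y\mapsto y^{1/\gamma}$ then forces $\langle p^{1/\gamma}\rangle=\langle p\rangle^{1/\gamma}$;
\item only after that is $\rho$ recovered via the $\rho\ln\rho$ renormalization.
\end{itemize}
This is not a Gr\"onwall estimate on defects, and your sketch does not supply one.

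\textbf{What is missing are the two devices the paper uses to get the required strong convergence.}
\begin{itemize}
\item \emph{Well-prepared data.} Suppose $\kappa^\alpha\Vert\partial_x\rho_0\Vert_2^2+\kappa^\alpha\Vert\partial_x\theta_0\Vert_2^2\le D_0$ with $\alpha<1$. Proposition~\ref{prop:18} propagates these weighted bounds and yields $\int_0^T\int_0^1[\partial_x(\kappa\partial_x\theta)]^2\le\kappa^{1-\alpha}D_1\to0$. With this, the $\kappa=0$ argument goes through (Proposition~\ref{prop:4prime}).
\item \emph{General data.} The paper mollifies $\rho_0,\theta_0$ at scale $\kappa^{1/4}$ and $u_0$ at scale $\kappa^{1/2}$, which makes the data well prepared uniformly in $\kappa$. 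It then compares the two solutions of $(NS_\kappa)$ with a Lagrangian stability estimate that is uniform in $\kappa$ (Proposition~\ref{prop:poney}, with loss $\Vert\delta\rho_0\Vert_2^{2/3}$). That estimate relies on the bound on $\kappa\Vert\partial_x\theta\Vert_\infty^2$ from Proposition~\ref{prop:pullnoel2} for the regularized solution. Finally it returns to Eulerian variables by controlling $\Vert\widehat X^\kappa-X^\kappa\Vert_2$ and mollifying the limit in $x$.
\end{itemize}
Your compactness of $u^\kappa$ and $\sigma^\kappa$, the identification of the limit equations, and the uniqueness argument are fine. Without a substitute for the strong-convergence step above, however, the proof does not close.
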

Note that this last theorem is by no means obvious; indeed, the bounds known so far on $\rho$, $u$ and $\theta$, whether in the case of strong or weak solutions, degenerated as $\kappa$ tends towards 0. The uniform $\kappa$ bounds obtained in Theorem \ref{thm:2} are therefore crucial to the proof of Theorem \ref{thm:main}.

\

For $\kappa>0$, the uniqueness of the solution to $(NS_\kappa)$ is already known. The same holds for $\kappa=0$ (see \cite{BrBuGJLa24}). Moreover, the existence of strong solutions is established for $\kappa>0$ (see \cite{LiLi14,Li19}) and for $\kappa=0$ (see \cite{Li20}). The stability result Theorem \ref{eq:3} is straightforward to adapt for a fixed $\kappa>0$. Therefore, in order to prove Theorems \ref{thm:1} and \ref{thm:2}, it suffices to show that all strong solutions verifying \eqref{eq:16}--\eqref{eq:18} also satisfy \eqref{eq:12}--\eqref{eq:15}. Indeed, to construct an "à la Hoff" solution, we can approximate the initial condition $(\rho_0,\theta_0,u_0)\in L^2(\T)^3$ satisfying \eqref{eq:16}--\eqref{eq:18} by a triplet in $H^1(\T)^3$, and then use the stability result. Section \ref{sect:1} is devoted to the proof of the bounds \eqref{eq:12}--\eqref{eq:15}. Section \ref{sect:2} is devoted to the proof of Theorem \ref{thm:3} and the proof of some weak version of Theorem \ref{thm:main}, for well-prepared data (see Proposition \ref{prop:4prime}). Finally, section \ref{sect:3} establishes a stability result at fixed $\kappa$, which combined with Proposition \ref{prop:4prime}, allow us to show Theorem \ref{thm:main}.

\

\begin{notation}
For some $f:\T\rightarrow \R$ and $p\in [1,+\infty]$, we will denote $\Vert f\Vert_p$ the $L^p$ norm of $f$. If $f$ is also a function of $t\in [0,T]$, then $\Vert f\Vert_p$ is a function, $\Vert f\Vert_p : [0,T]\rightarrow [0,+\infty[$.
\end{notation}

\begin{notation}
For the rest of the analysis, let us introduce the total derivative $D_t$ formally defined for some function $f:\R_+\rightarrow \R$ by
\begin{equation*}
D_t f := \partial_t f + u\partial_x f.
\end{equation*}
Note that from \eqref{eq:1} we get
\begin{equation*}
\rho D_t f = \partial_t(\rho f) + \partial_x(\rho u f).
\end{equation*}
We then define a reciprocal at right operator $D_t^{-1}$ by
\begin{empheq}[left=\empheqlbrace]{align*}
\partial_t D_t^{-1} f + u\partial_x D_t^{-1} f &= f,\\
D_t^{-1} f(0,\cdot) &= 0.
\end{empheq}
Note that $D_t^{-1}$ is well defined supposing $\partial_x u\in L^1(0,T,L^\infty(\T))$, which corresponds well with the framework of the analysis (see \eqref{eq:14}). Then, defining
\begin{equation}\label{eq:19}
R_t f := f - D_t^{-1} D_t f,
\end{equation}
we get
\begin{empheq}[left=\empheqlbrace]{align*}
D_t (R_t f) &= 0,\\
(R_t f)(0,\cdot) & = f(0, \cdot),
\end{empheq}
then by some maximal principle
\begin{equation}\label{eq:20}
\operatorname*{ess\,inf}_{x\in \T} f(0,x) \leq R_t f \leq \operatorname*{ess\,sup}_{x\in \T} f(0,x)\quad \text{a.e.}.
\end{equation}
In particular, if $f(0,\cdot) = 0$, then $R_t f = 0$ and $D_t^{-1} D_t f = f$. Moreover, $D_t^{-1}$ is a non-negative operator, and in particular, for all $t\in [0,+\infty[$,
\begin{equation}\label{eq:21}
\vert D_t^{-1} f\vert(t) \leq \int_0^t \Vert f\Vert_\infty.
\end{equation}
Remark moreover that
\begin{equation}\label{eq:22}
D_t^{-1} \int_0^1 f(t) = \int_0^t\int_0^1 f.
\end{equation}
\end{notation}

\begin{notation}
Finally, for some function $f:\T\rightarrow \R$, let us formally define $\partial_x^{-1}$ the reciprocal at right operator of $\partial_x$, reciprocal at left on the space of zero mean functions, by
\begin{equation*}
\text{for all }x\in\T,\quad\partial_x^{-1} f (x) = \int_0^1dy\int_y^x f(z)dz.
\end{equation*}
In particular,
\begin{equation}\label{eq:23}
\vert\partial_x^{-1}f\vert\leq \int_0^1 \vert f\vert.
\end{equation}
Remark that $\partial_x^{-1}f$ is one-periodical if and only if $\int_0^1 f = 0$. Thus we get the formula
\begin{equation}\label{eq:24}
\partial_x^{-1}\partial_x f = f - \int_0^1 f.
\end{equation}
\end{notation}

{In all that follows, $\mu>0$, $\cv>0$, $\gamma>1$ and $R$ are fixed once and for all. All the bounds in the various energy estimates will implicitly depend on these variables.}
\section{A priori estimates}\label{sect:1}

This section is divided as follows. The first sub-section is devoted to the establishment of the main estimates valid for $\kappa\geq 0$. Finding a bound on the Cauchy stress is crucial and requires slightly different techniques depending on whether $\kappa$ is far from or close to zero. Thus, the second sub-section deals with this issue in the case where $\kappa$ is far from zero, mainly bringing together ideas from \cite{KaSh76}, \cite{Ho92} and \cite{Li20}. Then, the third sub-section deals with the case where $\kappa$ is small. Finaly, the fourth sub-section is devoted to establish the bound \eqref{eq:15} also known as the second Hoff energy estimate. For the following, let us take $(\rho_0,u_0,\theta_0)\in H^1(\T)^3$ satisfying \eqref{eq:16}--\eqref{eq:18} with some $\underline{\rho_0},\overline{\rho_0},\underline{\theta_0},\overline{\theta_0}, C_0>0$. Let us fix some $C_0,\underline{\kappa},\overline{\kappa},\underline{\rho_0},\overline{\rho_0},T,\underline{\theta_0},\overline{\theta_0}>0$. We denote $(\rho,u,\theta)$ the strong solution of \eqref{eq:1}--\eqref{eq:3} with initial condition $(\rho_0,u_0,\theta_0)$.
\subsection{\texorpdfstring{Some classical estimates for $\kappa \geq 0$}{Some classical estimates for kappa ≥ 0}}
\begin{prop}\label{prop:1}
Let's denote
\begin{equation*}
{\cal M} = \int_0^1\rho_0,\quad {\cal E} = \int_0^1\rho_0 E_0.
\end{equation*}
Then $0<{\cal M}\leq \overline{\rho_0}$, $0<{\cal E}\leq \overline{\rho_0}\cv\overline{\theta_0} + C_0/2$, and
\begin{equation}
   \text{for all }t\in[0,T],\int_0^1\rho(t) = \cM\label{eq:25},
\end{equation}
\begin{equation}
    \text{for all }t\in[0,T],\int_0^1 \rho E(t) = \cE, \label{eq:26}
\end{equation}
\begin{equation}
    \sup_{0\leq t\leq T}\int_0^1 p(t) \leq (\gamma-1)\cE,
\label{eq:27}
\end{equation}
\begin{equation}
    \sup_{0\leq t\leq T}\int_0^1 \frac{\rho u^2}{2}(t) \leq \cE,
\label{eq:28}
\end{equation}
\begin{equation}
    \sup_{0\leq t \leq T}\int_0^1 \vert\rho u\vert(t)\leq  \sqrt{2\cM\cE}.
\label{eq:29}
\end{equation}
\end{prop}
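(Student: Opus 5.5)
The plan is to integrate the conservation laws \eqref{eq:1} and \eqref{eq:3} over the torus and use periodicity, then deduce the remaining bounds from positivity of $\rho$ and $\theta$ together with Cauchy--Schwarz. Throughout, $(\rho,u,\theta)$ is the strong solution fixed at the start of the section, so every integration by parts below is justified and all boundary terms vanish by periodicity.

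First come the bounds on the constants. By \eqref{eq:16}, $0<\underline{\rho_0}\leq\cM\leq\overline{\rho_0}$, since $\T$ has measure one. For $\cE$, positivity is immediate from $\rho_0>0$ and $\theta_0\geq\underline{\theta_0}>0$. For the upper bound we write
\begin{equation*}
\cE=\int_0^1\rho_0\frac{u_0^2}{2}+\cv\int_0^1\rho_0\theta_0 .
\end{equation*}
The first term is at most $C_0/2$ by \eqref{eq:18}, and the second is at most $\cv\overline{\rho_0}\,\overline{\theta_0}$ by \eqref{eq:16}--\eqref{eq:17}.

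Next, integrating \eqref{eq:1} in $x$ over $\T$ gives $\frac{d}{dt}\int_0^1\rho=0$, which is \eqref{eq:25}. Integrating \eqref{eq:3} in $x$ makes both flux divergences on the left vanish. On the right, $\partial_x(\mu(\partial_x u)u-R\rho\theta u)$ and $\partial_x(\kappa\partial_x\theta)$ also integrate to zero, because $\kappa$ is constant and all quantities are periodic. Hence $\frac{d}{dt}\int_0^1\rho E=0$, which is \eqref{eq:26}. The only point needing care is regularity: for strong solutions, $\rho E$ and the fluxes lie in $W^{1,1}$ in time with values in $L^1$, so the time derivative commutes with the spatial integral. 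Alternatively, one can integrate the weak formulation against a test function depending only on $t$.

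Finally come the consequences. Since $\rho E=\frac{\rho u^2}{2}+\cv\rho\theta$ with both terms nonnegative, \eqref{eq:26} bounds each term separately by $\cE$. This gives \eqref{eq:28} directly. Using \eqref{eq:4}, $p=(\gamma-1)\rho e=(\gamma-1)\cv\rho\theta$, so $\int_0^1 p\leq(\gamma-1)\cE$, which is \eqref{eq:27}. For \eqref{eq:29}, Cauchy--Schwarz gives
\begin{equation*}
\int_0^1|\rho u|\leq\Big(\int_0^1\rho\Big)^{1/2}\Big(\int_0^1\rho u^2\Big)^{1/2}\leq\sqrt{\cM}\,\sqrt{2\cE}.
\end{equation*}
There is no genuine obstacle here. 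The only mildly delicate step is justifying the time-differentiation of the integrals, which is routine in the strong-solution setting.
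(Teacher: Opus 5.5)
Your proposal is correct and follows the paper's proof. Like the paper, you integrate \eqref{eq:1} and \eqref{eq:3} over the torus, then use positivity of $\rho$ and $\theta$ together with \eqref{eq:4} and Cauchy--Schwarz; you also spell out the bounds on $\cM$ and $\cE$ from \eqref{eq:16}--\eqref{eq:18}, which the paper leaves implicit.
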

\begin{proof}
Integrating \eqref{eq:1} (resp. \eqref{eq:3}) on the torus gives \eqref{eq:25} (resp. \eqref{eq:26}). We deduce \eqref{eq:27}, \eqref{eq:28} from \eqref{eq:26}, \eqref{eq:4} and the positivity of $\rho$ and $p$. Finally, using Cauchy-Schwarz inequality,
\begin{equation*}
    \int_0^1 \vert\rho u\vert(t) = \sqrt{2}\int_0^1 \sqrt{\rho}\sqrt{\rho}\frac{\vert u\vert}{\sqrt{2}}\leq \sqrt{2}\sqrt{\int_0^1\rho}\sqrt{\int_0^1 \frac{\rho u^2}{2}}\leq  \sqrt{2\cM\cE}.
\end{equation*}
\end{proof}
\begin{remark}
Note that, for any $v\in\R$, the change of unknows
\begin{equation*}
    (\rho(t,x),u(t,x),E(t,x)) \rightarrow (\rho(t,x-vt), u(t,x-vt)+v, E(t,x-vt))
\end{equation*}
is a solution of $(NS_\kappa)$ (it is the Galilean invariance principle). Choosing
\begin{equation*}
    v = -\frac{1}{\cM}\int_0^1\rho_0 u_0,
\end{equation*}
we can assume without any loss of generality
\begin{equation*}
    \int_0^1\rho_0u_0 = 0.
\end{equation*}
Then, integrating \eqref{eq:2} on the torus, we get
\begin{equation}\label{eq:30}
    \text{for all }t\in [0,T],\quad \int_0^1\rho u(t) = \int_0^1\rho_0 u_0 = 0.
\end{equation}
Roughly speaking, we can assume that the mean of the moments vanishes, and we keep this assumption for the remainder of the analysis.
\end{remark}

The next proposition is related to entropy.
\begin{prop}
\label{prop:2}
There exists $H_1 = H_1({\cal E}, \underline{\rho_0}, \overline{\rho_0}, \underline{\theta_0}, \overline{\theta_0})>0$ such that
\begin{equation}\label{eq:31}
\int_0^T\int_0^1\mu\frac{(\partial_x u)^2}{\theta}+\int_0^T\int_0^1 \kappa \frac{(\partial_x \theta)^2}{\theta^2} \leq H_1.
\end{equation}
\end{prop}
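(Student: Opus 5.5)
The plan is to integrate the entropy equation \eqref{eq:7} over $[0,T]\times\T$. The flux terms $\partial_x(\rho s u)$ and $\partial_x(\kappa\partial_x\ln\theta)$ integrate to zero on the torus, so
\begin{equation*}
\int_0^T\int_0^1\mu\frac{(\partial_x u)^2}{\theta}+\int_0^T\int_0^1 \kappa \frac{(\partial_x \theta)^2}{\theta^2} = \int_0^1 \rho s(T) - \int_0^1\rho_0 s_0 .
\end{equation*}
It therefore suffices to bound $\int_0^1 \rho s(T)$ from above and $\int_0^1\rho_0 s_0$ from below, with constants depending only on $\cE$ and the bounds on the initial data.

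\textbf{Lower bound at $t=0$.} By \eqref{eq:16}--\eqref{eq:17}, $s_0=\cv\ln\theta_0-R\ln\rho_0\geq \cv\ln\underline{\theta_0}-R\ln\overline{\rho_0}$. Multiplying by $\rho_0\in[\underline{\rho_0},\overline{\rho_0}]$ gives $\rho_0 s_0\geq -\overline{\rho_0}\,\bigl(\cv|\ln\underline{\theta_0}|+R|\ln\overline{\rho_0}|\bigr)$, and integrating bounds $\int_0^1\rho_0 s_0$ from below.

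\textbf{Upper bound at time $T$.} This is the only step needing care, because $\rho(T)$ and $\theta(T)$ are not yet known to be bounded. The idea is to use elementary inequalities that let mass and energy control everything.

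For the temperature term, use $\ln\theta\leq\theta$ to get $\cv\rho\ln\theta\leq \cv\rho\theta\leq\rho E$. By \eqref{eq:26}, its integral is at most $\cE$.

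For the density term we need an upper bound on $-R\rho\ln\rho$. Use $-\rho\ln\rho\leq e^{-1}$ for $\rho>0$, so that $\int_0^1 -R\rho\ln\rho\leq R/e$.

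Combining the two gives $\int_0^1\rho s(T)\leq \cE+R/e$. By Proposition \ref{prop:1}, $\cE\leq\overline{\rho_0}\cv\overline{\theta_0}+C_0/2$, so this bound depends only on the allowed quantities.

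Putting the two bounds together yields \eqref{eq:31} with
\begin{equation*}
H_1=\cE+\frac{R}{e}+\overline{\rho_0}\,\bigl(\cv|\ln\underline{\theta_0}|+R|\ln\overline{\rho_0}|\bigr).
\end{equation*}
This is independent of $T$ and of $\kappa$, which is essential for Theorem \ref{thm:2}. Since the solution is strong, with $\rho,\theta$ positive and bounded on $[0,T]$ for fixed data, the integration of \eqref{eq:7} and the vanishing of the flux terms are justified.
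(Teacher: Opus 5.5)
Your proof is correct and takes essentially the paper's route: integrate the entropy equation \eqref{eq:7} over $[0,T]\times\T$, then bound the terminal entropy $\int_0^1\rho s(T)$ from above by the conserved energy $\cE$ and the initial entropy from below using the pointwise bounds on $(\rho_0,\theta_0)$. The only difference is cosmetic. The paper packages these inequalities through the nonnegative function $h(x)=x-1-\ln x$, which gives $\rho s\le \cv\rho\theta-\cv\rho-R\rho+R$ together with mass conservation. Your $\ln\theta\le\theta$ and $-\rho\ln\rho\le e^{-1}$ achieve the same bound slightly more directly.
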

\begin{proof}
Considering the nonnegative function $h:]0,+\infty[\rightarrow \R$ defined by
\noindent

\vspace{0.5cm}
\begin{minipage}{0.5\textwidth}
\begin{equation*}
\text{for all }x \in ]0,+\infty[, \quad h(x) = x - 1 - \ln(x),
\end{equation*}
\end{minipage}%
\hfill
\begin{minipage}{0.45\textwidth}
\begin{minipage}{0.1\textwidth}
\begin{tikzpicture}
\begin{axis}[
    height=3cm,
    width=4cm,
    xmin=0,
    xmax=8,
    axis lines=middle,
    xtick={1},
    ytick=\empty
]
\addplot[mark=none, line width=1pt] table {\mesdonnees};
\end{axis}
\end{tikzpicture}
\end{minipage}%
\hfill
\begin{minipage}{0.45\textwidth}
\small\textbf{Figure 1:} Representation of the function \(h\)
\end{minipage}
\begin{minipage}{0.15\textwidth}
~
\end{minipage}
\end{minipage}

\noindent As $\rho>0$ and $\theta>0$, we get from \eqref{eq:6}
\begin{equation}\label{eq:32}
\rho \cv h(\theta) +\rho R h(1/\rho) + \rho s = \cv\rho\theta - \cv\rho - R\rho + R.
\end{equation}
Hence, derivating \eqref{eq:32} in time then integrating on the torus, using \eqref{eq:7} and \eqref{eq:25}, we obtain

\begin{equation}\label{eq:33}
\frac{d}{dt}\int_0^1 \left(\rho \cv h(\theta) + \rho R h(1/\rho)\right) + \int_0^1 \left(\mu\frac{(\partial_x u)^2}{\theta} + \kappa\frac{(\partial_x \theta)^2}{\theta^2}\right) = \frac{d}{dt}\int_0^1 \rho \cv\theta.
\end{equation}
Finally, integrating \eqref{eq:33} on $[0,T]$, we get
\begin{align}
\int_0^1 \rho \cv h(\theta)(T) + \rho R h(1/\rho)(T) &+ \int_0^T\int_0^1 \mu \frac{(\partial_x u)^2}{\theta} + \kappa\frac{(\partial_x\theta)^2}{\theta^2} \nonumber\\&= \int_0^1 \rho \cv \theta(T) - \int_0^1 \rho_0 \cv \theta_0 + \int_0^1 \rho_0 \cv h(\theta_0) + \rho_0 R h(1/\rho_0) \nonumber\\&\leq {\cal E} + \cv {\cal M} \sup_{[\underline{\theta_0},\overline{\theta_0}]} h  + R{\cal M} \sup_{[1/\overline{\rho_0},1/\underline{\rho_0}]} h. \nonumber\end{align}
and in particular the conclusion, because $h\geq 0$ over $R_+^*$.
\end{proof}
\begin{remark}
Controlling
\begin{equation*}
\int_0^T\int_0^1\mu \frac{(\partial_x u)^2}{\theta} 
\end{equation*}
provides some dissipation information. In fact, up to the author's knowledge, this has not been used get. In order to do so, we would need to obtain an upper bound on the temperature, which is one of the major difficulties of the problem.
\end{remark}
The next bound gives some information on the Cauchy stress. It has long been used for both barotropic and non-insentropic Navier-Stokes equations (see \cite{Se91}).
\begin{prop}\label{prop:3}
There exists $H_2 = H_2({\cal E}, {\cal M}, T)>0$ such that
\begin{equation}\label{eq:34}
\text{for almost all }(t,x)\in [0,T]\times \T,\quad \vert D_t^{-1}\sigma(t,x)\vert\leq H_2.
\end{equation} 
\end{prop}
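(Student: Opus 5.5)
The plan is to write $\sigma$ as a total derivative along particle paths plus bounded terms, by means of the momentum equation. From \eqref{eq:1}--\eqref{eq:2} we have $\rho D_t u = \partial_x\sigma$, that is,
\begin{equation*}
\partial_t(\rho u) + \partial_x(\rho u^2) = \partial_x \sigma .
\end{equation*}
Applying $\partial_x^{-1}$ and using \eqref{eq:24} gives
\begin{equation*}
\sigma - \int_0^1\sigma = \partial_x^{-1}\partial_t(\rho u) + \rho u^2 - \int_0^1\rho u^2 .
\end{equation*}
This identity is the starting point.

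The first step is to treat the term $\partial_x^{-1}\partial_t(\rho u)$, which should become a total derivative. Set $\phi := \partial_x^{-1}(\rho u)$, which is periodic by \eqref{eq:30}. Then $\partial_t \phi = \partial_x^{-1}\partial_t(\rho u)$, since $\partial_x^{-1}$ commutes with $\partial_t$, and $\partial_x\phi = \rho u$. It follows that
\begin{equation*}
D_t\phi = \partial_x^{-1}\partial_t(\rho u) + \rho u^2 .
\end{equation*}
Hence
\begin{equation*}
\sigma = D_t\phi + \int_0^1\sigma - \int_0^1 \rho u^2 .
\end{equation*}

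Next I would apply $D_t^{-1}$ to this identity. For the first term, $D_t^{-1}D_t\phi = \phi - R_t\phi$. By \eqref{eq:23} and \eqref{eq:29} we have $\vert\phi\vert\le\int_0^1\vert\rho u\vert\le\sqrt{2\cM\cE}$ at every time. By \eqref{eq:20} applied at time $0$, $\vert R_t\phi\vert\le\sqrt{2\cM\cE}$ as well. So this part is bounded by $2\sqrt{2\cM\cE}$.

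The remaining terms depend only on $t$, so by \eqref{eq:22} they contribute
\begin{equation*}
\int_0^t\int_0^1\sigma - \int_0^t\int_0^1\rho u^2 .
\end{equation*}
The second integral is bounded by $2\cE t$ using \eqref{eq:28}. For the first one, write $\int_0^1\sigma = \mu\int_0^1\partial_x u - \int_0^1 p$. The integral of $\partial_x u$ vanishes by periodicity, and $\int_0^1 p\le(\gamma-1)\cE$ by \eqref{eq:27}, so its absolute value is at most $(\gamma-1)\cE T$.

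Collecting the three pieces gives
\begin{equation*}
H_2 = 2\sqrt{2\cM\cE} + (\gamma+1)\cE T,
\end{equation*}
which depends only on $\cE,\cM,T$ (and the fixed constants), as the statement requires.

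The main obstacle is justifying the commutation of $D_t^{-1}$ with these manipulations. Concretely, one must check that $\phi(0,\cdot)$ is the correct initial value, so that $D_t^{-1}D_t\phi=\phi-R_t\phi$ holds, and that $\phi$ is regular enough to be transported. For strong solutions this is fine, because $\partial_x u\in L^1L^\infty$ and $\rho u\in H^1$, so $\phi$ is Lipschitz in $x$ and the transport formulation is well defined.
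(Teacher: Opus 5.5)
Your proposal is correct and follows essentially the same route as the paper. The paper also applies $\partial_x^{-1}$ to the momentum equation to get $D_t\partial_x^{-1}(\rho u)=\sigma+\int_0^1 p+\int_0^1\rho u^2$, then inverts with $D_t^{-1}$ via \eqref{eq:19}--\eqref{eq:22}. It bounds the pieces using \eqref{eq:23} and \eqref{eq:27}--\eqref{eq:30}, arriving at the same constant $H_2=2\sqrt{2{\cal M}{\cal E}}+(\gamma+1){\cal E}T$.
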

\begin{proof}
Applying $\partial_x^{-1}$ to \eqref{eq:2}, we get, using \eqref{eq:24} and the periodicity of $u$,
\begin{equation}\label{eq:35}
\partial_t \partial_x^{-1}(\rho u) + \partial_x^{-1}\partial_x(\rho u^2) = \sigma - \int_0^1 \sigma = \sigma + \int_0^1 p.
\end{equation}
Due to \eqref{eq:24},
\begin{equation}\label{eq:36}
\partial_x^{-1}\partial_x(\rho u^2) = \rho u^2 - \int_0^1\rho u^2 = u\partial_x\partial_x^{-1}(\rho u) - \int_0^1\rho u^2.
\end{equation}
Then \eqref{eq:36} in \eqref{eq:35} gives
\begin{equation}\label{eq:37}
D_t \partial_x^{-1}(\rho u) = \sigma + \int_0^1 p+ \int_0^1\rho u^2.
\end{equation}
Finally, applying $D_t^{-1}$ to \eqref{eq:37}, we get using \eqref{eq:22},
\begin{equation*}
D_t^{-1}\sigma = D_t^{-1} D_t \partial_x^{-1}(\rho u)-\int_0^t\int_0^1 p -\int_0^t \int_0^1 \rho u^2,
\end{equation*}
thus from \eqref{eq:19}, \eqref{eq:20}, \eqref{eq:27}, \eqref{eq:28}, \eqref{eq:29}, \eqref{eq:30} and \eqref{eq:23},
\begin{align}
\vert D_t^{-1} \sigma\vert &\leq \vert \partial_x^{-1}(\rho u)\vert + \vert R_t\partial_x^{-1}(\rho u)\vert + \left\vert\int_0^t\int_0^1 p\right\vert + \left\vert\int_0^t\int_0^1\rho u^2\right\vert \nonumber\\&\leq \int_0^1 \vert \rho u\vert + \Vert\partial_x^{-1}(\rho_0 u_0)\Vert_\infty + T(\gamma+1){\cal E} \nonumber\\&\leq 2\sqrt{2{\cal M}{\cal E}} + T(\gamma+1){\cal E}. \nonumber\end{align}
\end{proof}

From Proposition \ref{prop:3}, we deduce an upper bound on $\rho$.
\begin{prop}\label{prop:4}
There exists some $\overline{\rho} = \overline{\rho}({\cal E}, \overline{\rho_0}, T)$ such that
\begin{equation*}
\text{for almost all }(t,x)\in [0,T]\times \T,\quad \rho(t,x)\leq \overline{\rho}.
\end{equation*} 
\end{prop}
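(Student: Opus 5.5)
Combining the continuity equation with the definition of $\sigma$ expresses $D_t\ln\rho$ in terms of $\sigma$ and the pressure, so we can integrate along particle paths with $D_t^{-1}$ and then use Proposition \ref{prop:3}. From \eqref{eq:1} we have $D_t \rho = -\rho\,\partial_x u$, that is $D_t \ln\rho = -\partial_x u$. By \eqref{eq:8}, $\mu\partial_x u = \sigma + R\rho\theta = \sigma + p$. Hence
\begin{equation*}
\mu D_t \ln \rho = -\sigma - p .
\end{equation*}
Apply $D_t^{-1}$ and use \eqref{eq:19}, where $R_t \ln\rho$ is the transport of $\ln\rho_0$. This gives
\begin{equation*}
\mu \ln\rho = \mu R_t\ln\rho - D_t^{-1}\sigma - D_t^{-1} p .
\end{equation*}

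Next we bound each term on the right from above. By \eqref{eq:20}, $R_t\ln\rho \le \ln\overline{\rho_0}$ almost everywhere. By Proposition \ref{prop:3}, $-D_t^{-1}\sigma \le H_2$. Since $p = R\rho\theta > 0$ and $D_t^{-1}$ is a non-negative operator, $-D_t^{-1}p \le 0$, so this term can simply be discarded. We obtain
\begin{equation*}
\ln \rho \le \ln\overline{\rho_0} + \frac{H_2}{\mu},
\qquad\text{that is}\qquad
\rho \le \overline{\rho_0}\, e^{H_2/\mu} =: \overline{\rho}.
\end{equation*}
Here $\overline{\rho}$ depends on ${\cal E}$, ${\cal M}\le\overline{\rho_0}$, $T$, $\overline{\rho_0}$, and on $\mu$, which is fixed once and for all. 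This is the announced dependence.

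The main obstacle is justifying the manipulations with $D_t^{-1}$ for strong solutions. We need $\ln\rho$ to be regular enough that $D_t^{-1}D_t\ln\rho = \ln\rho - R_t\ln\rho$ holds. We also need $D_t^{-1}$ to be well defined and order preserving, which requires $\partial_x u\in L^1(0,T,L^\infty)$ and so holds for strong solutions. Positivity of $\rho$ for strong solutions is used to take the logarithm. Alternatively, everything can be written in Lagrangian coordinates along characteristics $X(t,x)$: there the identity reads $\mu \frac{d}{dt}\ln\rho(t,X) = -\sigma(t,X) - p(t,X)$, and integrating in time gives the same bound directly.
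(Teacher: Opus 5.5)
Your proposal is correct and follows the paper's proof essentially verbatim. You write $\mu D_t\ln\rho = -\sigma - p$, apply $D_t^{-1}$ via \eqref{eq:19}, bound $R_t\ln\rho$ by \eqref{eq:20} and $-D_t^{-1}\sigma$ by Proposition \ref{prop:3}, and discard $-D_t^{-1}p\le 0$, which yields $\overline{\rho} = \overline{\rho_0}\exp(H_2/\mu)$; your absorption of the ${\cal M}$-dependence of $H_2$ through ${\cal M}\le\overline{\rho_0}$ correctly gives the stated dependence.
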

\begin{proof}
From \eqref{eq:1} and \eqref{eq:8} we obtain
\begin{equation}\label{eq:38}
D_t\ln\rho = -\partial_x u = \frac{-\sigma - p}{\mu}.
\end{equation}
Hence, applying $D_t^{-1}$ to \eqref{eq:38} then using \eqref{eq:19}, \eqref{eq:20} and \eqref{eq:34},
\begin{align}
\ln \rho &= R_t\ln\rho -\frac{D_t^{-1}\sigma}{\mu} - \frac{D_t^{-1} p}{\mu}\label{eq:39}
\\&\leq \ln \overline{\rho_0} + \frac{\vert D_t^{-1}\sigma\vert}{\mu} \nonumber\\&\leq \ln \overline{\rho_0} + \frac{H_2}{\mu} \nonumber\end{align}
because $D_t^{-1} p \geq 0$ (because $p=R\rho\theta\geq 0$). Finally, passing to the exponential,
\begin{equation*}
\text{for almost all }(t,x)\in [0,T]\times\T,\quad\rho(t,x) \leq \overline{\rho_0}\exp{(H_2/\mu)}.
\end{equation*}
\end{proof}

A lower bound is then established for the temperature.
\begin{prop}\label{prop:5}
There exists some $\underline{\theta} = \underline{\theta}({\cal E}, {\cal M}, \overline{\rho_0}, T, \underline{\theta_0})$ such that
\begin{equation*}
\text{for almost all }(t,x)\in [0,T]\times\T,\quad \theta(t,x)\geq \underline{\theta}.
\end{equation*}
\end{prop}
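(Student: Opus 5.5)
The plan is to prove a pointwise upper bound on $1/\theta$ by a maximum principle applied to the temperature equation \eqref{eq:5}. The bound must not degrade as $\kappa\to 0$, so the conduction term should only be used through its sign.

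First, using \eqref{eq:1}, rewrite \eqref{eq:5} in non-conservative form:
\begin{equation*}
\cv\rho D_t\theta = \mu(\partial_x u)^2 - R\rho\theta\,\partial_x u + \kappa\partial_x^2\theta .
\end{equation*}
The first two terms form a quadratic polynomial in $\partial_x u$ with positive leading coefficient. Completing the square gives
\begin{equation*}
\mu(\partial_x u)^2 - R\rho\theta\,\partial_x u \geq -\frac{R^2\rho^2\theta^2}{4\mu}.
\end{equation*}

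Next, set $w=1/\theta$, which is legitimate since $\theta>0$ for the strong solution. Then $D_t w=-D_t\theta/\theta^2$, and a direct computation gives
\begin{equation*}
-\kappa\frac{\partial_x^2\theta}{\theta^2} = \kappa\partial_x^2 w - 2\kappa\frac{(\partial_x\theta)^2}{\theta^3}\leq \kappa\partial_x^2 w .
\end{equation*}
Dividing the equation by $-\theta^2$ and using the upper bound $\rho\le\overline{\rho}$ from Proposition \ref{prop:4}, we obtain
\begin{equation*}
\cv\rho D_t w - \kappa\partial_x^2 w \leq \frac{R^2\rho^2}{4\mu}\leq \frac{R^2\overline{\rho}}{4\mu}\,\rho .
\end{equation*}
The point is that the conduction term contributes with a favourable sign, so no constant depends on $\kappa$.

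Let $K(t)=1/\underline{\theta_0}+R^2\overline{\rho}\,t/(4\mu\cv)$. Then $z=w-K$ satisfies $\cv\rho D_t z-\kappa\partial_x^2 z\leq 0$, with $z(0)\le 0$ by \eqref{eq:17}.

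To turn this into a rigorous maximum principle for the strong solution, I would multiply by $z_+$ and integrate over $\T$. The identity $\rho D_t f=\partial_t(\rho f)+\partial_x(\rho u f)$ applied to $f=z_+^2/2$ gives
\begin{equation*}
\frac{d}{dt}\int_0^1 \cv\rho\frac{z_+^2}{2} + \kappa\int_0^1 (\partial_x z_+)^2\leq 0,
\end{equation*}
so $\int_0^1\rho z_+^2$ stays zero. Since $\rho>0$, this gives $z\le 0$ a.e.

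Hence $\theta\geq \underline{\theta}:=\bigl(1/\underline{\theta_0}+R^2\overline{\rho}T/(4\mu\cv)\bigr)^{-1}$. Through Proposition \ref{prop:4} and $H_2$, this constant depends only on ${\cal E},{\cal M},\overline{\rho_0},T,\underline{\theta_0}$, as required.

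The main obstacle is justifying this energy-type maximum principle at the regularity of strong solutions. One needs enough regularity of $\theta$ to make sense of $\partial_x^2 w$ and of the test function $z_+$, and a positive lower bound on $\theta$ for each fixed strong solution (possibly non-uniform) so that $w$ is well defined. I expect strong solutions provide this; otherwise I would apply the same argument to $\log$-type truncations.
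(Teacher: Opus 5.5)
Your proof is correct and gives exactly the paper's constant $\underline{\theta}=\bigl(1/\underline{\theta_0}+TR^2\overline{\rho}/(4\mu\cv)\bigr)^{-1}$. It reaches that constant by a different final step.

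\textbf{The paper's route.} The paper never writes a pointwise equation for $1/\theta$. It multiplies \eqref{eq:5} by $\beta'(\theta)$ with $\beta(\theta)=\theta^{-k}$ and discards the conduction term through $\beta''\geq 0$. Using H\"older and \eqref{eq:25}, it obtains
\[
\frac{d}{dt}\Bigl(\int_0^1\rho\cv\theta^{-k}\Bigr)^{1/k}\leq \frac{R^2\overline{\rho}\,\cM^{1/k}}{4\mu\cv^{1-1/k}},
\]
and then lets $k\to+\infty$ to control $\frac{d}{dt}\Vert 1/\theta\Vert_\infty$.

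\textbf{Your route.} You use the same two ingredients: the completed square \eqref{eq:41}, and the favourable sign of conduction. For you that sign appears as the term $-2\kappa(\partial_x\theta)^2/\theta^3$, i.e.\ the convexity of $\theta\mapsto 1/\theta$. From these you derive a pointwise differential inequality for $1/\theta$. You then conclude by comparison with the explicit barrier $K(t)$ via a Stampacchia truncation. Your energy identity is in fact the paper's $\beta$-method with a different choice of $\beta$. You take the convex, nonincreasing, time-dependent $\beta(t,\theta)=(1/\theta-K(t))_+^2$ in place of $\theta^{-k}$.

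\textbf{What each buys.} Your version yields the pointwise bound directly. It avoids justifying the passage $k\to+\infty$ in ${\cal D}'(0,T)$ and the differentiation of $t\mapsto\Vert 1/\theta\Vert_\infty$, both of which the paper handles rather formally. The paper's version keeps everything as integrated $L^k$ estimates, a framework it reuses with $\beta(\theta)=\theta^k$ for the upper bound in Proposition \ref{prop:8}.

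The regularity caveat you raise is the same one the paper relies on implicitly for strong solutions. That caveat is positivity of $\theta$ for the fixed strong solution, plus enough regularity to integrate the conduction term by parts against $z_+$.
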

\begin{proof}
Let $\beta:\R\rightarrow\R$ be some convex and nonincreasing function. Multiplying \eqref{eq:5} by $\beta'(\theta)$ we get
\begin{align}\label{eq:40}
\partial_t(\rho\cv\beta(\theta)) + \partial_x(\rho u\cv\beta(\theta)) &= \beta'(\theta)\sigma\partial_x u + \beta'(\theta)\partial_x(\kappa\partial_x\theta).
\end{align}
Note that
\begin{equation}\label{eq:41}
\sigma\partial_x u = \frac{\sigma^2}{\mu} + \frac{\sigma R\rho\theta}{\mu} = \frac{(\sigma + R\rho\theta/2)^2}{\mu} - \frac{R^2\rho^2\theta^2}{4\mu}\geq \frac{-R^2\rho^2\theta^2}{4\mu}
\end{equation}
and $\beta'(\theta)\leq 0$, hence \eqref{eq:40} and \eqref{eq:41} give
\begin{equation}\label{eq:42}
\partial_t(\rho\cv\beta(\theta)) + \partial_x(\rho u\cv\beta(\theta)) \leq -\frac{R^2\rho^2\theta^2\beta'(\theta)}{4\mu} + (\partial_x(\kappa\partial_x\theta))\beta'(\theta).
\end{equation} 
Integrating \eqref{eq:42} on the torus we get by integration by parts
\begin{equation*}
\frac{d}{dt}\int_0^1 \rho\cv\beta(\theta) \leq -\int_0^1 \frac{R^2\rho^2\theta^2\beta'(\theta)}{4\mu} - \int_0^1 \kappa(\partial_x\theta)^2\beta''(\theta) \leq -\int_0^1\frac{R^2\rho^2\theta^2\beta'(\theta)}{4\mu}
\end{equation*}
because $\beta''(\theta)\geq 0$. Choosing now $\beta :x\mapsto (1/x)^k$ for $k\in\N^*$, we obtain
\begin{equation}\label{eq:43}
\frac{d}{dt}\int_0^1\frac{\rho\cv}{\theta^k}\leq \frac{R^2\overline{\rho}k}{4\mu}\int_0^1\frac{\rho}{\theta^{k-1}} = \frac{R^2\overline{\rho}k}{4\mu\cv^{1-1/k}}\int_0^1\frac{\rho^{1-1/k}\cv^{1-1/k}\rho^{1/k}}{\theta^{k-1}} \leq \frac{R^2\overline{\rho} k}{4\mu\cv^{1-1/k}}{\cal M}^{1/k}\left(\int_0^1\frac{\rho\cv}{\theta^k}\right)^{1-1/k}
\end{equation}
using Hölder's inequality then \eqref{eq:25}. Hence, multiplying \eqref{eq:43} by 
\begin{equation*}
\frac{1}{k}\left(\int_0^1 \frac{\rho\cv}{\theta^k}\right)^{1/k-1},
\end{equation*}
we get
\begin{equation}\label{eq:44}
\frac{d}{dt}\left(\int_0^1\frac{\rho\cv}{\theta^k}\right)^{1/k} \leq \frac{R^2\overline{\rho}}{4\mu\cv^{1-1/k}}{\cal M}^{1/k}.
\end{equation}
Moreover, as $\rho>0$ and $\rho$ is smooth, $\rho$ has some positive lower bound which can depend on $\Vert\rho\Vert_{H^1}$. Using the convergence
\begin{equation*}
\text{for all }f\in L^\infty(\T),\quad \Vert f\Vert_p \tend{p}{+\infty} \Vert f\Vert_\infty,
\end{equation*}
we have
\begin{equation*}
\left(\int_0^1 \frac{\rho\cv}{\theta^k}\right)^{1/k}\underset{k\rightarrow +\infty}{\rightarrow}\left\Vert \frac{1}{\theta} \right\Vert_\infty \quad\text{in }{\cal D}'(0,T),
\end{equation*}
hence from \eqref{eq:44}
\begin{equation}\label{eq:45}
\frac{d}{dt}\left\Vert \frac{1}{\theta}\right\Vert_{L^\infty_x} \leq \frac{R^2\overline{\rho}}{4\mu\cv}.
\end{equation}
Finally, integrating \eqref{eq:45} on $[0,t]$ for $t\in [0,T]$, we get
\begin{equation*}
\text{for almost every }(t,x)\in [0,T]\times \T,\quad\theta(t,x) \geq \frac{1}{1/\underline{\theta_0} + T R^2\overline{\rho}/(4\mu\cv)}.
\end{equation*}
\end{proof}

We then establish a link between the lower bound on $\rho$ and the upper bound on $\theta$.
\begin{prop}
\label{prop:6}
There exists some $H_3 = H_3({\cal E}, \underline{\rho_0}, \overline{\rho_0}, T)>0$ such that
\begin{equation}\label{eq:46}
\text{for almost all }(t,x)\in [0,T]\times \T,\quad 1/\rho(t,x) \leq H_3\left(1+\int_0^t \Vert\theta\Vert_\infty\right).
\end{equation}
\end{prop}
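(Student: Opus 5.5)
Starting from the representation \eqref{eq:39} obtained in the proof of Proposition \ref{prop:4}, we write
\begin{equation*}
\ln\frac{1}{\rho} = -R_t\ln\rho + \frac{D_t^{-1}\sigma}{\mu} + \frac{D_t^{-1} p}{\mu},
\end{equation*}
and estimate the three terms separately. By \eqref{eq:20}, $-R_t\ln\rho\leq -\ln\underline{\rho_0}$. By \eqref{eq:34}, $D_t^{-1}\sigma/\mu\leq H_2/\mu$. The remaining term $D_t^{-1}p/\mu$ is nonnegative and is the only one that can grow with $\theta$. A crude bound via \eqref{eq:21}, $D_t^{-1}p\leq R\overline{\rho}\int_0^t\Vert\theta\Vert_\infty$, would give $1/\rho\lesssim\exp\big(C\int_0^t\Vert\theta\Vert_\infty\big)$. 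That is exponential rather than linear in $\int_0^t\Vert\theta\Vert_\infty$, so it is not enough for \eqref{eq:46}.

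To get a linear bound, I would not take the logarithm. Instead I would use the structure of $D_t(1/\rho)$. From \eqref{eq:38},
\begin{equation*}
D_t\frac{1}{\rho} = \frac{\partial_x u}{\rho} = \frac{\sigma + R\rho\theta}{\mu\rho} = \frac{\sigma}{\mu\rho} + \frac{R\theta}{\mu}.
\end{equation*}
Set $A := D_t^{-1}\sigma$, which is bounded by $H_2$ (Proposition \ref{prop:3}). The idea is an integrating factor. Put $y:=\exp(-A/\mu)/\rho$. Since $D_t A=\sigma$, we compute
\begin{equation*}
D_t y = e^{-A/\mu}\Big(\frac{\sigma}{\mu\rho}+\frac{R\theta}{\mu}\Big) - \frac{\sigma}{\mu}\,\frac{e^{-A/\mu}}{\rho} = \frac{R\theta}{\mu}e^{-A/\mu}\leq \frac{R}{\mu}e^{H_2/\mu}\Vert\theta\Vert_\infty .
\end{equation*}
Equivalently, $\rho\exp(A/\mu)$ satisfies $D_t(\rho e^{A/\mu}) = -\frac{R}{\mu}\rho\theta\,\rho e^{A/\mu}/\rho$; I would use whichever form is cleanest. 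Applying $D_t^{-1}$ together with \eqref{eq:19}--\eqref{eq:21} gives
\begin{equation*}
y\leq \frac{1}{\underline{\rho_0}} + \frac{R}{\mu}e^{H_2/\mu}\int_0^t\Vert\theta\Vert_\infty,
\end{equation*}
because $y(0)=1/\rho_0$ (as $A(0)=0$). Multiplying back by $e^{A/\mu}\leq e^{H_2/\mu}$ yields \eqref{eq:46} with
\begin{equation*}
H_3 = e^{H_2/\mu}\max\Big(1/\underline{\rho_0},\ \frac{R}{\mu}e^{H_2/\mu}\Big).
\end{equation*}
Since $H_2$ depends on ${\cal E},{\cal M},T$, and ${\cal M}\leq\overline{\rho_0}$, $H_3$ depends only on the quantities listed in the statement.

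The main obstacle is recognizing that the logarithmic route produces an exponential dependence on $\theta$. The integrating factor $e^{-D_t^{-1}\sigma/\mu}$ is what absorbs the $\sigma/\rho$ term exactly. A secondary check is that these manipulations of $D_t^{-1}$ are legitimate for strong solutions. In particular we need that $D_t$ satisfies the product rule and that $R_t$ of a function vanishing at $t=0$ is zero. Both hold since $\partial_x u\in L^1(0,T,L^\infty)$.
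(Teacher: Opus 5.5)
Your argument is correct and is essentially the paper's. Your $y=e^{-D_t^{-1}\sigma/\mu}/\rho$ is just the paper's $B/\rho=\exp(D_t^{-1}p/\mu)$ divided by the factor $e^{R_t\ln\rho}$, which is constant along characteristics, so both proofs reduce to the same linear transport inequality with source $\frac{R\theta}{\mu}e^{-D_t^{-1}\sigma/\mu}$; your normalization even spares the $\overline{\rho_0}/\underline{\rho_0}$ factor in front of $\int_0^t\Vert\theta\Vert_\infty$. The only slip is in the unused parenthetical form, whose right-hand side should be $-\frac{R}{\mu}\rho\theta\,\rho e^{A/\mu}=-\frac{p}{\mu}\,\rho e^{A/\mu}$, without the final division by $\rho$.
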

\begin{proof}
Applying the exponential function to \eqref{eq:39}, we obtain
\begin{equation}\label{eq:47}
\rho \exp(D_t^{-1} p/\mu) = \exp(R_t\ln \rho - D_t^{-1}\sigma/\mu) =: B.
\end{equation}
Remark that, from \eqref{eq:20} and \eqref{eq:34} we get
\begin{equation}\label{eq:48}
\text{for almost all }(t,x)\in [0,T]\times \T,\quad 0 < \underline{B}\leq B(t,x)\leq \overline{B}
\end{equation}
where
\begin{equation*}
\underline{B} = \underline{\rho_0}\exp(- H_2/\mu),\quad \overline{B}=\overline{\rho_0}\exp(H_2/\mu).
\end{equation*}
Multiplying now \eqref{eq:47} by $R\theta/\mu$, we obtain
\begin{equation*}
D_t \exp(D_t^{-1} p/\mu) = B R\theta/\mu,
\end{equation*}
and applying $D_t^{-1}$,
\begin{equation}\label{eq:49}
\exp(D_t^{-1} p/\mu) = 1 + D_t^{-1}(B R\theta/\mu),
\end{equation}
because $D_t^{-1} p(0,\cdot) = 0$.
Combining \eqref{eq:49} with \eqref{eq:47}, we get
\begin{equation}\label{eq:50}
B/\rho = 1 + D_t^{-1}(BR\theta/\mu).
\end{equation}
Dividing \eqref{eq:50} by $B$, we finally get, using \eqref{eq:48}, the nonnegativity of $D_t^{-1}$ and \eqref{eq:21},
\begin{equation*}
1/\rho = 1/B + D_t^{-1}(B R\theta/\mu)/B \leq \frac{1}{\underline{B}}\left(1 + (\overline{B}R/\mu) \int_0^t\Vert\theta\Vert_\infty\right)\leq H_3\left(1 + \int_0^t \Vert \theta\Vert_\infty\right)
\end{equation*}
for all $t\in[0,T]$ and with
\begin{equation*}
H_3 = \max((\overline{B}R/\mu)/\underline{B}, 1/\underline{B}).
\end{equation*}
\end{proof}

The key to continuing the a priori estimates is to obtain the bound
\begin{equation}\label{eq:51}
\sup_{0\leq t\leq T}\int_0^1 \sigma^2 + \int_0^T\int_0^1 (\partial_x\sigma)^2 \leq H_4
\end{equation}
for some $H_4\in \R_+^*$. We will address this in the next two subsections, depending on whether $\kappa$ is considered to be far from or close to zero. From \eqref{eq:51}, we immediatly get the
\begin{prop}\label{prop:7}
Assume that \eqref{eq:51} is satisfied for some $H_4>0$. Then
\begin{equation}\label{eq:52}
\int_0^T\Vert \sigma\Vert_\infty^2 \leq (2T+1)H_4.
\end{equation}
\end{prop}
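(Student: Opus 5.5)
The plan is to apply a one-dimensional Sobolev-type inequality on the torus at each fixed time, and then integrate in time using the two parts of \eqref{eq:51}. Since we work with the strong solution, for almost every $t\in[0,T]$ the function $\sigma(t,\cdot)$ belongs to $H^1(\T)$. It is therefore absolutely continuous and periodic, so the fundamental theorem of calculus applies to $\sigma^2$.

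Fix such a $t$ and write $f=\sigma(t,\cdot)$. For all $x,y\in\T$ we have
\begin{equation*}
f(x)^2 = f(y)^2 + \int_y^x 2 f\,\partial_x f .
\end{equation*}
Integrating in $y$ over $[0,1]$ and bounding the integral over $[y,x]$ by the integral over the whole torus gives
\begin{equation*}
f(x)^2 \leq \int_0^1 f^2 + 2\int_0^1 \vert f\vert\,\vert\partial_x f\vert .
\end{equation*}
Young's inequality $2\vert f\vert\vert\partial_x f\vert\leq f^2+(\partial_x f)^2$ then yields
\begin{equation*}
\Vert\sigma(t)\Vert_\infty^2 \leq 2\int_0^1\sigma^2(t) + \int_0^1(\partial_x\sigma)^2(t).
\end{equation*}
The Young step is where the constant is decided: this particular split is what produces exactly the factor $2T+1$ below, rather than $2T+2$.

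Next, integrate this bound over $t\in[0,T]$. For the first term use $\int_0^T\int_0^1\sigma^2\leq T\sup_{0\leq t\leq T}\int_0^1\sigma^2$. Setting
\begin{equation*}
A=\sup_{0\leq t\leq T}\int_0^1\sigma^2, \qquad B=\int_0^T\int_0^1(\partial_x\sigma)^2,
\end{equation*}
we obtain
\begin{equation*}
\int_0^T\Vert\sigma\Vert_\infty^2 \leq 2TA + B .
\end{equation*}
Assumption \eqref{eq:51} says $A+B\leq H_4$, and since both $A,B\geq 0$ it follows that $A\leq H_4$ and $B\leq H_4$. Hence $2TA+B\leq 2TH_4+H_4=(2T+1)H_4$, which is \eqref{eq:52}.

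There is no real obstacle in this argument. The only points to check are the regularity needed to use the pointwise identity for almost every $t$, which holds for strong solutions (or by density of smooth functions in $H^1(\T)$), and the measurability of $t\mapsto\Vert\sigma(t)\Vert_\infty$.
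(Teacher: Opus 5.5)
Your proof is correct and matches the paper's argument. The paper also uses the Gagliardo--Nirenberg inequality \eqref{eq:53} and Young's inequality to get $\Vert\sigma\Vert_\infty^2\leq 2\Vert\sigma\Vert_2^2+\Vert\partial_x\sigma\Vert_2^2$, then integrates in time with \eqref{eq:51}; you simply derive the one-dimensional bound by hand and apply Young pointwise instead of after Cauchy--Schwarz, which gives the same constants.
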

\begin{proof}
Let us just recall the Gagliardo-Nirenberg inequality
\begin{equation}\label{eq:53}
\Vert \sigma\Vert_{\infty}^2\leq \Vert \sigma\Vert_{2}^2 + 2\Vert \sigma \Vert_{2}\Vert\partial_x \sigma\Vert_{2}.
\end{equation}
Thus, by Young's inequality,
\begin{align*}
\int_0^T\Vert\sigma\Vert_\infty^2 \leq 2 \int_0^T \int_0^1 \sigma^2 + \int_0^T\int_0^1 (\partial_x\sigma)^2
\end{align*}
and finally \eqref{eq:52}, using \eqref{eq:51}.
\end{proof}
We can now obtain some upper bound on $\theta$. Note that the idea of the following proposition was already used by Hoff \cite{Hoff1986}.
\begin{prop}\label{prop:8}
Assume that \eqref{eq:51} is verified for some $H_4>0$. Then there exists some \newline$\overline{\theta} = \overline{\theta}({\cal E}, H_4, \overline{\rho_0}, T, \overline{\theta}_0)$ such that
\begin{equation}\label{eq:54}
\text{for almost all }(t,x)\in [0,T]\times \T,\quad \theta(t,x)\leq \overline{\theta}.
\end{equation}
\end{prop}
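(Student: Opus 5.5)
We bound $\theta$ from above with the same maximum-principle approach used for the lower bound in Proposition \ref{prop:5}, now with convex nondecreasing test functions. The source term will be controlled by $\Vert\sigma\Vert_\infty$, which is square integrable in time by Proposition \ref{prop:7}. The argument does not use $\kappa$ beyond its sign, so it holds uniformly for $\kappa\geq 0$.

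Start from the temperature equation \eqref{eq:5} written as
\begin{equation*}
\rho\cv D_t\theta = \sigma\partial_x u + \partial_x(\kappa\partial_x\theta).
\end{equation*}
Since $\partial_x u = (\sigma + R\rho\theta)/\mu$, we have
\begin{equation*}
\sigma\partial_x u = \frac{\sigma^2}{\mu} + \frac{R\rho\theta\sigma}{\mu}\leq \frac{\Vert\sigma\Vert_\infty^2}{\mu} + \frac{R\rho\theta\Vert\sigma\Vert_\infty}{\mu}.
\end{equation*}
Let $\beta(\theta)=\theta^k$ with $k\geq 1$, which is convex and nondecreasing. Multiply by $\beta'(\theta)$ and integrate over the torus. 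The conduction term gives $-\int\kappa\beta''(\theta)(\partial_x\theta)^2\leq 0$ and can be dropped. This yields
\begin{equation*}
\frac{d}{dt}\int_0^1\rho\cv\theta^k\leq \frac{k}{\mu}\Vert\sigma\Vert_\infty^2\int_0^1\theta^{k-1} + \frac{kR}{\mu}\Vert\sigma\Vert_\infty\int_0^1\rho\theta^k .
\end{equation*}

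The only delicate term is $\int_0^1\theta^{k-1}$, because it carries no factor $\rho$ and we have no lower bound on $\rho$ yet. I would avoid it with a pointwise Lagrangian argument instead of $L^k$ norms. Write $D_t\theta\leq \frac{1}{\rho\cv}\bigl(\sigma^2/\mu + R\rho\theta\vert\sigma\vert/\mu\bigr)$ plus the conduction term, which is handled at a maximum point. The second term gives a Gronwall factor $\exp(\frac{R}{\mu\cv}\int_0^t\Vert\sigma\Vert_\infty)$, bounded through \eqref{eq:52}. The first term needs $1/\rho$, and Proposition \ref{prop:6} bounds it by $H_3(1+\int_0^t\Vert\theta\Vert_\infty)$. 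Setting $M(t)=\Vert\theta(t)\Vert_\infty$ then gives, in the sense of distributions and using the maximum principle for the conduction term,
\begin{equation*}
M'(t)\leq \frac{R}{\mu\cv}\Vert\sigma\Vert_\infty M + \frac{H_3}{\mu\cv}\Vert\sigma\Vert_\infty^2\Bigl(1+\int_0^t M\Bigr).
\end{equation*}
With $Y(t)=M(t)+\int_0^t M$, this is a linear Gronwall inequality $Y'\leq a(t)Y + b(t)$, where $a,b$ are integrable on $[0,T]$ by \eqref{eq:52}. Hence $M$ is bounded by a constant depending on $\cE$, $H_4$, $\overline{\rho_0}$, $T$ and $\overline{\theta_0}$, through $H_2,H_3$, which gives \eqref{eq:54}. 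Rigorously, I would carry this out with $(\int\rho\theta^k)^{1/k}$ as in Proposition \ref{prop:5}, bounding $\int\theta^{k-1}\leq \Vert 1/\rho\Vert_\infty^{1/k}\cdots$ via Hölder, and then let $k\to\infty$.

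The main obstacle is this coupling between the lower bound on $\rho$ and the upper bound on $\theta$. It is resolved because Proposition \ref{prop:6} gives only linear growth in $\int_0^t\Vert\theta\Vert_\infty$, so the resulting inequality closes by a linear Gronwall argument. A second point is the dependence of $H_3$ on $\underline{\rho_0}$. The statement lists only $\overline{\rho_0}$, but I would accept the dependence on $\underline{\rho_0}$ through $H_3$, or else handle the $\sigma^2/\rho$ term differently by keeping $\rho$ in the $L^k$ estimate.
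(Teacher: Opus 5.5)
Your proof is correct, and it departs from the paper exactly where the paper's argument fails.

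The paper runs the same $L^k$ scheme. After multiplying \eqref{eq:56} by $\frac{1}{k}(\int_0^1\rho\cv\theta^k)^{1/k-1}$, it asserts in \eqref{eq:58} that $\int_0^1\theta^{k-1}(\int_0^1\rho\cv\theta^k)^{1/k-1}\leq(\overline{\rho}\cv)^{1/k-1}$. Since $1/k-1<0$, an upper bound here requires a \emph{lower} bound on $\rho$. For instance, with $\rho\equiv\rho_c<\overline{\rho}$ and $\theta$ constant, the left-hand side equals $(\rho_c\cv)^{1/k-1}>(\overline{\rho}\cv)^{1/k-1}$ when $k\geq 2$. So \eqref{eq:58} is false. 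Both the factor $1/\overline{\rho}$ in the paper's final bound and the absence of $\underline{\rho_0}$ from the stated dependencies come from that step.

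The correct estimate is $\int_0^1\theta^{k-1}\leq\Vert 1/\rho\Vert_\infty^{1-1/k}(\int_0^1\rho\theta^k)^{1-1/k}$. The exponent is $1-1/k$, not $1/k$ as in your sketch; with $1/k$ the factor $1/\rho$ would disappear as $k\to\infty$. The paper only obtains $\underline{\rho}$ afterwards, in Proposition \ref{prop:9}, and from this very bound. So Proposition \ref{prop:6} is the natural non-circular control of $\Vert 1/\rho\Vert_\infty$ at this stage. Its linear growth in $\int_0^t\Vert\theta\Vert_\infty$ is what lets a linear Gr\"onwall argument close, as you say.

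Two points on execution.

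First, in the rigorous $L^k$ version you cannot close at fixed $k$, because $\int_0^t\Vert\theta\Vert_\infty$ is not controlled by $(\int_0^1\rho\cv\theta^k)^{1/k}$. Instead, integrate in time at fixed $k$ and let $k\to\infty$ to obtain $M(t)\leq \overline{\theta_0}e^{\frac{R}{\mu\cv}\int_0^t\Vert\sigma\Vert_\infty}+\frac{H_3}{\mu\cv}\int_0^t\Vert\sigma\Vert_\infty^2(s)\bigl(1+\int_0^sM\bigr)e^{\frac{R}{\mu\cv}\int_s^t\Vert\sigma\Vert_\infty}ds$. Then apply Gr\"onwall to $Z(t)=\int_0^tM$ using \eqref{eq:52}. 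This also avoids differentiating $\Vert\theta(t)\Vert_\infty$.

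Second, the resulting dependence of $\overline{\theta}$ on $\underline{\rho_0}$ through $H_3$ should simply be accepted. The heating term $\sigma^2/(\mu\rho\cv)$ in $D_t\theta$ carries $1/\rho$ intrinsically. Keeping $\rho$ inside the $L^k$ estimate only produces $\Vert\sigma^2/\rho\Vert_\infty$ in the limit, so your fallback would not remove it. This costs nothing for Theorems \ref{thm:1} and \ref{thm:2}, whose constants depend on $\underline{\rho_0}$ anyway. The same correction is needed in \eqref{eq:88}, on which Proposition \ref{prop:14} relies.
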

\begin{proof}
Let $\beta:\R_+\rightarrow\R$ be some convex and non decreasing function. Multiplying \eqref{eq:5} by $\beta'(\theta)$ we get
\begin{equation}\label{eq:55}
\partial_t(\rho\cv\beta(\theta)) + \partial_x(\rho u\cv\beta(\theta)) = (\sigma\partial_x u)\beta'(\theta) + (\partial_x(\kappa\partial_x\theta))\beta'(\theta).
\end{equation}
Integrating now \eqref{eq:55} on the torus, and using some integration by parts,
\begin{equation*}
\frac{d}{dt}\int_0^1\rho\cv\beta(\theta) + \int_0^1\kappa (\partial_x\theta)^2\beta''(\theta) = \int_0^1(\sigma\partial_x u)\beta'(\theta) = \frac{1}{\mu}\int_0^1 \sigma^2\beta'(\theta) + \frac{R}{\mu}\int_0^1 \sigma \rho \theta\beta'(\theta),
\end{equation*}
hence
\begin{equation*}
\frac{d}{dt}\int_0^1 \rho\cv\beta(\theta)\leq \frac{\Vert \sigma\Vert_\infty^2}{\mu}\int_0^1\beta'(\theta) + \frac{R}{\mu}\Vert\sigma\Vert_\infty\int_0^1 \rho\theta\beta'(\theta),
\end{equation*}
since $\beta''(\theta)\geq 0$ and $\beta'(\theta)\geq 0$. Choosing $\beta:x\mapsto x^k$ for $k\in\N^*$, we get
\begin{equation}\label{eq:56}
\frac{d}{dt}\int_0^1\rho \cv\theta^k \leq \frac{k\Vert\sigma\Vert_\infty^2}{\mu}\int_0^1 \theta^{k-1} + \frac{Rk}{\mu\cv}\Vert\sigma\Vert_\infty\int_0^1 \rho\cv\theta^k.
\end{equation}
Multiplying now \eqref{eq:56} by 
\begin{equation*}
\frac{1}{k}\left(\int_0^1 \rho\cv\theta^k\right)^{1/k - 1}
\end{equation*}
we obtain
\begin{equation}\label{eq:57}
\frac{d}{dt}\left(\int_0^1\rho\cv\theta^k\right)^{1/k}\leq \frac{\Vert\sigma\Vert_\infty^2}{\mu}\int_0^1 \theta^{k-1}\left(\int_0^1\rho\cv\theta^k\right)^{1/k-1} + \frac{R}{\mu\cv}\Vert\sigma\Vert_\infty\left(\int_0^1\rho\cv\theta^k\right)^{1/k}. 
\end{equation}
Moreover, by Jensen's inequality,
\begin{equation*}
\int_0^1\theta^{k-1}\leq \left(\int_0^1 \theta^k\right)^{1-1/k},
\end{equation*}
hence
\begin{equation}\label{eq:58}
\int_0^1\theta^{k-1}\left(\int_0^1\rho\cv\theta^k\right)^{1/k-1}\leq \overline{\rho}^{1/k-1}\cv^{1/k-1}.
\end{equation}
Thus \eqref{eq:58} in \eqref{eq:57} gives
\begin{equation*}
\frac{d}{dt}\left(\int_0^1\rho\cv\theta^k\right)^{1/k}\leq \frac{\overline{\rho}^{1/k-1}\cv^{1/k-1}}{\mu}\Vert\sigma\Vert_\infty^2+ \frac{R}{\mu\cv}\Vert\sigma\Vert_\infty\left(\int_0^1\rho\cv\theta^k\right)^{1/k}. 
\end{equation*}
Thus by Grönwall's lemma, for all $t\in [0,T]$,
\begin{equation}\label{eq:59}
\begin{split}
\left(\int_0^1\rho\cv\theta^k\right)^{1/k}(t)&\leq \left(\int_0^1\rho_0\cv\theta_0^k\right)^{1/k}\exp\left(\int_0^t \frac{R}{\mu\cv}\Vert\sigma\Vert_\infty\right) \\&+ \frac{\overline{\rho}^{1/k-1}\cv^{1/k-1}}{\mu}\int_0^t \Vert\sigma\Vert_\infty^2(s)\exp\left(\int_s^t \frac{R}{\mu\cv}\Vert\sigma\Vert_\infty\right).
\end{split}
\end{equation}
Hence, from \eqref{eq:52} and Cauchy-Schwarz's inequality
\begin{equation}\label{eq:60}
\int_0^t \Vert\sigma\Vert_\infty \leq \sqrt{t}\sqrt{\int_0^t\Vert\sigma\Vert_\infty^2},
\end{equation}
we get passing to the sup in \eqref{eq:59}
\begin{equation}\label{eq:61}
\begin{split}
&\sup_{0\leq t \leq T}\left(\int_0^1\rho\cv\theta^k\right)^{1/k}(t)\\&\leq \left[\left(\int_0^1\rho_0\cv\theta_0^k\right)^{1/k} +\frac{\overline{\rho}^{1/k-1}\cv^{1/k-1}}{\mu}(2T+1)H_4 \right]\exp\left(\frac{R}{\mu\cv}\sqrt{T}\sqrt{(2T+1)H_4}\right).
\end{split}
\end{equation}
Finally, taking $k\rightarrow +\infty$ in \eqref{eq:61}, we obtain
\begin{equation*}
\text{for almost all }(t,x)\in [0,T]\times \T,\quad\theta(t,x) \leq \left(\overline{\theta_0}+\frac{(2T+1)H_4}{\mu\overline{\rho}\cv} \right)\exp\left(\frac{R}{\mu\cv}\sqrt{T}\sqrt{(2T+1)H_4}\right).
\end{equation*}
\end{proof}
We then get, as a corollary,
\begin{prop}\label{prop:9}
Assume that \eqref{eq:51} is verified for some $H_4>0$. Then there exists some \newline$\underline{\rho} = \underline{\rho}( {\cal E}, H_4, \underline{\rho_0}, \overline{\rho_0}, T, \overline{\theta_0})>0$ such that
\begin{equation*}
\text{for almost all }(t,x)\in [0,T]\times\T,\quad \rho(t,x)\geq \underline{\rho}.
\end{equation*}
\begin{proof}
Simply recall that
\begin{equation}
\text{for almost all }(t,x)\in [0,T]\times \T,\quad 1/\rho(t,x) \leq H_3\left(1+\int_0^t \Vert\theta\Vert_\infty\right),\tag{\ref{eq:46}}
\end{equation}
Then, thanks to Proposition \ref{prop:8}, we get
\begin{equation*}
\text{for almost all }(t,x)\in [0,T]\times \T,\quad \rho(t,x)\geq \frac{1}{H_3(1+T\overline{\theta})}=:\underline{\rho}.
\end{equation*}
\end{proof}
\end{prop}

Finally, it is easy to find the missing bounds to show \eqref{eq:14}:
\begin{prop}\label{prop:10} Assume that \eqref{eq:51} is verified for some $H_4>0$. Then there exists some \newline$H_5 = H_5({\cal E}, H_4, \underline{\rho_0}, \overline{\rho_0}, T)>0$ and $H_6 = H_6({\cal E}, H_4, {\cal M}, \underline{\rho_0}, \overline{\rho_0}, T)>0$ such that
\begin{equation}\label{eq:62}
\kappa\int_0^T\int_0^1 (\partial_x\theta)^2 \leq H_1 \overline{\theta}^2,
\end{equation}
\begin{equation}\label{eq:63}
\sup_{0\leq t \leq T}\int_0^1 (\partial_x u)^2 \leq  H_5,
\end{equation}
\begin{equation}\label{eq:64}
\int_0^T \Vert \partial_x u\Vert_\infty^2 \leq (2T+1)H_5,
\end{equation}
\begin{equation}\label{eq:65}
\sup_{[0,T]\times\T} u^2 \leq H_6,
\end{equation}
\begin{equation*}
\int_0^T\int_0^1 (\partial_t u)^2\leq \frac{2H_4}{\underline{\rho}^2} + 2 T H_5 H_6,
\end{equation*}
where $\overline{\theta}$ is defined in Proposition \ref{prop:8} as function in particular of $H_4$.
\end{prop}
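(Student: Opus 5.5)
The plan is to get every bound in Proposition \ref{prop:10} from \eqref{eq:51} together with the pointwise bounds already established: $\underline{\rho}\le\rho\le\overline{\rho}$ (Propositions \ref{prop:4}, \ref{prop:9}) and $\underline{\theta}\le\theta\le\overline{\theta}$ (Propositions \ref{prop:5}, \ref{prop:8}). The organising identity is $\partial_x u=(\sigma+p)/\mu$ from \eqref{eq:8}, so each derivative of $u$ is controlled by $\sigma$ and by $p=R\rho\theta\le R\overline{\rho}\,\overline{\theta}$.

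\textbf{The two bounds on $\partial_x u$.} First, \eqref{eq:62} is immediate from Proposition \ref{prop:2}:
\begin{equation*}
\kappa\int_0^T\int_0^1(\partial_x\theta)^2\le \overline{\theta}^2\,\kappa\int_0^T\int_0^1\frac{(\partial_x\theta)^2}{\theta^2}\le H_1\overline{\theta}^2.
\end{equation*}
For \eqref{eq:63}, I would write
\begin{equation*}
(\partial_x u)^2\le \frac{2}{\mu^2}\left(\sigma^2+R^2\overline{\rho}^2\overline{\theta}^2\right)
\end{equation*}
and integrate over $\T$, using $\sup_t\int_0^1\sigma^2\le H_4$. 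This gives $H_5$. For \eqref{eq:64}, I would apply the same pointwise inequality with sup norms and then \eqref{eq:52}:
\begin{equation*}
\int_0^T\Vert\partial_x u\Vert_\infty^2\le \frac{2}{\mu^2}\left((2T+1)H_4+TR^2\overline{\rho}^2\overline{\theta}^2\right).
\end{equation*}
Enlarging $H_5$ if necessary, this is at most $(2T+1)H_5$. Alternatively, apply the Gagliardo--Nirenberg argument of Proposition \ref{prop:7} to $\partial_x u$; but that needs $\partial_x^2u$, which is less convenient, so I would use the route through $\sigma$.

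\textbf{The sup bound \eqref{eq:65}.} By \eqref{eq:30}, $\int_0^1\rho u=0$. Since $\rho>0$, $u$ cannot have a constant sign, so $u(t,x_0(t))=0$ for some $x_0(t)$. Then
\begin{equation*}
|u(t,x)|\le\int_0^1|\partial_x u|\le\Big(\int_0^1(\partial_x u)^2\Big)^{1/2}\le\sqrt{H_5},
\end{equation*}
so $u^2\le H_5$. If one prefers not to use the zero, one can instead combine $\int_0^1\rho u^2\le 2\cE$ with $\rho\ge\underline{\rho}$ to bound the mean of $u$, which is where the dependence on $\cM$ enters $H_6$. Either way, $H_6$ is explicit.

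\textbf{The bound on $\partial_t u$.} I would use the nonconservative form of \eqref{eq:2}, namely $\rho(\partial_t u+u\partial_x u)=\partial_x\sigma$, which follows from \eqref{eq:1}. This gives
\begin{equation*}
\partial_t u=\frac{\partial_x\sigma}{\rho}-u\partial_x u,\qquad (\partial_t u)^2\le \frac{2(\partial_x\sigma)^2}{\underline{\rho}^2}+2u^2(\partial_x u)^2.
\end{equation*}
Integrating over $[0,T]\times\T$ and using \eqref{eq:51}, \eqref{eq:65} and \eqref{eq:63} yields exactly
\begin{equation*}
\int_0^T\int_0^1(\partial_t u)^2\le\frac{2H_4}{\underline{\rho}^2}+2TH_5H_6.
\end{equation*}

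There is no real obstacle in this proposition. The only point that needs care is tracking which constants each $H_i$ depends on, since $\overline{\theta}$ and $\underline{\rho}$ themselves depend on $H_4$. The substantive difficulty was proving \eqref{eq:51} itself, which is assumed here.
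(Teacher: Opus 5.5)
Your proof is correct. For \eqref{eq:62}, \eqref{eq:63}, \eqref{eq:64} and the $\partial_t u$ bound it is the paper's own argument: the pointwise bound $(\partial_x u)^2\le 2(\sigma^2+p^2)/\mu^2$ with $H_5=2H_4/\mu^2+2R^2\overline{\rho}^2\overline{\theta}^2/\mu^2$, then \eqref{eq:52}, then $\partial_t u=\partial_x\sigma/\rho-u\partial_x u$. For \eqref{eq:64} you do not need to enlarge $H_5$, since $T\le 2T+1$.

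The only real difference is \eqref{eq:65}. The paper does not use \eqref{eq:30}. It writes $u^2(x)=u^2(y)+2\int_y^x u\,\partial_z u$, multiplies by $\rho(y)$ and integrates in $y$. Mass and energy conservation then give ${\cal M}u^2(x)\le\int_0^1\rho u^2+2\Vert u\Vert_2\Vert\partial_x u\Vert_2$ with $\Vert u\Vert_2^2\le 2{\cal E}/\underline{\rho}$, which is how ${\cal M}$ and $\underline{\rho}$ enter $H_6$.

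You instead use $\int_0^1\rho u=0$ and the continuity of $u(t,\cdot)\in H^1(\T)$ to find a zero of $u(t,\cdot)$; continuity is the ingredient your intermediate-value step actually needs. This gives the cleaner bound $u^2\le H_5$, with no extra dependence on ${\cal E}$ or ${\cal M}$. It is legitimate, since the paper keeps the Galilean normalisation throughout Section \ref{sect:1}. However, it is frame-dependent: for the unshifted solution you must add back the shift $|v|\le\sqrt{2{\cal E}/{\cal M}}$ from the remark after Proposition \ref{prop:1}. The paper's weighted-average argument works in any frame.

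In your fallback variant, bounding $\vert\int_0^1 u\vert\le\sqrt{2{\cal E}/\underline{\rho}}$ is fine, but ${\cal M}$ does not actually appear there. It enters only through the paper's $\rho(y)$-weighting.
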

\begin{proof}
The inequality \eqref{eq:62} is a straightforward consequence of \eqref{eq:31} and \eqref{eq:54}. Moreover, we have
\begin{equation*}
(\partial_x u)^2 \leq \frac{2\sigma^2}{\mu^2} + \frac{2p^2}{\mu^2},
\end{equation*}
hence \eqref{eq:63} and \eqref{eq:64}, denoting
\begin{equation*}
H_5 := \frac{2H_4}{\mu^2} + \frac{2R^2\overline{\rho}^2\overline{\theta}^2}{\mu^2}
\end{equation*}
and using \eqref{eq:51}. Moreover, 
\begin{equation*}
\text{for all }x,y\in\T,\quad u^2(x) = u^2(y) + 2\int_y^x u(z)\partial_z u(z) dz
\end{equation*}
then multiplying this last equality by $\rho(y)$ and integrating on the torus in $y$, we get by Hölder's inequality and \eqref{eq:25}, \eqref{eq:28}, \eqref{eq:63},
\begin{align}
\text{for all }x\in\T,\quad {\cal M}u^2(x) &\leq \int_0^1 \rho u^2 + 2\left(\int_0^1 u^2\right)^{1/2}\left(\int_0^1(\partial_x u)^2\right)^{1/2} \nonumber\\&\leq 2{\cal E} + \frac{2\sqrt{{\cal E}}}{\sqrt{\underline{\rho}}}\sqrt{H_5}, \nonumber\end{align}
hence \eqref{eq:65} with
\begin{equation*}
H_6 := \frac{2{\cal E}}{{\cal M}} + \frac{2\sqrt{{\cal E}}}{\sqrt{\underline{\rho}}{\cal M}}\sqrt{H_5}.
\end{equation*}
Finally, from \eqref{eq:2} and \eqref{eq:8},
\begin{equation}\label{eq:66}
\partial_t u = \partial_t u + u\partial_x u - u\partial_x u = \frac{\partial_x \sigma}{\rho} - u\partial_x u.
\end{equation}
Thus passing \eqref{eq:66} to the square then using Young's inequality and integrating on $[0,T]\times\T$, we get from \eqref{eq:51}, \eqref{eq:63}, \eqref{eq:65},
\begin{align}
\int_0^T\int_0^1 (\partial_t u)^2&\leq \frac{2}{\underline{\rho}^2}\int_0^T\int_0^1 (\partial_x\sigma)^2 + 2\left(\sup_{[0,T]\times \T}\vert u\vert^2\right)\int_0^T\int_0^1 (\partial_x u)^2 \nonumber\\&\leq \frac{2H_4}{\underline{\rho}^2} + 2 T H_6 H_5. \nonumber\end{align}
\end{proof}

\subsection{\texorpdfstring{Bounds on $\sigma$ when $\kappa$ is far from zero}{Bounds on sigma when kappa is far from zero}}
In this subsection, we fix some $\overline{\kappa}\geq\underline{\kappa}>0$, and we assume that $\overline{\kappa}\geq\kappa\geq \underline{\kappa}$. Here we do not assume \eqref{eq:51} anymore. We get the
\begin{prop}\label{prop:11}
There exists some $H_7 = H_7({\cal E}, H_1, \underline{\kappa}, {\cal M}, \underline{\rho_0}, \overline{\rho_0}, T)>0$ such that
\begin{equation}\label{eq:67}
\int_0^T\Vert \theta\Vert_{\infty} \leq H_7.
\end{equation}
\end{prop}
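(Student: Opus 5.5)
We follow the classical Kazhikhov--Shelukhin argument: control the oscillation of $\theta$ in $x$ through the conduction dissipation from Proposition \ref{prop:2}, and control its mean through the energy and a lower bound on $\rho$ that costs only $\int_0^t\Vert\theta\Vert_\infty$.

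\textbf{Step 1 (oscillation of $\sqrt\theta$).} For every $t$ and $x,y\in\T$, Cauchy--Schwarz gives
\begin{equation*}
\vert\sqrt{\theta}(t,x)-\sqrt{\theta}(t,y)\vert\leq \frac12\int_0^1\frac{\vert\partial_x\theta\vert}{\sqrt\theta}\leq \frac12\left(\int_0^1\frac{(\partial_x\theta)^2}{\theta^2}\right)^{1/2}\Vert\theta\Vert_\infty^{1/2}.
\end{equation*}
We then square this.

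\textbf{Step 2 (pointwise bound).} Squaring yields
\begin{equation*}
\Vert\theta\Vert_\infty\leq 2\min_x\theta+\frac12 V(t)\Vert\theta\Vert_\infty,\qquad V(t):=\int_0^1\frac{(\partial_x\theta)^2}{\theta^2}.
\end{equation*}
The factor $\frac12 V\Vert\theta\Vert_\infty$ cannot be absorbed pointwise in time, because $V$ is not small. The estimate is therefore written differently. Set $\phi(t)=\min_x\theta(t)$. Mass conservation \eqref{eq:25} and \eqref{eq:27} give $\cM\phi\leq\int_0^1\rho\theta\leq \cE/\cv$, so $\phi\leq \cE/(\cv\cM)$. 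Using $\sqrt{\Vert\theta\Vert_\infty}\leq\sqrt{\phi}+\frac12 V^{1/2}\Vert\theta\Vert_\infty^{1/2}$ together with Young's inequality,
\begin{equation*}
\Vert\theta\Vert_\infty\leq 2\phi+\tfrac12 V\Vert\theta\Vert_\infty\leq C+\tfrac12 V\Vert\theta\Vert_\infty.
\end{equation*}
By \eqref{eq:31}, $\int_0^T V\leq H_1/\underline\kappa$ since $\kappa\geq\underline\kappa$; this is exactly where $\underline\kappa$ enters. Integrating in time would give $\int_0^T\Vert\theta\Vert_\infty\leq CT+\frac12\int_0^T V\Vert\theta\Vert_\infty$, and the last term is still not closed.

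\textbf{Step 3 (closing).} To close the estimate, the plan is to bound $\Vert\theta\Vert_\infty$ by a quantity weighted with $\rho$. Since $\min\theta$ is controlled by $\int_0^1\rho\theta/\cM$, we can write, for $x$ and some $y$ with $\theta(y)\leq \cE/(\cv\cM)$,
\begin{equation*}
\theta(x)\leq 2\theta(y)+2\Big(\int_0^1 \vert\partial_x\sqrt\theta\vert\Big)^2,
\end{equation*}
with the last integral bounded by $\frac14\int_0^1\frac{(\partial_x\theta)^2}{\theta^2}\int_0^1\theta$. By \eqref{eq:46}, $\int_0^1\theta\leq \frac{1}{\cv}\Vert 1/\rho\Vert_\infty\int_0^1\rho\cv\theta\leq C\big(1+\int_0^t\Vert\theta\Vert_\infty\big)$. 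Setting $Y(t)=1+\int_0^t\Vert\theta\Vert_\infty$, this gives
\begin{equation*}
Y'\leq C+CV(t)Y.
\end{equation*}
Grönwall's lemma then yields $Y(T)\leq (1+CT)\exp\big(CH_1/\underline\kappa\big)$, which is \eqref{eq:67}. The constants depend only on $\cE,\cM,\underline{\rho_0},\overline{\rho_0},T$ (through $H_3$) and on $H_1,\underline\kappa$.

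The main obstacle is precisely the non-absorbable product $V\Vert\theta\Vert_\infty$ in Step 2. It is resolved by replacing $\Vert\theta\Vert_\infty$ with $\int_0^1\theta$, which Proposition \ref{prop:6} controls linearly by $\int_0^t\Vert\theta\Vert_\infty$. This turns the estimate into a linear Grönwall inequality whose coefficient $V$ is integrable thanks to Proposition \ref{prop:2} and $\kappa\geq\underline\kappa$.
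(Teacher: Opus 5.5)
Your Step 3 is correct and is essentially the paper's own proof. The paper likewise bounds the oscillation of $\sqrt\theta$ by $V^{1/2}$ times a factor controlled through \eqref{eq:46} and \eqref{eq:27}, and it averages against $\rho(y)\,dy$, which is equivalent to your choice of a point $y$ with $\theta(y)\leq\cE/(\cv\cM)$; it then closes with the same linear Grönwall inequality, giving exactly \eqref{eq:71}. Steps 1--2 are a harmless detour and can be dropped.
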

\begin{proof}
Let us start by the equality, for almost all $x,y\in \T$, $t\in [0,T]$,
\begin{equation}\label{eq:68}
\sqrt{\theta}(t,x) = \sqrt{\theta}(t, y) + \int_y^x \frac{\partial_z \theta}{2\sqrt{\theta}}(t, z) dz.
\end{equation}
Remark that from \eqref{eq:46} we have, for almost all $t\in[0,T]$,
\begin{equation*}
\left\vert\frac{\partial_z \theta}{\sqrt{\theta}}\right\vert = \left\vert\frac{\partial_z \theta}{\theta}\right\vert\sqrt{\rho\theta}\frac{1}{\sqrt{\rho}}\leq \left\vert\frac{\partial_z \theta}{\theta}\right\vert\sqrt{\rho\theta} \sqrt{H_3}\sqrt{1+\int_0^t\Vert\theta\Vert_\infty}.
\end{equation*}
Thus by Hölder's inequality and \eqref{eq:27}
\begin{equation}\label{eq:69}
\left\vert\int_y^x \frac{\partial_z \theta}{2\sqrt{\theta}}\right\vert \leq \frac{1}{2}\sqrt{\frac{{\cal E}H_3}{\cv}}\sqrt{\int_0^1 \frac{(\partial_z \theta)^2}{\theta^2}} \sqrt{1+\int_0^t\Vert\theta\Vert_\infty}.
\end{equation}
Then, taking the square in \eqref{eq:68}, using Young's inequality then \eqref{eq:69}, we obtain
\begin{align}
\theta(t,x) \leq 2\theta(t,y) + \frac{1}{2}\frac{{\cal E}H_3}{\cv}\int_0^1\frac{(\partial_z\theta)^2}{\theta^2}\left(1+\int_0^t\Vert\theta\Vert_\infty\right). \nonumber
\end{align}
Multiplying this last inequality by $\rho(t,y)$, then integrating on the torus in $y$, we get from \eqref{eq:25} and \eqref{eq:27}
\begin{equation}\label{eq:70}
{\cal M}\theta(t,x) \leq \frac{2{\cal E}}{\cv} + \frac{1}{2}\frac{{\cal M}{\cal E}H_3}{\cv}\int_0^1 \frac{(\partial_z \theta)^2}{\theta^2}\left(1 + \int_0^t\Vert \theta\Vert_\infty\right).
\end{equation}
Finally, dividing \eqref{eq:70} by $\cal M$ and passing to the sup on space at the left hand side, we obtain
\begin{equation*}
\Vert \theta\Vert_\infty(t) \leq \frac{2{\cal E}}{\cv {\cal M}} + \frac{1}{2}\frac{{\cal E}H_3}{\cv}\int_0^1 \frac{(\partial_z\theta)^2}{\theta^2}\left(1+\int_0^t\Vert\theta\Vert_\infty\right).
\end{equation*}
Using Grönwall's lemma then \eqref{eq:31}, we get
\begin{equation}\label{eq:71}
1 + \int_0^t\Vert \theta\Vert_\infty \leq  \left(1 + \frac{2{\cal E}}{\cv {\cal M}} T\right)\exp\left(\frac{{\cal E} H_3 H_1}{2\cv\underline{\kappa}}\right)
\end{equation}
hence the result.
\end{proof}

\begin{remark}
The bound \eqref{eq:71} explodes very quickly when $\underline{\kappa}$ tends towards zero. This clearly illustrates the fact that the estimates obtained so far in the case with thermal conductivity are not satisfactory when very small conductivity coefficients are considered. 
\end{remark}

We can now obtain the first Hoff energy estimates, as in \cite{Li19}.
\begin{prop}\label{prop:12}
There exists some $H_4^{\underline{\kappa}} = H_4^{\underline{\kappa}}(C_0, \underline{\kappa}, \overline{\kappa}, \underline{\rho_0}, \overline{\rho_0}, T, \overline{\theta_0})>0$ such that
\begin{equation*}
\sup_{0\leq t \leq T}\int_0^1 \sigma^2 (t) + \int_0^T\int_0^1 (\partial_x\sigma)^2 + \kappa\int_0^T\int_0^1 (\partial_x\theta)^2 \leq H_4^{\underline{\kappa}}.
\end{equation*}
\end{prop}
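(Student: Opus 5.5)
We estimate $\sigma$ in $L^\infty_tL^2_x$ and $\partial_x\sigma$ in $L^2_{t,x}$ by an energy estimate on the parabolic equation \eqref{eq:9}, combined with the entropy and temperature bounds of Propositions \ref{prop:2}, \ref{prop:4}, \ref{prop:6} and \ref{prop:11}, which hold uniformly for $\underline{\kappa}\leq\kappa\leq\overline{\kappa}$.

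\textbf{Step 1: lower bound on $\rho$ and the $\kappa$-term.} By \eqref{eq:46} and \eqref{eq:67}, $1/\rho\leq H_3(1+H_7)$, which gives a lower bound $\underline{\rho}_*$ depending only on the admissible data. We cannot yet bound $\kappa\int\int(\partial_x\theta)^2$ through \eqref{eq:62}, since that needs $\overline{\theta}$. Instead we write
\begin{equation*}
\kappa(\partial_x\theta)^2=\kappa\frac{(\partial_x\theta)^2}{\theta^2}\,\theta^2\leq \kappa\frac{(\partial_x\theta)^2}{\theta^2}\Vert\theta\Vert_\infty^2 .
\end{equation*}
This will be absorbed in a Gronwall argument once we control $\Vert\theta\Vert_\infty$ through $\sigma$, in the spirit of Proposition \ref{prop:8}.

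\textbf{Step 2: energy estimate on $\sigma$.} Multiply \eqref{eq:9} by $\sigma/\rho$. Equivalently, write $\rho D_t\sigma$ in conservative form using \eqref{eq:1}, multiply by $\sigma$ and integrate on $\T$. Integrating by parts yields
\begin{equation*}
\frac{d}{dt}\int_0^1\frac{\sigma^2}{2} + \mu\int_0^1\frac{(\partial_x\sigma)^2}{\rho}\leq C\int_0^1|\sigma|^2|\partial_x u| + (\gamma-1)\kappa\int_0^1|\partial_x\theta||\partial_x\sigma| .
\end{equation*}
In the flux term, $\kappa\partial_x\theta$ moves onto $\partial_x\sigma$. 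By Young's inequality it is bounded by $\frac{\mu}{4\overline{\rho}}\int(\partial_x\sigma)^2 + C\kappa\,\overline{\kappa}\int(\partial_x\theta)^2$.

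For the cubic term, use $\partial_x u=(\sigma+p)/\mu$:
\begin{equation*}
\int_0^1|\sigma|^2|\partial_x u|\leq \frac{1}{\mu}\Vert\sigma\Vert_\infty\int_0^1\sigma^2 + \frac{R\overline{\rho}}{\mu}\Vert\theta\Vert_\infty\int_0^1\sigma^2 .
\end{equation*}
Then apply \eqref{eq:53} in the form $\Vert\sigma\Vert_\infty\leq \Vert\sigma\Vert_2+\sqrt{2}\Vert\sigma\Vert_2^{1/2}\Vert\partial_x\sigma\Vert_2^{1/2}$. The $\partial_x\sigma$ contribution is absorbed by Young's inequality, leaving a factor $C(1+\Vert\sigma\Vert_2^{2})\Vert\sigma\Vert_2^{2}$. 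Note also $\int_0^1\sigma^2\leq C$ times $\int(\partial_x u)^2+\int p^2$, which is integrable in time by Proposition \ref{prop:2}, since $\int(\partial_x u)^2\leq\Vert\theta\Vert_\infty\int(\partial_x u)^2/\theta$.

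\textbf{Step 3: closing, the main obstacle.} The difficulty is the nonlinear Gronwall term $\Vert\sigma\Vert_2^4$ together with the $\kappa(\partial_x\theta)^2$ source. For the first, we plan to use that $\int_0^T\Vert\sigma\Vert_2^2$ is already bounded: $\int_0^T\int(\partial_x u)^2\leq \sup_t\Vert\theta\Vert_\infty\cdot H_1/\mu$, or alternatively via Step 1. Gronwall in the form $y'\leq a(t)y$ with $a=C(1+\Vert\theta\Vert_\infty+\Vert\sigma\Vert_2^2)\in L^1(0,T)$ then closes, provided $\int_0^T\Vert\sigma\Vert_2^2$ is controlled.

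To get this control without $\sup\theta$, we would first multiply the momentum equation by $u$ and by $\partial_t u$ in Hoff's manner, as in \cite{Li19}. Controlling $\int\rho(D_tu)^2=\int(\partial_x\sigma)^2/\rho$ then requires handling $\int\sigma\,\partial_x u\,\theta$-type terms via Proposition \ref{prop:8}'s $L^k$ argument coupled with $\sigma$. We therefore intend to run the $\theta^k$ estimate \eqref{eq:59} simultaneously with the $\sigma$ energy. Setting $Y(t)=\sup_{s\leq t}\Vert\sigma\Vert_2^2+\int_0^t\Vert\partial_x\sigma\Vert_2^2$, we would show $\Vert\theta\Vert_\infty\leq C\exp(C\sqrt{Y})$ and feed this back. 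The $\kappa(\partial_x\theta)^2$ source is then bounded by $\Vert\theta\Vert_\infty^2\kappa(\partial_x\theta)^2/\theta^2$, whose time integral is $\leq H_1\sup\Vert\theta\Vert_\infty^2$.

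Closing this loop is where we expect the real work. We would exploit $\kappa\geq\underline{\kappa}$ through \eqref{eq:67}, treating the time integral of $\Vert\theta\Vert_\infty$ as a Gronwall weight rather than its supremum. If that fails, we would differentiate $\int\rho\theta^2$ using the energy equation, to obtain $\kappa\int\int(\partial_x\theta)^2$ in terms of $\int\int|\sigma\partial_x u|\theta$, which is bounded by $\int_0^T\Vert\theta\Vert_\infty(\Vert\sigma\Vert_2^2+\int p^2)$. This is linear in $Y$ with an $L^1$ weight, so it closes by Gronwall and yields $H_4^{\underline{\kappa}}$.
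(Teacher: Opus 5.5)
\textbf{The gap: the cubic term.} Substituting $\partial_x u=(\sigma+p)/\mu$ produces $\Vert\sigma\Vert_\infty\int_0^1\sigma^2$. After \eqref{eq:53} and Young's inequality this leaves a superlinear remainder (your $C\Vert\sigma\Vert_2^4$). Gr\"onwall then closes only if $\int_0^T\Vert\sigma\Vert_2^2$ is known in advance, and none of your routes supplies this bound:
\begin{itemize}
\item In Step 2 you assert that $\int_0^1(\partial_x u)^2\leq\Vert\theta\Vert_\infty\int_0^1(\partial_x u)^2/\theta$ is integrable in time. With only \eqref{eq:31} and \eqref{eq:67} at hand, this is a product of two $L^1(0,T)$ functions, which need not be integrable.
\item The variant with $\sup_t\Vert\theta\Vert_\infty$ is circular, since Proposition \ref{prop:8} presupposes \eqref{eq:51}.
\item The coupled loop $\Vert\theta\Vert_\infty\leq C\exp(C\sqrt{Y})$ yields $Y'\leq DY+\kappa\delta(t)\Phi(Y)$ with superlinear $\Phi$. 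This closes only for small $\kappa$, which is exactly Lemma \ref{lem:17} and Proposition \ref{prop:13}, not the regime $\underline{\kappa}\leq\kappa\leq\overline{\kappa}$.
\end{itemize}
Your last fallback (multiply \eqref{eq:5} by $\theta$ and use $\Vert\theta\Vert_\infty\in L^1(0,T)$ as a Gr\"onwall weight) is how the paper treats the $\kappa(\partial_x\theta)^2$ source. However, it leaves your superlinear $\sigma$-term untouched, so your inequality is not ``linear in $Y$''. Also, in that fallback $\int_0^1\rho\theta^2$ must itself be part of the Gr\"onwall functional, with a factor of order $\kappa$ or $\overline{\kappa}$. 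Bounding $\int_0^1 p^2$ by $C\Vert\theta\Vert_\infty$ instead gives the non-integrable weight $\Vert\theta\Vert_\infty^2$.

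\textbf{The missing idea.} The cubic term is in fact linear in $\int_0^1\sigma^2$ once you use the energy bound \eqref{eq:28}. Integrating by parts, $\int_0^1\sigma^2\partial_x u=-2\int_0^1 u\sigma\partial_x\sigma$, so $|\int_0^1\sigma^2\partial_x u|\leq 2\sqrt{2\mathcal{E}}\,\Vert\sigma\Vert_\infty\Vert\partial_x\sigma/\sqrt{\rho}\Vert_2$. Then \eqref{eq:53} and Young's inequality give $C(\mathcal{E},\overline{\rho})\int_0^1\sigma^2$ plus an absorbable multiple of $\int_0^1(\partial_x\sigma)^2/\rho$; this is \eqref{eq:74}--\eqref{eq:75}.

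Alternatively, you can repair your own route with the kinetic energy identity $\frac{d}{dt}\int_0^1\frac{\rho u^2}{2}+\mu\int_0^1(\partial_x u)^2=\int_0^1 p\,\partial_x u$, together with $\int_0^1 p^2\leq(\gamma-1)R\overline{\rho}\,\mathcal{E}\,\Vert\theta\Vert_\infty$. By \eqref{eq:67} this gives $\int_0^T\int_0^1(\partial_x u)^2\leq C(1+H_7)$, hence $\int_0^T\Vert\sigma\Vert_2^2\leq C$, which makes your Gr\"onwall weight integrable.

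Either way, you conclude with a linear Gr\"onwall on $\int_0^1\sigma^2+c\int_0^1\rho\theta^2$ with weight $C+C\Vert\theta\Vert_\infty\in L^1(0,T)$, as in the paper. Finally, the inequality displayed in your Step 2 is the one obtained by multiplying \eqref{eq:9} by $\sigma$ itself. Multiplying by $\sigma/\rho$ or $\rho\sigma$ does not give it, and the latter would introduce uncontrolled $\partial_x\rho$ terms.
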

\begin{proof}
Multiplying \eqref{eq:9} by $\sigma$, we get
\begin{equation}\label{eq:72}
\partial_t \left(\frac{\sigma^2}{2}\right) + u\partial_x\left(\frac{\sigma^2}{2}\right) - \mu \sigma \partial_x\left(\frac{\partial_x\sigma}{\rho}\right) = -\gamma\sigma^2\partial_x u - (\gamma-1)\sigma\partial_x(\kappa\partial_x\theta).
\end{equation}
Then, integrating \eqref{eq:72} on the torus, we get after some integration by parts
\begin{equation}\label{eq:73}
\frac{1}{2}\frac{d}{dt}\int_0^1 \sigma^2 +\mu\int_0^1 \frac{(\partial_x\sigma)^2}{\rho} = -\left(\gamma-\frac{1}{2}\right)\int_0^1 \sigma^2\partial_x u +(\gamma-1)\kappa\int_0^1(\partial_x\sigma)(\partial_x\theta).
\end{equation}
From one hand, we get by some integration by parts and by Hölder's inequality, using \eqref{eq:28},
\begin{equation}\label{eq:74}
\left\vert\int_0^1 \sigma^2(\partial_x u)\right\vert = 2\left\vert\int_0^1 u\sigma\partial_x \sigma\right\vert\leq 2\int_0^1 \vert \sqrt{\rho} u \vert \vert \sigma\vert \left\vert\frac{\partial_x\sigma}{\sqrt{\rho}}\right\vert\leq 2\sqrt{2}\sqrt{{\cal E}}\Vert\sigma\Vert_\infty\sqrt{\int_0^1 \frac{(\partial_x\sigma)^2}{\rho}}.
\end{equation}
Using now the inequality
\begin{equation*}
\Vert \sigma\Vert_\infty \leq \Vert \sigma\Vert_2 + \sqrt{2}\sqrt{\Vert \sigma\Vert_2}\sqrt{\Vert\partial_x\sigma\Vert_2},
\end{equation*}
obtained from \eqref{eq:53} and the subadditivity of $\sqrt{\cdot}$, we get from \eqref{eq:74} and Young's inequality
\begin{align}
\left\vert \int_0^1 \sigma^2\partial_x u\right\vert &\leq 2\sqrt{2}\sqrt{{\cal E}}\sqrt{\int_0^1 \sigma^2}\sqrt{\int_0^1\frac{(\partial_x\sigma)^2}{\rho}} + 4\sqrt{{\cal E}}\overline{\rho}^{1/4}\left(\int_0^1\sigma^2\right)^{1/4}\left(\int_0^1\frac{\partial_x\sigma^2}{\rho}\right)^{3/4} \nonumber\\&\leq \frac{16{\cal E}(\gamma-1/2)}{\mu}\int_0^1 \sigma^2 + \frac{\mu}{8(\gamma-1/2)}\int_0^1\frac{(\partial_x\sigma)^2}{\rho}\nonumber \\&+ 24^3{\cal E}^{2}\overline{\rho}\frac{(\gamma-1/2)^3}{\mu^3} \int_0^1\sigma^2 + \frac{\mu}{8(\gamma-1/2)}\int_0^1\frac{(\partial_x\sigma)^2}{\rho}.\label{eq:75}
\end{align}

From one other hand, 
\begin{align}
(\gamma-1)\kappa\left\vert\int_0^1 (\partial_x\sigma)(\partial_x\theta)\right\vert &\leq (\gamma-1)\kappa\sqrt{\overline{\rho}}\int_0^1\left\vert\frac{\partial_x\sigma}{\sqrt{\rho}}\right\vert\vert\partial_x\theta\vert \nonumber\\&\leq \frac{(\gamma-1)^2\kappa^2\overline{\rho}}{\mu}\int_0^1 (\partial_x\theta)^2 + \frac{\mu}{4}\int_0^1\frac{(\partial_x\sigma)^2}{\rho}\label{eq:76}
\end{align}
by Young's inequality.
From \eqref{eq:73}, \eqref{eq:75} and \eqref{eq:76} we get
\begin{align}
\frac{1}{2}\frac{d}{dt}\int_0^1\sigma^2 + \frac{\mu}{2}\int_0^1\frac{(\partial_x\sigma)^2}{\rho} &\leq \frac{16(\gamma-1/2)^2{\cal E}}{\mu}\left(1+ \frac{12^3}{2}{\cal E}\overline{\rho}\frac{(\gamma-1/2)^2}{\mu^2}\right)\int_0^1\sigma^2\nonumber\\&+ \frac{(\gamma-1)^2\kappa\overline{\rho}}{\mu}\kappa\int_0^1(\partial_x\theta)^2,\label{eq:77}
\end{align}
since $\gamma>1$. Besides, multiplying now \eqref{eq:5} by $\theta$, we obtain
\begin{equation}\label{eq:78}
\partial_t\left(\frac{\rho\cv\theta^2}{2}\right) + \partial_x\left(\frac{\rho\cv\theta^2 u}{2}\right) = \theta\sigma(\partial_x u) + \theta\partial_x(\kappa\partial_x\theta). 
\end{equation}
Then, integrating \eqref{eq:78} on the torus, we get
\begin{equation}\label{eq:79}
\frac{d}{dt}\int_0^1 \frac{\rho\cv\theta^2}{2} + \int_0^1 \kappa(\partial_x\theta)^2 = \int_0^1\sigma(\partial_x u)\theta.
\end{equation}
We obtain by Young's inequality
\begin{align}
\left\vert \int_0^1 \sigma(\partial_x u)\theta\right\vert &\leq \frac{1}{\mu}\left\vert\int_0^1 \sigma^2\theta\right\vert + \frac{R}{\mu}\left\vert \int_0^1 \sigma\rho\theta^2\right\vert \nonumber\\&\leq \frac{\Vert \theta\Vert_\infty}{\mu}\int_0^1\sigma^2 + \frac{R\sqrt{\overline{\rho}}\Vert\theta\Vert_\infty}{\mu}\left\vert\int_0^1 \sigma \sqrt{\rho} \theta\right\vert \nonumber\\&\leq \left(1 + \frac{R\sqrt{\overline{\rho}}}{2}\right)\frac{\Vert \theta\Vert_\infty}{\mu}\int_0^1\sigma^2 + \frac{R\sqrt{\overline{\rho}}\Vert\theta\Vert_\infty}{2\mu}\int_0^1 \rho\theta^2.\label{eq:80}
\end{align}
Finally, \eqref{eq:79} and \eqref{eq:80} give
\begin{equation}\label{eq:81}
\frac{d}{dt}\int_0^1\frac{\rho\cv\theta^2}{2} + \int_0^1\kappa(\partial_x\theta)^2 \leq \left(1 + \frac{R\sqrt{\overline{\rho}}}{2}\right)\frac{\Vert \theta\Vert_\infty}{\mu}\int_0^1\sigma^2 + \frac{R\sqrt{\overline{\rho}}\Vert\theta\Vert_\infty}{\cv\mu}\int_0^1 \frac{\rho\cv\theta^2}{2}.
\end{equation}
Summing \eqref{eq:77} with $2(\gamma-1)^2\kappa\overline{\rho}/\mu\times$\eqref{eq:81}, we finally obtain
\begin{align}
&\frac{1}{2}\frac{d}{dt}\int_0^1\sigma^2 + \frac{(\gamma-1)^2\kappa\overline{\rho}}{\mu}\frac{d}{dt}\int_0^1\rho\cv\theta^2+ \frac{\mu}{2}\int_0^1\frac{(\partial_x\sigma)^2}{\rho} + \frac{(\gamma-1)^2\kappa\overline{\rho}}{\mu}\kappa\int_0^1(\partial_x\theta)^2 \nonumber\\&\leq \left[\frac{16(\gamma-1/2)^2{\cal E}}{\mu}\left(1+ \frac{12^3}{2}{\cal E}\overline{\rho}\frac{(\gamma-1/2)^2}{\mu^2}\right) + \frac{2(\gamma-1)^2\kappa\overline{\rho}}{\mu}\left(1 + \frac{R\sqrt{\overline{\rho}}}{2}\right)\frac{\Vert \theta\Vert_\infty}{\mu} \right]\int_0^1\sigma^2 \nonumber\\&+\frac{R\sqrt{\overline{\rho}}}{\cv\mu}\Vert\theta\Vert_\infty\frac{(\gamma-1)^2\kappa\overline{\rho}}{\mu}\int_0^1\rho\cv\theta^2. \nonumber\end{align}
Hence by Grönwall's lemma, for all $t\in [0,T]$, using \eqref{eq:67},
\begin{equation}\label{eq:82}
\int_0^1\sigma^2(t) + \int_0^1\rho\theta^2(t) + \int_0^t\int_0^1(\partial_x\sigma)^2 + \kappa\int_0^t\int_0^1(\partial_x\theta)^2 \leq B_1\left(\int_0^1\sigma_0^2 + \int_0^1\rho_0\theta_0^2\right)\exp(B_2 t + B_3 H_7),
\end{equation}
where
\begin{equation*}
B_1 = \frac{\max(1, 2(\gamma-1)^2\overline{\kappa}\overline{\rho}\cv/\mu)}{\min(1, 2(\gamma-1)^2\underline{\kappa}\overline{\rho}\cv/\mu), 2(\gamma-1)^2\underline{\kappa}\overline{\rho}/\mu, \mu/\overline{\rho})},
\end{equation*}
\begin{equation*}
B_2 = \frac{32(\gamma-1/2)^2{\cal E}}{\mu}\left(1+ \frac{12^3}{2}{\cal E}\overline{\rho}\frac{(\gamma-1/2)^2}{\mu^2}\right),
\end{equation*}
\begin{equation*}
B_3 = \max\left(\frac{4(\gamma-1)^2\overline{\kappa}\overline{\rho}}{\mu^2}\left(1+\frac{R\sqrt{\overline{\rho}}}{2}\right), \frac{R\sqrt{\overline{\rho}}}{\cv\mu}\right).
\end{equation*}
Thus, by \eqref{eq:16},\eqref{eq:17} and \eqref{eq:18}, passing to the sup in \eqref{eq:82},
\begin{align}
\sup_{0\leq t\leq T}\int_0^1\sigma^2 &+ \sup_{0\leq t\leq T}\int_0^1\rho\theta^2 + \int_0^T\int_0^1(\partial_x\sigma)^2 + \kappa\int_0^T\int_0^1(\partial_x\theta)^2 \nonumber\\&\leq B_1(2\mu^2 C_0 + 2R^2\overline{\rho_0}^2\overline{\theta_0}^2 + \overline{\rho_0}\overline{\theta_0}^2)\exp(B_2 T + B_3 H_7) \nonumber\end{align}
which proves Proposition \ref{prop:12}.
\end{proof}
\subsection{\texorpdfstring{Bounds on $\sigma$ when $\kappa$ is close to zero}{Bounds on sigma when kappa is close to zero}}
The aim of this subsection is to show
\begin{prop}\label{prop:13}
There exists some $\kappa_0 = \kappa_0({\cal E}, H_1, \overline{\rho_0}, T, \overline{\theta_0})>0$ and some \newline$H_4^0 = H_4^0({\cal E}, H_1, \overline{\rho_0}, T, \overline{\theta_0})>0$ such that, if $0\leq\kappa<\kappa_0$, then
\begin{equation*}
\sup_{0\leq t\leq T}\int_0^1 \sigma^2(t) + \int_0^T\int_0^1 (\partial_x \sigma)^2 \leq H_4^0.
\end{equation*}
\end{prop}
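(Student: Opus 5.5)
The plan is a continuity (bootstrap) argument built on the differential inequality \eqref{eq:77}. That inequality was derived for every $\kappa\geq 0$, using only \eqref{eq:28} and the upper bound $\overline{\rho}$ of Proposition \ref{prop:4}. The only term in \eqref{eq:77} that is not closed is $\frac{(\gamma-1)^2\kappa\overline{\rho}}{\mu}\,\kappa\int_0^1(\partial_x\theta)^2$. Its time integral can be bounded by $\frac{(\gamma-1)^2\kappa\overline{\rho}}{\mu}H_1\overline{\theta}^2$ through \eqref{eq:62}. The difficulty is that $\overline{\theta}$, from Proposition \ref{prop:8}, itself depends on the quantity we are trying to bound. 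The key observation is that this term carries an extra factor $\kappa$, so it can be made small by taking $\kappa$ small.

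\textbf{Step 1: the target bound $K$.} Drop the $\kappa$ term in \eqref{eq:77}, apply Grönwall, and use $\int_0^1(\partial_x\sigma)^2/\rho\geq \overline{\rho}^{-1}\int_0^1(\partial_x\sigma)^2$. This gives a bound
\[
K:=\max\!\left(1,\frac{\overline{\rho}}{\mu}\right)\left(\int_0^1\sigma_0^2\right)e^{B_2T},
\]
where $\int_0^1\sigma_0^2\leq 2\mu^2C_0+2R^2\overline{\rho_0}^2\overline{\theta_0}^2$ and $B_2$ is the constant appearing after \eqref{eq:82}. Since $B_2$ and $\overline{\rho}$ depend only on ${\cal E},\overline{\rho_0},T$, so does $K$, together with the initial data.

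\textbf{Step 2: bootstrap hypothesis.} Define
\[
F(t)=\sup_{0\leq s\leq t}\int_0^1\sigma^2(s)+\int_0^t\int_0^1(\partial_x\sigma)^2 .
\]
For a strong solution, $F$ is continuous and nondecreasing, and $F(0)<2K$. Let $t^*$ be the supremum of those $t\leq T$ for which $F\leq 2K$ on $[0,t]$. On $[0,t^*]$, \eqref{eq:51} holds with $H_4=2K$. Propositions \ref{prop:7} and \ref{prop:8} then give $\theta\leq\overline{\theta}(2K)$ on $[0,t^*]$, with $\overline{\theta}(2K)$ independent of $\kappa$. By \eqref{eq:62}, $\kappa\int_0^{t^*}\int_0^1(\partial_x\theta)^2\leq H_1\overline{\theta}(2K)^2$. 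These propositions are stated on $[0,T]$, but their proofs apply verbatim on $[0,t^*]$.

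\textbf{Step 3: closing the bootstrap.} Integrate \eqref{eq:77} on $[0,t]$ for $t\leq t^*$ and apply Grönwall. This yields
\[
F(t)\leq K+C\,\kappa\,H_1\overline{\theta}(2K)^2 e^{B_2T},
\qquad C=\frac{2\max(1,\overline{\rho}/\mu)(\gamma-1)^2\overline{\rho}}{\mu}.
\]
Choose $\kappa_0$ so that $C\kappa_0H_1\overline{\theta}(2K)^2e^{B_2T}<K$. Then $F(t^*)<2K$. By continuity, if $t^*<T$ the bound $F\leq 2K$ would persist slightly beyond $t^*$, contradicting the definition of $t^*$; hence $t^*=T$. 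The statement follows with $H_4^0=2K$.

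\textbf{Main obstacle.} The delicate point is the circularity between the $\sigma$-bound and the $\theta$-bound: $\overline{\theta}$ grows exponentially in $H_4$. This is exactly what forces the smallness threshold $\kappa_0$ and rules out a direct Grönwall argument. It must also be checked that Propositions \ref{prop:7} and \ref{prop:8} only use \eqref{eq:51} on the current interval and the $\kappa$-independent bound $\overline{\rho}$, and not $\underline{\rho}$ or Proposition \ref{prop:11}, whose constants blow up as $\kappa\to 0$. With that checked, the constants $\kappa_0$ and $H_4^0$ depend only on ${\cal E},H_1,\overline{\rho_0},T,\overline{\theta_0}$ and the initial stress bound, as claimed.
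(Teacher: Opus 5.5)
Your bootstrap is correct. It closes the $\sigma\to\overline{\theta}\to\sigma$ loop differently from the paper.

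\textbf{The paper's route.} The paper turns the feedback into a single nonlinear differential inequality. It introduces $A_2=A_1+\int_0^tA_1$ so that, via (88)--(89), $\sup_{[0,t]\times\T}\theta$ becomes an explicit increasing function of the current value $A_2(t)$. This gives $\frac{d}{dt}A_2\leq DA_2+\kappa\,\delta(t)\,\Phi(A_2)$, with $\delta=\kappa\int_0^1(\partial_x\theta)^2/\theta^2$ and $\Vert\delta\Vert_{L^1}\leq H_1$ (Proposition 14). It then concludes with the Desjardins-type Lemma 17, a Bihari-type argument through a primitive $\Psi$ of $1/\Phi$, choosing $\kappa_0$ so that $\Psi(\tau_0)+\kappa_0H_1<\sup\Psi$.

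\textbf{Your route.} You freeze the feedback at the level $2K$. Under the bootstrap hypothesis $\overline{\theta}(2K)$ is a constant, so by (62) the forcing term of (77) has time integral $O(\kappa)$, and a linear Grönwall improves the hypothesis. This is more elementary: there is no auxiliary functional, no explicit $\Phi$ and no nonlinear comparison lemma, and the threshold $\kappa_0$ is transparent. It relies on the time continuity of $F$, which holds for the strong solutions of Section 1, whereas Lemma 17 is a reusable abstract statement. A minor point: since $F$ is a supremum plus an integral, the prefactor $\max(1,\overline{\rho}/\mu)$ in $K$ and $C$ should be $1+\overline{\rho}/\mu$; nothing else changes.

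\textbf{The lower bound on $\rho$.} The check you flag at the end does not go through as stated, though this affects the paper's route equally. In (58) the exponent $1/k-1$ is negative. So bounding $\left(\int_0^1\rho\cv\theta^k\right)^{1/k-1}$ from above needs a \emph{lower} bound on $\rho$. The correct constant is $(\cv\min_x\rho(t,\cdot))^{1/k-1}$, not $(\overline{\rho}\cv)^{1/k-1}$; physically, $\cv D_t\theta$ contains $\sigma^2/(\mu\rho)$. Hence Proposition 8, and (88), genuinely use a lower bound on $\rho$.

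This is repairable with $\kappa$-independent constants. By (46), $1/\rho\leq H_3\big(1+\int_0^t\Vert\theta\Vert_\infty\big)$ with $H_3=H_3({\cal E},\underline{\rho_0},\overline{\rho_0},T)$. Letting $k\to+\infty$ in the corrected (57)--(58) then gives
\[
\frac{d}{dt}\Vert\theta\Vert_\infty\leq\frac{H_3}{\mu\cv}\Vert\sigma\Vert_\infty^2\Big(1+\int_0^t\Vert\theta\Vert_\infty\Big)+\frac{R}{\mu\cv}\Vert\sigma\Vert_\infty\Vert\theta\Vert_\infty .
\]
Applying Grönwall to $\Vert\theta\Vert_\infty+1+\int_0^t\Vert\theta\Vert_\infty$ bounds $\theta$ on $[0,t^*]$ using only $\int_0^{t^*}\Vert\sigma\Vert_\infty^2\leq 2(2T+1)K$, $H_3$, $\overline{\theta_0}$ and $T$. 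Define $\overline{\theta}(2K)$ this way; it now also depends on $\underline{\rho_0}$, as $H_1$ already does. With that definition your bootstrap closes exactly as you wrote it.
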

The idea is the following: defining some functional $A(t)$ similar to
\begin{equation*}
\int_0^1\sigma^2(t) + \int_0^t\int_0^1 (\partial_x\sigma)^2,
\end{equation*}
we will show the existence of a constant $D>0$ and some positive non-decreasing function $\Phi:\R\mapsto \R$ not depending on $\kappa$ such that
\begin{equation*}
\frac{d}{dt}A \leq DA + \kappa^2\left(\int_0^1\frac{(\partial_x\theta)^2}{\theta^2}\right)\Phi(A).
\end{equation*}
We can then obtain some uniform bound on $A$ using the
\begin{lemma}\label{lem:17}
Assume that some family of non-negative smooth functions $\tau_\kappa:[0,T]\rightarrow \R$ satisfies the inequality
\begin{equation}\label{eq:83}
\text{for all }\kappa>0, \text{ for almost all }t\in[0,T],\quad
\frac{d}{dt}\tau_\kappa(t) \leq D \tau_\kappa(t) + \kappa\delta(t)\Phi(\tau_\kappa(t))
\end{equation} 
where $D\in\R$, $\delta\in L^1(0,T)$, and $\Phi:\R_+\rightarrow\R$ is some measurable function. Then there exists some $\kappa_0>0$ and $\overline{\tau}>0$ depending only on $D$, $\Phi$, $\Vert\delta\Vert_{L^1}$, $\tau_\kappa(0) =:\tau_0$ and $T$ such that
\begin{equation*}
\text{for all }t\in[0,T],\quad\text{for all }0\leq\kappa <\kappa_0,\quad \tau_\kappa(t)\leq \overline{\tau}.
\end{equation*}
\end{lemma}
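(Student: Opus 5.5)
The proof is a continuity (bootstrap) argument combined with Grönwall's lemma. The idea is to fix a threshold $\overline{\tau}$ depending only on $\tau_0$, $D$ and $T$, to bound $\Phi$ on $[0,\overline{\tau}]$, and then to choose $\kappa_0$ small enough that the perturbation term $\kappa\delta\Phi(\tau_\kappa)$ cannot push $\tau_\kappa$ up to the threshold.

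The argument needs two things that the statement does not say explicitly.
\begin{itemize}
\item $\Phi$ must be bounded on compact subsets of $\R_+$. This holds in the intended application, where $\Phi$ is non-negative and non-decreasing, so that $\sup_{[0,a]}\Phi=\Phi(a)$. For a merely measurable $\Phi$ the lemma needs local boundedness as an extra assumption.
\item The perturbation should be replaced by a non-negative one. Since $\kappa\ge 0$, we have $\kappa\delta\Phi(\tau_\kappa)\le\kappa|\delta|\max(\Phi(\tau_\kappa),0)$, so we may assume $\delta\ge 0$ and $\Phi\ge 0$ without loss of generality.
\end{itemize}

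\textbf{Choice of constants.} Set
\begin{equation*}
\overline{\tau}:=e^{|D|T}(\tau_0+1),\qquad \overline{\Phi}:=\sup_{[0,\overline{\tau}]}\Phi<\infty,\qquad \kappa_0:=\frac{1}{2\big(1+\overline{\Phi}\,\Vert\delta\Vert_{L^1}\big)}.
\end{equation*}
All three depend only on $D$, $\Phi$, $\Vert\delta\Vert_{L^1}$, $\tau_0$ and $T$.

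\textbf{Bootstrap.} Fix $0\le\kappa<\kappa_0$ and define
\begin{equation*}
t^*:=\sup\{t\in[0,T]:\ \tau_\kappa\le\overline{\tau}\text{ on }[0,t]\}.
\end{equation*}
Since $\tau_\kappa$ is continuous and $\tau_\kappa(0)=\tau_0<\overline{\tau}$, we have $t^*>0$. On $[0,t^*]$ the bound $\Phi(\tau_\kappa(t))\le\overline{\Phi}$ holds, so \eqref{eq:83} gives, for almost every $t$,
\begin{equation*}
\frac{d}{dt}\tau_\kappa\le |D|\,\tau_\kappa+\kappa\overline{\Phi}\,\delta(t).
\end{equation*}
Grönwall's lemma then yields, for all $t\in[0,t^*]$,
\begin{equation*}
\tau_\kappa(t)\le e^{|D|t}\Big(\tau_0+\kappa\overline{\Phi}\int_0^t\delta\Big)\le e^{|D|T}\Big(\tau_0+\tfrac12\Big)<\overline{\tau}.
\end{equation*}
Because this inequality is strict, continuity of $\tau_\kappa$ shows that if $t^*<T$ then $\tau_\kappa\le\overline{\tau}$ on a slightly larger interval, contradicting the definition of $t^*$. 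Hence $t^*=T$, which gives $\tau_\kappa\le\overline{\tau}$ on $[0,T]$ for every $0\le\kappa<\kappa_0$.

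\textbf{Main obstacle.} The only delicate point is bounding $\Phi(\tau_\kappa)$ uniformly in $\kappa$, and the bootstrap handles this exactly by freezing $\tau_\kappa$ below $\overline{\tau}$. The remaining care is to use the a.e. differential inequality through the integral form of Grönwall's lemma: $\tau_\kappa$ is smooth, hence absolutely continuous, and $\delta\in L^1$, so the integrated inequality is legitimate.
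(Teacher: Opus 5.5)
Your proof is correct and takes a different route from the paper. The paper first replaces $\Phi$ by $s\mapsto\sup_{[0,s]}|\Phi|+1$, which is positive and non-decreasing. It then treats $D=0$ by a Bihari--LaSalle type separation of variables: with $\Psi$ a primitive of $1/\Phi$ one gets $\Psi(\tau_\kappa(t))\le\Psi(\tau_0)+\kappa\int_0^t\delta$, and $\kappa_0$ is chosen so that the right-hand side stays below $\sup\Psi$. Finally it reduces $D>0$ to $D=0$ by passing to $e^{-Dt}\tau_\kappa$, using the monotonicity of $\Phi$ to replace $\Phi$ by $\Phi(e^{DT}\cdot)$.

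You instead fix the threshold $\overline{\tau}=e^{|D|T}(\tau_0+1)$ in advance, bound $\Phi$ by its supremum on $[0,\overline{\tau}]$, and close with a linear Gr\"onwall estimate plus a continuity argument. Your version is more elementary. It needs no monotonicity of $\Phi$ and no nonlinear comparison function, handles $D$ of either sign at once, gives explicit $\kappa_0$ and $\overline{\tau}$, and only uses boundedness of $\Phi$ on the single interval $[0,\overline{\tau}]$.

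The paper's version buys a bound that is intrinsic and explicit in $\kappa$: for $D=0$ it is $\tau_\kappa\le\Psi^{-1}(\Psi(\tau_0)+\kappa\Vert\delta\Vert_{L^1})$. It also allows $\kappa_0$ up to the natural limit $\Psi(\tau_0)+\kappa_0\Vert\delta\Vert_{L^1}<\sup\Psi$. The local boundedness you flag is tacitly used by the paper as well, in its reduction to $\sup_{s\le\tau_\kappa}|\Phi(s)|+1$. In the application $\Phi$ is continuous and increasing, so nothing is lost either way.

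One small slip: the inequality $\kappa\delta\Phi(\tau_\kappa)\le\kappa|\delta|\max(\Phi(\tau_\kappa),0)$ fails when $\delta(t)<0$ and $\Phi(\tau_\kappa(t))<0$. Use $\kappa|\delta|\,|\Phi(\tau_\kappa)|$ and set $\overline{\Phi}:=\sup_{[0,\overline{\tau}]}|\Phi|$; the rest of your argument then goes through verbatim.
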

This lemma is inspired from some work by Desjardins about weak solutions of barotropic Navier-Stokes equations in dimension $2$ and $3$ for short times (see \cite{Des}).
\begin{proof}
From \eqref{eq:83} we get, for almost all $t\in[0,T]$,
\begin{equation*}
\frac{d}{dt}\tau_{\kappa}(t)\leq \vert D\vert\tau_\kappa(t) + \kappa\vert\delta(t)\vert \left(\sup_{s\leq \tau_\kappa(t)}\vert\Phi(s)\vert+1\right).
\end{equation*}
Thus we can assume without any loss of generality that $\Phi$ is positive and non-decreasing, $\delta>0$ and $D\geq 0$.
Let's start by looking at the case where $D=0$. Let $\Psi$ be a primitive of $1/\Phi$. Then, dividing \eqref{eq:83} by $\Phi(\tau_\kappa)$, we get for almost all $t\in [0,T]$,
\begin{equation*}
\frac{d}{dt}(\Psi\circ\tau_\kappa)(t)\leq \kappa\delta(t),
\end{equation*}
thus
\begin{equation}\label{eq:84}
\Psi(\tau_\kappa(t))\leq \Psi(\tau_0) + \kappa\int_0^t\delta.
\end{equation}
As $\Phi>0$, $\Psi$ is increasing. Thus there exists some $\kappa_0>0$ depending only on $\Psi(\tau_0)$ and $\Vert\delta\Vert_{L^1}$ such that
\begin{equation}\label{eq:85}
\Psi(\tau_0) + \kappa_0\int_0^T\delta <\sup \Psi.
\end{equation}
Assume $\kappa<\kappa_0$. We deduce from \eqref{eq:84} and \eqref{eq:85} that there exists some $\overline{\tau}>0$ depending only on $\Psi(\tau_0)$ and $\Vert \delta\Vert_{L^1}$ such that
\begin{equation*}
\text{for all }t\in [0,T],\quad \tau_\kappa(t) \leq \overline{\tau}.
\end{equation*}

\begin{minipage}{0.7\textwidth}
\begin{center}
\begin{tikzpicture}
  \begin{axis}[
    name = monaxe,
    axis lines = middle,
    xlabel = {$y$},
    ylabel = {$\Psi(y)$},
    domain = 0:5,
    samples = 100,
    ymin = 0, ymax = 1.2,
    xmin = 0.05, xmax = 5,
    xtick = \empty,
    ytick = \empty,
    thick,
    every axis x label/.style = {at={(current axis.right of origin)}, anchor=north},
    every axis y label/.style = {at={(current axis.above origin)}, anchor=east},
    axis line style={->},
    ]
    \addplot[thick] {sqrt(1-exp(-x))};
    \addplot[thin, dashed] {1};
    \addplot[thin] coordinates {(0,0.95) ({-ln(1-0.95^2)},0.95)};
    \addplot[thin] coordinates {(0,{sqrt(1-exp(-0.5))}) (0.5,{sqrt(1-exp(-0.5))})};
    \addplot[thin] coordinates {(0.5, 0) (0.5, {sqrt(1-exp(-0.5))})};
    \addplot[thin] coordinates {({-ln(1-0.95^2)}, 0) ({-ln(1-0.95^2)}, 0.95)};
  \end{axis}
\node at (0.6,0) [anchor=north] {$\tau_0$};
\node at ({-1.35*ln(1-0.95^2)},0.05) [anchor=north] {$\overline{\tau}$};
\node at (1,5) [anchor=west] {$\sup \Psi$};
\node at (-2.85,4.5) [anchor=west] {$\kappa_0\int_0^T\delta + \Psi(\tau_0)$};
\node at (-1.2, 3) [anchor=west] {$\Psi(\tau_0)$};
\end{tikzpicture}
\end{center}
\end{minipage}%
\hfill
\begin{minipage}{0.25\textwidth}
\small\textbf{Figure 2:} Illustration of the proof of Lemma \ref{lem:17}.
\end{minipage}
\begin{minipage}{0.1\textwidth}
~
\end{minipage}

For the general case $D>0$, remark that we get from \eqref{eq:83}, for all $t\in [0,T]$,
\begin{equation*}
\frac{d}{dt}(e^{-Dt}\tau_\kappa(t))\leq \kappa\delta(t)\Phi(\tau_\kappa(t)) \leq \kappa\delta(t)\Phi(e^{DT}e^{-Dt}\tau_\kappa(t))
\end{equation*}
because $\Phi$ is non-decreasing. We can then apply the result obtained in the case $D=0$, with $\widetilde{\tau_\kappa}(t) = e^{-Dt}\tau_\kappa(t)$ and $\widetilde{\Phi} = \Phi(e^{DT}\cdot)$. We then get the existence of $\kappa_0$, $\overline{\widetilde{\tau}}$ depending only on $D$, $\Phi$, $\Vert \delta\Vert_{L^1}$, $\tau_0$ and $T$ such that
\begin{equation*}
\text{for all }0\leq\kappa < \kappa_0,\quad\text{for all }t\in [0,T],\quad e^{-Dt}\tau_\kappa(t) \leq \overline{\widetilde{\tau}},
\end{equation*}
hence
\begin{equation*}
\text{for all }0\leq\kappa < \kappa_0,\quad\text{for all }t\in [0,T],\quad \tau_\kappa(t) \leq \overline{\widetilde{\tau}}e^{DT} =:\overline{\tau}.
\end{equation*}
\end{proof}
\begin{prop}\label{prop:14}
Let us denote, for all $t\in [0,T]$,
\begin{equation*}
A_1(t) : = \int_0^1\sigma^2(t) + \mu\int_0^t\int_0^1 \frac{(\partial_x\sigma)^2}{\rho}
\end{equation*}
and
\begin{equation}\label{eq:86}
A_2(t) : = A_1(t) + \int_0^t A_1.
\end{equation}
There exists some $D = D({\cal E}, \overline{\rho_0}, T)>0$ and some function $\Phi:\R\mapsto \R$ depending only on ${\cal E},\overline{\rho_0},T$ and $\overline{\theta_0}$ such that
\begin{equation*}
\frac{d}{dt}A_2 \leq DA_2 + \kappa^2\left(\int_0^1 \frac{(\partial_x\theta)^2}{\theta^2}\right)\Phi(A_2).
\end{equation*}
\end{prop}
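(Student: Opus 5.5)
We start from the identity \eqref{eq:73}, which holds for every $\kappa\geq 0$:
\begin{equation*}
\frac{1}{2}\frac{d}{dt}\int_0^1 \sigma^2 +\mu\int_0^1 \frac{(\partial_x\sigma)^2}{\rho} = -\left(\gamma-\frac{1}{2}\right)\int_0^1 \sigma^2\partial_x u +(\gamma-1)\kappa\int_0^1(\partial_x\sigma)(\partial_x\theta).
\end{equation*}
The first term on the right has already been handled in \eqref{eq:75}. That argument uses only \eqref{eq:28} and the bound $\overline{\rho}$ of Proposition \ref{prop:4}, and both are independent of $\kappa$. So that term is bounded by a constant $D_0({\cal E},\overline{\rho_0},T)$ times $\int_0^1\sigma^2$, plus a small multiple of $\mu\int_0^1(\partial_x\sigma)^2/\rho$, which we absorb.

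The conduction term is the only new point. We cannot use $\kappa\int(\partial_x\theta)^2$ as in \eqref{eq:76}, since its bound degenerates. Instead we write $\partial_x\theta=\theta\,\partial_x\theta/\theta$ and apply Young's inequality:
\begin{equation*}
(\gamma-1)\kappa\left\vert\int_0^1\partial_x\sigma\,\partial_x\theta\right\vert\leq \frac{\mu}{4}\int_0^1\frac{(\partial_x\sigma)^2}{\rho}+\frac{(\gamma-1)^2\overline{\rho}}{\mu}\,\kappa^2\Vert\theta\Vert_\infty^2\int_0^1\frac{(\partial_x\theta)^2}{\theta^2}.
\end{equation*}
It remains to bound $\Vert\theta\Vert_\infty(t)$ by a nondecreasing function of $A_2(t)$, uniformly in $\kappa$. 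Here we redo the proof of Proposition \ref{prop:8} on $[0,t]$ rather than $[0,T]$. Inequality \eqref{eq:61} on $[0,t]$, with $k\to\infty$, gives
\begin{equation*}
\Vert\theta\Vert_\infty(t)\leq\Big(\overline{\theta_0}+C\int_0^t\Vert\sigma\Vert_\infty^2\Big)\exp\Big(C\sqrt{T}\big(\textstyle\int_0^t\Vert\sigma\Vert_\infty^2\big)^{1/2}\Big).
\end{equation*}
By the Gagliardo--Nirenberg inequality \eqref{eq:53}, $\int_0^t\Vert\sigma\Vert^2_\infty\leq 2\int_0^t\int_0^1\sigma^2+\int_0^t\int_0^1(\partial_x\sigma)^2$. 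The first integral is at most $\int_0^tA_1$. The second is at most $\overline{\rho}A_1(t)/\mu$, since $\rho\leq\overline{\rho}$. Hence $\int_0^t\Vert\sigma\Vert_\infty^2\leq C A_2(t)$. This is precisely why $A_2$ includes the term $\int_0^tA_1$. We therefore obtain $\Vert\theta\Vert_\infty(t)^2\leq\Phi_0(A_2(t))$, with $\Phi_0$ nondecreasing and depending only on ${\cal E},\overline{\rho_0},T,\overline{\theta_0}$.

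Combining the two estimates gives
\begin{equation*}
\frac{d}{dt}A_1\leq 2D_0\int_0^1\sigma^2+\kappa^2\Big(\int_0^1\frac{(\partial_x\theta)^2}{\theta^2}\Big)\Phi(A_2),
\end{equation*}
where we used $\frac{d}{dt}\big(\mu\int_0^t\int_0^1(\partial_x\sigma)^2/\rho\big)=\mu\int_0^1(\partial_x\sigma)^2/\rho$. The absorbed fractions leave a nonnegative dissipation on the left, which we drop. Since $\int_0^1\sigma^2\leq A_1\leq A_2$ and $\frac{d}{dt}A_2=\frac{d}{dt}A_1+A_1$, we get $\frac{d}{dt}A_2\leq(2D_0+1)A_2+\kappa^2\big(\int_0^1(\partial_x\theta)^2/\theta^2\big)\Phi(A_2)$, which is the claim with $D=2D_0+1$.

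The main obstacle is the temperature step. One must check that the Hoff-type $L^\infty$ bound on $\theta$ (Proposition \ref{prop:8}) can be localized in time. It must also produce a function of $A_2$ alone, with no $\kappa$ dependence and without invoking the a priori assumption \eqref{eq:51}. Since that proof only drops the nonnegative conduction contribution, it works for every $\kappa\geq0$.
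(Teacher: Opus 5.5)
Your route is the paper's own proof of this proposition. You use \eqref{eq:77} with $\kappa\int_0^1(\partial_x\theta)^2\leq\kappa\Vert\theta\Vert_\infty^2\int_0^1(\partial_x\theta)^2/\theta^2$, then the bound \eqref{eq:88} on $\theta$ obtained from \eqref{eq:59} as $k\to\infty$, then \eqref{eq:89}, i.e. $\int_0^t\Vert\sigma\Vert_\infty^2\leq\max(2,\overline{\rho}/\mu)A_2(t)$. The absorption and the final bookkeeping are fine. There is, however, a gap in exactly the temperature step you single out as the main obstacle. It is inherited from \eqref{eq:58}, which the paper's proof also relies on.

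In the $L^k$ argument the heating term is $\frac{1}{\mu}\int_0^1\sigma^2\beta'(\theta)$, with no factor $\rho$. Since $1/k-1<0$, bounding $\int_0^1\theta^{k-1}\big(\int_0^1\rho\cv\theta^k\big)^{1/k-1}$ requires a \emph{lower} bound on $\int_0^1\rho\cv\theta^k$, hence on $\inf_{\T}\rho$, not $\overline{\rho}$. Indeed, for $\theta\equiv1$, nonconstant $\rho$ and $k\geq2$, the left side of \eqref{eq:58} equals $(\cv{\cal M})^{1/k-1}>(\cv\overline{\rho})^{1/k-1}$. The correct limit $k\to\infty$ is
\begin{equation*}
\Vert\theta\Vert_\infty(t)\leq\overline{\theta_0}\,e^{\int_0^t a}+\int_0^t\frac{\Vert\sigma\Vert_\infty^2(s)}{\mu\cv\inf_{\T}\rho(s)}\,e^{\int_s^t a}\,ds,\qquad a=\frac{R}{\mu\cv}\Vert\sigma\Vert_\infty.
\end{equation*}
This reflects that viscous heating per unit mass is $\sigma^2/(\mu\rho)$. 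Here \eqref{eq:51} is not assumed, so Proposition \ref{prop:9} is unavailable, and your constant $C$ cannot depend only on ${\cal E},\overline{\rho_0},T$.

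The fix stays within your strategy. Insert \eqref{eq:46}, namely $1/\inf_{\T}\rho(s)\leq H_3(1+TM(s))$ with $M(t)=\sup_{[0,t]}\Vert\theta\Vert_\infty$, and apply Grönwall to $M$. This bounds $M(t)$ by a nondecreasing function of $\int_0^t\Vert\sigma\Vert_\infty^2$, hence of $A_2(t)$, so the differential inequality still holds. However, $\Phi$ then also depends on $\underline{\rho_0}$ through $H_3$, and so do $\kappa_0$ and $H_4^0$ in Proposition \ref{prop:13}. That is harmless for Theorem \ref{thm:2}, but it is not the dependence you (and the statement) claim.
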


\begin{proof}
In the proof of Proposition \ref{prop:12}, we showed that
\begin{align}
\frac{1}{2}\frac{d}{dt}\int_0^1\sigma^2 + \frac{\mu}{2}\int_0^1\frac{(\partial_x\sigma)^2}{\rho} &\leq \frac{16(\gamma-1/2)^2{\cal E}}{\mu}\left(1+ \frac{12^3}{2}{\cal E}\overline{\rho}\frac{(\gamma-1/2)^2}{\mu^2}\right)\int_0^1\sigma^2\nonumber\\&+ \frac{(\gamma-1)^2\kappa\overline{\rho}}{\mu}\kappa\int_0^1(\partial_x\theta)^2.\tag{\ref{eq:77}} \nonumber\end{align}
Hence
\begin{align}\label{eq:87}
\frac{d}{dt}A_1 &\leq \frac{32(\gamma-1/2)^2{\cal E}}{\mu}\left(1+ \frac{12^3}{2}{\cal E}\overline{\rho}\frac{(\gamma-1/2)^2}{\mu^2}\right)A_1+ \frac{2(\gamma-1)^2\overline{\rho}}{\mu}\sup_{[0,t]\times\T}\theta^2\kappa^2\int_0^1\frac{(\partial_x\theta)^2}{\theta^2}.
\end{align}
Besides, passing to the limit $k\rightarrow +\infty$ in \eqref{eq:59} and taking the supremum in $t$, we obtain
\begin{equation}\label{eq:88}
\sup_{[0,t]\times\T}\theta \leq \overline{\theta_0}\exp\left(\int_0^t \frac{R}{\mu\cv}\Vert\sigma\Vert_\infty\right) + \frac{1}{\mu\overline{\rho}\cv}\int_0^t \Vert\sigma\Vert_\infty^2(s)\exp\left(\int_s^t \frac{R}{\mu\cv}\Vert\sigma\Vert_\infty\right).
\end{equation}
Note that by the Gagliardo-Nirenberg inequality \eqref{eq:53} and Young's inequality, wet get for $t\in [0,T]$,
\begin{equation}\label{eq:89}
\int_0^t\Vert\sigma\Vert_\infty^2 \leq 2\int_0^t A_1 + \frac{\overline{\rho}}{\mu} A_1(t)\leq \max(2,\overline{\rho}/\mu)A_2(t).
\end{equation}
We deduce from \eqref{eq:88}, \eqref{eq:89} that for almost all $t\in [0,T]$,
\begin{equation*}
\sup_{[0,t]\times\T}\theta \leq \left(\overline{\theta_0} + \frac{1}{\mu\overline{\rho}\cv}\max(2,\overline{\rho}/\mu)A_2(t)\right)\exp\left(\frac{R\sqrt{\max(2,\overline{\rho}/\mu)}\sqrt{T}}{\mu\cv}\sqrt{A_2(t)}\right).
\end{equation*}
Finally, \eqref{eq:86}, \eqref{eq:87}, and $A_1\leq A_2$ give for almost all $t\in[0,T]$,
\begin{align}
\frac{d}{dt}A_2(t) &\leq \left[\frac{32(\gamma-1/2)^2{\cal E}}{\mu}\left(1+ \frac{12^3}{2}{\cal E}\overline{\rho}\frac{(\gamma-1/2)^2}{\mu^2}\right) + 1\right]A_2(t) \nonumber\\&+ \frac{2(\gamma-1)^2\overline{\rho}}{\mu}\left(\overline{\theta_0} + \frac{1}{\mu\overline{\rho}\cv}\max(2,\overline{\rho}/\mu)A_2(t)\right)^2\exp\left(\frac{2R\sqrt{\max(2,\overline{\rho}/\mu)}\sqrt{T}}{\mu\cv}\sqrt{A_2(t)}\right)\kappa^2\int_0^1\frac{(\partial_x\theta)^2}{\theta^2}. \nonumber\end{align}
Hence the result, denoting
\begin{equation*}
D : = \frac{32(\gamma-1/2)^2{\cal E}}{\mu}\left(1+ \frac{12^3}{2}{\cal E}\overline{\rho}\frac{(\gamma-1/2)^2}{\mu^2}\right) + 1,
\end{equation*}
and for all $y\in [0,+\infty[$,
\begin{equation*}
\Phi(y) = \frac{2(\gamma-1)^2\overline{\rho}}{\mu}\left(\overline{\theta_0} + \frac{1}{\mu\overline{\rho}\cv}\max(2,\overline{\rho}/\mu)y\right)^2\exp\left(\frac{2R\sqrt{\max(2,\overline{\rho}/\mu)}\sqrt{T}}{\mu\cv}\sqrt{y}\right).
\end{equation*}
\end{proof}
The proof of Proposition \ref{prop:13} is then obtained by combining Proposition \ref{prop:14} and Lemma \ref{lem:17}.
\begin{remark}
The proof is, of course, much easier in the special case $\kappa = 0$ (see \cite{Li20,BrBuGJLa24}).
\end{remark}

Combining Propositions \ref{prop:1}, \ref{prop:2}, \ref{prop:4}, \ref{prop:5}, \ref{prop:7}, \ref{prop:8}, \ref{prop:9}, \ref{prop:10} and \ref{prop:12}, we obtain the bounds \eqref{eq:12}--\eqref{eq:14} for Theorem \ref{thm:1}. Using moreover Proposition \ref{prop:14}, we get the bounds \eqref{eq:12}--\eqref{eq:14} for Theorem \ref{thm:2}. In the last sub-section, we prove \eqref{eq:15}.

\subsection{Second Hoff energy}
\begin{prop}
\label{prop:15}There exists some $H_8 = H_8(C_0, \overline{\kappa}, \underline{\rho_0}, \overline{\rho_0}, T, \underline{\theta_0}, \overline{\theta_0})>0$ such that
\begin{equation}\label{eq:90}
\sup_{0\leq t\leq T} w \int_0^1 \kappa(\partial_x\theta)^2(t) + \int_0^T w \int_0^1 [\partial_x(\kappa\partial_x\theta)]^2\leq H_8.
\end{equation}
\end{prop}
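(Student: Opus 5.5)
The plan is an energy estimate on the temperature equation, tested against $-w\,\partial_x(\kappa\partial_x\theta)$. Every constant must stay bounded as $\kappa\to0$, so I will use only the uniform bounds \eqref{eq:12}--\eqref{eq:14} already obtained (Propositions \ref{prop:4}, \ref{prop:5}, \ref{prop:7}--\ref{prop:10}, together with \ref{prop:12} or \ref{prop:13} according to the size of $\kappa$), and the trivial inequality $\kappa\leq\overline{\kappa}$. The case $\kappa=0$ is empty, so assume $\kappa>0$.

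\textbf{Step 1: the differential inequality.} Using \eqref{eq:1}, I rewrite \eqref{eq:5} in nonconservative form and divide by $\rho$:
\begin{equation*}
\cv\,\partial_t\theta + \cv\, u\,\partial_x\theta - \frac{\partial_x(\kappa\partial_x\theta)}{\rho} = \frac{\sigma\,\partial_x u}{\rho}.
\end{equation*}
I multiply by $-\partial_x(\kappa\partial_x\theta)$ and integrate over $\T$. Since $\kappa$ is constant, integrating by parts gives $-\int_0^1\partial_t\theta\,\partial_x(\kappa\partial_x\theta)=\frac{1}{2}\frac{d}{dt}\int_0^1\kappa(\partial_x\theta)^2$. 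The convective term is
\begin{equation*}
-\cv\int_0^1 u\,\partial_x\theta\,\kappa\partial_x^2\theta=\frac{\cv}{2}\int_0^1\kappa\,\partial_x u\,(\partial_x\theta)^2 .
\end{equation*}
Hence
\begin{equation*}
\frac{\cv}{2}\frac{d}{dt}\int_0^1\kappa(\partial_x\theta)^2+\int_0^1\frac{[\partial_x(\kappa\partial_x\theta)]^2}{\rho}
\leq \frac{\cv}{2}\Vert\partial_x u\Vert_\infty\int_0^1\kappa(\partial_x\theta)^2+\left|\int_0^1\frac{\sigma\,\partial_x u}{\rho}\,\partial_x(\kappa\partial_x\theta)\right|.
\end{equation*}

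\textbf{Step 2: absorbing the source term.} By Young's inequality, the last term is at most
\begin{equation*}
\frac12\int_0^1\frac{[\partial_x(\kappa\partial_x\theta)]^2}{\rho}+\frac{1}{2\underline{\rho}}\Vert\sigma\Vert_\infty^2\int_0^1(\partial_x u)^2 .
\end{equation*}
The first piece is absorbed into the left-hand side. The second is at most $\frac{H_5}{2\underline{\rho}}\Vert\sigma\Vert_\infty^2$ by \eqref{eq:63}, and this is integrable in time by \eqref{eq:52}.

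\textbf{Step 3: the weight and Gr\"onwall.} Multiplying by $w$ produces the additional term $\frac{\cv}{2}w'\int_0^1\kappa(\partial_x\theta)^2$. Since $0\le w'\leq\mathds{1}_{[0,1]}$, its time integral is controlled by \eqref{eq:62}, i.e. by $H_1\overline{\theta}^2$, which is independent of $\kappa$. Gr\"onwall's lemma then applies with the factor $\exp(\int_0^T\Vert\partial_x u\Vert_\infty)$, bounded through \eqref{eq:64} and Cauchy--Schwarz. Because $w(0)=0$, no initial value of $\kappa(\partial_x\theta)^2$ enters; this matters since $\theta_0$ is only in $L^\infty$. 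Finally, the lower bound $1/\rho\geq1/\overline{\rho}$ turns the dissipation $\int_0^1[\partial_x(\kappa\partial_x\theta)]^2/\rho$ into the $L^2$ norm in \eqref{eq:90}.

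\textbf{Main obstacle.} The delicate point is uniformity in $\kappa$. Each constant must come from bounds that Theorem \ref{thm:2} makes independent of $\kappa$, namely $\underline{\rho},\overline{\rho},\overline{\theta},H_4,H_5$ and $H_1$. In particular, one must not use the crude control of $\int_0^t\Vert\theta\Vert_\infty$ from Proposition \ref{prop:11}, since it blows up as $\kappa\to0$. A secondary issue is justifying the multiplication and integrations by parts; this is done at the level of the strong solutions, which is legitimate since $\theta\in H^2$ for $t>0$ when $\kappa>0$.
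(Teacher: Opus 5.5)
Your proposal is correct and is essentially the paper's argument. Testing the temperature equation, divided by $\rho$, against $-\partial_x(\kappa\partial_x\theta)$ is the same computation as the paper's (apply $\partial_x$, multiply by $\kappa\partial_x\theta$, integrate by parts), and your Young absorption, treatment of the $w'$ term through \eqref{eq:62}, use of $w(0)=0$, and Gr\"onwall step with the $\kappa$-uniform bounds of \eqref{eq:14} all match the paper's proof.
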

\begin{proof}
Dividing \eqref{eq:5} by $\rho$ then applying $\partial_x$, we get
\begin{equation}\label{eq:91}
\cv\partial_x D_t \theta = \partial_x\left(\frac{\sigma\partial_x u}{\rho}\right) + \partial_x\left(\frac{\partial_x(\kappa\partial_x\theta)}{\rho}\right).
\end{equation}
Using the formula 
\begin{equation*}
D_t (\partial_x \theta) = \partial_x D_t \theta - (\partial_x u)\partial_x \theta,
\end{equation*}
then multiplying \eqref{eq:91} by $\kappa\partial_x\theta$, we obtain 
\begin{equation}\label{eq:92}
\frac{\cv\kappa}{2} D_t (\partial_x\theta)^2 + \kappa\cv(\partial_x u)(\partial_x\theta)^2 = \left[\partial_x\left(\frac{\sigma\partial_x u}{\rho}\right)\right]\kappa\partial_x\theta + \left[\partial_x\left(\frac{\partial_x(\kappa\partial_x\theta)}{\rho}\right)\right]\kappa\partial_x\theta.
\end{equation}
Then, integrating \eqref{eq:92} over the torus, we get, using integration by parts,
\begin{equation*}
\frac{\cv}{2}\frac{d}{dt}\int_0^1\kappa(\partial_x\theta)^2 + \int_0^1 \frac{[\partial_x(\kappa\partial_x\theta)]^2}{\rho} = -\frac{\cv}{2}\int_0^1(\partial_x u)\kappa(\partial_x\theta)^2 - \int_0^1 \frac{\sigma\partial_x u}{\rho}\partial_x (\kappa\partial_x\theta).
\end{equation*}
Hence, by Young's inequality,
\begin{align}
\cv\frac{d}{dt}\int_0^1\kappa(\partial_x\theta)^2 + \int_0^1 \frac{[\partial_x(\kappa\partial_x\theta)]^2}{\rho} &\leq \cv\int_0^1 \vert\partial_x u\vert \kappa(\partial_x\theta)^2 + \int_0^1 \frac{(\sigma\partial_x u)^2}{\rho} \nonumber\\&\leq \cv \Vert\partial_x u\Vert_\infty \int_0^1 \kappa(\partial_x\theta)^2 + \frac{\Vert\sigma\Vert_\infty^2}{\underline{\rho}}\int_0^1 (\partial_x u)^2. \label{eq:93}\end{align}
Multiplying by $w(t)=\min(1,t)$ this last equation, we get since $w\leq 1$,
\begin{align}
\cv\frac{d}{dt} \left(w \int_0^1\kappa(\partial_x\theta)^2\right) &+ w\int_0^1 \frac{[\partial_x(\kappa\partial_x\theta)]^2}{\rho} \nonumber\\&\leq \cv\int_0^1\kappa(\partial_x\theta)^2 + \cv \Vert\partial_x u\Vert_\infty w\int_0^1\kappa(\partial_x\theta)^2 + \frac{\Vert\sigma\Vert_\infty^2}{\underline{\rho}}\int_0^1 (\partial_x u)^2. \nonumber\end{align}
Finally, by Grönwall's lemma, for all $t\in [0,T]$,
\begin{align}
\cv w \int_0^1 \kappa(\partial_x\theta)^2(t) &+ \int_0^t w \int_0^1 \frac{[\partial_x(\kappa\partial_x\theta)]^2}{\rho} \nonumber\\&\leq \exp\left(\int_0^t \Vert\partial_x u\Vert_\infty\right)\left(\cv\int_0^t\int_0^1\kappa(\partial_x\theta)^2 + \frac{1}{\underline{\rho}}\int_0^t \Vert\sigma\Vert_\infty^2 \sup_{0\leq t\leq T}\int_0^1(\partial_x u)^2(t) \right). \nonumber\end{align}
Hence, using \eqref{eq:14},
\begin{equation*}
w \int_0^1 \kappa(\partial_x\theta)^2(t) + \int_0^t w \int_0^1 [\partial_x(\kappa\partial_x\theta)]^2\leq \max(1/\cv,\overline{\rho})\exp(\sqrt{T}\sqrt{C_1})\left(\cv C_1 + \frac{1}{\underline{\rho}}C_1^2\right).
\end{equation*}
\end{proof}
\begin{prop}
\label{prop:16}
There exists some $H_9 = H_9(C_0, \overline{\kappa}, \underline{\rho_0}, \overline{\rho_0}, T, \underline{\theta_0}, \overline{\theta_0})>0$ such that
\begin{equation}\label{eq:94}
\sup_{0\leq t \leq T} w(t)\int_0^1(\partial_x\sigma)^2 + \int_0^t w \int_0^1 (\partial_t\sigma)^2\leq H_9.
\end{equation}
\end{prop}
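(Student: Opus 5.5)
The plan is to run the same weighted energy method as in Proposition \ref{prop:15}, now on the parabolic equation \eqref{eq:9} for $\sigma$. I would multiply \eqref{eq:9} by $\partial_t\sigma$ and integrate over the torus, which should produce the time derivative of $\frac{\mu}{2}\int_0^1 (\partial_x\sigma)^2/\rho$ together with the dissipative term $\int_0^1(\partial_t\sigma)^2$. This requires an integration by parts in the diffusion term:
\begin{equation*}
-\mu\int_0^1\partial_x\left(\frac{\partial_x\sigma}{\rho}\right)\partial_t\sigma = \frac{\mu}{2}\frac{d}{dt}\int_0^1\frac{(\partial_x\sigma)^2}{\rho} - \frac{\mu}{2}\int_0^1(\partial_x\sigma)^2\,\partial_t\left(\frac{1}{\rho}\right).
\end{equation*}
The commutator term is handled with $\partial_t(1/\rho) = \partial_x(u/\rho)$, which follows from \eqref{eq:1}. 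Integrating by parts once more leaves expressions like $\int_0^1 u(\partial_x\sigma)(\partial_x^2\sigma)/\rho$. Alternatively, one can simply multiply by $D_t\sigma$ instead of $\partial_t\sigma$, which is cleaner. In that case $\int_0^1 (\partial_t\sigma)^2$ is recovered at the end from $\partial_t\sigma = D_t\sigma - u\partial_x\sigma$, using $\sup u^2\le C_1$ from \eqref{eq:14} and $\int_0^T\int_0^1(\partial_x\sigma)^2\le C_1$.

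With the multiplier $D_t\sigma$, the remaining terms on the right-hand side are as follows.
\begin{itemize}
\item The term $-\gamma\int_0^1\sigma(\partial_x u)D_t\sigma$ is bounded by $\frac14\int_0^1 (D_t\sigma)^2+C\Vert\sigma\Vert_\infty^2\int_0^1(\partial_x u)^2$. This is integrable in time by \eqref{eq:14}.
\item The conduction term $-(\gamma-1)\int_0^1\partial_x(\kappa\partial_x\theta)D_t\sigma$ is absorbed by Young into $\frac14\int_0^1 (D_t\sigma)^2 + C\int_0^1[\partial_x(\kappa\partial_x\theta)]^2$. After multiplying by $w$, the latter is controlled by Proposition \ref{prop:15}, i.e.\ \eqref{eq:90}. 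This is exactly why Proposition \ref{prop:15} was proven first.
\item The commutator from the diffusion term, after writing $\partial_x D_t\sigma = D_t\partial_x\sigma+\partial_x u\,\partial_x\sigma$, gives terms of the form $\int_0^1 \frac{\partial_x u}{\rho}(\partial_x\sigma)^2$. These are bounded by $C\Vert\partial_x u\Vert_\infty\int_0^1(\partial_x\sigma)^2$, with $\Vert\partial_x u\Vert_\infty\in L^2(0,T)\subset L^1(0,T)$ by \eqref{eq:14}, which is suitable for Grönwall.
\end{itemize}

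Then I would multiply the resulting differential inequality by $w$. The term $w'\int_0^1(\partial_x\sigma)^2/\rho$ is bounded by $\underline\rho^{-1}\int_0^1(\partial_x\sigma)^2$, integrable in time by \eqref{eq:14}. Grönwall with the factor $\exp(C\int_0^T\Vert\partial_x u\Vert_\infty)$ then yields \eqref{eq:94}, with constants depending only on $C_1$, $H_8$, $\underline\rho$ and $\overline\rho$. Hence everything is uniform in $\kappa\in[0,\overline\kappa]$, thanks to Theorem \ref{thm:2}'s bounds and Proposition \ref{prop:15}.

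The main obstacle I expect is the diffusion term with variable coefficient $1/\rho$. With the multiplier $D_t\sigma$, the transport part $u\partial_x\sigma$ produces, after integration by parts, a term like $\mu\int_0^1 \frac{\partial_x\sigma}{\rho}\partial_x(u\partial_x\sigma)$. This contains $\int_0^1 u\,\partial_x\sigma\,\partial_x^2\sigma/\rho$, which must be rewritten as $-\frac12\int_0^1(\partial_x\sigma)^2\partial_x(u/\rho)$. Here $\partial_x\rho$ appears, which is not controlled in $L^\infty$ in the Hoff framework. My first attempt would be to combine it with the $\partial_t(1/\rho)$ commutator. Since $D_t(1/\rho)=(\partial_x u)/\rho$ by \eqref{eq:1}, the $\partial_x\rho$ contributions should cancel exactly when one computes $\frac{d}{dt}\int_0^1(\partial_x\sigma)^2/\rho$ along $D_t$. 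Only $\partial_x u$ factors, bounded as above, should remain. Verifying this cancellation carefully is the crux.
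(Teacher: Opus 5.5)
Your proposal is correct and follows the paper's proof. The paper multiplies \eqref{eq:9} by $D_t\sigma$ and uses $D_t(\partial_x\sigma/\rho)=\partial_x(D_t\sigma)/\rho$ (a consequence of $D_t(1/\rho)=\partial_x u/\rho$) to turn the diffusion term into $\frac{\mu}{2}\frac{d}{dt}\int_0^1(\partial_x\sigma)^2/\rho$; it then absorbs the right-hand side by Young using \eqref{eq:14} and \eqref{eq:90}, multiplies by $w$, and recovers $\partial_t\sigma$ from $D_t\sigma-u\partial_x\sigma$.

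The cancellation you flag as the crux is exact, and not only for the $\partial_x\rho$ terms: the $\partial_x u\,(\partial_x\sigma)^2/\rho$ terms also drop out, so no Gr\"onwall step is needed (keeping it is harmless). Note also that the Eulerian identity is $\partial_t(1/\rho)+\partial_x(u/\rho)=2\partial_x u/\rho$, not $\partial_t(1/\rho)=\partial_x(u/\rho)$ as written in your first paragraph; this only affects the multiplier-$\partial_t\sigma$ route you set aside.
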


\begin{proof}
Multiplying \eqref{eq:9} by $D_t\sigma$, we obtain
\begin{equation}\label{eq:95}
(D_t\sigma)^2 - \mu\left[\partial_x\left(\frac{\partial_x\sigma}{\rho}\right)\right]D_t\sigma = -\gamma\sigma (D_t \sigma) \partial_x u -(\gamma-1)[\partial_x(\kappa\partial_x\theta)]D_t\sigma.
\end{equation}
Let us remark that using some integration by parts and  the equality
\begin{equation*}
D_t \left(\frac{\partial_x \sigma}{\rho}\right) = \frac{\partial_x D_t \sigma}{\rho},
\end{equation*}
we get
\begin{equation*}
\int_0^1 \left[\partial_x\left(\frac{\partial_x\sigma}{\rho}\right)\right]D_t\sigma = -\int_0^1\frac{\partial_x\sigma}{\rho}\partial_x D_t\sigma = -\int_0^1 \rho \frac{\partial_x \sigma}{\rho} D_t\left(\frac{\partial_x \sigma}{\rho}\right) = -\frac{1}{2}\frac{d}{dt}\int_0^1 \frac{(\partial_x\sigma)^2}{\rho}.
\end{equation*}
Thus, integrating \eqref{eq:95} on the torus, we obtain by integration by parts and by Young's inequality
\begin{align}
\frac{\mu}{2}\frac{d}{dt}\int_0^1\frac{(\partial_x\sigma)^2}{\rho} + \int_0^1 (D_t\sigma)^2 &= -\gamma\int_0^1 \sigma D_t\sigma(\partial_x u) - (\gamma-1)\int_0^1 [\partial_x(\kappa\partial_x\theta)]D_t\sigma \nonumber\\&\leq \gamma^2\Vert\partial_x u\Vert_\infty^2\int_0^1\sigma^2 + \frac{1}{4}\int_0^1(D_t\sigma)^2 + (\gamma-1)^2\int_0^1[\partial_x(\kappa\partial_x\theta)]^2 + \frac{1}{4}\int_0^1 (D_t\sigma)^2, \nonumber\end{align}
hence
\begin{equation}\label{eq:96}
\mu\frac{d}{dt}\int_0^1\frac{(\partial_x\sigma)^2}{\rho} + \int_0^1(D_t\sigma)^2 \leq 2\gamma^2\Vert\partial_x u\Vert_\infty^2\int_0^1\sigma^2 + 2(\gamma-1)^2\int_0^1[\partial_x(\kappa\partial_x\theta)]^2.
\end{equation}
Multiplying \eqref{eq:96} by $w$ then integrating on $[0,t]$ for $t\in [0,T]$, we get
\begin{equation}\label{eq:97}
\begin{split}
\mu w(t)\int_0^1 \frac{(\partial_x\sigma)^2}{\rho}(t) &+ \int_0^t w\int_0^1 (D_t\sigma)^2 \\&\leq\mu \int_0^t\int_0^1 \frac{(\partial_x\sigma)^2}{\rho} + 2\gamma^2\int_0^t\Vert\partial_x u\Vert_\infty^2\int_0^1\sigma^2 + 2(\gamma-1)^2\int_0^t w \int_0^1 [\partial_x(\kappa\partial_x\theta)]^2.
\end{split}
\end{equation}
Remark that, as
\begin{equation}\label{eq:98}
\text{for all }a,b\in\R,\quad (a+b)^2 \geq \frac{a^2}{2}-b^2,
\end{equation}
we have
\begin{equation}\label{eq:99}
\int_0^t w\int_0^1 (D_t\sigma)^2 \geq \frac{1}{2}\int_0^t w\int_0^1 (\partial_t\sigma)^2 - \int_0^1 w \int_0^t (u\partial_x\sigma)^2.
\end{equation}
From \eqref{eq:97} and \eqref{eq:99}, we get, passing to the supremum in $t\in [0,T]$, then using \eqref{eq:14} and \eqref{eq:90},
\begin{equation*}
\mu\sup_{0\leq t \leq T} w(t)\int_0^1\frac{(\partial_x\sigma)^2}{\rho} + \frac{1}{2}\int_0^T w \int_0^1 (\partial_t\sigma)^2\leq \frac{\mu}{\underline{\rho}}C_1 + 2\gamma^2 C_1^2 + 2(\gamma-1)^2H_8 + \left(\sup_{[0,T]\times\T} \vert u\vert\right)^2 C_1.
\end{equation*}
Then, finally,
\begin{equation*}
\sup_{0\leq t \leq T} w(t)\int_0^1(\partial_x\sigma)^2 + \int_0^t w \int_0^1 (\partial_t\sigma)^2\leq \max(2,\overline{\rho}/\mu)\left[\left(\frac{1}{\underline{\rho}} + 2\gamma^2C_1 + C_1\right)C_1 + 2(\gamma-1)^2H_8\right].
\end{equation*}
\end{proof}
And of course, \eqref{eq:90} and \eqref{eq:94} give \eqref{eq:15}.

\ 

We can now use these bounds to show Theorem \ref{thm:3} and Theorem \ref{thm:main}, as will be done in the next section.
\section{Compactness results}\label{sect:2}
This section is divided as follows: the first sub-section proves strong compactness of the velocity and the Cauchy stress, valid for both $\kappa = 0$ and $\kappa > 0$. Then, the second sub-section is devoted to the proof of Theorem \ref{thm:3}, the stability result in the case $\kappa=0$. Finally, the third sub-section proves Theorem \ref{prop:4prime}, a convergence result as $\kappa\rightarrow 0$. Let fix $T>0$. 
\subsection{\texorpdfstring{Strong compactness of the velocity and the Cauchy stress (for $\kappa\geq 0$)}{Strong compactness of the velocity and the Cauchy stress (for kappa greater than zero)}}
\begin{prop}
\label{prop:17}
Let $(\kappa_n)_n\subset \R_+$ a bounded sequence. Let $(\rho_0^n,u_0^n,\theta_0^n)_n \subset L^2(\T)^3$ a sequence of triplet satisfying \eqref{eq:16}--\eqref{eq:18} with $\underline{\rho_0},\overline{\rho_0},\underline{\theta_0},\overline{\theta_0}, C_0$ not depending on $n$. Consider $(\rho^n,u^n,\theta^n)$ the global "à la Hoff" solution of \eqref{eq:1}--\eqref{eq:3} with initial conditions $(\rho_0^n,u_0^n,\theta_0^n)$ and with conductivity $\kappa_n$. Then, denoting $\sigma^n$ the Cauchy stress, there exists $u$ and $\sigma\in L^2([0,T]\times\T)$ such that, up to a subsequence,
\begin{equation}\label{eq:100}
u^n \tend{n}{+\infty} u \quad \text{in }L^2([0,T]\times \T),
\end{equation}
\begin{equation}\label{eq:101}
\sigma^n \tend{n}{+\infty} \sigma \quad \text{in }L^2([0,T]\times \T).
\end{equation}
\end{prop}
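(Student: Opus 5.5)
The plan is to apply the Aubin--Lions--Simon lemma to both sequences, using only the bounds \eqref{eq:14}--\eqref{eq:15}. By Theorem \ref{thm:2} (or Theorem \ref{thm:1} together with Theorem \ref{thm:2}, since $(\kappa_n)$ is bounded by some $\overline{\kappa}$), these bounds hold with constants $C_1,C_2,\underline{\rho},\overline{\rho},\underline{\theta},\overline{\theta}$ that do not depend on $n$. The point is that $\overline{\kappa}$ is fixed and Theorem \ref{thm:2} gives constants independent of $\kappa\in[0,\overline{\kappa}]$, so no degeneration occurs when $\kappa_n\to 0$.

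\textbf{Velocity.} From \eqref{eq:14}, $(u^n)$ is bounded in $L^\infty(0,T;H^1(\T))$, because $\sup u^2\le C_1$ and $\sup_t\int(\partial_x u^n)^2\le C_1$. Also $(\partial_t u^n)$ is bounded in $L^2(0,T;L^2(\T))$. Since $H^1(\T)\hookrightarrow L^2(\T)$ compactly, Aubin--Lions gives relative compactness of $(u^n)$ in $C([0,T];L^2(\T))$, hence in $L^2([0,T]\times\T)$. This yields \eqref{eq:100}.

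\textbf{Cauchy stress.} From \eqref{eq:14}, $(\sigma^n)$ is bounded in $L^2(0,T;H^1(\T))$, since $\sup_t\int(\sigma^n)^2\le C_1$ and $\int_0^T\int(\partial_x\sigma^n)^2\le C_1$. The time derivative $\partial_t\sigma^n$ is only controlled in the weighted form $\int_0^T w\int(\partial_t\sigma^n)^2\le C_2$ from \eqref{eq:15}, so I will localize away from $t=0$.

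For fixed $\varepsilon\in(0,\min(1,T))$, on $[\varepsilon,T]$ we have $w\ge\varepsilon$. Hence $(\partial_t\sigma^n)$ is bounded in $L^2(\varepsilon,T;L^2(\T))$, and Aubin--Lions gives compactness of $(\sigma^n)$ in $L^2([\varepsilon,T]\times\T)$. The strip $[0,\varepsilon]$ is handled uniformly by
\begin{equation*}
\int_0^\varepsilon\int_0^1(\sigma^n)^2\le \varepsilon C_1.
\end{equation*}
Taking $\varepsilon=1/k$ and a diagonal extraction, we obtain a subsequence that converges in $L^2([1/k,T]\times\T)$ for every $k$. The uniform smallness on $[0,1/k]$ then shows that this subsequence is Cauchy in $L^2([0,T]\times\T)$, which gives \eqref{eq:101}. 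The same diagonal subsequence can be taken for $u^n$.

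An alternative that avoids localization is to bound $\partial_t\sigma^n$ in a negative Sobolev space directly from \eqref{eq:9}. The terms $u\partial_x\sigma$, $\partial_x(\partial_x\sigma/\rho)$, $\sigma\partial_x u$ and $\partial_x(\kappa\partial_x\theta)$ are each bounded in $L^1(0,T;H^{-1})$ or better, using $\kappa_n(\partial_x\theta^n)^2$ bounded in $L^1$ and $\kappa_n\le\overline{\kappa}$. Aubin--Lions--Simon with $H^1\subset\subset L^2\subset H^{-1}$ then also applies.

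The main obstacle is the lack of an unweighted bound on $\partial_t\sigma^n$ near $t=0$. The $\varepsilon$-splitting above resolves it, so I expect the argument to be routine.
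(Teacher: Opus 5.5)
Your proof is correct. For the velocity it matches the paper. For the Cauchy stress your main route is different.

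\textbf{Velocity.} Your Aubin--Lions argument in $C([0,T];L^2(\T))$ and the paper's Rellich--Kondrachov argument in $H^1([0,T]\times\T)$ rest on the same two bounds, on $\partial_x u^n$ and $\partial_t u^n$.

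\textbf{Cauchy stress, paper's route.} The paper never uses the weighted bound \eqref{eq:15}.
\begin{enumerate}
\item It reads off from \eqref{eq:9} that each term of $\partial_t\sigma^n$ is bounded in $L^2(0,T;H^{-1}(\T))$: $u^n\partial_x\sigma^n$, $\mu\partial_x(\partial_x\sigma^n/\rho^n)$, $\gamma\sigma^n\partial_x u^n$ and $(\gamma-1)\partial_x(\kappa_n\partial_x\theta^n)$. This uses only \eqref{eq:12}--\eqref{eq:14} and $\sup_n\kappa_n<\infty$.
\item It applies Aubin--Lions with $L^2\subset H^{-1}$ compact, which gives strong convergence in $L^2(0,T;H^{-1}(\T))$.
\item It pairs this with weak convergence in $L^2(0,T;H^1(\T))$. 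The $L^2$ norms then converge, hence the convergence is strong in $L^2$.
\end{enumerate}
This is essentially the alternative you sketch at the end. Your direct use of $H^1\subset L^2\subset H^{-1}$, with the first embedding compact, is a slightly shorter packaging of the same idea.

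\textbf{Cauchy stress, your route.} You replace the equation for $\sigma$ by the second Hoff estimate.
\begin{enumerate}
\item Since $w\geq\varepsilon$ on $[\varepsilon,T]$, \eqref{eq:15} gives an unweighted $L^2$ bound on $\partial_t\sigma^n$ there.
\item The initial layer is handled by the uniform smallness $\int_0^\varepsilon\int_0^1(\sigma^n)^2\leq\varepsilon C_1$ together with a diagonal extraction.
\end{enumerate}
This is valid because Theorem \ref{thm:2} makes $C_2$ independent of $n$.

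\textbf{Comparison.} Your route gives a slightly stronger conclusion away from $t=0$: compactness in $C([\varepsilon,T];L^2)$, since \eqref{eq:15} also bounds $\sigma^n$ in $L^\infty(\varepsilon,T;H^1)$. The price is the heavier a priori input of Propositions \ref{prop:15} and \ref{prop:16}, including the uniform control of $w\,[\partial_x(\kappa\partial_x\theta)]^2$. The paper's route needs only the first-level bounds \eqref{eq:14} and the structure of \eqref{eq:9}. It also works uniformly up to $t=0$ with no localization.
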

\begin{proof}
We know from Theorem \ref{thm:2} that $(\partial_x u^n)$ is bounded in $L^2([0,T]\times\T)$ and $(\partial_t u^n)$ is bounded in $L^2([0,T]\times \T)$, hence $(u^n)$ is bounded in $H^1([0,T]\times\T)$, then \eqref{eq:100} holds by the Rellich–Kondrachov theorem.

Beside, from Theorem \ref{thm:2}, $(\partial_x\sigma^n)$ is bounded in $L^2([0,T]\times\T)$. Thus there exists some $\sigma\in L^2(0,T,H^1(\T))$ such that
\begin{equation}\label{eq:102}
\sigma^n \tendf{n}{+\infty} \sigma \quad\text{in }L^2(0,T,H^1(\T)).
\end{equation} 
Recall that
\begin{equation}\label{eq:103}
\partial_t \sigma^n = - u^n\partial_x\sigma^n + \mu \partial_x\left(\frac{\partial_x\sigma^n}{\rho^n}\right)  -\gamma\sigma^n\partial_x u^n - (\gamma - 1)\partial_x(\kappa_n\partial_x\theta^n).
\end{equation}
Moreover, for all $n\in\N$, using \eqref{eq:12}--\eqref{eq:14},
\begin{equation}\label{eq:104}
\int_0^T\int_0^1 (u^n\partial_x\sigma^n)^2\leq \left(\sup_{[0,T]\times\T} (u^n)^2\right)\int_0^T\int_0^1(\partial_x\sigma^n)^2 \leq C_1^2,
\end{equation}
\begin{equation*}
\int_0^T\int_0^1 (\sigma^n\partial_x u^n)^2\leq \sup_{0\leq t\leq T}\int_0^1(\partial_x u^n)^2\int_0^T\Vert\sigma^n\Vert_\infty^2 \leq C_1^2,
\end{equation*}
\begin{equation*}
\int_0^T\int_0^1 \mu^2\left(\frac{\partial_x \sigma^n}{\rho^n}\right)^2\leq \frac{\mu^2}{\underline{\rho}^2}C_1,
\end{equation*}
\begin{equation}\label{eq:105}
\int_0^T\int_0^1 (\gamma-1)^2(\kappa_n\partial_x\theta^n)^2\leq (\gamma-1)^2\left(\sup_n\kappa_n\right)C_1.
\end{equation}
Recall that $(\kappa_n)$ is assumed to be bounded. Finally, from \eqref{eq:103} and \eqref{eq:104}--\eqref{eq:105}, we get that $(\partial_t\sigma^n)$ is bounded in $L^2(0,T,H^{-1}(\T))$. As $(\sigma^n)$ is bounded in $L^2(0,T,L^2(\T))$ (from \eqref{eq:13}) and the injection $L^2(\T)\subset H^{-1}(\T)$ is compact, we get from the Aubin-Lions lemma
\begin{equation}\label{eq:106} 
\sigma^n \tend{n}{+\infty} \sigma \quad\text{in }L^2(0,T,H^{-1}(\T)).
\end{equation}
Combining \eqref{eq:102} and \eqref{eq:106}, we get
\begin{equation}\label{eq:107}
\int_0^T\int_0^1 (\sigma^n)^2 \tend{n}{+\infty}\int_0^T\int_0^1\sigma^2,
\end{equation}
and at last, \eqref{eq:101} using \eqref{eq:102} and \eqref{eq:107}.
\end{proof}

From now until the end of the section, if a sequence of functions $(f^n)$ has a limit in ${\cal D}'([0,T]\times\T)$ when $n\rightarrow +\infty$, we will denote this limit $\langle f\rangle$. 

%

\subsection{\texorpdfstring{The case $\kappa = 0$}{The case kappa = 0}}
 The following lemma is the fundamental ingredient in the proof of Theorem \ref{thm:3}.
\begin{lemma}\label{lemm:coucou}
Let $(\rho_0^n,u_0^n,\theta_0^n)_n \subset L^2(\T)^3$ be a sequence of triplets satisfying \eqref{eq:16}--\eqref{eq:18} with $\underline{\rho_0},\overline{\rho_0},\underline{\theta_0},\overline{\theta_0}, C_0$ independant of $n$. Consider $(\rho^n,u^n,\theta^n)$ the global "à la Hoff" solution of \eqref{eq:1}--\eqref{eq:3} with initial conditions $(\rho_0^n,u_0^n,\theta_0^n)$ and without conductivity. Assume moreover that
\begin{equation*}
\rho_0^n,\theta_0^n \tend{n}{+\infty} \rho_0,\theta_0\quad \text{ in }L^2(\T).
\end{equation*} 
Then, up to a subsequence,
\begin{equation}\label{eq:108}
\rho^n\theta^n \tend{n}{+\infty} \langle\rho\theta\rangle \quad \text{in }L^1([0,T]\times\T).
\end{equation}
\end{lemma}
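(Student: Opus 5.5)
The plan is to prove strong convergence of the pressure $p^n=R\rho^n\theta^n$, which is exactly \eqref{eq:108}, through a defect-measure (Lions--Feireisl type) argument on the quantity $q^n:=(p^n)^{1/\gamma}$. For $\kappa=0$, this quantity satisfies Wu's identity \eqref{eq:11}. By Theorem~\ref{thm:2}, uniformly in $n$ we have $\underline{\rho}\le\rho^n\le\overline{\rho}$ and $\underline{\theta}\le\theta^n\le\overline{\theta}$. Hence $q^n$ takes values in a fixed compact interval $[a,b]\subset\,]0,+\infty[$. The key observation is that $\partial_x u^n=(\sigma^n+(q^n)^\gamma)/\mu$. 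Therefore, for any $b\in C^2([a,b])$, renormalizing \eqref{eq:11} gives
\begin{equation*}
\partial_t b(q^n)+\partial_x\big(b(q^n)u^n\big)=G_b(\sigma^n,q^n),
\end{equation*}
where $G_b$ is an explicit smooth function, quadratic in $\sigma$. Explicitly,
\begin{equation*}
G_b(\sigma,q)=\big(b(q)-qb'(q)\big)\frac{\sigma+q^\gamma}{\mu}+\frac{\gamma-1}{\gamma\mu}\,b'(q)\,(\sigma+q^\gamma)^2q^{1-\gamma}.
\end{equation*}
The point is that the only weakly converging quantity on the right-hand side is $q^n$ itself, since $\partial_x u^n$ has been eliminated.

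Next we pass to the limit along a subsequence using Proposition~\ref{prop:17}. It gives $u^n\to u$ and $\sigma^n\to\sigma$ strongly in $L^2$. Together with the bound $\int_0^T\Vert\sigma^n\Vert_\infty^2\le C_1$, this lets us pass to the limit in the products $b(q^n)u^n$ and in $G_b(\sigma^n,q^n)$. The limit of $G_b(\sigma^n,q^n)$ is $\langle G_b(\sigma,q)\rangle$, an average over the Young measure of $q^n$ with $\sigma$ frozen. We apply this with $b(q)=q$ and $b(q)=q^2$. Separately, the limit equation for $\langle q\rangle$ is a transport equation with velocity $u\in L^2(0,T,H^1)$, so DiPerna--Lions renormalization applies and yields an equation for $\langle q\rangle^2$. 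Subtracting, the defect $d:=\langle q^2\rangle-\langle q\rangle^2\ge0$ satisfies
\begin{equation*}
\partial_t d+\partial_x(du)=\langle G_{q^2}(\sigma,q)\rangle-G_{q^2}(\sigma,\langle q\rangle)-2\langle q\rangle\big(\langle G_q(\sigma,q)\rangle-G_q(\sigma,\langle q\rangle)\big)+\mathcal{R},
\end{equation*}
where $\mathcal{R}$ collects the difference between the renormalized limit source and the true one. Each bracket has the form $\langle h(\sigma,q)\rangle-h(\sigma,\langle q\rangle)$, with $h$ smooth in $q$ on $[a,b]$. By a second-order Taylor expansion around $\langle q\rangle$, the first-order terms vanish because $\langle q-\langle q\rangle\rangle=0$. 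The remainder is bounded by $\sup_{[a,b]}|\partial_q^2h|\,d/2\le C(1+|\sigma|^2)\,d$.

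Integrating over $\T$ then gives $\frac{d}{dt}\int_0^1 d\le C(1+\Vert\sigma\Vert_\infty^2)\int_0^1 d$, and $1+\Vert\sigma\Vert_\infty^2\in L^1(0,T)$ by \eqref{eq:14}. For the initial value, $p_0^n=R\rho_0^n\theta_0^n\to R\rho_0\theta_0$ in $L^1$, since the factors are bounded and converge in $L^2$. Hence $q_0^n$ converges strongly, so $d(0)=0$. Making this rigorous requires weak time-continuity of $\langle q\rangle$ and $\langle q^2\rangle$, which follows from the equations. Gr\"onwall then yields $d\equiv0$. Thus $q^n\to\langle q\rangle$ strongly in $L^2$, and since $q^n$ is uniformly bounded, $p^n=(q^n)^\gamma$ converges strongly in $L^1$. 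This gives \eqref{eq:108}.

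I expect the main obstacle to be the bookkeeping in the defect equation. One has to check that all terms involving $\sigma$ combine into differences $\langle h(\sigma,q)\rangle-h(\sigma,\langle q\rangle)$, with no leftover linear term in $q-\langle q\rangle$ multiplied by a non-strongly-converging factor. One also has to justify the renormalization of the limit equation with only $\sigma\in L^2(0,T,L^\infty)$. I would first try to handle both points by writing everything in terms of the single function $F(\sigma,q)$ defined by $D_tq=F(\sigma,q)$ in nonconservative form, and checking that the defect source is exactly $\langle 2qF\rangle-2\langle q\rangle\langle F\rangle$ minus the divergence correction.
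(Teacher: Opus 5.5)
Your argument is correct, but it takes a different route from the paper.

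\textbf{The paper's route.} It never estimates an oscillation defect. It passes to the limit in Wu's identity \eqref{eq:109} directly, keeping the source $(\partial_x u^n)^2(p^n)^{1/\gamma-1}$ as it is. The strong convergence of $\sigma^n$ is used only to identify the limit pressure equation \eqref{eq:114}, so that $\langle p\rangle^{1/\gamma}$ satisfies \eqref{eq:115}. The paper then uses the joint convexity of $(y,z)\mapsto y^2z^{1/\gamma-1}$ to get $\langle(\partial_x u)^2p^{1/\gamma-1}\rangle\ge(\partial_x u)^2\langle p\rangle^{1/\gamma-1}$. Integrating gives $\int_0^1\langle p^{1/\gamma}\rangle\ge\int_0^1\langle p\rangle^{1/\gamma}$. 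Strict concavity of $z\mapsto z^{1/\gamma}$ gives the reverse inequality pointwise, which forces equality and hence strong convergence.

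\textbf{Your route.} You use $\sigma^n$ as an effective viscous flux to eliminate $\partial_x u^n$ altogether, so the only weakly converging argument left in the source is $q^n$. You then close a Gr\"onwall estimate on $d=\langle q^2\rangle-\langle q\rangle^2$ through Young measures. The bookkeeping you were worried about does close. In the limit $\partial_x u=(\sigma+\langle q^\gamma\rangle)/\mu$, so your $\mathcal R$ equals $\frac{\langle q\rangle^2}{\mu}\big(\langle q^\gamma\rangle-\langle q\rangle^\gamma\big)$, again a Jensen defect. The whole defect source can be written as
\[
\big\langle 2(q-\langle q\rangle)\,G_q(\sigma,q)\big\rangle-\frac{\sigma}{\mu}\,d-\frac{1}{\mu}\big\langle (q^2-\langle q\rangle^2)\,q^\gamma\big\rangle .
\]
Inside the first and third brackets you may subtract $G_q(\sigma,\langle q\rangle)$ and $2\langle q\rangle^{1+\gamma}$ respectively, since $\langle q-\langle q\rangle\rangle=0$. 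After that, each term is bounded by $C(1+\sigma^2)\,d$, with $C$ depending only on the interval $[a,b]$.

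\textbf{What each buys.} The paper's argument is shorter and one-sided: no second-order expansion and no Gr\"onwall. The price is that it relies on a specific convexity property of Wu's source term. Yours needs no convexity at all; it would work for any smooth source in $(\sigma,q)$, and it gives a quantitative propagation of the defect. However, it leans essentially on strong compactness of the Cauchy stress. The remark after the lemma deliberately downplays that ingredient, with an eye toward higher dimensions. Both routes need the same justification of renormalization for the limit transport equation and of the initial trace $d(0)=0$.
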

\begin{remark}
An other proof of this result can be found in \cite{BrBuGJLa24}. However, the proof that we give here is more elegant because it makes greater use of the structure of the Navier-Stokes equations without conductivity rather than strong convergence on the Cauchy stress. In particular, this proof could be adapted in dimension greater than 1.
\end{remark}

\begin{proof}[Proof of Lemma \ref{lemm:coucou}]
Let us recall the identity, for all $n\in\N$,
\begin{equation}\label{eq:109}
\partial_t ({(p^n)}^{1/\gamma}) + \partial_x({(p^n)}^{1/\gamma} u^n) = \frac{\gamma-1}{\gamma}\mu(\partial_x u^n)^2 {(p^n)}^{1/\gamma - 1}
\end{equation}
where $p^n=R\rho^n\theta^n$. From \eqref{eq:12},\eqref{eq:13} and \eqref{eq:14}, $({(p^n)}^{1/\gamma})$ is bounded in $L^\infty([0,T]\times\T)$ and $((\partial_x u^n)^2{(p^n)}^{1/\gamma-1})$ is bounded in $L^2([0,T], L^2(\T))$. Thus these two sequences converge (up to a subsequence) in ${\cal D}'([0,T]\times\T)$, and from \eqref{eq:109} and the strong convergence \eqref{eq:100} of $(u^n)$, we get
\begin{equation}\label{eq:110}
\partial_t \langle p^{1/\gamma}\rangle + \partial_x(\langle p^{1/\gamma}\rangle u) = \frac{\gamma-1}{\gamma}\mu \langle (\partial_x u)^2 p^{1/\gamma-1}\rangle.
\end{equation}
As $(\rho^n)$ and $(p^n)$ are bounded in $L^\infty([0,T]\times\T)$, they converge (up to a subsequence) in $L^\infty([0,T]\times\T)-\star$. 
Passing now to the limit in \eqref{eq:1}, \eqref{eq:2}, we obtain using \eqref{eq:100}
\begin{equation}
\partial_t \langle\rho\rangle + \partial_x (\langle\rho\rangle u)= 0, \label{eq:111}
\end{equation}
\begin{equation}
\partial_t (\langle \rho\rangle u) + \partial_x (\langle\rho\rangle u^2) = \partial_x(\mu\partial_x u - \langle p \rangle). \label{eq:112}
\end{equation}
Moreover, using $p^n=\cv(\gamma-1)\rho^n\theta^n$, we get from \eqref{eq:5}, for all $n\in\N$,
\begin{equation}
\frac{1}{\gamma-1}\partial_t p^n + \frac{1}{\gamma-1}\partial_x(p^n u^n) = (\mu\partial_x u^n - p^n) \partial_x  u^n
 \label{eq:113}.
\end{equation}
As $(\partial_x u^n)$ is bounded in $L^2(0,T,L^2(\T))$, and as \eqref{eq:101} holds with
\begin{equation*}
\sigma = \mu\partial_x u - \langle p \rangle,
\end{equation*}
passing to the limit in \eqref{eq:113} we obtain
\begin{equation}
\frac{1}{\gamma-1}\partial_t \langle p\rangle + \frac{1}{\gamma-1}\partial_x(\langle p \rangle u) = (\mu\partial_x u - \langle p \rangle)\partial_x u.
\label{eq:114}
\end{equation}
Thus, from \eqref{eq:111}, \eqref{eq:112} and \eqref{eq:114}, $\left(\langle \rho\rangle, u, \dfrac{\langle p\rangle}{R\langle\rho\rangle}\right)$ is solution to $(NS_0)$. So we also have the equality
\begin{equation}\label{eq:115}
\partial_t \langle p\rangle^{1/\gamma} + \partial_x(\langle p \rangle^{1/\gamma} u) = \frac{\gamma-1}{\gamma}\mu(\partial_x u)^2 \langle p\rangle^{1/\gamma-1}.
\end{equation}
The crucial point now is that the application $\varphi:\R\times\R_+\rightarrow \R, (y,z)\mapsto y^2z^{1/\gamma-1}\text{ is convex}$.
Indeed, we have, for all $(y,z)\in \R\times \R_+$,
\[
D^2\varphi(y,z) = \begin{pmatrix}
2z^{1/\gamma-1} & 2\left(\dfrac{1}{\gamma}-1\right)yz^{1/\gamma-2}\\
2\left(\dfrac{1}{\gamma}-1\right)yz^{1/\gamma-2} & \left(\dfrac{1}{\gamma}-1\right)\left(\dfrac{1}{\gamma}-2\right)y^2z^{1/\gamma-3}
\end{pmatrix},
\]
with $2z^{1/\gamma-1} \geq 0$ and
\begin{equation*}
\det D^2\varphi(y,z) = \left[\left(\frac{1}{\gamma}-1\right)\left(\frac{1}{\gamma}-2\right) - 2\left(\frac{1}{\gamma}-1\right)^2\right]2y^2z^{2/\gamma-4} = \frac{2}{\gamma}\left(1-\frac{1}{\gamma}\right)y^2z^{2/\gamma-4}\geq 0
\end{equation*}
because $\gamma > 1$. From this convexity, we deduce
\begin{equation}\label{eq:116}
\frac{\gamma-1}{\gamma}\mu \langle (\partial_x u)^2p^{1/\gamma-1}\rangle \geq \frac{\gamma-1}{\gamma}\mu (\partial_x u)^2 \langle p\rangle^{1/\gamma-1}.
\end{equation}
Then \eqref{eq:110}, \eqref{eq:115} and \eqref{eq:116} give
\begin{equation}\label{eq:117}
\partial_t \langle p^{1/\gamma}\rangle + \partial_x(\langle p^{1/\gamma}\rangle  u) \geq  \partial_t(\langle p\rangle^{1/\gamma}) + \partial_x(\langle p\rangle^{1/\gamma}u).
\end{equation}
Because $\rho_0^n \tend{n}{+\infty} \rho_0$ and $\theta_0^n \tend{n}{+\infty} \theta_0$ in $L^2(\T)$, we have $\langle p_0^{1/\gamma}\rangle = \langle p_0\rangle^{1/\gamma}$.
Hence, fixing $t\in [0,T]$ and integrating \eqref{eq:117} over $[0,t]\times\T$, we finally obtain
\begin{equation}\label{eq:118}
\text{for almost all }t\in[0,T],\quad \int_0^1 \langle p^{1/\gamma}\rangle(t) \geq \int_0^1 \langle p\rangle^{1/\gamma}(t).
\end{equation}
Besides, as $\gamma>1$, $y\mapsto y^{1/\gamma}$ is strictly concave on $\R_+$, hence
\begin{equation}\label{eq:119}
\text{for almost all }(t,x)\in [0,T]\times \T,\quad \langle p^{1/\gamma}\rangle(t,x) \leq \langle p\rangle^{1/\gamma}(t,x).
\end{equation}
Combining \eqref{eq:118} and \eqref{eq:119}, we get
\begin{equation}\label{eq:120}
\text{for almost all }(t,x)\in [0,T]\times \T,\quad \langle p^{1/\gamma}\rangle(t,x) = \langle p\rangle^{1/\gamma}(t,x),
\end{equation}
and as the concavity of $y\mapsto y^{1/\gamma}$ is strict, \eqref{eq:120} implies (see \cite{NoSt}, Lemma 3.34)
\begin{equation*}
p^n \tend{n}{\infty} \langle p\rangle \quad \text{ in }L^1(0,T,L^1(\T)),
\end{equation*}
hence \eqref{eq:108} because $\langle p \rangle = R\langle \rho\theta\rangle$.
\end{proof}
We are now in position to prove Theorem \ref{thm:3}.
\begin{proof}[Proof of Theorem \ref{thm:3}]
From \eqref{eq:108} and the fact that $\rho^n\theta^n\in L^\infty([0,T]\times\T)$, we deduce by interpolation that
\begin{equation}\label{eq:121}
p^n \tend{n}{+\infty} \langle p \rangle \quad\text{in }L^2(0,T,L^2(\T)). 
\end{equation}
Combining \eqref{eq:121} and \eqref{eq:101}, we get
\begin{equation}\label{eq:122}
\partial_x u^n \tend{n}{+\infty}\partial_x u \quad\text{in }L^2(0,T,L^2(\T)).
\end{equation}
For $n\in\N$, multiplying now \eqref{eq:1} by $1+\ln\rho^n$, we get
\begin{equation}\label{eq:123}
\partial_t(\rho^n\ln\rho^n) + \partial_x((\rho^n\ln\rho^n) u^n) = -\rho\partial_x u^n.
\end{equation}
Passing to the limit in \eqref{eq:123} and using \eqref{eq:122}, we obtain
\begin{equation}\label{eq:124}
\partial_t \langle \rho\ln\rho\rangle + \partial_x(\langle\rho\ln\rho\rangle u) = -\langle\rho\rangle\partial_x u.
\end{equation}
Besides, multiplying \eqref{eq:111} by $1 + \ln \langle\rho\rangle$ gives
\begin{equation}\label{eq:125}
\partial_t(\langle\rho\rangle \ln\langle\rho\rangle) + \partial_x(\langle\rho\rangle \ln\langle\rho\rangle u) = -\langle\rho\rangle \partial_x u.
\end{equation}
Substracting now \eqref{eq:124} to \eqref{eq:125}, we obtain
\begin{equation}\label{eq:126}
\partial_t (\langle\rho\rangle \ln\langle\rho\rangle) + \partial_x(\langle\rho\rangle \ln\langle\rho\rangle u) =  \partial_t\langle \rho\ln\rho\rangle  + \partial_x(\langle \rho\ln\rho\rangle u).
\end{equation}
Because $\rho_0^n\tend{n}{+\infty}\rho_0$ in $L^2(\T)$, we get
\begin{equation}\label{eq:127}
\langle\rho_0\rangle\ln\langle\rho_0\rangle = \langle\rho_0\ln\rho_0\rangle.
\end{equation}
As $ u\in L^2(0,T,W^{1,\infty}(\T))$ (from \eqref{eq:14}), \eqref{eq:126} and \eqref{eq:127} give
\begin{equation}\label{eq:128}
\text{for almost all }(t,x)\in [0,T]\times \T,\quad \langle \rho\ln\rho\rangle(t,x) = \langle \rho\rangle \ln\langle\rho\rangle(t,x).
\end{equation}
As $y\mapsto y\ln y$ is strictly convex, \eqref{eq:128} implies that (see \cite{NoSt}, Lemma 3.34)
\begin{equation}\label{eq:129}
\rho^n \tend{n}{+\infty} \langle \rho \rangle \quad \text{in }L^1([0,T]\times\T).
\end{equation}
This convergence is always true in $L^2([0,T]\times\T)$ because $\rho^n\in L^\infty([0,T]\times\T)$. Combining \eqref{eq:108} and \eqref{eq:129}, we get
\begin{equation*}
\theta^n \tend{n}{+\infty} \langle \theta \rangle \quad \text{in }L^1([0,T]\times\T),
\end{equation*}
convergence similarly still true in $L^2([0,T]\times\T)$. In particular,
\begin{equation*}
\langle \rho\theta\rangle = \langle\rho\rangle \langle\theta\rangle.
\end{equation*}
Thus $\langle p \rangle/(R\langle \rho\rangle) = \langle\theta\rangle$, and $(\langle \rho\rangle, u, \langle\theta\rangle)$ is solution to $(NS_0)$.
\end{proof}

\subsection{\texorpdfstring{Convergence as $\kappa$ tends to $0$ for well-prepared initial data}{Convergence as kappa tends to zero for well-prepared initial data}}
In this subsection, we show that Theorem \ref{thm:main} holds under additional conditions for the initial data. 
Let's start by showing the following result of regularity propagation over time :
\begin{prop}
\label{prop:18}
Let $(\rho_0,u_0,\theta_0)$ satisfying \eqref{eq:16}--\eqref{eq:18} and $(\rho,u,\theta)$ the global "à la Hoff" solution of $(NS_\kappa)$, with $\kappa\leq \overline{\kappa}$, and with initial conditions $\rho_0,u_0,\theta_0$. Assume moreover that there exists some constants $\alpha>0$ and $D_0>0$ such that
\begin{equation}
\int_0^1 \kappa^\alpha (\partial_x \theta_0)^2 + \int_0^1 \kappa^\alpha (\partial_x \rho_0)^2 \leq D_0.\label{eq:130}
\end{equation}
Then there exists some $D_1 = D_1(C_0, D_0, \overline{\kappa}, \underline{\rho_0}, \overline{\rho_0}, T, \underline{\theta_0}, \overline{\theta_0})>0$ such that
\begin{equation}\label{eq:131}
\text{for all }t\in [0,T],\quad \int_0^1 \kappa^\alpha (\partial_x \theta)^2(t) + \int_0^1 \kappa^\alpha (\partial_x \rho)^2(t) + \kappa^{\alpha-1}\int_0^t\int_0^1 [\partial_x(\kappa\partial_x\theta)]^2\leq D_1.
\end{equation}
\end{prop}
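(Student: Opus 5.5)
The plan is to run the computation of Proposition \ref{prop:15} again, without the time weight $w$ and with multiplier $\kappa^{\alpha}\partial_x\theta$ instead of $\kappa\partial_x\theta$. I will couple it with a similar estimate on $\partial_x\rho$ and close both by Grönwall's lemma, using only the $\kappa$-uniform bounds \eqref{eq:12}--\eqref{eq:14} of Theorem \ref{thm:2}.

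\emph{Step 1: the estimate for $\theta$.} Multiply \eqref{eq:91} by $\kappa^\alpha\partial_x\theta$ and integrate over $\T$. Using $D_t(\partial_x\theta)=\partial_xD_t\theta-(\partial_xu)\partial_x\theta$, the left side becomes $\frac{\cv}{2}\frac{d}{dt}\int_0^1\kappa^\alpha(\partial_x\theta)^2$ plus a term bounded by $\Vert\partial_x u\Vert_\infty\int_0^1\kappa^\alpha(\partial_x\theta)^2$. The conduction term, after one integration by parts, gives the good dissipation
\begin{equation*}
-\kappa^{\alpha-1}\int_0^1\frac{[\partial_x(\kappa\partial_x\theta)]^2}{\rho}.
\end{equation*}
The source term must \emph{not} be integrated by parts. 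Doing so, as in Proposition \ref{prop:15}, would produce $\kappa^{\alpha-1}\int_0^1(\sigma\partial_xu)^2/\rho$, which blows up as $\kappa\to0$ when $\alpha<1$. Instead I expand
\begin{equation*}
\partial_x\left(\frac{\sigma\partial_x u}{\rho}\right)=\frac{\partial_x\sigma\,\partial_x u}{\rho}+\frac{\sigma\partial_x^2u}{\rho}-\frac{\sigma\partial_xu\,\partial_x\rho}{\rho^2},
\qquad
\mu\partial_x^2u=\partial_x\sigma+R\theta\partial_x\rho+R\rho\partial_x\theta .
\end{equation*}
This is exactly why $\partial_x\rho$ enters the statement. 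Each resulting product with $\kappa^\alpha\partial_x\theta$ is handled by Cauchy--Schwarz and Young, using $\underline\rho\le\rho\le\overline\rho$ and $\theta\le\overline\theta$. The outcome is a bound of the form
\begin{equation*}
C\bigl(1+\Vert\partial_xu\Vert_\infty+\Vert\sigma\Vert_\infty\bigr)^2\Bigl(\int_0^1\kappa^\alpha(\partial_x\theta)^2+\int_0^1\kappa^\alpha(\partial_x\rho)^2\Bigr)+\overline{\kappa}^{\alpha}\int_0^1(\partial_x\sigma)^2 ,
\end{equation*}
where $\kappa^\alpha\le\overline\kappa^\alpha$ is used on the $\partial_x\sigma$ terms.

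\emph{Step 2: the estimate for $\rho$.} Differentiate \eqref{eq:1} in $x$ to get
\begin{equation*}
D_t(\partial_x\rho)+2\partial_xu\,\partial_x\rho+\rho\partial_x^2u=0 ,
\end{equation*}
and substitute the expression of $\partial_x^2u$ above. Multiplying by $\kappa^\alpha\partial_x\rho$ and integrating, the term $R\rho\theta(\partial_x\rho)^2/\mu$ has a good sign and can be dropped. The remaining terms are bounded by
\begin{equation*}
C(1+\Vert\partial_xu\Vert_\infty)\int_0^1\kappa^\alpha(\partial_x\rho)^2+C\int_0^1\kappa^\alpha(\partial_x\theta)^2+C\overline\kappa^\alpha\int_0^1(\partial_x\sigma)^2 .
\end{equation*}

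\emph{Step 3: Grönwall.} Set $Y(t)=\int_0^1\kappa^\alpha[(\partial_x\theta)^2+(\partial_x\rho)^2](t)$. Adding Steps 1 and 2 gives
\begin{equation*}
Y'+c\,\kappa^{\alpha-1}\int_0^1[\partial_x(\kappa\partial_x\theta)]^2\le a(t)Y+b(t),
\end{equation*}
with $a=C(1+\Vert\partial_xu\Vert_\infty+\Vert\sigma\Vert_\infty)^2$ and $b=C\int_0^1(\partial_x\sigma)^2$. By \eqref{eq:14}, both $a$ and $b$ lie in $L^1(0,T)$ with norms depending only on $C_0,\overline\kappa$, the initial bounds and $T$. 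Since $Y(0)\le D_0$ by \eqref{eq:130}, Grönwall's lemma gives \eqref{eq:131}.

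The main obstacle I expect is the source term in Step 1. It must be treated in the non-integrated-by-parts form so that no negative power of $\kappa$ appears, and one has to check that every resulting term carries a coefficient integrable in time. The delicate term is $\kappa^\alpha\sigma\,\partial_x\theta\,\partial_x\sigma/\rho$. I would bound it by $\Vert\sigma\Vert_\infty^2\int_0^1\kappa^\alpha(\partial_x\theta)^2+\overline\kappa^\alpha\int_0^1(\partial_x\sigma)^2$, which is fine because $\Vert\sigma\Vert_\infty^2\in L^1(0,T)$. Finally, the computation is formal for à la Hoff solutions; I would justify it on strong solutions with regularized data and pass to the limit using the stability at fixed $\kappa$.
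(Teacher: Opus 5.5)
Your proposal is correct and follows essentially the same route as the paper. You differentiate the continuity and temperature equations, test them against $\kappa^\alpha\partial_x\rho$ and $\kappa^\alpha\partial_x\theta$, keep the source $\partial_x(\sigma\partial_x u/\rho)$ in expanded rather than integrated-by-parts form, drop the good-sign term $\frac{R}{\mu}\int_0^1\kappa^\alpha\rho\theta(\partial_x\rho)^2$, and close by Grönwall using \eqref{eq:12}--\eqref{eq:14}. The only difference is cosmetic: the paper expands the source via $\sigma\partial_x u/\rho=\sigma^2/(\mu\rho)+R\sigma\theta/\mu$, whereas you use the product rule with $\mu\partial_x^2u=\partial_x\sigma+R\theta\partial_x\rho+R\rho\partial_x\theta$, and the two expansions coincide once $\partial_x u=(\sigma+R\rho\theta)/\mu$ is substituted.
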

\begin{proof}
Applying $\partial_x$ to \eqref{eq:1}, we get
\begin{equation}\label{eq:132}
\partial_t \partial_x \rho + \partial_x(u\partial_x \rho) = -\partial_x(\rho\partial_x u).
\end{equation}
Then multiplying \eqref{eq:132} by $\partial_x \rho$ we obtain
\begin{equation*}
\frac{1}{2}\partial_t (\partial_x \rho)^2 + \frac{1}{2}\partial_x(u(\partial_x \rho)^2) = -\frac{1}{2}(\partial_x u)(\partial_x \rho)^2 -(\partial_x(\rho\partial_x u))\partial_x \rho,
\end{equation*}
hence
\begin{equation}\label{eq:133}
\frac{1}{2}\partial_t (\partial_x\rho)^2 + \frac{1}{2}\partial_x(u(\partial_x\rho)^2) = -\frac{3}{2}(\partial_x u)(\partial_x\rho)^2 - \rho(\partial_x\rho)\partial_{xx}u.
\end{equation}
On the other hand we have
\begin{equation}\label{eq:134}
\mu\partial_{xx}u = \partial_x\sigma +R\partial_x(\rho\theta) = \partial_x\sigma + R\theta\partial_x\rho + R\rho\partial_x\theta. 
\end{equation}
Thus \eqref{eq:134} in \eqref{eq:133} gives
\begin{equation}\label{eq:135}
\frac{1}{2}\partial_t (\partial_x\rho)^2 + \frac{1}{2}\partial_x(u(\partial_x\rho)^2) = -\frac{3}{2}(\partial_x u)(\partial_x\rho)^2 - \frac{1}{\mu}\rho(\partial_x\rho)(\partial_x\sigma) - \frac{R}{\mu}\rho\theta(\partial_x\rho)^2 - \frac{R}{\mu}\rho^2(\partial_x\rho)(\partial_x\theta). 
\end{equation}
Multiplying \eqref{eq:135} by $\kappa^\alpha$ then  
integrating on the torus we get by Young's inequality
\begin{align}
&\frac{1}{2}\frac{d}{dt}\int_0^1 \kappa^\alpha(\partial_x\rho)^2 + \frac{R}{\mu}\int_0^1 \kappa^\alpha\rho\theta(\partial_x\rho)^2 \nonumber\\&= -\frac{3}{2}\int_0^1 \kappa^\alpha(\partial_x u)(\partial_x\rho)^2 -\frac{1}{\mu}\int_0^1 \kappa^\alpha\rho (\partial_x\rho)(\partial_x \sigma) - \frac{R}{\mu}\int_0^1\kappa^\alpha \rho^2(\partial_x\rho)(\partial_x\theta)\nonumber
\\&\leq \frac{3}{2}\Vert\partial_x u\Vert_\infty\int_0^1 \kappa^\alpha (\partial_x\rho)^2 + \frac{\overline{\rho}^2\kappa^\alpha}{2\mu^2}\int_0^1 (\partial_x\sigma)^2 + \frac{1}{2}\int_0^1 \kappa^\alpha (\partial_x\rho)^2+ \frac{R\overline{\rho}^2}{2\mu}\int_0^1\kappa^\alpha (\partial_x\theta)^2 +\frac{R\overline{\rho}^2}{2\mu} \int_0^1 \kappa^\alpha (\partial_x\rho)^2\nonumber
\\&\leq \frac{1}{2}\left(3\Vert\partial_x u\Vert_\infty + \frac{R\overline{\rho}^2}{\mu} + 1\right)\int_0^1\kappa^\alpha (\partial_x\rho)^2 + \frac{R\overline{\rho}^2}{2\mu}\int_0^1 \kappa^\alpha(\partial_x\theta)^2 + \frac{\overline{\rho}^2\kappa^\alpha}{2\mu^2}\int_0^1(\partial_x\sigma)^2.\label{eq:136}
\end{align}
Besides, dividing \eqref{eq:5} by $\rho$ and applying $\partial_x$, we get
\begin{equation}\label{eq:137}
\partial_t(\cv\partial_x\theta) + \partial_x(u\cv\partial_x\theta) = \partial_x \left(\frac{\sigma\partial_x u}{\rho}\right) + \partial_x\left(\frac{\partial_{x}(\kappa\partial_x\theta)}{\rho}\right).
\end{equation}
Then multiplying \eqref{eq:137} by $\partial_x\theta/\cv$ we obtain
\begin{equation}\label{eq:138}
\frac{1}{2}\partial_t(\partial_x\theta)^2 + \frac{1}{2}\partial_x(u(\partial_x\theta)^2) = -\frac{1}{2}(\partial_x u)(\partial_x\theta)^2 + \frac{\partial_x\theta}{\cv}\partial_x\left(\frac{\sigma\partial_x u}{\rho}\right) + \frac{\partial_x\theta}{\cv} \partial_x\left(\frac{\partial_{x}(\kappa\partial_x\theta)}{\rho}\right).
\end{equation}
Moreover,
\begin{equation}\label{eq:139}
\partial_x \left(\frac{\sigma\partial_x u}{\rho}\right) = \frac{1}{\mu}\partial_x\left(\frac{\sigma^2}{\rho}\right) + \frac{R}{\mu}\partial_x(\sigma\theta) = \frac{2}{\rho\mu}\sigma\partial_x\sigma - \frac{\sigma^2}{\mu\rho^2}\partial_x\rho + \frac{R}{\mu}\theta\partial_x\sigma + \frac{R}{\mu}\sigma\partial_x\theta.
\end{equation}
Then \eqref{eq:139} in \eqref{eq:138} gives
\begin{equation}\label{eq:140}
\begin{split}
&\frac{1}{2}\partial_t(\partial_x\theta)^2 + \frac{1}{2}\partial_x(u(\partial_x\theta)^2) -\frac{\partial_x\theta}{\cv}\partial_x\left(\frac{\partial_{x}(\kappa\partial_x\theta)}{\rho}\right) \\& = -\frac{1}{2}(\partial_x u)(\partial_x\theta)^2 + \frac{2}{\cv\mu}\frac{\sigma}{\rho}(\partial_x\sigma)(\partial_x\theta) - \frac{1}{\cv\mu}\frac{\sigma^2}{\rho^2}(\partial_x\rho)(\partial_x\theta) + \frac{R}{\cv\mu}\theta(\partial_x\sigma)(\partial_x\theta) + \frac{R\sigma}{\cv\mu}(\partial_x\theta)^2.
\end{split}
\end{equation}
Multiplying \eqref{eq:140} by $\kappa^\alpha$ then integrating on the torus, we get after some integration by parts and by Young's inequality

\begin{align}
&\frac{1}{2}\frac{d}{dt}\int_0^1 \kappa^\alpha (\partial_x\theta)^2 + \frac{\kappa^{\alpha-1}}{\cv}\int_0^1 \frac{[\partial_x(\kappa\partial_x\theta)]^2}{\rho} \nonumber\\& = -\frac{1}{2}\int_0^1\kappa^\alpha(\partial_x u)(\partial_x\theta)^2 + \frac{2}{\cv\mu}\int_0^1\kappa^\alpha \frac{\sigma}{\rho}(\partial_x\sigma)(\partial_x\theta) - \frac{1}{\cv\mu}\int_0^1 \kappa^\alpha \frac{\sigma^2}{\rho^2}(\partial_x\rho)(\partial_x\theta) \nonumber
\\&+ \frac{R}{\cv\mu}\int_0^1\kappa^\alpha \theta (\partial_x\sigma)(\partial_x\theta) + \frac{R}{\cv\mu}\int_0^1 \kappa^\alpha\sigma (\partial_x\theta)^2\nonumber
\\&\leq \frac{1}{2}\Vert \partial_x u\Vert_\infty \int_0^1\kappa^\alpha (\partial_x\theta)^2 + \frac{2\kappa^\alpha}{\cv^2\mu^2\underline{\rho}^2}\int_0^1 (\partial_x\sigma)^2 + \frac{\Vert\sigma\Vert_\infty^2}{2}\int_0^1\kappa^\alpha (\partial_x\theta)^2  
+ \frac{1}{2\cv\mu\underline{\rho}^2} \Vert \sigma\Vert_\infty^2\int_0^1\kappa^\alpha (\partial_x\rho)^2\nonumber \\&+\frac{1}{2\cv\mu\underline{\rho}^2}\Vert\sigma\Vert_\infty^2 \int_0^1 \kappa^\alpha(\partial_x\theta)^2
+\frac{R^2\overline{\theta}^2\kappa^\alpha}{2\cv^2\mu^2}\int_0^1 (\partial_x\sigma)^2 + \frac{1}{2}\int_0^1\kappa^\alpha (\partial_x\theta)^2 + \frac{R}{\cv\mu}\Vert\sigma\Vert_\infty\int_0^1\kappa^\alpha(\partial_x\theta)^2 \nonumber
\\&\leq \frac{1}{2\cv\mu\underline{\rho}^2}\Vert\sigma\Vert_\infty^2\int_0^1\kappa^\alpha(\partial_x\rho)^2 + \frac{1}{2}\left(\Vert \partial_x u\Vert_\infty + \left(1+\frac{1}{\cv\mu\underline{\rho}^2}\right)\Vert\sigma\Vert_\infty^2 + 1 + \frac{2R}{\cv\mu}\Vert\sigma\Vert_\infty\right)\int_0^1\kappa^\alpha(\partial_x\theta)^2 \nonumber
\\&+ \left(\frac{2}{\underline{\rho}^2} + \frac{R^2\overline{\theta}^2}{2}\right)\frac{\kappa^\alpha}{\cv^2\mu^2}\int_0^1(\partial_x\sigma)^2.\label{eq:141}
\end{align}
Combining \eqref{eq:136} and \eqref{eq:141} give
\begin{align*}
&\frac{d}{dt}\int_0^1 \kappa^\alpha(\partial_x\rho)^2 + \frac{d}{dt}\int_0^1 \kappa^\alpha (\partial_x\theta)^2 + \frac{2R}{\mu}\int_0^1 \kappa^\alpha\rho\theta(\partial_x\rho)^2 + \frac{2}{\cv}\kappa^{\alpha-1}\int_0^1 \frac{[\partial_x(\kappa\partial_x\theta)]^2}{\rho}
\\&\leq \left(\overline{\rho}^2\cv^2+ \frac{4}{\underline{\rho}^2}+ R^2\overline{\theta}^2\right)\frac{\kappa^\alpha}{\cv^2\mu^2}\int_0^1 (\partial_x\sigma)^2 + \left(3\Vert \partial_x u\Vert_\infty + \frac{R\overline{\rho}^2}{\mu} + 1 + \frac{1}{\cv\mu\underline{\rho}^2}\Vert\sigma\Vert_\infty^2\right)\int_0^1 \kappa^\alpha(\partial_x\rho)^2 \\&+ \left(\frac{R\overline{\rho}^2}{\mu} + 1 + \Vert \partial_x u\Vert_\infty + \left(1 + \frac{1}{\cv\mu\underline{\rho}^2}\right)\Vert\sigma\Vert_\infty^2 + \frac{2R}{\cv\mu}\Vert\sigma\Vert_\infty\right)\int_0^1\kappa^\alpha (\partial_x\theta)^2   
\end{align*}
hence, by Grönwall's lemma, for all $t\in [0,T]$,
\begin{equation}\label{eq:142}
\int_0^1 \kappa^\alpha(\partial_x\rho)^2(t) + \int_0^1\kappa^\alpha(\partial_x\theta)^2(t) + \frac{2}{\cv}\kappa^{\alpha-1}\int_0^t\int_0^1\frac{[\partial_x(\kappa\partial_x\theta)]^2}{\rho} \leq \left(D_0 + \int_0^t G_1(s)\right)\exp\left(\int_0^t G_2(s)\right) 
\end{equation}
where
\begin{equation*}
G_1 : = \left(\overline{\rho}^2\cv^2+ \frac{4}{\underline{\rho}^2}+ R^2\overline{\theta}^2\right)\frac{\kappa^\alpha}{\cv^2\mu^2}\int_0^1 (\partial_x\sigma)^2,
\end{equation*}
\begin{equation*}
G_2 := 1 + \frac{R\overline{\rho}^2}{\mu} + 3\Vert\partial_x u\Vert_\infty + \left(1+\frac{1}{\cv\mu\underline{\rho}^2}\right)\Vert\sigma\Vert_\infty^2 + \frac{2R}{\cv\mu}\Vert\sigma\Vert_\infty.
\end{equation*}
Using \eqref{eq:60} and \eqref{eq:12}--\eqref{eq:14}, we obtain
\begin{equation}\label{eq:143}
\int_0^T G_1\leq \left(\overline{\rho}^2\cv^2+ \frac{4}{\underline{\rho}^2} + R^2\overline{\theta}^2\right)\frac{\kappa^\alpha}{\cv^2\mu^2}C_1,
\end{equation}
\begin{equation}\label{eq:144}
\int_0^T G_2 \leq \left(1 + \frac{R\overline{\rho}^2}{\mu}\right)T + \left(3 + \frac{2R}{\cv\mu}\right)\sqrt{T}\sqrt{C_1} + \left(1+\frac{1}{\cv\mu \underline{\rho}^2}\right)C_1.
\end{equation}
Finally, combining \eqref{eq:142}, \eqref{eq:143} and \eqref{eq:144}, we get for all $t\in [0,T]$
\begin{equation*}
\begin{split}
\int_0^1 \kappa^\alpha(\partial_x\rho)^2(t) &+ \int_0^1\kappa^\alpha(\partial_x\theta)^2(t) + \kappa^{\alpha-1}\int_0^t\int_0^1 [\partial_x(\kappa\partial_x\theta)]^2(s)ds \\&\leq \max(1, \cv\overline{\rho}/2)\left(D_0 + \left(\overline{\rho}^2\cv^2+ \frac{4}{\underline{\rho}^2} R^2\overline{\theta}^2\right)\frac{\kappa^\alpha}{\cv^2\mu^2}C_1\right)\\&\times\left(\left(1 + \frac{R\overline{\rho}^2}{\mu}\right)T + \left(3 + \frac{2R}{\cv\mu}\right)\sqrt{T}\sqrt{C_1} + \left(1+\frac{1}{\cv\mu \underline{\rho}^2}\right)C_1\right).
\end{split}
\end{equation*}
\end{proof}
The following proposition is a weaker version of Theorem \ref{thm:main}.
\begin{prop}\label{prop:4prime}
Let $({\kappa_n})\subset [0,+\infty[$ some sequence tending towards zero when $n$ tends towards infinity. Let $(\rho_0^{\kappa_n},u_0^{\kappa_n},\theta_0^{\kappa_n})\in L^2(\T)^3$. Assume that $(\rho_0^{\kappa_n},u_0^{\kappa_n},\theta_0^{\kappa_n})$ verifies  \eqref{eq:16}--\eqref{eq:18} and \eqref{eq:130} for some $\alpha<1$, with $\underline{\rho_0},\overline{\rho_0}, \underline{\theta_0},\overline{\theta_0}, C_0, D_0$ independant of $n$. Assume moreover that there exists some $(\rho_0,u_0, \theta_0)\in L^2(\T)\times H^1(\T)\times L^2(\T)$ such that
\begin{equation*}
\rho_0^{{\kappa_n}},\theta_0^{{\kappa_n}} \tend{n}{+\infty} \rho_0,\theta_0\quad \text{ in }L^2(\T),\quad u_0^{{\kappa_n}} \tendf{n}{+\infty} u_0\quad \text{ in } H^1(\T).
\end{equation*} 
Then, for all $T>0$, there exists some $(\rho,u,\theta)\in L^2(0,T,L^2(\T))\times L^2(0,T,H^1(\T))\times L^2(0,T,L^2(\T))$ such that the "à la Hoff" solution $(\rho^{{\kappa_n}}, u^{{\kappa_n}},\theta^{{\kappa_n}})$ of $(NS_{{\kappa_n}})$ with initial conditions $\rho_0^{{\kappa_n}},\theta_0^{{\kappa_n}}, u_0^{{\kappa_n}}$ verifies
\begin{equation*}
\rho^{{\kappa_n}},\theta^{{\kappa_n}} \tend{n}{+\infty} \rho,\theta\quad \text{ in }L^2(0,T,L^2(\T)),\quad u^{{\kappa_n}} \tendf{n}{+\infty}u\quad \text{ in } L^2(0,T,H^1(\T)). 
\end{equation*}
Moreover, $(\rho,u,\theta)$ is the "à la Hoff" solution of $(NS_0)$ with initial conditions $(\rho_0,u_0,\theta_0)$.
\end{prop}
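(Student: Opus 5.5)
The plan is to combine the uniform bounds of Theorem \ref{thm:2} with the compactness of Proposition \ref{prop:17}, and then to obtain strong compactness of $\rho^{\kappa_n}$ and $\theta^{\kappa_n}$ from the propagated regularity of Proposition \ref{prop:18}. Since $\kappa_n\to 0$, the sequence is bounded by some $\overline{\kappa}$. Theorem \ref{thm:2} therefore gives bounds \eqref{eq:12}--\eqref{eq:15} with constants independent of $n$, and Proposition \ref{prop:17} gives, up to a subsequence, $u^{\kappa_n}\to u$ and $\sigma^{\kappa_n}\to\sigma$ strongly in $L^2([0,T]\times\T)$. In addition, $u^{\kappa_n}\rightharpoonup u$ weakly in $L^2(0,T,H^1(\T))$, and $\rho^{\kappa_n}$, $\theta^{\kappa_n}$ converge weakly-$\star$ in $L^\infty$ to $\langle\rho\rangle,\langle\theta\rangle$.

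The key difficulty is strong convergence of $p^{\kappa_n}$ (equivalently of $\partial_x u^{\kappa_n}$, since $\mu\partial_x u=\sigma+p$ and $\sigma^{\kappa_n}$ already converges strongly). I would imitate the proof of Lemma \ref{lemm:coucou} with the identity \eqref{eq:11} corrected by conduction. For $\kappa>0$, the equation for $p^{1/\gamma}$, derived as \eqref{eq:109} from \eqref{eq:5}, acquires an extra term
\begin{equation*}
\frac{\gamma-1}{\gamma}\,p^{1/\gamma-1}\,\partial_x(\kappa\partial_x\theta).
\end{equation*}
I need this to vanish in $\mathcal D'$. Proposition \ref{prop:18} gives $\kappa^{\alpha-1}\int_0^T\int_0^1[\partial_x(\kappa\partial_x\theta)]^2\le D_1$. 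Hence
\begin{equation*}
\Vert \partial_x(\kappa_n\partial_x\theta^{\kappa_n})\Vert_{L^2}^2\le D_1\kappa_n^{1-\alpha}\to 0,
\end{equation*}
because $\alpha<1$. Since $p^{1/\gamma-1}$ is bounded in $L^\infty$ by \eqref{eq:12}--\eqref{eq:13}, the extra term tends to $0$ strongly in $L^1$. This is the place where the hypothesis $\alpha<1$ and the well-preparedness \eqref{eq:130} are used, so I would write the renormalized $p^{1/\gamma}$ equation carefully there. The conduction term in \eqref{eq:113} tends to $0$ for the same reason. The limit equations \eqref{eq:110}--\eqref{eq:114} therefore hold exactly as for $\kappa=0$, and $(\langle\rho\rangle,u,\langle p\rangle/(R\langle\rho\rangle))$ solves $(NS_0)$ weakly. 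It is then an "à la Hoff" solution, since the uniform bounds pass to the limit by lower semicontinuity, so the identity \eqref{eq:115} is available.

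From here the convexity argument of Lemma \ref{lemm:coucou} applies verbatim. The function $(y,z)\mapsto y^2z^{1/\gamma-1}$ is convex, the initial data converge strongly, and $y\mapsto y^{1/\gamma}$ is strictly concave. Together these give $p^{\kappa_n}\to\langle p\rangle$ in $L^1$, hence in $L^2$ by $L^\infty$ bounds, and thus $\partial_x u^{\kappa_n}\to\partial_x u$ strongly. Then the $\rho\ln\rho$ argument from the proof of Theorem \ref{thm:3} yields $\rho^{\kappa_n}\to\langle\rho\rangle$ strongly; it only uses \eqref{eq:1}, which is unaffected by $\kappa$. Consequently $\theta^{\kappa_n}=p^{\kappa_n}/(R\rho^{\kappa_n})\to\langle\theta\rangle$ in $L^2$, using the lower bound on $\rho$.

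Finally, the limit is the "à la Hoff" solution of $(NS_0)$ with data $(\rho_0,u_0,\theta_0)$. By uniqueness for $\kappa=0$ \cite{BrBuGJLa24}, the whole sequence converges, not just a subsequence. As an alternative to the $p^{1/\gamma}$ argument, the uniform bound $\kappa^\alpha\int(\partial_x\rho)^2\le D_1$ does not give compactness uniformly as $\kappa\to0$, which is why I rely on the entropy-type convexity argument instead.
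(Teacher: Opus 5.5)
Your proposal is correct and follows essentially the same route as the paper. It takes the uniform bounds of Theorem \ref{thm:2} and the compactness of Proposition \ref{prop:17}, uses the $L^2$ vanishing of $\partial_x(\kappa_n\partial_x\theta^{\kappa_n})$ from Proposition \ref{prop:18} (via $\alpha<1$) to kill the extra conduction term in \eqref{eq:145}, and then reuses the convexity/strict-concavity argument of Lemma \ref{lemm:coucou} and the $\rho\ln\rho$ argument of Theorem \ref{thm:3}. Your extra justifications, namely why \eqref{eq:115} holds for the limit and why uniqueness for $(NS_0)$ gives convergence of the whole sequence, only make explicit what the paper leaves implicit.
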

\begin{proof} Let us repeat the proof of Theorem \ref{thm:3}. To begin with, let us show that \eqref{eq:108} is true. In the case with conductivity, \eqref{eq:11} becomes
\begin{equation}\label{eq:145}
\partial_t (p^{1/\gamma}) + \partial_x(p^{1/\gamma} u) = \frac{\gamma-1}{\gamma}\mu(\partial_x u)^2 p^{1/\gamma - 1} + \frac{\gamma-1}{\gamma}[\partial_x(\kappa\partial_x\theta)]p^{1/\gamma - 1}.
\end{equation}
And from \eqref{eq:131}, from any $n\in\N$ we have
\begin{equation*}
\int_0^T\int_0^1 [\partial_x({\kappa_n}\partial_x\theta^n)]^2\leq {\kappa_n}^{1-\alpha}D_1.
\end{equation*}
Thus, as $\alpha<1$ and ${\kappa_n}\tend{n}{+\infty}0$,
\begin{equation}\label{eq:146}
\partial_x({\kappa_n}\partial_x\theta^n) \tend{n}{+\infty} 0 \quad\text{in }L^2(0,T,L^2(\T)).
\end{equation}
Passing to the limit in \eqref{eq:145} and using \eqref{eq:146}, we get using Proposition \ref{prop:17} (as $(\kappa_n)$ is bounded)
\begin{equation*}
\partial_t \langle p^{1/\gamma}\rangle + \partial_x(\langle p^{1/\gamma}\rangle u) = \frac{\gamma-1}{\gamma}\mu \langle (\partial_x u)^2 p^{1/\gamma-1}\rangle.
\end{equation*}
Besides, using again \eqref{eq:146}, $\left(\langle \rho\rangle, \langle u\rangle, \dfrac{\langle p\rangle}{R\langle\rho\rangle}\right)$ is solution to $(NS_0)$. Thus
\begin{equation*}
\partial_t \langle p\rangle^{1/\gamma} + \partial_x(\langle p \rangle^{1/\gamma} u) = \frac{\gamma-1}{\gamma}\mu(\partial_x u)^2 \langle p\rangle^{1/\gamma-1},
\end{equation*}
and the end of the proof is the same that the proof of Theorem \ref{thm:3}.
\end{proof}
\begin{remark}\label{rmk:1}
Without the assumption \eqref{eq:130}, we can still prove that $\left(\langle \rho\rangle, \langle u\rangle, \dfrac{\langle p\rangle}{R\langle\rho\rangle}\right)$ is solution to $(NS_0)$. Indeed, as $\sqrt{\kappa}\partial_x\theta\in L^2(0,T,L^2(\T))$, we get
\begin{equation*}
\partial_x({\kappa_n}\partial_x\theta^n) \tend{n}{+\infty} 0 \quad \text{in }{\cal D}'([0,T]\times\T)
\end{equation*}
which is suffisiant to pass to the limit in \eqref{eq:5}. However, the strong convergence seems essential to pass to the limit in \eqref{eq:145}.
\end{remark}

\section{Stability results for fixed values of \texorpdfstring{$\kappa$}{kappa} and conclusions.}\label{sect:3}
\subsection{More bounds for well-prepared data}
\begin{prop}\label{prop:pullnoel}
Let $(\rho_0,u_0,\theta_0)$ satisfying \eqref{eq:16}--\eqref{eq:18} and $(\rho,u,\theta)$ be the global "à la Hoff" solution of $(NS_\kappa)$, with $0<\kappa\leq \overline{\kappa}$, with initial conditions $\rho_0,u_0,\theta_0$. Assume that there exists a constant $E_0>0$ such that
\begin{equation}\label{eq:147}
\kappa\int_0^1\vert\partial_x\theta_0\vert^2 \leq E_0.
\end{equation}
Then there exists some $E_1 = E_1(C_0, E_0, \overline{\kappa}, \underline{\rho_0}, \overline{\rho_0}, T, \underline{\theta_0}, \overline{\theta_0})>0$ such that
\begin{equation}\label{eq:148}
\sup_{0\leq t \leq T}\kappa\int_0^1\vert\partial_x\theta\vert^2(t) +\int_0^T\int_0^1 \vert D_t\theta\vert^2 \leq E_1.
\end{equation}
Let us assume furthermore that there exists some $E_0'>0$ such that
\begin{equation}\label{eq:149}
\kappa\int_0^1\vert\partial_x\sigma_0\vert^2\leq E_0'.
\end{equation}
Then there exists some $E_1' = E_1'(C_0,E_0,E_0',\overline{\kappa},\underline{\rho_0},\overline{\rho_0},T,\underline{\theta_0},\overline{\theta_0})>0$ such that
\begin{equation*}
\sup_{0\leq t\leq T}\kappa\int_0^1\vert \partial_x\sigma\vert^2(t) + \kappa\int_0^T\int_0^1\Vert\partial_x\sigma\Vert_\infty^2 \leq E_1'.
\end{equation*}
\end{prop}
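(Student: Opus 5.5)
We treat the two bounds as energy estimates obtained by testing the temperature equation and the $\sigma$-equation \eqref{eq:9} against $D_t\theta$ and $-\kappa\partial_x(\partial_x\sigma/\rho)$ respectively. Throughout, the uniform bounds \eqref{eq:12}--\eqref{eq:14} of Theorem \ref{thm:2} hold with constants independent of $\kappa\in]0,\overline{\kappa}]$.

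\emph{First bound.} We write \eqref{eq:5} in nonconservative form,
\begin{equation*}
\cv\rho D_t\theta = \sigma\partial_x u + \partial_x(\kappa\partial_x\theta),
\end{equation*}
and multiply by $D_t\theta$. Integrating over $\T$ gives $\cv\int_0^1\rho(D_t\theta)^2$ on the left. The diffusive term becomes
\begin{equation*}
\int_0^1\partial_x(\kappa\partial_x\theta)(\partial_t\theta+u\partial_x\theta) = -\frac{\kappa}{2}\frac{d}{dt}\int_0^1(\partial_x\theta)^2 + \int_0^1\kappa\partial_x(u\partial_x\theta)\,\partial_x\theta\cdot(-1),
\end{equation*}
and the last term equals $-\frac{\kappa}{2}\int_0^1\partial_x u(\partial_x\theta)^2$ after integration by parts. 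Hence
\begin{equation*}
\frac{\kappa}{2}\frac{d}{dt}\int_0^1(\partial_x\theta)^2 + \cv\int_0^1\rho(D_t\theta)^2 \le \frac{1}{2}\Vert\partial_x u\Vert_\infty\,\kappa\int_0^1(\partial_x\theta)^2 + \int_0^1|\sigma\partial_x u||D_t\theta|.
\end{equation*}
We absorb the last term by Young's inequality, using $\underline{\rho}$, which leaves $C\Vert\sigma\Vert_\infty^2\int_0^1(\partial_x u)^2$. By \eqref{eq:14} this is integrable in time with bound $C_1^2$. Grönwall's lemma, together with $\int_0^T\Vert\partial_x u\Vert_\infty\le\sqrt{TC_1}$ and \eqref{eq:147}, then yields \eqref{eq:148}. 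This step is routine; it is essentially the proof of Proposition \ref{prop:15} without the weight $w$, which is now unnecessary thanks to \eqref{eq:147}.

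\emph{Second bound.} We multiply \eqref{eq:9} by $-\kappa\partial_x(\partial_x\sigma/\rho)$ and integrate. The time derivative term, after integrating by parts and using $\partial_t\rho=-\partial_x(\rho u)$, gives
\begin{equation*}
\frac{\kappa}{2}\frac{d}{dt}\int_0^1\frac{(\partial_x\sigma)^2}{\rho}
\end{equation*}
plus terms of the form $\kappa\int_0^1\partial_x u\,(\partial_x\sigma)^2/\rho$. The transport term $u\partial_x\sigma$ produces the same type of term. The diffusion yields the good term
\begin{equation*}
\mu\kappa\int_0^1\Bigl[\partial_x\Bigl(\frac{\partial_x\sigma}{\rho}\Bigr)\Bigr]^2.
\end{equation*}
The terms of the first type are bounded by $\Vert\partial_x u\Vert_\infty\,\kappa\int_0^1(\partial_x\sigma)^2$ and go to Grönwall. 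The right-hand side term $\gamma\sigma\partial_x u$ is handled by Young: $\kappa\Vert\sigma\Vert_\infty^2\int_0^1(\partial_x u)^2$ plus a fraction of the good term.

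The main obstacle is the conduction term $(\gamma-1)\kappa\,\partial_x(\kappa\partial_x\theta)\,\partial_x(\partial_x\sigma/\rho)$. Young's inequality reduces it to $C\kappa\int_0^1[\partial_x(\kappa\partial_x\theta)]^2$, so we need $\kappa\int_0^T\int_0^1[\partial_x(\kappa\partial_x\theta)]^2\le C$ without the weight $w$. To get this, we use the temperature equation again:
\begin{equation*}
\partial_x(\kappa\partial_x\theta)=\cv\rho D_t\theta-\sigma\partial_x u.
\end{equation*}
Its $L^2_{t,x}$ norm is controlled by \eqref{eq:148} and by $\int_0^T\Vert\sigma\Vert_\infty^2\sup_t\int_0^1(\partial_x u)^2\le C_1^2$. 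This bound even holds without the factor $\kappa$; alternatively one can test the $\theta$-equation against $-\partial_x(\kappa\partial_x\theta)/\rho$.

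Grönwall's lemma with \eqref{eq:149} then bounds $\sup_t\kappa\int_0^1(\partial_x\sigma)^2$ and $\kappa\int_0^T\int_0^1[\partial_x(\partial_x\sigma/\rho)]^2$. Finally, since $\partial_x\sigma=\rho\cdot(\partial_x\sigma/\rho)$ with $\rho$ bounded, Gagliardo--Nirenberg applied to $\partial_x\sigma/\rho$ gives
\begin{equation*}
\Vert\partial_x\sigma/\rho\Vert_\infty^2\le\Vert\partial_x\sigma/\rho\Vert_2^2+2\Vert\partial_x\sigma/\rho\Vert_2\Vert\partial_x(\partial_x\sigma/\rho)\Vert_2.
\end{equation*}
Multiplying by $\kappa$ and integrating in time yields the bound on $\kappa\int_0^T\Vert\partial_x\sigma\Vert_\infty^2$.
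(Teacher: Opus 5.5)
Your argument is correct, but in both steps you use the multiplier the paper does not use.

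\textbf{Temperature step.} The paper does not test against $D_t\theta$. It integrates \eqref{eq:93} without the weight $w$. That inequality comes from differentiating the temperature equation divided by $\rho$ and testing with $\kappa\partial_x\theta$, i.e.\ using the multiplier $-\partial_x(\kappa\partial_x\theta)/\rho$, which is the alternative you mention. This controls $\int_0^T\int_0^1[\partial_x(\kappa\partial_x\theta)]^2/\rho$, and the paper then reads off $D_t\theta$ from $\cv\rho D_t\theta=\sigma\partial_x u+\partial_x(\kappa\partial_x\theta)$. You get $D_t\theta$ directly and recover $\partial_x(\kappa\partial_x\theta)$ afterwards. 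Your computation is fine, but your remark that this step is ``essentially Proposition \ref{prop:15}'' is not accurate: that proof uses the other multiplier.

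\textbf{Stress step.} The paper reuses \eqref{eq:96}, obtained by testing \eqref{eq:9} with $D_t\sigma$. It must then recover $\kappa\int_0^T\int_0^1[\partial_x(\partial_x\sigma/\rho)]^2$ from $\kappa\int_0^T\int_0^1(D_t\sigma)^2$, using the equation and the reverse inequality \eqref{eq:98}, as in \eqref{eq:152}. Your multiplier $-\kappa\partial_x(\partial_x\sigma/\rho)$ produces that dissipation directly, which is slightly more economical. The paper's choice buys brevity, since it recycles two inequalities already proved for the second Hoff energy.

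\textbf{Bookkeeping in your $\sigma$ step.} The time-derivative term on its own does not produce only $\partial_x u$ terms. Since $\partial_t\rho=-u\partial_x\rho-\rho\partial_x u$, it also generates $-\tfrac{\kappa}{2}\int_0^1 u\,\partial_x\rho\,(\partial_x\sigma)^2/\rho^2$. This term involves $\partial_x\rho$, which is not controlled under the hypotheses of this proposition, so the two terms cannot be estimated separately as you describe. The transport term cancels it exactly. In fact time derivative and transport together give precisely $\tfrac{\kappa}{2}\tfrac{d}{dt}\int_0^1(\partial_x\sigma)^2/\rho$, with no remainder. This follows from $D_t(\partial_x\sigma/\rho)=\partial_x(D_t\sigma)/\rho$, the identity used in the proof of Proposition \ref{prop:16}. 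Treat the two terms together through this identity; then no Gr\"onwall argument is needed in that step.
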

\begin{proof}
Integrating \eqref{eq:93} on $[0,t]$ for $t\in [0,T]$, we get
\begin{equation*}
\int_0^1 \kappa(\partial_x\theta)^2(t) + \int_0^t\int_0^1 \frac{[\partial_x(\kappa\partial_x\theta)]^2}{\rho\cv} \leq \int_0^1\kappa\vert\partial_x\theta_0\vert^2 + \int_0^T\Vert\partial_x u\Vert_\infty \int_0^1 \kappa(\partial_x\theta)^2 + \frac{1}{\underline{\rho}\cv}\int_0^T \Vert\sigma\Vert_\infty^2\int_0^1(\partial_x u)^2.
\end{equation*}
Moreover,
\begin{equation*}
\vert \rho\cv D_t\theta\vert^2 = \vert\sigma\partial_x u + \partial_x(\kappa\partial_x\theta)\vert^2,
\end{equation*}
hence
\begin{equation*}
\vert D_t\theta\vert^2\leq \frac{2}{\underline{\rho}^2\cv^2}\sigma^2\vert\partial_x u\vert^2 + \frac{2}{\underline{\rho}\cv}\frac{\vert\partial_x(\kappa\partial_x\theta)\vert^2}{\rho\cv}
\end{equation*}
and
\begin{equation*}
\int_0^1\kappa\vert\partial_x\theta\vert^2(t) + \frac{\underline{\rho}\cv}{2}\int_0^t\int_0^1 \vert D_t\theta\vert^2 \leq E_0 + \frac{2C_1^2}{\underline{\rho}\cv} + \int_0^T\Vert\partial_x u\Vert_\infty \int_0^1 \kappa\vert\partial_x\theta\vert^2.
\end{equation*}
Thus, by Grönwall's lemma,
\begin{equation*}
\sup_{0\leq t\leq T}\left[\kappa\int_0^1\vert\partial_x\theta\vert^2(t)  +\frac{\underline{\rho}\cv}{2}\int_0^t\int_0^1\vert D_t\theta\vert^2\right]\leq \left(E_0 + \frac{2C_1^2}{\underline{\rho}\cv}\right)\exp(\sqrt{T C_1}),
\end{equation*}
hence \eqref{eq:148}.
Let us now assume that \eqref{eq:149} holds. Integrating \eqref{eq:96} on $[0,t]$ for $t\in [0,T]$, and multiplying the result by $\kappa$, we get
\begin{equation}\label{eq:150}
\begin{split}
\mu\kappa\int_0^1\frac{\vert\partial_x\sigma\vert^2}{\rho}(t) &+ \kappa\int_0^t\int_0^1\vert D_t\sigma\vert^2 \\&\leq \frac{\mu\kappa}{\underline{\rho}}\int_0^1\vert\partial_x\sigma_0\vert^2+2\gamma^2\kappa\int_0^T\Vert\partial_x u\Vert_\infty^2\int_0^1\sigma^2 + 2(\gamma-1)^2\kappa\int_0^T\int_0^1[\partial_x(\kappa\partial_x\theta)]^2.
\end{split}
\end{equation}
Moreover, from \eqref{eq:5},
\begin{align}
\int_0^t\int_0^1\vert\partial_x(\kappa\partial_x\theta)\vert^2&\leq 2\overline{\rho}^2\cv^2\int_0^t\int_0^1 \vert D_t\theta\vert^2 + 2\int_0^t\Vert\partial_x u\Vert_\infty^2\int_0^1\sigma^2 \nonumber\\&\leq 2\overline{\rho}^2\cv^2 E_1 + 2C_1^2,\label{eq:151}
\end{align}
and from \eqref{eq:9} and \eqref{eq:98},
\begin{align}
\int_0^t\int_0^1 \vert D_t\sigma\vert^2 \geq \frac{\mu^2}{2}\int_0^t\int_0^1 \left\vert \partial_x\left(\frac{\partial_x\sigma}{\rho}\right)\right\vert^2 - 2\gamma^2\int_0^t\Vert \partial_x u\Vert_\infty^2\int_0^1\sigma^2 - 2(\gamma-1)^2\int_0^t\int_0^1 \vert\partial_x(\kappa\partial_x\theta)\vert^2.\label{eq:152}
\end{align}
Using the Gagliardo-Nirenberg's inequality
\begin{equation*}
\left\Vert \frac{\partial_x\sigma}{\rho}\right\Vert_\infty^2 \leq \left\Vert \frac{\partial_x \sigma}{\rho}\right\Vert_2^2 + 2\left\Vert\frac{\partial_x\sigma}{\rho}\right\Vert_2\left\Vert\partial_x\left(\frac{\partial_x\sigma}{\rho}\right)\right\Vert_2,
\end{equation*}
we obtain
\begin{equation}\label{eq:153}
\int_0^t \Vert\partial_x\sigma\Vert_\infty^2 \leq \frac{2\overline{\rho}^2}{\underline{\rho}^2}\int_0^t\int_0^1\vert\partial_x\sigma\vert^2 + \overline{\rho}^2\int_0^t\int_0^1\left\vert\partial_x\left(\frac{\partial_x\sigma}{\rho}\right)\right\vert^2.
\end{equation}
Taking the supremum on $[0,T]$ in \eqref{eq:150} and combining \eqref{eq:151}, \eqref{eq:152} and \eqref{eq:153}, we get
\begin{equation*}
\sup_{0\leq t\leq T}\left[ \mu\kappa\int_0^1\vert\partial_x\sigma\vert^2(t) + \frac{\mu^2\kappa}{2\overline{\rho}^2}\int_0^t\Vert\partial_x\sigma\Vert_\infty^2 \right]\leq\frac{\mu}{\underline{\rho}}E_0' + 8(\gamma-1)^2\overline{\kappa}\overline{\rho}^2\cv^2 E_1 + \left(\frac{\mu^2}{\overline{\rho}^2} + 4\gamma^2 + 8(\gamma-1)^2\right)\overline{\kappa}C_1^2,
\end{equation*}
hence Proposition \ref{prop:pullnoel}.
\end{proof}
\begin{prop}\label{prop:pullnoel2}
Let $(\rho_0,u_0,\theta_0)$ satisfy \eqref{eq:16}--\eqref{eq:18} and $(\rho,u,\theta)$ be the global "à la Hoff" solution of $(NS_\kappa)$, with $0<\kappa\leq \overline{\kappa}$ and with initial conditions $\rho_0,u_0,\theta_0$. Assume that there exists a constant $F_0>0$ such that
\begin{equation}\label{eq:154}
\kappa\Vert\partial_x\rho_0\Vert_\infty^2 + \kappa\Vert\partial_x\theta_0\Vert_\infty^2 + \kappa\int_0^1\vert\partial_{xx} u_0\vert^2\leq F_0.
\end{equation}
Then there exists $F_1 = F_1(C_0, F_0, \overline{\kappa}, \underline{\rho_0}, \overline{\rho_0}, T, \underline{\theta_0}, \overline{\theta_0})>0$ such that
\begin{equation}\label{eq:155}
\sup_{0\leq t\leq T} \kappa\Vert\partial_x\rho\Vert_\infty^2(t) + \sup_{0\leq t\leq T}\kappa\Vert\partial_x\theta\Vert_\infty^2(t)\leq F_1.
\end{equation}
\end{prop}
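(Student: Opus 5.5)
We propagate $L^\infty$ bounds on $\partial_x\rho$ and $\partial_x\theta$ along particle paths. For $\rho$, differentiating \eqref{eq:38} in $x$ gives
\begin{equation*}
D_t(\partial_x\ln\rho) = -\partial_{xx}u = -\frac{1}{\mu}\left(\partial_x\sigma + R\theta\partial_x\rho + R\rho\partial_x\theta\right),
\end{equation*}
where we used \eqref{eq:134}. Writing $\partial_x\ln\rho=\partial_x\rho/\rho$, the term $-\frac{R}{\mu}\theta\partial_x\rho$ is damping. The remaining forcing terms are $\Vert\partial_x\sigma\Vert_\infty$, controlled by $\kappa\int_0^T\Vert\partial_x\sigma\Vert_\infty^2\le E_1'$ from Proposition \ref{prop:pullnoel}, and $\Vert\partial_x\theta\Vert_\infty$. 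Taking the supremum along characteristics through the maximum principle \eqref{eq:20}--\eqref{eq:21}, and using the bounds \eqref{eq:12}, \eqref{eq:13}, gives
\begin{equation*}
\sqrt{\kappa}\Vert\partial_x\rho\Vert_\infty(t)\le C\Big(\sqrt{\kappa}\Vert\partial_x\rho_0\Vert_\infty + \int_0^t\sqrt\kappa\Vert\partial_x\sigma\Vert_\infty + \int_0^t\sqrt\kappa\Vert\partial_x\theta\Vert_\infty\Big).
\end{equation*}
The second integral is bounded by Cauchy--Schwarz.

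Before applying Proposition \ref{prop:pullnoel}, we must check its hypotheses from \eqref{eq:154}. Condition \eqref{eq:147} holds since $\kappa\int(\partial_x\theta_0)^2\le\kappa\Vert\partial_x\theta_0\Vert_\infty^2$. For \eqref{eq:149}, we write $\partial_x\sigma_0=\mu\partial_{xx}u_0-R\theta_0\partial_x\rho_0-R\rho_0\partial_x\theta_0$, so $\kappa\int|\partial_x\sigma_0|^2\lesssim F_0$.

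For $\theta$, dividing \eqref{eq:5} by $\rho\cv$ gives
\begin{equation*}
D_t\theta = \frac{\sigma\partial_x u}{\rho\cv}+\frac{\partial_x(\kappa\partial_x\theta)}{\rho\cv}.
\end{equation*}
This is parabolic with a small diffusion $\kappa/(\rho\cv)$. We set $q=\partial_x\theta$ and differentiate, obtaining
\begin{equation*}
D_t q + (\partial_x u) q = \partial_x\Big(\frac{\sigma\partial_x u}{\rho\cv}\Big) + \partial_x\Big(\frac{\kappa\partial_x q}{\rho\cv}\Big).
\end{equation*}
We then run an $L^{2k}$ argument in the style of Proposition \ref{prop:8}. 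Multiplying by $q^{2k-1}$ and integrating, the diffusion term gives a good sign plus a commutator $\kappa\,\partial_x(1/\rho)\partial_x q\, q^{2k-1}$, which is absorbed by Young into the good term at the price of $\kappa\Vert\partial_x\rho\Vert_\infty^2 q^{2k}$-type terms. The forcing $\partial_x(\sigma\partial_x u/\rho)$ is expanded as in \eqref{eq:139}; its pieces are bounded in $L^\infty$ by
\begin{equation*}
\Vert\sigma\Vert_\infty\Vert\partial_x\sigma\Vert_\infty + \Vert\partial_x\sigma\Vert_\infty + \Vert\sigma\Vert_\infty^2\Vert\partial_x\rho\Vert_\infty + \Vert\sigma\Vert_\infty\Vert q\Vert_\infty.
\end{equation*}
The factor $\Vert\sigma\Vert_\infty$ is bounded since $\sigma\in L^\infty(H^1)$ via $\kappa\sup\int|\partial_x\sigma|^2\le E_1'$; more cleanly, we keep only $\Vert\sigma\Vert_\infty\in L^2(0,T)$ from \eqref{eq:14}. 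Taking $k\to\infty$ as in Proposition \ref{prop:8} and multiplying by $\sqrt\kappa$, we get a differential inequality for $Y(t)=\sqrt\kappa(\Vert\partial_x\rho\Vert_\infty+\Vert\partial_x\theta\Vert_\infty)$ of the form
\begin{equation*}
Y' \le a(t)\,Y + b(t), \qquad a,b\in L^1(0,T).
\end{equation*}
Here $a$ involves $\Vert\partial_x u\Vert_\infty$, $\Vert\sigma\Vert_\infty$, $\Vert\sigma\Vert_\infty^2$, and $b$ involves $\sqrt\kappa\Vert\partial_x\sigma\Vert_\infty(1+\Vert\sigma\Vert_\infty)$. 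We then close with Grönwall together with \eqref{eq:14} and Proposition \ref{prop:pullnoel}.

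The main obstacle is the commutator from the variable diffusion coefficient $\kappa/\rho$. Young's inequality produces $\kappa\Vert\partial_x\rho\Vert_\infty^2$ multiplying $q^{2k}$, which is quadratic in $Y$ and would break the linear Grönwall. To handle it, we first try writing the diffusion in the non-divergence form $\frac{1}{\rho\cv}\partial_x(\kappa\partial_x\theta)$ and applying a pointwise maximum principle to $q$ directly. At a maximum of $q$ we have $\partial_x q=0$, so the commutator $\kappa\partial_x(1/\rho)\partial_x q$ vanishes and only $\frac{\kappa}{\rho}\partial_{xx}q\le 0$ remains. This avoids the $L^{2k}$ commutator entirely and keeps the inequality linear in $Y$. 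The argument is justified by the strong-solution regularity and then passed to the limit.
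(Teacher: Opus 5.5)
Your proof is correct and has the same skeleton as the paper's. You derive a coupled Gr\"onwall inequality for $\sqrt{\kappa}$ times the $L^\infty$ norms of $\partial_x\rho$ and $\partial_x\theta$, built from \eqref{eq:134} and the expansion \eqref{eq:139}. You close it with $E_1'$ from Proposition \ref{prop:pullnoel}, whose hypotheses you verify from \eqref{eq:154} exactly as the paper does, together with $\Vert\partial_x u\Vert_\infty,\Vert\sigma\Vert_\infty\in L^2(0,T)$ from \eqref{eq:14}.

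The difference is the device producing the $L^\infty$ inequalities. The paper runs $L^{2k}$ estimates on \eqref{eq:132} and \eqref{eq:137}, lets $k\to\infty$, and works with squared norms. You use characteristics for $\rho$ and a pointwise maximum principle for $q=\partial_x\theta$, with unsquared norms; your forcing $b$ is in $L^1$ as a product of two $L^2(0,T)$ functions.

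The obstacle that pushed you to the pointwise argument is not real. The diffusion $\partial_x\bigl(\kappa\partial_x q/(\rho\cv)\bigr)$ is already in divergence form. Testing with $q^{2k-1}$ therefore gives exactly $-(2k-1)\kappa\int_0^1(\partial_x q)^2q^{2k-2}/(\rho\cv)\le 0$, with no commutator; this is the paper's computation.

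What each route buys:
\begin{itemize}
\item The $L^{2k}$ route needs only integrations by parts.
\item Your pointwise argument needs $\partial_x^3\theta$ to exist at extremal points of $q$. That requires classical solutions plus an approximation argument; strong-solution regularity does not justify that step as you claim.
\item In exchange, yours gives a transparent Lagrangian picture that fits the paper's $D_t^{-1}$, $R_t$ calculus.
\end{itemize}

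Two smaller points.
\begin{itemize}
\item $D_t(\partial_x\ln\rho)=-\partial_{xx}u-(\partial_x u)\,\partial_x\ln\rho$, so you dropped a term. It is harmless: it only costs a factor $\exp\bigl(\int_0^T\Vert\partial_x u\Vert_\infty\bigr)$. You can also avoid it by using $\partial_x\rho/\rho^2$, which satisfies $D_t(\partial_x\rho/\rho^2)=-\partial_{xx}u/\rho$ exactly.
\item Your first option, bounding $\Vert\sigma\Vert_\infty$ via $\kappa\sup_t\int_0^1|\partial_x\sigma|^2\le E_1'$, would give a constant of order $\kappa^{-1/2}$. That is not allowed, since $F_1$ must be uniform in $\kappa$. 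The $L^2(0,T)$ bound from \eqref{eq:14} that you finally keep is the right one.
\end{itemize}
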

\begin{proof}
Let $k\in\N^*$. From \eqref{eq:132}, we get, multiplying by $\vert\partial_x\rho\vert^{2k-2}\partial_x\rho$,
\begin{align}\label{eq:156}
\frac{1}{2k}\partial_t\vert\partial_x\rho\vert^{2k} + \frac{1}{2k}\partial_x(u\vert\partial_x\rho\vert^{2k}) = -(2-\frac{1}{2k})\vert\partial_x\rho\vert^{2k}\partial_x u -\rho\vert\partial_x\rho\vert^{2k-2}(\partial_x\rho)\partial_{xx} u.
\end{align}
As
\begin{equation*}
\mu\partial_{xx} u = \partial_x\sigma + R\theta\partial_x\rho + R\rho\partial_x\theta,
\end{equation*}
we get, from \eqref{eq:156},
\begin{equation*}
\begin{split}
&\frac{1}{2k}\partial_t\vert\partial_x\rho\vert^{2k} + \frac{1}{2k}\partial_x(u\vert\partial_x\rho\vert^{2k}) + \frac{1}{\mu}R\rho\theta\vert\partial_x\rho\vert^{2k} \\&= -(2-\frac{1}{2k})\vert\partial_x\rho\vert^{2k}\partial_x u - \frac{1}{\mu}\rho\vert\partial_x\rho\vert^{2k-2}(\partial_x\rho)\partial_x\sigma - \frac{1}{\mu}R\rho^2\vert\partial_x\rho\vert^{2k-2}(\partial_x\rho)\partial_x\theta.
\end{split}
\end{equation*}
Hence, integrating this last equation on the torus, we get
\begin{equation*}
\frac{1}{2k}\frac{d}{dt}\int_0^1\vert\partial_x\rho\vert^{2k}\leq 2\Vert\partial_x u\Vert_\infty \int_0^1\vert\partial_x\rho\vert^{2k} + \frac{1}{\mu}\overline{\rho}\Vert\partial_x\sigma\Vert_\infty \int_0^1 \vert\partial_x\rho\vert^{2k-1} +  \frac{R}{\mu}\overline{\rho}^2\Vert\partial_x\theta\Vert_\infty\int_0^1\vert\partial_x\rho\vert^{2k-1}.
\end{equation*}
As
\begin{equation*}
\int_0^1\vert\partial_x\rho\vert^{2k-1}\leq \left(\int_0^1\vert\partial_x\rho\vert^{2k}\right)^{(2k-1)/(2k)},
\end{equation*}
since $x\mapsto x^{(2k-1)/2k}$ is concave on $\R_+$, we get
\begin{equation*}
\frac{1}{2}\Vert\vert\partial_x\rho\vert^2\Vert_{k}^{k-1}\frac{d}{dt}\Vert\vert\partial_x\rho\vert^2\Vert_{k}\leq 2\Vert\partial_x u\Vert_\infty\Vert\vert\partial_x\rho\vert^2\Vert_{k}^{k} + \frac{1}{\mu}\overline{\rho}\Vert\partial_x\sigma\Vert_\infty\Vert\vert\partial_x\rho\vert^2\Vert_{k}^{k-1/2} + \frac{R}{\mu}\overline{\rho}^2\Vert\partial_x\theta\Vert_\infty\Vert\vert\partial_x\rho\vert^2\Vert_{k}^{k-1/2},
\end{equation*}
hence
\begin{equation*}
\frac{d}{dt}\Vert\vert\partial_x\rho\vert^2\Vert_{k}\leq 4\Vert\partial_x u\Vert_\infty\Vert\vert\partial_x\rho\vert^2\Vert_{k} + \frac{2}{\mu}\overline{\rho}\Vert\partial_x\sigma\Vert_\infty\Vert\vert\partial_x\rho\vert^2\Vert_k^{1/2} + \frac{2R}{\mu}\overline{\rho}^2\Vert\partial_x\theta\Vert_\infty\Vert\vert\partial_x\rho\vert^2\Vert_k^{1/2}.
\end{equation*}
Then passing to the limit $k\rightarrow +\infty$ we obtain
\begin{align}
\frac{d}{dt}\Vert\partial_x\rho\Vert_{\infty}^2&\leq 4\Vert\partial_x u\Vert_\infty\Vert\partial_x\rho\Vert_{\infty}^2 + \frac{2}{\mu}\overline{\rho}\Vert\partial_x\sigma\Vert_\infty\Vert\partial_x\rho\Vert_\infty + \frac{2R}{\mu}\overline{\rho}^2\Vert\partial_x\theta\Vert_\infty\Vert\partial_x \rho\Vert_\infty \nonumber\\&\leq \left(4\Vert\partial_x u\Vert_\infty + \frac{\overline{\rho}^2}{\mu^2}+ \frac{R^2\overline{\rho}^4}{\mu^2}\right)\Vert\partial_x\rho\Vert_\infty^2 +\frac{R^2\overline{\rho}^4}{\mu^2}\Vert\partial_x\theta\Vert_\infty^2 + \frac{\overline{\rho}^2}{\mu^2}\Vert\partial_x\sigma\Vert_\infty^2.\label{eq:157}
\end{align}
Besided, from \eqref{eq:137}, we get, multiplying by $\vert\partial_x\theta\vert^{2k-2}\partial_x\theta$ for $k\in\N^*$,
\begin{equation*}
\begin{split}
&\frac{1}{2k}\partial_t\vert\partial_x\theta\vert^{2k} + \frac{1}{2k}\partial_x(u\vert\partial_x\theta\vert^{2k}) \\&= -\frac{1}{\cv}\left(1-\frac{1}{2k}\right)\vert\partial_x\theta\vert^{2k}\partial_x u + \frac{1}{\cv}\partial_x\left(\frac{\sigma\partial_x u}{\rho}\right)\vert\partial_x\theta\vert^{2k-2}\partial_x\theta + \frac{1}{\cv}\partial_x\left(\frac{\partial_x(\kappa\partial_x\theta)}{\rho}\right)\vert\partial_x\theta\vert^{2k-2}\partial_x\theta.
\end{split}
\end{equation*}
Integrating on the torus, we get
\begin{equation*}
\frac{1}{2k}\frac{d}{dt}\int_0^1\vert\partial_x\theta\vert^{2k} + \frac{2k-1}{\cv}\kappa\int_0^1 \frac{\vert\partial_{xx}\theta\vert^2}{\rho}\vert\partial_x\theta\vert^{2k-2} \leq \frac{1}{\cv}\Vert\partial_x u\Vert_\infty\int_0^1 \vert\partial_x\theta\vert^{2k} + \frac{1}{\cv}\left\Vert \partial_x\left(\frac{\sigma\partial_x u}{\rho}\right)\right\Vert_\infty\int_0^1 \vert\partial_x\theta\vert^{2k-1},
\end{equation*}
hence, similarly as for \eqref{eq:157},
\begin{equation*}
\frac{d}{dt}\Vert\partial_x\theta\Vert_\infty^2 \leq \frac{2}{\cv}\Vert\partial_x u\Vert_\infty \Vert\partial_x\theta\Vert_\infty^2 + \frac{2}{\cv}\left\Vert \partial_x\left(\frac{\sigma\partial_x u}{\rho}\right)\right\Vert_\infty\Vert\partial_x\theta\Vert_\infty. 
\end{equation*}
Moreover,
\begin{align}
\mu\partial_x\left(\frac{\sigma\partial_x u}{\rho}\right) &= \partial_x\left(\frac{\sigma^2}{\rho}\right) + R\partial_x(\sigma\theta) \nonumber\\&=2\frac{\sigma\partial_x\sigma}{\rho} - \frac{\sigma^2\partial_x\rho}{\rho^2} + R\theta\partial_x\sigma + R\sigma\partial_x\theta, \nonumber\end{align}
hence
\begin{equation*}
\left\Vert \partial_x\left(\frac{\sigma\partial_x u}{\rho}\right)\right\Vert_\infty \leq \frac{2}{ \mu\underline{\rho}}\Vert\sigma\Vert_\infty\Vert\partial_x\sigma\Vert_\infty + \frac{1}{\mu\underline{\rho}^2}\Vert\sigma\Vert_\infty^2\Vert\partial_x\rho\Vert_\infty + \frac{R}{\mu}\overline{\theta}\Vert\partial_x\sigma\Vert_\infty + \frac{R}{\mu}\Vert\sigma\Vert_\infty\Vert\partial_x\theta\Vert_\infty,
\end{equation*}
and
\begin{align}
&\frac{d}{dt}\Vert\partial_x\theta\Vert_\infty^2\leq \frac{2}{\cv}\Vert\partial_x u\Vert_\infty\Vert\partial_x\theta\Vert_\infty^2 + \frac{4}{\mu\cv\underline{\rho}}\Vert\sigma\Vert_\infty\Vert\partial_x\sigma\Vert_\infty\Vert\partial_x\theta\Vert_\infty
+\frac{2}{\mu\cv\underline{\rho}^2}\Vert\sigma\Vert_\infty^2\Vert\partial_x\rho\Vert_\infty\Vert\partial_x\theta\Vert_\infty \nonumber\\&+ \frac{2R\overline{\theta}}{\mu\cv}\Vert\partial_x\sigma\Vert_\infty\Vert\partial_x\theta\Vert_\infty + \frac{2R}{\mu\cv}\Vert\sigma\Vert_\infty\Vert\partial_x\theta\Vert_\infty^2 \nonumber\\&\leq \left(\frac{2}{\cv}\Vert\partial_x u\Vert_\infty + \frac{2\Vert\sigma\Vert_\infty^2}{\mu\cv\underline{\rho}} + \frac{\Vert\sigma\Vert_\infty^2}{\mu\cv\underline{\rho}^2} + \frac{R\overline{\theta}}{\mu\cv} + \frac{2R}{\mu\cv}\Vert\sigma\Vert_\infty\right)\Vert\partial_x\theta\Vert_\infty^2 \nonumber\\&+ \frac{\Vert\sigma\Vert_\infty^2}{\mu\cv\underline{\rho}^2}\Vert\partial_x\rho\Vert_\infty^2+\left(\frac{2}{\mu\cv\underline{\rho}} + \frac{R\overline{\theta}}{\mu\cv}\right)\Vert\partial_x\sigma\Vert_\infty^2.\label{eq:158}
\end{align}
Adding \eqref{eq:157} and \eqref{eq:158}, then multiplying by $\kappa$, we get
\begin{equation}\label{eq:159}
\frac{d}{dt}\kappa\Vert\partial_x\rho\Vert_\infty^2 + \frac{d}{dt}\kappa\Vert\partial_x\theta\Vert_\infty^2 \leq G_3(\kappa\Vert\partial_x\rho\Vert_\infty^2 + \kappa\Vert\partial_x\theta\Vert_\infty^2) + G_4\kappa\Vert\partial_x\sigma\Vert_\infty^2.
\end{equation}
where
\begin{equation*}
\begin{split}
G_3 =& \frac{R^2\overline{\rho}^4}{\mu^2}+\frac{\Vert\sigma\Vert_\infty^2}{\mu\cv\underline{\rho}^2}\\&+ \max\left(4\Vert\partial_x u\Vert_\infty + \frac{\overline{\rho}^2}{\mu^2},\frac{2}{\cv}\Vert\partial_x u\Vert_\infty + \frac{2\Vert\sigma\Vert_\infty^2}{\mu\cv\underline{\rho}} + \frac{R\overline{\theta}}{\mu\cv}+ \frac{2R}{\mu\cv}\Vert\sigma\Vert_\infty\right),
\end{split}
\end{equation*}
\begin{equation*}
G_4 = \max\left(\frac{\overline{\rho}^2}{\mu^2},\frac{2}{\mu\cv\underline{\rho}} + \frac{R\overline{\theta}}{\mu\cv}\right).
\end{equation*}
Note that, according to assumption \eqref{eq:153},
\begin{equation*}
\kappa\int_0^1\vert\partial_x\theta_0\vert^2\leq F_0,
\end{equation*}
\begin{equation*}
\kappa\int_0^1\vert\partial_x\sigma_0\vert^2\leq 2(\mu^2 + R^2\overline{\theta}^2 + R^2\overline{\rho}^2)F_0.
\end{equation*}
Thus, by Proposition \ref{prop:pullnoel}, there exists some $E_1' = E_1'(C_0,\cv,F_0,\gamma,\overline{\kappa},\mu,\underline{\rho_0},\overline{\rho_0},T,\underline{\theta_0},\overline{\theta_0})$ such that
\begin{equation*}
\kappa\int_0^T\Vert\partial_x\sigma\Vert_\infty^2\leq E_1'.
\end{equation*}
We then deduce Proposition \ref{prop:pullnoel2} from \eqref{eq:159} and the Grönwall's lemma.
\end{proof}

\subsection{Stability result in Lagrangian coordinates}
In this section we will switch to switch to Lagrangian coordinates. For $u\in L^1(0,T,W^{1,\infty}(\T))$, we define the flow $X:[0,T]\times\T\rightarrow \R$ of $u$ by 
\begin{equation*}
\begin{cases}
\text{for all }(t,x)\in[0,T]\times\mathbb{T}, &
\partial_t X(t,x) = u(t, X(t,x)), \\[0.3em]
\text{for all }x\in\mathbb{T}, &
X(0,x) = x .
\end{cases}
\end{equation*}
We will moreover denote $X(t)(x) = X(t,x)$. The classical following result holds.
\begin{prop}
Let $(\rho,u,\theta)$ a "à la Hoff" solution of $(NS_\kappa)$ for $\kappa\geq 0$, with initial condition $(\rho_0,u_0,\theta_0)$ satisfying \eqref{eq:16}--\eqref{eq:18}. Let $X$ be the flow of $u$. Then 
\begin{equation}\label{eq:160}
\text{for all }(t,x)\in [0,T]\times\T,\quad \partial_x X(t,x) = \frac{\rho_0(x)}{\rho(t,X(t,x))}.
\end{equation}
In particular,
\begin{equation*}
\text{for all }(\tau,t)\in [0,T]^2,\quad \int_0^1 f(\tau,X(t,x))dx = \int_0^1 f(\tau, y)\frac{\rho(t,y)}{\rho_0(X(t)^{-1}(y))}dy,
\end{equation*}
hence
\begin{equation}\label{eq:161}
\text{for all }(\tau,t)\in [0,T]^2,\quad\frac{\underline{\rho}}{\overline{\rho}}\int_0^1 \vert f(\tau,x)\vert dx\leq\int_0^1 \vert f(\tau,X(t,x))\vert dx \leq \frac{\overline{\rho}}{\underline{\rho}}\int_0^1 \vert f(\tau,x)\vert dx.
\end{equation}
\end{prop}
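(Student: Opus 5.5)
The plan is to compute $\partial_t \partial_x X$ and compare it with the transport of $\rho_0/\rho$ along the flow. First I check that $X$ is well defined. Since $u\in L^1(0,T,W^{1,\infty}(\T))$ by \eqref{eq:14} (using $\int_0^T\Vert\partial_x u\Vert_\infty^2\leq C_1$ and $\sup u^2\leq C_1$), Cauchy--Lipschitz in the Carathéodory sense gives a unique flow. For each $t$, the map $X(t)$ is a bi-Lipschitz homeomorphism of $\T$ (lifted to $\R$, it is increasing). Differentiating the flow equation in $x$ gives $\partial_t \partial_x X = (\partial_x u)(t,X)\,\partial_x X$ with $\partial_x X(0,x)=1$. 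Hence
\begin{equation*}
\partial_x X(t,x)=\exp\left(\int_0^t \partial_x u(s,X(s,x))\,ds\right)>0 .
\end{equation*}

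Next I use \eqref{eq:38}, $D_t\ln\rho=-\partial_x u$. Along characteristics $D_t$ is exactly $\frac{d}{dt}$ of the composition with $X$, so $\frac{d}{dt}\ln\rho(t,X(t,x))=-\partial_x u(t,X(t,x))$. Integrating gives $\rho(t,X(t,x))=\rho_0(x)\exp\left(-\int_0^t\partial_x u(s,X(s,x))ds\right)$, and comparing with the formula for $\partial_x X$ yields \eqref{eq:160}.

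\textbf{Main obstacle.} This step needs justification because $\rho$ is only $L^\infty$ in the "à la Hoff" framework. I would first prove \eqref{eq:160} for strong solutions with $H^1$ data, where all the computations are classical. I would then pass to the limit using the approximation and stability procedure the paper uses to construct "à la Hoff" solutions, in which $u^n\to u$ strongly, and hence the flows converge uniformly. Alternatively, one can argue directly in the weak formulation: $\rho(t,X(t,\cdot))\partial_x X(t,\cdot)$ is constant in time. This holds because mass conservation \eqref{eq:1} means the measure $\rho(t)\,dy$ is the push-forward of $\rho_0\,dx$ by $X(t)$, which is the standard DiPerna--Lions/Lipschitz-flow characterization of weak solutions of the continuity equation with $W^{1,\infty}$ velocity. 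Writing the push-forward identity as $\rho(t,X(t,x))\partial_xX(t,x)=\rho_0(x)$ for a.e. $x$ is exactly \eqref{eq:160}.

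Finally, the change of variables $y=X(t,x)$, $dy=\partial_x X\,dx=\frac{\rho_0(x)}{\rho(t,y)}dx$ with $x=X(t)^{-1}(y)$, gives the integral identity. Then \eqref{eq:12} and \eqref{eq:16} bound the Jacobian factor $\rho(t,y)/\rho_0(X(t)^{-1}(y))$ between $\underline{\rho}/\overline{\rho}$ and $\overline{\rho}/\underline{\rho}$, enlarging $\underline{\rho},\overline{\rho}$ if needed so that they also bound $\rho_0$. Applying this to $\vert f\vert$ gives \eqref{eq:161}.
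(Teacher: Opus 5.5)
Your proposal is correct. The paper gives no proof of this proposition and simply calls it classical, so your push-forward argument supplies the justification it takes for granted. Since $u\in L^1(0,T,W^{1,\infty}(\T))$, the bounded weak solution $\rho$ of \eqref{eq:1} is unique, hence $\rho(t)=X(t)_\#\rho_0$. The change of variables for the bi-Lipschitz map $X(t)$ then turns this into \eqref{eq:160} for a.e.\ $x$; since $\rho$ is only $L^\infty$, this is how the ``for all $x$'' of the statement has to be read.

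That route is better than your first computation. Both the Liouville formula for $\partial_x X$ and the identity $\frac{d}{dt}\ln\rho(t,X(t,x))=-\partial_x u(t,X(t,x))$ are only formal here, because $\partial_x u=(\sigma+p)/\mu$ is discontinuous in $x$ and $\rho$ is merely bounded.

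If you take the approximation route instead, pass to the limit in the integrated identity $\int_0^1\phi(X^n(t,x))\rho_0^n(x)\,dx=\int_0^1\phi(y)\rho^n(t,y)\,dy$, which is linear in $\rho^n$. Weak-$\star$ convergence of $\rho^n$ plus uniform convergence of $X^n$ then suffice. The uniform convergence of $X^n$ comes from $u^n\to u$ in $L^1(0,T,L^\infty(\T))$, which follows from the $L^2$ convergence, the uniform bounds \eqref{eq:14} and the Gagliardo--Nirenberg inequality \eqref{eq:53}. Passing to the limit directly in $\rho^n(t,X^n)\,\partial_x X^n=\rho_0^n$ would instead require strong convergence of $\rho^n$, since $\partial_x X^n$ converges only weakly.
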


We then prove the following stability result, for some fixed $\kappa$:
\begin{prop}\label{prop:poney}
Let $\overline{\kappa}>0$. Let $\kappa\in [0,\overline{\kappa}]$. For $i=1,2$, let $(\rho_{0,i},\theta_{0,i},u_{0,i})$ satisfy \eqref{eq:16}--\eqref{eq:18} and $(\rho_i,u_i,\theta_i)$ be the "à la Hoff" solution of $(NS_\kappa)$, with $0<\kappa\leq\overline{\kappa}$ and with initial condition $(\rho_{0,i},\theta_{0,i},u_{0,i})$. Assume that $(\rho_{0,2},\theta_{0,2},u_{0,2})$ satisfies \eqref{eq:147},\eqref{eq:149} and \eqref{eq:154}, with some $F_0>0$. Then there exists some \newline$L=L(C_0, F_0,\overline{\kappa},\overline{\rho_0},\underline{\rho_0},T,\underline{\theta_0},\overline{\theta_0})>0$ such that
\begin{equation*}
\begin{split}
\sup_{0\leq t\leq T}&\left(\int_0^1 \vert \rho_2(t,X_2(t,x)) - \rho_1(t,X_1(t,x))\vert^2dx+ \int_0^1\vert\theta_2(t,X_2(t,x))-\theta_1(t,X_1(t,x))\vert^2dx\right.\\&\left. + \int_0^1\vert u_2(t,X_2(t,x)) - u_1(t,X_1(t,x))\vert^2dx + \int_0^1 \vert X_2(t,x) - X_1(t,x)\vert^2dx\right)\\&\leq L(\Vert\rho_{0,2}-\rho_{0,1}\Vert_2^{2/3} + \Vert \theta_{0,2} - \theta_{0,1}\Vert_2^2 + \Vert u_{0,2} - u_{0,1}\Vert_2^2).
\end{split}
\end{equation*}
\end{prop}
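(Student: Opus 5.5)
We work entirely in Lagrangian coordinates along each flow. Set $\tilde\rho_i(t,x)=\rho_i(t,X_i(t,x))$, $\tilde\theta_i(t,x)=\theta_i(t,X_i(t,x))$, $\tilde u_i(t,x)=u_i(t,X_i(t,x))$ and $\tilde\sigma_i(t,x)=\sigma_i(t,X_i(t,x))$. By \eqref{eq:160}, the relation $\partial_x X_i=\rho_{0,i}/\tilde\rho_i$ turns the system into the mass-Lagrangian form:
\begin{equation*}
\partial_t X_i=\tilde u_i,\qquad \partial_t \tilde u_i=\frac{\partial_x\tilde\sigma_i}{\rho_{0,i}},\qquad \partial_t(1/\tilde\rho_i)=\frac{\partial_x\tilde u_i}{\rho_{0,i}},
\end{equation*}
together with $\cv\partial_t\tilde\theta_i=\tilde\sigma_i\widetilde{\partial_x u_i}/\tilde\rho_i+\widetilde{\partial_x(\kappa\partial_x\theta_i)}/\tilde\rho_i$. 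Write $\delta f=\tilde f_2-\tilde f_1$. The plan is a weighted energy (relative-entropy-like) estimate on the functional
\begin{equation*}
Y(t)=\int_0^1|\delta X|^2+\int_0^1|\delta u|^2+\int_0^1|\delta\theta|^2+\int_0^1|\delta\rho|^2,
\end{equation*}
closed by Grönwall. Constants come from \eqref{eq:12}--\eqref{eq:15} for both solutions. Solution 2 additionally enjoys, via Propositions \ref{prop:pullnoel} and \ref{prop:pullnoel2}, the bounds $\kappa\|\partial_x\rho_2\|_\infty^2$, $\kappa\|\partial_x\theta_2\|_\infty^2$, $\kappa\int_0^T\|\partial_x\sigma_2\|_\infty^2$ and $\int\int|D_t\theta_2|^2$. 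Norms of Lagrangian quantities are compared with Eulerian ones through \eqref{eq:161}.

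\textbf{Step 1 (velocity).} Multiply $\rho_{0,2}\partial_t\delta u$ by $\delta u$. The term $(\rho_{0,2}-\rho_{0,1})\partial_t\tilde u_1$ is controlled by $\|\rho_{0,2}-\rho_{0,1}\|_2$ times an $L^2$ norm of $\partial_x\tilde\sigma_1/\rho_{0,1}$ in time, which is why the final bound carries a fractional power of $\|\delta\rho_0\|_2$. Integrating by parts gives $-\int\delta\sigma\,\partial_x\delta u$. Split $\delta\sigma=\mu\,\delta(\partial_xu)-R\,\delta(\rho\theta)$, where $\partial_x u$ in Lagrangian variables equals $\tilde\rho\,\partial_x\tilde u/\rho_0$. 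This produces a dissipation term $\mu\int|\partial_x\delta u|^2/\rho_0\cdot\tilde\rho$, with remainders involving $\delta\rho\,\partial_x\tilde u_1$ and the pressure difference. Absorbing these by Young's inequality yields
\begin{equation*}
\frac{d}{dt}\int\rho_{0,2}|\delta u|^2+c\int|\partial_x\delta u|^2\lesssim(1+\|\partial_x u_1\|_\infty^2)\bigl(\|\delta\rho\|_2^2+\|\delta\theta\|_2^2\bigr)+\|\delta\rho_0\|_2\,g(t),
\end{equation*}
with $g\in L^1(0,T)$.

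\textbf{Step 2 (density and position).} The equation for $1/\tilde\rho_i$ gives $\partial_t\delta(1/\rho)$ in terms of $\partial_x\delta u$ and $\delta\rho_0$. Hence $\frac{d}{dt}\|\delta\rho\|_2^2\le\epsilon\|\partial_x\delta u\|_2^2+C_\epsilon(\ldots)$, and the $\epsilon$-term is absorbed by the dissipation from Step 1. The position estimate is trivial: $\frac{d}{dt}\|\delta X\|_2^2\le\|\delta X\|_2^2+\|\delta u\|_2^2$.

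\textbf{Step 3 (temperature), the main obstacle.} When $\kappa>0$ the heat flux term $\widetilde{\partial_x(\kappa\partial_x\theta_i)}$ does not transform nicely: it equals $\partial_x\bigl(\kappa\tilde\rho_i\partial_x\tilde\theta_i/\rho_{0,i}\bigr)\tilde\rho_i/\rho_{0,i}$. Testing against $\delta\theta$ gives a good term $-\kappa\int|\partial_x\delta\theta|^2$ plus cross terms of the form $\kappa(\delta\rho)\,\partial_x\tilde\theta_2\,\partial_x\delta\theta$ and $\kappa\,\delta\rho_0\ldots$. These cross terms are exactly where the extra regularity of solution 2 enters: $\kappa\|\partial_x\theta_2\|_\infty^2\le F_1$ lets us bound $\kappa|\delta\rho|\,|\partial_x\theta_2|\,|\partial_x\delta\theta|\le\frac\kappa2|\partial_x\delta\theta|^2+CF_1|\delta\rho|^2$. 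Similarly, $\kappa\|\partial_x\rho_2\|_\infty^2$ handles the terms in which derivatives of $\tilde\rho$ appear. For the viscous heating difference $\delta(\sigma\partial_xu/\rho)$, write it as $\delta\sigma\cdot\partial_x u_2+\sigma_1\,\delta(\partial_x u)+\ldots$ and use $\|\sigma_i\|_\infty^2,\|\partial_xu_i\|_\infty^2\in L^1$, absorbing $\|\partial_x\delta u\|_2^2$ into the dissipation of Step 1. To do so, weight the temperature estimate by a small constant. If the $\partial_x\delta\sigma$-type terms resist, I would first try to use $\kappa\int\|\partial_x\sigma_2\|_\infty^2\le E_1'$ from Proposition \ref{prop:pullnoel} to replace $\partial_x$ acting on the coefficients of solution 2.

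\textbf{Step 4 (Grönwall and initial data).} Summing the weighted estimates gives
\begin{equation*}
Y'\le G(t)\,Y+C\bigl(\|\delta\rho_0\|_2\,g(t)+\ldots\bigr),\qquad G\in L^1(0,T).
\end{equation*}
Grönwall's lemma then yields a bound by $Y(0)$ plus $\|\delta\rho_0\|_2$ times a constant. The $2/3$ exponent should come out of optimizing Young's inequality in the terms linear in $\delta\rho_0$, which are paired with quantities bounded only by $\|\delta u\|_2^{1/2}$-type interpolations, for instance $\|\delta u\|_\infty\lesssim\|\delta u\|_2^{1/2}\|\partial_x\delta u\|_2^{1/2}$. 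Finally, \eqref{eq:161} converts the Lagrangian estimates into the stated ones.
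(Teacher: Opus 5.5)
Your architecture matches the paper's: Lagrangian variables, $L^2$ estimates on $\delta\widetilde\rho$, $\delta\widetilde u$, $\delta\widetilde\theta$, absorption of $\|\partial_x\delta\widetilde u\|_2^2$ into the viscous dissipation, Grönwall, then \eqref{eq:177} for $\delta X$. Steps 1--2 go through.

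\textbf{The gap: Step 3, the coupling of $\delta\rho_0$ with the heat flux.} This is the one term that needs care, and your bounds do not reach it.

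\emph{Unweighted test.} You write $\cv\partial_t\widetilde\theta_i=\rho_{0,i}^{-1}\left[\widetilde\sigma_i\partial_x\widetilde u_i+\partial_x\left(\kappa\frac{\widetilde\rho_i}{\rho_{0,i}}\partial_x\widetilde\theta_i\right)\right]$. Testing this against $\delta\widetilde\theta$ does not produce $-\kappa\int|\partial_x\delta\widetilde\theta|^2$. Integrating by parts puts $\partial_x$ on $1/\rho_{0,1}$ or on $1/\rho_{0,2}$, and $\rho_{0,1}$ is merely bounded.

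\emph{Factoring out $1/\rho_{0,2}$.} This leaves the term $(\rho_{0,2}^{-1}-\rho_{0,1}^{-1})\,\partial_x\left(\kappa\frac{\widetilde\rho_1}{\rho_{0,1}}\partial_x\widetilde\theta_1\right)$.

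\emph{Weight $\rho_{0,2}$ as in your Step 1.} Equivalently, the leftover is $-\cv\int(\delta\rho_0)\,\partial_t\widetilde\theta_1\,\delta\widetilde\theta$. For the non-well-prepared solution 1, $\partial_t\widetilde\theta_1=\widetilde{D_t\theta_1}$ is controlled only through \eqref{eq:15}, with the time weight $w$. That gives no $L^1(0,T)$ bound on $\|\partial_t\widetilde\theta_1\|_2$.

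None of the bounds you invoke in Step 3 ($\kappa\|\partial_x\theta_2\|_\infty^2$, $\kappa\|\partial_x\rho_2\|_\infty^2$, $\kappa\int_0^T\|\partial_x\sigma_2\|_\infty^2$) addresses this term. Also, a correctly arranged estimate contains no derivatives of $\widetilde\rho$ at all; those of $\widetilde\rho_1$ would be uncontrolled.

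\textbf{The missing idea: the choice of weight.} Take the difference of the $\rho_{0,i}$-multiplied equations \eqref{eq:164} and split
$\delta(\rho_0\cv\partial_t\widetilde\theta)=\cv(\delta\rho_0)\partial_t\widetilde\theta_2+\rho_{0,1}\cv\partial_t\delta\widetilde\theta$.
That is, weight the energy by the \emph{rough} initial density $\rho_{0,1}$. Two things follow.
\begin{enumerate}
\item The conduction difference becomes the exact divergence $\partial_x\left(\kappa\,\delta(\widetilde\rho/\rho_0)\,\partial_x\widetilde\theta_2\right)+\partial_x\left(\kappa\frac{\widetilde\rho_1}{\rho_{0,1}}\partial_x\delta\widetilde\theta\right)$. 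It integrates by parts without differentiating any coefficient. The first piece is where $\kappa\|\partial_x\widetilde\theta_2\|_\infty^2$ enters, as you say.
\item The leftover $-\cv\int(\delta\rho_0)\,\partial_t\widetilde\theta_2\,\delta\widetilde\theta$ falls on the well-prepared solution. There, $\int_0^T\int|D_t\theta_2|^2\le E_1$ from \eqref{eq:148} makes it integrable in time. This is precisely where hypothesis \eqref{eq:147} is used.
\end{enumerate}
Integrating by parts in time to move $\partial_t$ from $\widetilde\theta_1$ onto $\widetilde\theta_2$ amounts to the same change of weight. For the velocity the choice of weight is harmless, since both $\partial_t\widetilde u_i$ are in $L^2$.

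\textbf{On the exponent $2/3$.} In the paper it comes from this same temperature term (Hölder in $L^4\times L^2\times L^4$, then Young), not from interpolating $\delta\widetilde u$. Since $\|\delta\rho_0\|_2\le 2\overline{\rho_0}$, any power of $\|\delta\rho_0\|_2$ that is at least $2/3$ implies the stated bound. So simply using $|\delta\widetilde\theta|\le\overline{\theta}$ in that term also works.
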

\begin{proof}
For $i=1,2$ and for a map $f_i:[0,T]\times\T\rightarrow \R$, we denote
\begin{equation*}
\text{for all }(t,x)\in[0,T]\times\T,\quad \widetilde{f_i}(t,x) = f_i(t,X_i(t,x)).
\end{equation*}
Remark that $\tilde{f_i}(0,\cdot) = f_i(0,\cdot)$.
We denote, for any function $\widetilde{f}=f(\widetilde{\rho},\widetilde{\theta},\widetilde{u})$,
\begin{equation*}
\delta \widetilde{f} := \widetilde{f_2} - \widetilde{f_1}.
\end{equation*}
In Lagrangian coordinates, $(NS_\kappa)$ can be rewritten as, for $i=1,2$,

\begin{empheq}[left=\empheqlbrace]{alignat=1}
\rho_{0,i}\partial_t\widetilde{\rho_i} &= -\widetilde{\rho_i}^2\partial_x \widetilde{u_i},\label{eq:162}\\
\rho_{0,i}\partial_t \widetilde{u_i} &= \partial_x\widetilde{\sigma_i},\label{eq:163}\\
\rho_{0,i}\cv\partial_t\widetilde{\theta_i} &= \widetilde{\sigma_i}\partial_x \widetilde{u_i} + \partial_x(\kappa \frac{\widetilde{\rho_i}}{\rho_{0,i}}\partial_x\widetilde{\theta_i})\label{eq:164}
\end{empheq}
where
\begin{equation}
\widetilde{\sigma_i} = \mu\frac{\widetilde{\rho_i}}{\rho_{0,i}}\partial_x \widetilde{u_i} - R\widetilde{\rho_i}\widetilde{\theta_i}.\label{eq:165}
\end{equation}

From \eqref{eq:162} we obtain
\begin{equation*}
(\delta\rho_{0})\partial_t\widetilde{\rho_2} + \rho_{0,1}\partial_t \delta\widetilde{\rho} = - (\widetilde{\rho_1} + \widetilde{\rho_2})(\delta\widetilde{\rho})\partial_x \widetilde{u_1} - \widetilde{\rho_2}^2\partial_x \delta\widetilde{u}.
\end{equation*}
Let $\varepsilon>0$. We get, multiplying this last equation by $\delta\widetilde{\rho}$ and integrating on the torus, using Young's inequality,
\begin{align}
\frac{1}{2}\frac{d}{dt}\int_0^1\rho_{0,1} \vert\delta \widetilde{\rho}\vert^2&\leq \int_0^1 \vert \widetilde{\rho_1} + \widetilde{\rho_2}\vert \vert\partial_x \widetilde{u_1}\vert \vert\delta\widetilde{\rho}\vert^2 + \int_0^1 \vert\widetilde{\rho_2}\vert^2\vert\partial_x\delta\widetilde{u}\vert \vert\delta\widetilde{\rho}\vert + \int_0^1\vert\delta\rho_0\vert \frac{\vert\widetilde{\rho_2}\vert^2}{\vert\rho_{0,2}\vert}\vert\partial_x \widetilde{u_2}\vert \vert\delta\widetilde{\rho}\vert \nonumber\\&\leq \left(2\overline{\rho}\Vert\partial_x \widetilde{u_1}\Vert_\infty + \frac{\overline{\rho}^4}{2\underline{\rho}^4}\Vert\partial_x\widetilde{u_2}\Vert_\infty^2 + \frac{\overline{\rho}^4}{4\varepsilon}\right)\int_0^1\vert\delta\widetilde{\rho}\vert^2 + \varepsilon\int_0^1 \vert\partial_x\delta \widetilde{u}\vert^2 + \frac{1}{2}\int_0^1\vert\delta\rho_0\vert^2. \nonumber\end{align}
Moreover, from \eqref{eq:163} and \eqref{eq:165} we get
\begin{equation*}
(\delta \rho_0) \partial_t \widetilde{u_2} + \rho_{0,1}\partial_t\delta \widetilde{u} = \partial_x(\mu\delta(\widetilde{\rho}/\rho_0)\partial_x \widetilde{u_2}) + \partial_x(\mu\frac{\widetilde{\rho_1}}{\rho_{0,1}}\partial_x\delta \widetilde{u}) - \partial_x(\delta \widetilde{p}),
\end{equation*}
hence, multiplying by $\delta \widetilde{u}$ and integrating on the torus,
\begin{align}
\frac{1}{2}\frac{d}{dt}\int_0^1\rho_{0,1}\vert\delta \widetilde{u}\vert^2 + \int_0^1 \mu \frac{\widetilde{\rho_1}}{\rho_{0,1}}\vert\partial_x\delta \widetilde{u}\vert^2 &= -\int_0^1(\delta\rho_0)(\partial_t \widetilde{u_2})\delta \widetilde{u} - \int_0^1 \mu \delta(\widetilde{\rho}/\rho_0)(\partial_x \widetilde{u_2})\partial_x\delta \widetilde{u} + \int_0^1 (\delta \widetilde{p}) \partial_x\delta \widetilde{u} \nonumber\\&=:I_1 + I_2 + I_3. \nonumber\end{align}
By Young's inequality,
\begin{align}
I_2 &\leq \frac{\mu^2}{4\varepsilon}\Vert\partial_x \widetilde{u_2}\Vert_\infty^2\int_0^1 \vert\delta(\widetilde{\rho}/\rho_0)\vert^2 + \varepsilon \int_0^1 \vert \partial_x\delta \widetilde{u}\vert^2. \nonumber\end{align}
As 
\begin{equation}\label{eq:166}
\left\vert\delta\left(\frac{\widetilde{\rho}}{\rho_0}\right)\right\vert = \left\vert\frac{\delta \widetilde{\rho}}{\rho_{0,2}} - \frac{(\delta \rho_0)\widetilde{\rho_1}}{\rho_{0,1}\rho_{0,2}}\right\vert\leq  \frac{1}{\underline{\rho}}\vert\delta\widetilde{\rho}\vert + \frac{\overline{\rho}}{\underline{\rho}^2}\vert\delta\rho_0\vert ,
\end{equation}
we get 
\begin{equation}\label{eq:167}
I_2\leq \frac{\mu^2}{2\underline{\rho}^2\varepsilon} \Vert\partial_x\widetilde{u_2}\Vert_\infty^2\int_0^1\vert\delta\widetilde{\rho}\vert^2 +
\frac{\mu^2\overline{\rho}^2}{2\underline{\rho}^4\varepsilon} \Vert\partial_x\widetilde{u_2}\Vert_\infty^2\int_0^1\vert\delta\rho_0\vert^2 + \varepsilon\int_0^1\vert\partial_x\delta \widetilde{u}\vert^2.  
\end{equation}
Similarly, as
\begin{equation*}
\vert\delta \widetilde{p}\vert = \vert R\widetilde{\theta_1}\delta\widetilde{\rho} + R\widetilde{\rho_2}\delta\widetilde{\theta}\vert\leq R\overline{\theta}\vert\delta\widetilde{\rho}\vert + R\overline{\rho}\vert\delta\widetilde{\theta}\vert,
\end{equation*}
we get
\begin{equation}\label{eq:168}
I_3\leq \frac{R^2\overline{\theta}^2}{2\varepsilon}\int_0^1\vert\delta\widetilde{\rho}\vert^2 + \frac{R^2\overline{\rho}^2}{2\varepsilon}\int_0^1\vert\delta\widetilde{\theta}\vert^2 + \varepsilon\int_0^1 \vert\partial_x\delta\widetilde{u}\vert^2.
\end{equation}
Finally, using Hölder's inequality and Young's inequality,
\begin{align}
I_1&\leq \Vert\delta \rho_0\Vert_2\Vert\partial_t \widetilde{u_2}\Vert_2\Vert\delta \widetilde{u}\Vert_\infty \nonumber\\&\leq \frac{1}{2}\Vert\partial_t \widetilde{u_2}\Vert_2^2\Vert\delta\rho_0\Vert_2^2 + \frac{1}{2} \Vert\delta \widetilde{u}\Vert_\infty^2.\label{eq:169}
\end{align}
Moreover, by Galgliardo-Nirenberg's inequality (recall \eqref{eq:53}),
\begin{align}
\Vert\delta \widetilde{u}\Vert_\infty^2 &\leq \Vert\delta \widetilde{u}\Vert_2^2 + 2\Vert\delta \widetilde{u}\Vert_2\Vert\partial_x\delta \widetilde{u}\Vert_2 \nonumber\\&\leq \frac{2\varepsilon + 1}{2\varepsilon}\int_0^1 \vert\delta \widetilde{u}\vert^2 + 2\varepsilon\int_0^1 \vert \partial_x\delta\widetilde{u}\vert^2.\label{eq:170}
\end{align}
Hence \eqref{eq:167}, \eqref{eq:168}, \eqref{eq:169}, \eqref{eq:170} give
\begin{equation*}
\begin{split}
\frac{1}{2}\frac{d}{dt}\int_0^1\rho_{0,1}\vert\delta\widetilde{u}\vert^2 &+ \int_0^1\mu \frac{\widetilde{\rho_1}}{\rho_{0,1}}\vert\partial_x\delta\widetilde{u}\vert^2 \leq \left(\frac{\mu^2}{2\underline{\rho}^2\varepsilon}\Vert\partial_x\widetilde{u_2}\Vert_\infty^2 + \frac{R^2\overline{\theta}^2}{2\varepsilon}\right)\int_0^1\vert\delta\widetilde{\rho}\vert^2 + \frac{R^2\overline{\rho}^2}{2\varepsilon}\int_0^1\vert\delta\widetilde{\theta}\vert^2 \\&+ \left(\frac{\mu^2\overline{\rho}^2}{2\underline{\rho}^4\varepsilon}\Vert\partial_x \widetilde{u_2}\Vert_\infty^2 + \frac{1}{2}\Vert\partial_t \widetilde{u_2}\Vert_2^2\right)\int_0^1\vert\delta\rho_0\vert^2 + \frac{2\varepsilon + 1}{4\varepsilon}\int_0^1\vert\delta\widetilde{u}\vert^2 + 3\varepsilon\int_0^1\vert\partial_x\delta \widetilde{u}\vert^2.
\end{split}
\end{equation*}
From \eqref{eq:164} we get
\begin{align*}
\delta(\rho_0\cv\partial_t \widetilde{\theta})&=(\delta \rho_0)\cv\partial_t\widetilde{\theta_2} + \rho_{0,1}\cv \partial_t\delta\widetilde{\theta} \\&= (\delta\widetilde{\sigma})\partial_x \widetilde{u_1} + \widetilde{\sigma_2}\partial_x\delta \widetilde{u} + \partial_x(\kappa\delta\left(\frac{\widetilde{\rho}}{\rho_0}\right)\partial_x\widetilde{\theta_2}) + \partial_x(\kappa\frac{\widetilde{\rho_1}}{\rho_{0,1}}\partial_x\delta\widetilde{\theta}).
\end{align*}
Multiplying by $\delta\widetilde{\theta}$ and integrating in space, we get
\begin{align}
\frac{1}{2}\frac{d}{dt}\int_0^1 \rho_{0,1} \cv \vert\delta\widetilde{\theta}\vert^2 + \kappa\int_0^1\frac{\widetilde{\rho_1}}{\rho_{0,1}}\vert\partial_x \delta\widetilde{\theta}\vert^2 &= -\cv\int_0^1(\delta\rho_0)(\partial_t\widetilde{\theta_2})\delta\widetilde{\theta} - \kappa\int_0^1\delta\left(\frac{\widetilde{\rho}}{\rho_0}\right)(\partial_x\widetilde{\theta_2}) \partial_x \delta\widetilde{\theta} \nonumber\\&+\int_0^1(\delta\widetilde{\sigma})(\partial_x \widetilde{u_1})\delta\widetilde{\theta} + \int_0^1 \widetilde{\sigma_2}(\partial_x\delta \widetilde{u})\delta\widetilde{\theta} \nonumber\\&=: J_1 + J_2 + J_3 + J_4. \nonumber\end{align}
We then obtain by Hölder's inequality and Young's inequality,
\begin{align}
J_1 &\leq \cv\Vert\delta\rho_0\Vert_4\Vert\partial_t\widetilde{\theta_2}\Vert_2\Vert\delta\widetilde{\theta}\Vert_4 \nonumber\\&\leq \frac{3\cv}{4}\Vert\partial_t\widetilde{\theta_2}\Vert_2\Vert \delta\rho_0\Vert_4^{4/3} + \frac{\cv}{4}\Vert \partial_t \widetilde{\theta_2}\Vert_2\int_0^1\vert\delta\widetilde{\theta}\vert^4 \nonumber\\&\leq \frac{3\cv}{4}\Vert\partial_t\widetilde{\theta_2}\Vert_2 \Vert\delta\rho_0\Vert_4^{4/3} + \frac{\cv\overline{\theta}^2}{4}\Vert\partial_t\widetilde{\theta_2}\Vert_2\int_0^1\vert\delta\widetilde{\theta}\vert^2.\label{eq:171}
\end{align}
Moreover, from Young's inequality and \eqref{eq:166},
\begin{align}
J_2\leq \frac{1}{2\varepsilon\underline{\rho}^2}\kappa\Vert\partial_x\widetilde{\theta_2}\Vert_\infty^2\int_0^1\vert\delta\widetilde{\rho}\vert^2 + \frac{\overline{\rho}^2}{2\varepsilon\underline{\rho}^4}\kappa\Vert\partial_x\widetilde{\theta_2}\Vert_\infty^2\int_0^1\vert\delta\rho_0\vert^2 +\varepsilon\kappa\int_0^1\vert\partial_x\delta\widetilde{\theta}\vert^2.\label{eq:172}
\end{align}
From \eqref{eq:166},
\begin{align}
\vert\delta\widetilde{\sigma}\vert &= \left\vert\mu (\delta(\widetilde{\rho}/\rho_0))\partial_x \widetilde{u_2} + \mu\frac{\widetilde{\rho_1}}{\rho_{0,1}}\partial_x\delta \widetilde{u} -R\widetilde{\theta_1}\delta\widetilde{\rho} - R\widetilde{\rho_2}\delta\widetilde{\theta}\right\vert \nonumber\\&\leq \frac{\mu}{\underline{\rho}}\Vert\partial_x \widetilde{u_2}\Vert_\infty \vert\delta\widetilde{\rho}\vert + \frac{\mu\overline{\rho}}{\underline{\rho}^2}\Vert\partial_x \widetilde{u_2}\Vert_\infty\vert\delta\rho_0\vert + \frac{\mu\overline{\rho}}{\underline{\rho}}\vert\partial_x\delta\widetilde{u}\vert + R\overline{\theta}\vert\delta\widetilde{\rho}\vert + R\overline{\rho}\vert\delta\widetilde{\theta}\vert. \nonumber\end{align}
Hence from Young's inequality,
\begin{align}
J_3 &\leq \frac{5\mu^2\overline{\rho}^2}{4\underline{\rho}^2\varepsilon}\Vert\partial_x \widetilde{u_1}\Vert_\infty^2\int_0^1\vert\delta\widetilde{\theta}\vert^2 + \frac{\underline{\rho}^2\varepsilon}{5\mu^2\overline{\rho}^2}\int_0^1 \vert\delta\widetilde{\sigma}\vert^2 \nonumber\\&\leq \left(\frac{5\mu^2\overline{\rho}^2}{4\underline{\rho}^2\varepsilon}\Vert\partial_x \widetilde{u_1}\Vert_\infty^2 + \frac{R^2\underline{\rho}^2\varepsilon}{\mu^2}\right)\int_0^1\vert\delta\widetilde{\theta}\vert^2 + \left(\frac{\varepsilon}{\overline{\rho}^2}\Vert\partial_x \widetilde{u_2}\Vert_\infty^2 + \frac{R^2\overline{\theta}^2\underline{\rho}^2\varepsilon}{\mu^2\overline{\rho}^2}\right)\int_0^1\vert\delta\widetilde{\rho}\vert^2 \nonumber\\&+ \frac{\varepsilon}{\underline{\rho}^2}\Vert\partial_x \widetilde{u_2}\Vert_\infty^2\int_0^1\vert\delta\rho_0\vert^2 +\varepsilon\int_0^1\vert\partial_x\delta\widetilde{u}\vert^2.\label{eq:173}
\end{align}
By Young's inequality,
\begin{align}
J_4 &\leq \varepsilon\int_0^1 \vert\partial_x\delta \widetilde{u}\vert^2 + \frac{1}{4\varepsilon}\Vert\widetilde{\sigma_2}\Vert_\infty^2\int_0^1\vert\delta\widetilde{\theta}\vert^2.\label{eq:174}
\end{align}
Combining \eqref{eq:171},\eqref{eq:172},\eqref{eq:173},\eqref{eq:174}, we get
\begin{equation*}
\begin{split}
\frac{1}{2}\frac{d}{dt}\int_0^1\rho_{0,1}\cv\vert\delta\widetilde{\theta}\vert^2 &+ \kappa\int_0^1\frac{\widetilde{\rho_1}}{\rho_{0,1}}\vert\partial_x\delta\widetilde{\theta}\vert^2\leq \left(\frac{1}{2\varepsilon\underline{\rho}^2}\kappa\Vert\partial_x\widetilde{\theta}_2\Vert_\infty^2 + \frac{\varepsilon}{\overline{\rho}^2}\Vert\partial_x \widetilde{u_2}\Vert_\infty^2 + \frac{R^2\overline{\theta}^2\underline{\rho}^2\varepsilon}{\mu^2\overline{\rho}^2}\right)\int_0^1 \vert\delta\widetilde{\rho}\vert^2
\\& + \left(\frac{\cv\overline{\theta}^2}{4}\Vert\partial_t \widetilde{\theta_2}\Vert_2 + \frac{5\mu^2\overline{\rho}^2}{4\underline{\rho}^2\varepsilon}\Vert\partial_x \widetilde{u_1}\Vert_\infty^2 + \frac{R^2\underline{\rho}^2\varepsilon}{\mu^2} + \frac{1}{4\varepsilon}\Vert\widetilde{\sigma_2}\Vert_\infty^2\right)\int_0^1\vert\delta\widetilde{\theta}\vert^2
\\&+ \left(\frac{\overline{\rho}^2}{2\varepsilon\underline{\rho}^4}\kappa\Vert\partial_x\widetilde{\theta_2}\Vert_\infty^2 + \frac{\varepsilon}{\underline{\rho}^2}\Vert\partial_x\widetilde{u_2}\Vert_\infty^2\right)\int_0^1\vert\delta\rho_0\vert^2 + \frac{3\cv\overline{\rho}^{2/3}}{4}\Vert\partial_t\widetilde{\theta_2}\Vert_2\Vert\delta\rho_0\Vert_2^{2/3} \\&+ 2\varepsilon\int_0^1\vert\partial_x\delta \widetilde{u}\vert^2 + \varepsilon\kappa\int_0^1\vert\partial_x\delta\widetilde{\theta}\vert^2.
\end{split}
\end{equation*}
Finally, choosing
\begin{equation*}
\varepsilon\leq \min\left(\frac{\underline{\rho}}{2\overline{\rho}},\frac{\mu\overline{\rho}}{12\underline{\rho}}\right),
\end{equation*}
we get
\begin{equation*}
\begin{split}
&\frac{d}{dt}\int_0^1\rho_{0,1}\vert\delta\widetilde{\rho}\vert^2 + \frac{d}{dt}\int_0^1\rho_{0,1}\vert \delta \widetilde{u} \vert^2 + \int_0^1\mu \frac{\widetilde{\rho_1}}{\rho_{0,1}}\vert\partial_x\delta\widetilde{u}\vert^2 + \frac{d}{dt}\int_0^1\rho_{0,1}\cv\vert\delta\widetilde{\theta}\vert^2 + \kappa\int_0^1\frac{\widetilde{\rho_1}}{\rho_{0,1}}\vert\partial_x\delta\widetilde{\theta}\vert^2 \\&\leq G_5\left(\int_0^1\rho_{0,1}\vert\delta\widetilde{\rho}\vert^2 + \rho_{0,1}\vert\delta \widetilde{u}\vert^2 + \rho_{0,1}\cv\vert\delta\widetilde{\theta}\vert^2\right) +  G_6\left(\Vert \delta\rho_0\Vert_2^2 + \Vert\delta\rho_0\Vert_2^{2/3}\right) 
\end{split},
\end{equation*}
where
\begin{equation*}
\begin{split}
G_5 &= \frac{2}{\underline{\rho}}\max(1/\cv,1)\max\left(2\overline{\rho}\Vert\partial_x \widetilde{u_1}\Vert_\infty + \frac{\overline{\rho}^4}{2\underline{\rho}^4}\Vert\partial_x \widetilde{u_2}\Vert_\infty^2+ \frac{2\overline{\rho}^4}{\varepsilon},\frac{\mu^2}{2\underline{\rho}^2\varepsilon}\Vert\partial_x \widetilde{u_2}\Vert_\infty^2 + \frac{R^2\overline{\theta}^2}{4\varepsilon}, \frac{R^2\overline{\rho}^2}{4\varepsilon}, \frac{2\varepsilon+1}{4\varepsilon},\right. \\&\left.\frac{1}{2\varepsilon\underline{\rho}^2}\kappa\Vert\partial_x\widetilde{\theta}_2\Vert_\infty^2 + \frac{\varepsilon}{\overline{\rho}^2}\Vert\partial_x \widetilde{u_2}\Vert_\infty^2 + \frac{R^2\overline{\theta}^2\underline{\rho}^2\varepsilon}{\mu^2\overline{\rho}^2}, \frac{\cv\overline{\theta}^2}{4}\Vert\partial_t \widetilde{\theta_2}\Vert_2 + \frac{5\mu^2\overline{\rho}^2}{4\underline{\rho}^2\varepsilon}\Vert\partial_x \widetilde{u_1}\Vert_\infty^2 + \frac{R^2\underline{\rho}^2\varepsilon}{\mu^2} + \frac{\Vert\widetilde{\sigma_2}\Vert_\infty^2}{4}\right),
\end{split}
\end{equation*}
\begin{equation*}
G_6 = 2\max\left(\frac{\mu^2\overline{\rho}^2}{2\underline{\rho}^4\varepsilon}\Vert\partial_x \widetilde{u_2}\Vert_\infty^2 + \frac{1}{2}\Vert\partial_t \widetilde{u_2}\Vert_2^2, \frac{1}{2},\frac{\overline{\rho}^2}{2\varepsilon\underline{\rho}^4}\kappa\Vert\partial_x\widetilde{\theta_2}\Vert_\infty^2 + \frac{\varepsilon}{\underline{\rho}^2}\Vert\partial_x \widetilde{u_2}\Vert_\infty^2,\frac{3\cv\overline{\rho}^{2/3}}{4}\Vert\partial_t\widetilde{\theta_2}\Vert_2\right).
\end{equation*}
Hence by Grönwall's lemma, for all $t\in[0,T]$,
\begin{equation}
\begin{split}
&\int_0^1(\vert\delta\widetilde{\rho}\vert^2 + \vert\delta\widetilde{u}\vert^2 + \vert\delta\widetilde{\theta}\vert^2)(t) \\&\leq  \frac{\overline{\rho}}{\underline{\rho}}\max(\cv,1/\cv)\exp\left(\int_0^T G_5(s)ds\right)\left(\int_0^1(\vert\delta\rho_0\vert^2 + \vert\delta\theta_0\vert^2 + \vert\delta u_0\vert^2)\right)
\\& + \frac{\overline{\rho}}{\underline{\rho}}\max(\cv,1/\cv)\exp\left(\int_0^T(G_5(s) + G_6(s))ds\right) \left(\int_0^1\vert\delta\rho_0\vert^2 + \left(\int_0^1\vert\delta\rho_0\vert^2\right)^{1/3}\right).
\end{split}\label{eq:175}
\end{equation}
Remark that
\begin{equation}\label{eq:176}
\int_0^1\vert\delta\rho_0\vert^2 \leq \overline{\rho}^{4/3}\left(\int_0^1\vert\delta\rho_0\vert^2\right)^{1/3}.
\end{equation}
Moreover, starting from, for $i=1,2$,
\begin{equation*}
\text{for all }(t,x)\in [0,T]\times\T,\quad X_i(t,x) = x + \int_0^t u_i(s,X_i(s,x)) ds
\end{equation*}
we get
\begin{equation*}
\text{for all }(t,x)\in [0,T]\times\T,\quad X_2(t,x)-X_1(t,x) = \int_0^t \delta \widetilde{u}(s,x) ds
\end{equation*}
hence
\begin{equation}\label{eq:177}
\text{for all }t\in [0,T],\quad \int_0^1\vert X_2(t,x)-X_1(t,x)\vert^2 dx\leq T^2\sup_{0\leq t\leq T}\int_0^1\vert\delta\widetilde{u}\vert^2(t)dt.
\end{equation}

Note that \eqref{eq:147} holds with $E_0 = F_0$. Notably by using \eqref{eq:160} and \eqref{eq:161}, we deduce from \eqref{eq:14}, \eqref{eq:148}, \eqref{eq:155} that there exists some $\widetilde{C_1}$, $\widetilde{C_2}$ depending only on $C_0,F_0,\overline{\kappa},\overline{\rho_0},\underline{\rho_0},T,\underline{\theta_0},\overline{\theta_0}$ such that
\begin{equation}\label{eq:178}
\text{for }i=1,2,\quad 
\int_0^T\Vert\partial_x\widetilde{u_i}\Vert_\infty^2 + \int_0^T\Vert\widetilde{\sigma_i}\Vert_\infty^2 + \int_0^T\int_0^1\vert\partial_t \widetilde{u_i}\vert^2\leq \widetilde{C_1},
\end{equation}
\begin{equation}\label{eq:179}
\sup_{0\leq t\leq T}\kappa\Vert\partial_x\widetilde{\theta_2}\Vert_\infty^2 + \int_0^T\int_0^1\vert\partial_t \widetilde{\theta_2}\vert^2\leq \widetilde{C_2}. 
\end{equation}
Thus, combining \eqref{eq:175}, \eqref{eq:176}, \eqref{eq:177}, \eqref{eq:178} and \eqref{eq:179} gives Proposition \ref{prop:poney}.

\end{proof}
\subsection{Proof of the main result}
\begin{proof}[Proof of Theorem \ref{thm:main}]
Consider some $\varphi \in {\cal D}(\R)$ such that
\begin{equation*}
\int_{-\infty}^{\infty}\varphi = 1,\quad \varphi_{[-1,1]}=1, \quad 0\leq\varphi\leq 1.
\end{equation*}
Then we define, for $\eta>0$, $\varphi^\eta:\R\mapsto \R$ by
\begin{equation*}
\text{for all }x\in \R,\quad \varphi^\eta(x) = \frac{1}{\eta} \varphi(x/\eta).
\end{equation*}
Let $0 < \kappa\leq \overline{\kappa}$. Define 
\begin{equation*}
\widehat{\rho_0}^\kappa = \rho_0 \star \varphi^{\kappa^{1/4}},\quad \widehat{\theta_0}^\kappa = \theta_0 \star \varphi^{\kappa^{1/4}},\quad \widehat{u_0}^\kappa = u_0 \star \varphi^{\kappa^{1/2}},
\end{equation*}
and consider $(\widehat{\rho}^\kappa, \widehat{\theta}^\kappa,\widehat{u}^\kappa)$ the solution of $(NS_\kappa)$ with initial condition $(\widehat{\rho_0}^\kappa,\widehat{\theta_0}^\kappa, \widehat{u_0}^\kappa)$. 
We have
\begin{equation*}
\Vert\widehat{\rho_0}^\kappa - \rho_0\Vert_2 + \Vert\widehat{\theta_0}^\kappa - \theta_0\Vert_2 + \Vert\widehat{u_0}^\kappa - u_0\Vert_2 \tend{\kappa}{0} 0. 
\end{equation*}
Moreover,
\begin{equation*}
\sqrt{\kappa}\Vert\partial_x \widehat{\rho_0}^\kappa\Vert_\infty \leq\kappa^{1/4}\overline{\rho_0}\Vert\varphi'\Vert_1, \quad \sqrt{\kappa}\Vert\partial_x \widehat{\theta_0}^\kappa\Vert_\infty \leq \kappa^{1/4}\overline{\theta_0}\Vert \varphi'\Vert_1, \quad \sqrt{\kappa}\Vert \partial_{xx} \widehat{u_0}^\kappa\Vert_2\leq \Vert \partial_x u_0\Vert_2\Vert \varphi'\Vert_1,
\end{equation*}
and
\begin{equation*}
\kappa^{1/4}\Vert\partial_x\widehat{\rho_0}^\kappa\Vert_2\leq \Vert \rho_0\Vert_2\Vert\varphi'\Vert_1,\quad \kappa^{1/4}\Vert\partial_x\widehat{\theta_0}^\kappa\Vert_2\leq \Vert\theta_0\Vert_2\Vert\varphi'\Vert_1,
\end{equation*}
thus \eqref{eq:130} and \eqref{eq:154} are satisfied with $D_0, F_0$ that do not depend on $\kappa$. In the following, we denote $\widehat{X}^\kappa$ the flow of $\widehat{u}^\kappa$, 
For $f=\rho,\theta,u,$ we get, using \eqref{eq:161}, and abbreviating $L^2(0,T,L^2(\T))$ in $L^2$,
\begin{align}
\Vert f^\kappa - f\Vert_{L^2}&\leq \Vert f^\kappa - \widehat{f}^\kappa\Vert_{L^2} + \Vert \widehat{f}^\kappa - f\Vert_{L^2} \nonumber\\&\leq \sqrt{\frac{\overline{\rho}}{\underline{\rho}}}\Vert f^\kappa(\cdot,X^\kappa) - \widehat{f}^\kappa(\cdot,X^\kappa)\Vert_{L^2} + \Vert \widehat{f}^\kappa - f\Vert_{L^2} \nonumber\\&\leq \sqrt{\frac{\overline{\rho}}{\underline{\rho}}}\Vert f^\kappa(\cdot,X^\kappa) - \widehat{f}^\kappa(\cdot,{\widehat{X}^\kappa})\Vert_{L^2} + \sqrt{\frac{\overline{\rho}}{\underline{\rho}}}\Vert \widehat{f}^\kappa(\cdot,\widehat{X}^\kappa)-\widehat{f}^\kappa(\cdot,X^\kappa)\Vert_{L^2} +  \Vert \widehat{f}^\kappa - f\Vert_{L^2}. \nonumber\end{align}
At this step, remark that by Proposition \ref{prop:poney}, 
\begin{equation*}
\Vert f^\kappa(\cdot,X^\kappa) - \widehat{f}^\kappa(\cdot,\widehat{X}^\kappa)\Vert_{L^2} \tend{\kappa}{0} 0,
\end{equation*}
and by Proposition \ref{prop:4prime},
\begin{equation*}
\Vert \widehat{f}^\kappa - f\Vert_{L^2} \tend{\kappa}{0} 0.
\end{equation*}
Moreover,
\begin{align}
\Vert \widehat{f}^\kappa(\cdot,\widehat{X}^\kappa)-\widehat{f}^\kappa(\cdot,X^\kappa)\Vert_{L^2} &\leq \Vert \widehat{f}^\kappa(\cdot,\widehat{X}^\kappa)-f(\cdot,\widehat{X}^\kappa)\Vert_{L^2} + \Vert f(\cdot,\widehat{X}^\kappa) - f(\cdot,X^\kappa)\Vert_{L^2} + \Vert f(\cdot,X^\kappa) - \widehat{f}^\kappa(\cdot,X^\kappa)\Vert_{L^2} \nonumber\\&\leq 2\sqrt{\frac{\overline{\rho}}{\underline{\rho}}}\Vert \widehat{f}^\kappa - f\Vert_{L^2} + \Vert f(\cdot,\widehat{X}^\kappa) - f(\cdot,X^\kappa)\Vert_{L^2}. \nonumber\end{align}
As $\widehat{f}^\kappa \tend{\kappa}{0} f$ in $L^2(0,T,L^2(\T))$ by Proposition \ref{prop:4prime}, it remains to be proven that 
\begin{equation}\label{eq:180}
\Vert f(\cdot,\widehat{X}^\kappa) - f(\cdot,X^\kappa)\Vert_{L^2} \tend{\kappa}{0}0.
\end{equation}
Remark that the case $f=u$ is easy due to the regularity of $u$. Indeed, we have from \eqref{eq:177}
\begin{align*}
\int_0^T\int_0^1 \vert u(t,\widehat{X}^\kappa(t,x)) - u(t,X^\kappa(t,x))\vert^2 \leq &\left(\int_0^T \Vert\partial_x u\Vert_\infty^2(t) dt\right)\sup_{0\leq t\leq T} \int_0^1 \vert\widehat{X}^\kappa(t,x) - X^\kappa(t,x)\vert^2
\tend{\kappa}{0}0.
\end{align*}
For the case $f=\rho,\theta$, let us define $f_\eta$ for all $\eta>0$ by
\begin{equation*}
\text{for all }(t,x)\in [0,T]\times\T,\quad f_\eta(t,x) = (f(t,\cdot)\star\varphi_\eta)(x).
\end{equation*}
Then,
\begin{equation*}
\int_0^T\Vert\partial_x f_\eta\Vert_\infty^2\leq\frac{\Vert \varphi'\Vert_1^2}{\eta^2} \int_0^T\Vert f\Vert_\infty^2,
\end{equation*}
\begin{equation*}
\int_0^T\Vert f_\eta\Vert_2^2\leq \Vert\varphi\Vert_1^2\int_0^T\Vert f\Vert_2^2,
\end{equation*}
and thanks to the dominated convergence theorem,
\begin{equation*}
f_\eta \tend{\eta}{0} f \quad\text{in }L^2(0,T,L^2(\T)).
\end{equation*}
Then by \eqref{eq:161},
\begin{align}
\Vert f(\cdot,\widehat{X}^\kappa) - f(\cdot,X^\kappa)\Vert_{L^2} &\leq \Vert f(\cdot,\widehat{X}^\kappa) - f_\eta(\cdot,\widehat{X}^\kappa)\Vert_{L^2} + \Vert f_\eta(\cdot,\widehat{X}^\kappa) - f_\eta(\cdot,X^\kappa)\Vert_{L^2} + \Vert f_\eta(\cdot,X^\kappa) - f(\cdot,X^\kappa)\Vert_{L^2} \nonumber\\&\leq 2\sqrt{\frac{\overline{\rho}}{\underline{\rho}}}\Vert f - f_\eta\Vert_{L^2} + \left(\int_0^T\Vert\partial_x f_\eta\Vert_\infty^2(t)\int_0^1 \vert \widehat{X}^\kappa(t,x) - X^\kappa(t,x)\vert^2 dx dt\right)^{1/2} \nonumber\\&\leq 2\sqrt{\frac{\overline{\rho}}{\underline{\rho}}}\Vert f - f_\eta\Vert_{L^2} + \frac{\Vert\varphi'\Vert_1}{\eta}\sqrt{T}\Vert f\Vert_{L^\infty} \sup_{0\leq t\leq T}\Vert \widehat{X}^\kappa(t) - X^\kappa(t)\Vert_2, \nonumber\end{align}
hence
\begin{equation*}
\limsup_{\kappa\rightarrow 0} \Vert f(\cdot,\widehat{X}^\kappa) - f(\cdot,X^\kappa)\Vert_{L^2} \leq 2\sqrt{\frac{\overline{\rho}}{\underline{\rho}}}\Vert f - f_\eta\Vert_{L^2}.
\end{equation*}
We finally obtain \eqref{eq:180} taking $\eta\rightarrow 0$. In particular, we get
\begin{equation*}
\rho^\kappa \theta^\kappa \tend{\kappa}{0} \rho \theta \quad \text{in }L^1(0,T,L^1(\T)),
\end{equation*}
thus Theorem \ref{thm:main} by remembering Remark \ref{rmk:1}.
\end{proof}

\section*{Acknowledgements}  
The author would like to thank Didier Bresch and Frédéric Lagoutière for their attention and comments.
\printbibliography
\end{document}